\documentclass[12pt]{article}

\title{Torus knotted Reeb dynamics and the Calabi invariant}

\author{Jo Nelson and Morgan Weiler}
\date{}
\usepackage{subcaption}
\usepackage[normalem]{ulem}
\usepackage{amssymb}
\usepackage{latexsym}
\usepackage{amsmath}
\usepackage{amsthm}
\usepackage{eucal}
\usepackage[scr=rsfso]{mathalfa}
\DeclareFontFamily{OT1}{pzc}{}
\DeclareFontShape{OT1}{pzc}{m}{it}{<-> s * [1.10] pzcmi7t}{}
\DeclareMathAlphabet{\mathpzc}{OT1}{pzc}{m}{it}
%\usepackage[cal=txupr, scr=rsfso]{mathalfa}

%\usepackage[cal=lucida]{mathalfa}
%for the figures
\usepackage{graphicx, graphbox}
\usepackage[abs]{overpic}

\usepackage[all,cmtip]{xy}

\usepackage[margin=1in]{geometry}

%widecheck
\usepackage[matha,  mathx]{mathabx}

%lightning
\usepackage{wasysym}

%Morgan added these for figures with several parts
\usepackage{caption}
\usepackage{subcaption}
\usepackage{multirow}

%packages we don't need
%\usepackage{relsize}
%\usepackage[normalem]{ulem}
%\usepackage{dsfont}
%\usepackage{scalerel}
%\usepackage{amscd}
%\usepackage[mathscr]{euscript}
%\usepackage{color}

%biohazard
%\usepackage{marvosym}

%\usepackage{overpic}

\usepackage{enumerate}
\usepackage[usenames,dvipsnames,svgnames,table]{xcolor}

\usepackage{hyperref}
\hypersetup{colorlinks}

%\definecolor{indigo}{RGB}{51,0,102}
\definecolor{indigo}{RGB}{75,0,150}
\definecolor{brightpurple}{RGB}{102,0,153}
\definecolor{fuchsia}{RGB}{180,51,180}
\definecolor{jolightpurple}{RGB}{188,171,240}
\definecolor{blb}{RGB}{153, 102, 51}
\definecolor{bdb}{RGB}{77, 38, 0}
\definecolor{bg}{RGB}{85, 85, 94}

\hypersetup{colorlinks,
linkcolor=indigo,
filecolor=brightpurple,
urlcolor=brightpurple,
citecolor=fuchsia}

\normalbaselines
\setlength{\baselineskip}{9pt}
\setlength{\normalbaselineskip}{9pt}

\usepackage{tikz}

\usepackage[abs]{overpic}

\newcommand{\mc}[1]{{\mathcal #1}}

\numberwithin{equation}{section}
\numberwithin{figure}{section}

\newtheorem{theorem}{Theorem}[section]
\newtheorem{proposition}[theorem]{Proposition}
\newtheorem{corollary}[theorem]{Corollary}
\newtheorem{lemma}[theorem]{Lemma}
\newtheorem{lemma-definition}[theorem]{Lemma-Definition}

\theoremstyle{definition}
\newtheorem{definition}[theorem]{Definition}
\newtheorem{remark}[theorem]{Remark}

\newtheorem{example}[theorem]{Example}

\newtheorem{notation}[theorem]{Notation}

\renewcommand{\frak}{\mathfrak}

\newcommand{\C}{{\mathbb C}}
\newcommand{\CP}{{\mathbb C}{\mathbb P}}
\newcommand{\Q}{{\mathbb Q}}
\newcommand{\R}{{\mathbb R}}

\newcommand{\Z}{{\mathbb Z}}

\newcommand{\cpq}{\mathbb{CP}^1_{p,q}}
\newcommand{\ds}{{\dot{\Sigma}}}

\newcommand{\ve}{\varepsilon}

\newcommand{\fp}{\mathfrak{p}}

\newcommand{\D}{\mathbf{D}}

\newcommand{\cO}{{\Sigma}}

\newcommand{\op}{\operatorname}

\newcommand{\M}{\mc{M}}

\newcommand{\ind}{\op{ind}}

\newcommand{\Ker}{\op{Ker}}

\newcommand{\CZ}{\op{CZ}}

\newcommand{\bpm}{\begin{pmatrix}}
\newcommand{\epm}{\end{pmatrix}}

\newcommand{\fb}{\mathcal{F}_{b} }
\newcommand{\af}{\substack{\mathcal{A} < L \\ \mathcal{F}_{b}  \leq K}}

\newcommand{\afe}{\substack{\mathcal{A} < L(\varepsilon) \\ \mathcal{F}_{b}  \leq K}}

\newcommand{\vol}{\op{vol}}
\newcommand{\V}{\mathcal{V}}
\newcommand{\A}{\mathcal{A}}
\newcommand{\cur}{\mathcal{C}}

\renewcommand{\P}{\mathcal{P}}
\newcommand{\rot}{\op{rot}}
\newcommand{\Sdg}{\dot\Sigma_g}
\setcounter{MaxMatrixCols}{20}

\newcommand{\zb}{\mathpzc{b}}
\newcommand{\zp}{\mathpzc{p}}
\newcommand{\zq}{\mathpzc{q}}
\newcommand{\zh}{\mathpzc{h}}

\setcounter{tocdepth}{2} 

\begin{document}

\maketitle

\begin{abstract}

We establish quantitative existence, action, and linking bounds for all Reeb orbits associated to any contact form on the standard tight three-sphere admitting the standard transverse positive $T(p,q)$ torus knot as an elliptic Reeb orbit with canonically determined rotation numbers. This can be interpreted through an ergodic lens for Reeb flows transverse to a surface of section.  Our results also allow us to deduce an upper bound on the mean action of periodic orbits of naturally associated classes of area preserving diffeomorphisms of the associated Seifert surfaces of genus $(p-1)(q-1)/2$ in terms of the Calabi invariant, without the need for genericity or Hamiltonian hypotheses.   Our proofs utilize knot filtered embedded contact homology, which was first introduced and computed by Hutchings for the standard transverse unknot in the irrational ellipsoids and further developed in our previous work.   We continue our development of nontoric methods for embedded contact homology and establish the knot filtration on the embedded contact homology chain complex of the standard tight three-sphere with respect to positive $T(p,q)$ torus knots, where there are nonvanishing differentials.  We also obtain obstructions to the existence of exact symplectic cobordisms between positive transverse torus knots.
\end{abstract}

\tableofcontents

\section{Introduction}

In this paper we establish some ergodic type results for positive torus knotted Reeb dynamics on the tight three sphere.  These are obtained via our computation of the knot filtration on the embedded contact homology chain complex with respect to the standard transverse positive torus knot $T(p,q)$, realized as a Reeb orbit for any contact form on the tight three sphere, having rotation number $pq + \delta$, where $\delta$ is either zero or a sufficiently small positive irrational number.  From this, we obtain obstructions to symplectic cobordisms between many positive transverse torus knots.

We additionally establish quantitative existence results for Reeb orbits of such contact forms and for periodic orbits of area preserving surface maps homotopic to the Nielsen-Thurston representative of the monodromy of the open book decomposition of $S^3$ along $T(p,q)$ composed with a small positive boundary-parallel twist. Our results on Reeb dynamics are action-linking bounds which do not require a genericity hypothesis. They are motivated by the action-linking estimates proved in the generic case by Bechara Senior-Hryniewicz-Saloma\~{o} \cite{bhs}.  Our dynamical results for surfaces rely on a dictionary between Reeb dynamics and surface dynamics.  In particular, we show that if the Calabi invariant of an exact symplectomorphism of a genus $(p-1)(q-1)/2$ surface with one boundary component of the aforementioned type satisfies a particular upper bound in terms of $p$ and $q$ then it has a periodic orbit whose mean action is bounded above in terms of the Calabi invariant.

{Knot filtered embedded contact homology, a topological spectral invariant, was introduced by Hutchings in \cite{HuMAC}} and computed with respect to the standard transverse unknot in an irrational ellipsoid.  From this Hutchings related the mean action of area preserving disk maps which are rotation near the boundary to their Calabi invariant. It was subsequently computed for the nondegenerate Hopf link in the lens spaces $L(n,n-1)$ using a toric perturbation by Weiler \cite{weiler} and used to {relate the mean action of area preserving annulus diffeomorphisms, subject to a boundary condition, to their Calabi invariant.} 
In both cases, computing the knot filtration required irrational rotation numbers for the Reeb orbits realizing the {knot or} link, although the corresponding surface dynamics results did not.

We previously generalized the construction and invariance of knot filtered embedded contact homology so that it can be defined and computed via Morse-Bott methods with respect to degenerate Reeb orbits with rational rotation numbers \cite{kech}.  We computed the knot filtration for the positive $T(2,q)$ knots, by way of an ECH chain complex with no differential; this cannot be used when $p\neq 2$. The current paper extends our work to provide Morse-Bott computational methods for the positive $T(p,q)$ knot filtration, where there are nonvanishing differentials.

\subsection{Overview of embedded contact homology}\label{ss:overviewech}

Let $Y$ be a closed three-manifold with a contact form $\lambda$.  Let $\xi=\ker(\lambda)$ denote the associated contact structure. The \emph{Reeb vector field} $R$ associated to $\lambda$ is uniquely determined by 
\[
\lambda(R)=1, \ \ \ d\lambda(R, \cdot)=0.
\]
We say that an almost complex structure $J$ on $\R_s\times Y$ is {\em $\lambda$-compatible\/} if $J(\xi)=\xi$, $d\lambda(v,Jv)>0$ for nonzero $v\in\xi$, $J$ is invariant under translation of the $\R$ factor, and $J(\partial_s)=R$.

A (periodic) {\em Reeb orbit\/} is a map $\gamma:\R/T\Z\to Y$ for some $T>0$ such that $\gamma'(t)=R(\gamma(t))$, modulo reparametrization.    A Reeb orbit is said to be \textit{embedded} (equivalently, \textit{simple}) whenever this map is  injective.  Let $\mathcal{P}(\lambda)$ denote the set of embedded Reeb orbits.  After a symplectic trivialization of $\xi|_\gamma$,  the linearized Reeb flow for time $T$ along each Reeb orbit defines a symplectic linear map
\begin{equation}
\label{eqn:lrt}
P_\gamma{(T)} :(\xi_{\gamma(0)},d\lambda) \longrightarrow (\xi_{\gamma(0)},d\lambda).
\end{equation}
We say that a Reeb orbit $\gamma$ is {\em nondegenerate\/} whenever $P_\gamma$ does not have $1$ as an eigenvalue. If all the Reeb orbits in $\mathcal{P}(\lambda)$ are nondegenerate then the contact form $\lambda$ is said to be \textit{nondegenerate}; a generic contact form is nondegenerate.  Fix a nondegenerate contact form for the below discussions.

 A nondegenerate Reeb orbit $\gamma$ is {\em elliptic\/} if $P_\gamma$ has eigenvalues on the unit circle and {\em  hyperbolic\/} if $P_\gamma$ has real eigenvalues.  The {\em Conley-Zehnder index\/} $\CZ_\tau(\gamma)\in\Z$ is a winding number associated to the induced path of symplectic linear matrices depending on the homotopy class $\tau$ of symplectic trivializations.  The parity of the Conley-Zehnder index does not depend on the choice of trivialization $\tau$.  Given an embedded Reeb orbit $\gamma$, the Conley-Zehnder index is even when $\gamma$ is positive hyperbolic and odd otherwise.

Embedded contact homology (ECH) is a three dimensional gauge theory due to Hutchings \cite{Hindex, lecture}.  Let $(Y,\lambda)$ be a closed nondegenerate contact three-manifold $Y$ and let $J$ be a generic $\lambda$-compatible almost complex structure. The \emph{embedded contact homology chain complex} is defined with respect to a fixed homology class $\Gamma \in H_1(Y)$ as the $\Z/2$ vector space\footnote{ECH may be defined with integer coefficients, see \cite[\S9]{obg2}, but we do not require more than $\Z/2$ for this paper.} freely generated by {admissible Reeb currents}.  An \emph{admissible Reeb current} is a finite set of pairs $\alpha = \{ (\alpha_i, m_i) \}$, such that the $\alpha_i$ are distinct embedded Reeb orbits, the $m_i$ are positive integers, the total homology class $\sum_i m_i[\alpha_i] = \Gamma$, and $m_i=1$ whenever $\alpha_i$ is hyperbolic.  We frequently make use of the multiplicative notation, $\alpha = \prod_i \alpha_i^{m_i}$, and we also refer to Reeb currents as Reeb orbit sets.

Given Reeb currents $\alpha$ and $\beta$, a \textit{$J$-holomorphic current} from $\alpha$ to $\beta$ is a finite set of pairs $\cur = \{(C_k,d_k)\}$, where the $C_k$ are distinct irreducible somewhere injective $J$-holomorphic curves in $(\R_s\times Y, d(e^s\lambda))$ and the $d_k$ are positive integers, subject to the asymptotic condition that $C_k$ converges as a current to $\alpha = \sum_i m_i \alpha_i$ as $s\to+\infty$ and to $\beta = \sum_j n_j \beta_j$ as $s\to-\infty$.  Here ``convergence as a current" means that we only keep track of the total algebraic multiplicity of the covers over the ends as well as the underlying curve{, and not the connectedness of the covers}.  We now stipulate that $\alpha$ and $\beta$ are admissible Reeb currents, e.g. ECH generators.

Let $\M_1(\alpha,\beta,J)$ denote the set of $J$-holomorphic currents from $\alpha$ to $\beta$ with ECH index $1$, modulo the usual equivalence of $J$-holomorphic curves up to biholomorphism of the domain.  The ECH index is a topological index, which depends only on $\alpha, \beta$ and $Z \in H_2(Y,\alpha, \beta)$ and is discussed further in \S\ref{s:generalities}. The ECH index bounds the Fredholm index of the underlying somewhere-injective $J$-holomorphic curves from above, and governs the dimension of the moduli space of $J$-holomorphic currents.  Moreover, the ECH index provides a relative $\Z/d$ grading of the chain complex and its homology, where $d$ denotes the divisibility of $c_1(\xi) + 2 \op{PD}(\Gamma) \in H^2(Y;\Z)$ mod torsion.   

In particular, given two admissible Reeb currents $\alpha$ and $\beta$, their index difference $I(\alpha,\beta)$ is defined by choosing an arbitrary $Z\in H_2(Y,\alpha,\beta) $ and setting 
\[
I(\alpha,\beta) = [I(\alpha,\beta,Z)] \in \Z/d,
\]
which is well-defined by the index ambiguity formula \cite[\S 3.3]{Hindex}. {When $\Gamma=0$ and $H_2=0$, as is the case in this paper, there is an absolute grading by setting the index of the empty set to be zero. Moreover, there is always an absolute grading on ECH by homotopy classes of oriented two-plane fields due to Hutchings \cite{Hrevisit}, corresponding to a similar grading on Seiberg-Witten Floer cohomology due to Kronheimer-Mrowka \cite{KMbook} under Taubes' isomorphism (discussed below), and to one on Heegaard Floer homology defined by Huang-Ramos in \cite{hr} under Colin-Ghiggini-Honda's isomorphism \cite{cgh}-\cite{cghiii}. These correspondences were respectively proved by Cristofaro-Gardiner in \cite{CGgradings} and Ramos in \cite{Rgradings}.}

The \emph{ECH differential} is given by 
\[
\partial \alpha = \sum_\beta \#_2 \left( \M_1(\alpha,\beta,J)/\R \right) \beta,
\]
the count in $\Z/2$ of ECH index 1 currents in $\R \times Y$, modulo $\R$ translation in the range.  (The additivity property of the ECH index ensures that the differential decreases the grading by one.)

The definition of the ECH index is the key nontrivial part of ECH \cite{Hindex}, and under the assumption that $J$ is generic, guarantees that the curves of ECH index 1 are embedded, except possibly for multiple covers of \emph{trivial cylinders} $\R \times \gamma$ where $\gamma$ is a Reeb orbit.

 The proof that the differential squares to zero requires some serious expertise with obstruction bundle gluing and is established by Hutchings and Taubes \cite{obg1,obg2}.   Let $ECH_*(Y,\lambda,\Gamma,J)$ denote the homology of the ECH chain complex.  The homology does not depend on the choice of $J$ or on the contact form $\lambda$ for $\xi$, and so defines a well-defined $\Z/2$-module $ECH_*(Y,\xi,\Gamma)$.  Invariance of embedded contact homology is established by Taubes' isomorphism \cite{taubesechswf}-\cite{taubesechswf5} with the `from' version of monopole Seiberg-Witten Floer cohomology $\widehat{HM}^*$; the latter is rigorously detailed in the book by Kronheimer and Mrowka \cite{KMbook}.  
 
 \begin{theorem}[Taubes]
 If $Y$ is connected, then there is a canonical isomorphism of relatively graded $\Z[U]$-modules 
\[
ECH_*(Y,\lambda,\Gamma,J) \simeq \widehat{HM}^{-*}(Y,\mathfrak{s}_\xi + \op{PD}(\Gamma)),
\]
which sends the ECH contact invariant $c(\xi):=[\emptyset] \in ECH(Y,\xi,0)$ to the contact invariant in Seiberg-Witten Floer cohomology.
 \end{theorem}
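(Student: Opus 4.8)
The plan is to realize this isomorphism as the large-scale limit of a one-parameter deformation of the three-dimensional Seiberg-Witten equations adapted to the contact geometry, following Taubes. First I would introduce a real parameter $r>0$ and perturb the Seiberg-Witten equations on $(Y,\mathfrak{s}_\xi+\op{PD}(\Gamma))$ by a term built from $r\lambda$ together with the curvature data of $\xi$. Decomposing a spinor $\Psi$ according to the splitting of the spinor bundle determined by $\xi$ as $\Psi=(\alpha,\beta)$, the essential analytic input is a concentration phenomenon: for solutions $(A,\Psi)$ with controlled energy, as $r\to\infty$ one has $|\alpha|\to 1$ away from an arbitrarily small neighborhood of a finite union of closed integral curves of $R$, while the zero locus of $\alpha$ converges as a current to a collection of Reeb orbits carrying positive integer multiplicities. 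This is exactly the data of an admissible Reeb current, which dictates the correspondence on generators.

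Next I would establish the bijection on generators. For generic perturbation data and all sufficiently large $r$, every irreducible solution is nondegenerate, and the concentration estimate assigns to it an admissible Reeb current $\Theta(A,\Psi)$ in the class $\Gamma$; conversely, each Reeb current is desingularized into a genuine solution by solving the equations on the complement of thin tubular neighborhoods of its orbits and gluing in explicit local models near each orbit. To match gradings I would compare the spectral flow of the family of Hessians of the Seiberg-Witten functional, taken along a path to a fixed reference configuration, with the ECH index built from $\CZ_\tau$ and the relative intersection data; Taubes' index identity shows the two agree modulo the divisibility $d$ of $c_1(\xi)+2\op{PD}(\Gamma)$, so that $\Theta$ is an isomorphism of relatively graded $\Z/2$-modules.

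The principal obstacle lies in matching the differentials. Here one passes to the four-dimensional Seiberg-Witten equations on $\R_s\times Y$ with the same $r$-dependent perturbation and must show that, for all large $r$, the moduli space of index-one instantons from one solution to another is in bijection with $\M_1(\alpha,\beta,J)/\R$ for the corresponding Reeb currents. The hard direction is a compactness-and-convergence theorem: as $r\to\infty$ the zero loci of the $\alpha$-components of a sequence of instantons converge to an ECH index-one $J$-holomorphic current, with delicate uniform estimates and exponential decay required near the limiting Reeb orbits and along the two ends of the cylinder. The reverse direction glues a holomorphic current back into a Seiberg-Witten trajectory; when the current contains multiply covered trivial cylinders this forces an obstruction-bundle analysis that must be compatible with the gluing underlying $\partial^2=0$ in ECH. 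Securing these estimates and the transversality of the limiting problem is the technical heart of the entire program.

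Finally I would assemble the generator and trajectory correspondences into a chain map and verify that it induces an isomorphism on homology. The reducible solutions are insensitive to the contact perturbation and account for the distinction among the flavors of monopole Floer cohomology, fixing the target as the `from' version $\widehat{HM}^{-*}(Y,\mathfrak{s}_\xi+\op{PD}(\Gamma))$; tracking the associated $\Spinc$ or two-plane field data upgrades the relative grading to the stated one. I would then match the $\Z[U]$-module structures by comparing the ECH $U$-map, defined by counting index-two currents constrained to pass through a marked point, with the corresponding degree-two operation on the Seiberg-Witten side. Since $\emptyset$ is the generator whose associated solution has $\alpha$ nowhere vanishing as $r\to\infty$, it is carried to the distinguished class representing the Seiberg-Witten contact invariant, giving the asserted identification $c(\xi)=[\emptyset]$.
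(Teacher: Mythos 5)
Your proposal accurately reconstructs the strategy of Taubes' proof---the large-$r$ perturbation of the Seiberg--Witten equations by $r\lambda$, concentration of the zero locus of the $\alpha$-component of the spinor along Reeb orbits, the spectral-flow/ECH-index identity, the instanton-to-holomorphic-current correspondence with its gluing and obstruction-bundle difficulties, and the matching of the $U$-maps and contact invariants---which is precisely the program carried out in the five papers \cite{taubesechswf}--\cite{taubesechswf5} that this paper cites in lieu of a proof. Since the paper itself offers no argument beyond that citation, your sketch takes essentially the same approach as the proof the paper relies on.
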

Here $\mathfrak{s}_\xi$ denotes the canonical spin-c structure determined by the oriented 2-plane field $\xi$.  The contact invariant in Seiberg-Witten Floer cohomology was discovered and implicitly defined in \cite{km-contact} and further explanations are given in \cite[\S 6.3]{kmos} and \cite{mariano}. {See Remark \ref{rmk:U} for context on the $\Z[U]$-module structure in ECH.} Taubes' isomorphism additionally shows that ECH is a topological invariant of $Y$, except that one needs to shift $\Gamma$ when changing the contact structure.   
 
There are two filtrations on ECH respected by the differential, which give rise to distinct spectral invariants.  The filtration by symplectic action enables us to compute ECH via successive approximations, similarly to \cite[\S 3.4, \S 7.1]{preech} and \cite[\S 6-7]{kech}. The \emph{symplectic action} or \emph{length} of a Reeb orbit set $\alpha=\{(\alpha_i,m_i)\}$ is
\begin{equation}\label{eq:action}
\mathcal{A}(\alpha):=\sum_im_i\int_{\alpha_i}\lambda.
\end{equation}
If $J$ is $\lambda$-compatible and there is a $J$-holomorphic current from $\alpha$ to $\beta$, then $\mathcal{A}(\alpha)\geq\mathcal{A}(\beta)${. This follows from} Stokes' theorem, because $d\lambda$ is an area form for $J$-holomorphic curves. The differential $\partial$ counts $J$-holomorphic currents, hence it decreases symplectic action:
\[
\langle\partial\alpha,\beta\rangle\neq0\Rightarrow\mathcal{A}(\alpha)\geq\mathcal{A}(\beta).
\]

Let $ECC_*^L(Y,\lambda,\Gamma,J)$ denote the subcomplex of $ECC_*(Y,\lambda,\Gamma,J)$ generated by Reeb currents of symplectic action less than $L$, and let $\partial^L$ denote the restriction of $\partial$ to $ECC_*^L$ when we want to emphasize its restriction.  The homology $ECH_*^L(Y,\lambda,\Gamma)$ of $ECC_*^L(Y,\lambda,\Gamma,J)$  is independent of $J$ as a result of \cite[Theorem 1.3]{cc2}, {and we refer to it} as \emph{action filtered ECH}.

The second filtration is by linking and defined by Hutchings in \cite{HuMAC}. Let $b$ be an embedded elliptic Reeb orbit (its image is a transverse knot for $\xi$) with rotation number $\rot(b)$. This rotation number is determined by the linearized Reeb flow with respect to a topological Seifert framed trivialization, namely one in the homotopy class in which a constant pushoff of $b$ has linking number zero with $b$. See Remarks \ref{rem:linkingtriv} and \ref{rem:c1sl} for further discussion of this trivialization.

 The \emph{knot filtration} is the function
\[
\mathcal{F}_b(b^m\alpha) := m \rot(b) + \ell(\alpha,b),
\]
where $\ell(\alpha, b)=\sum_im_i\ell(\alpha_i,b)$. For nondegenerate $b$ (i.e., $\rot(b)\in\R\setminus\Q$), the function $\mathcal{F}_b$ will not be integer valued, but it will still take values in a discrete set of nonnegative real numbers if $\rot(b)>0$ and all other Reeb orbits link positively with $b$, or if $\rot(b)<0$ and all other Reeb orbits link negatively with $b$. The former is the setting we will consider in this paper. As proved in \cite{HuMAC}, the function $\mathcal{F}_b$ is not increased by the ECH differential, and thus it filters the ECH chain complex.

One significant feature of knot filtered ECH, proved in \cite[Thm.~6.10]{HuMAC}, is that it is a topological spectral invariant, depending only on $(Y,\xi)$, the Reeb orbit $b$, its rotation number $\rot(b)$, and the (positive) filtration level $K$. This is in contrast to action filtered ECH, which is extremely sensitive to the contact form. Thus we are justified in denoting knot filtered ECH by $ECH_*^{\fb \leq K}(Y,\xi,b,\rot(b))$. In \cite[Thm.~1.5]{kech}, we extended the definition and invariance properties to include rational rotation numbers, which encompass Morse-Bott direct limit style computations. The details are reviewed in \S \ref{ss:kECH}.

Similarly to the notion of the ECH action spectrum, one can define the ECH linking spectrum, which encodes knot filtered embedded contact homology by a sequence of real numbers, as observed by Hutchings.  In particular, when $Y$ is $S^3$ equipped with the standard tight contact structure we define
\[
c_k^{\op{link}}(b,\rot(b)) \in [-\infty, \infty)
\]
to be the infimum over $K$ such that the degree $2k$ generator of $ECH(S^3,\xi_{std})$ is in the image of the inclusion induced map \eqref{eq:kechi1} if $\rot(b)$ is irrational or in the image of the inclusion induced map   \eqref{eq:kechi2} if $\rot(b)$ is rational.  Further details and properties are given in \S \ref{ss:kECH} and Definition \ref{def:cklink}. 

\begin{remark}[Comparison with knot embedded contact homology]\label{rem:ECK}
Knot filtered embedded contact homology is distinct from knot embedded contact homology $ECK$.   We expect that by (understanding what is involved in) taking the rotation number of the transverse knot to $-\infty$,  $ECK$ can be recovered from knot filtered embedded contact homology.  For further discussion, we refer the reader to \cite[Rem.~1.3]{kech}
\end{remark}

\subsection{Positive torus knot filtered ECH of $(S^3,\xi_{std})$}\label{ss:ECHcalc}

We now present our main results on Reeb dynamics. First we introduce the contact forms and knots which will be central to our computations, then we state and discuss our results, and finally we outline the relevant work in \S\ref{s:calabi}-\S\ref{s:spectral}.

The positive torus knot $T(p,q)$ in the three-sphere in $\C^2$ can be algebraically defined by
\[
T(p,q)=\left\{(z_1,z_2)\in S^3 \subset \C^2 \ |\ z_1^p+z_2^q=0\right\}.
\]
By realizing $S^3$ as the unit sphere in $\C^2$, we can associate to it the contact form
\[
\lambda_0 = \frac{i}{2}  \left( z_1 d\bar{z_1} - \bar{z_1} dz_1 + z_2 d\bar{z_2} - \bar{z_2} dz_2 \right),
\]
whose kernel is the two-plane field
\[
\left(\xi_{std}\right)\vert_p = T_pS^3 \cap J_{\C^2}(T_pS^3),
\]
which is the standard tight contact structure on $S^3$; here $J_{\C^2}$ denotes the standard complex structure. As indicated by the notation, up to homotopy this is the standard contact structure on $S^3$.

Our choice of equation defining $T(p,q)$ is motivated by the Milnor fibration from $S^3 \setminus T(p,q)$ to $S^1$, detailed in Example \ref{ex:Tpq}. For the contact form $\lambda_0$, the positive $T(p,q)$ torus knot is not a Reeb orbit, but only a transverse knot for $\xi_{std}$. Our $T(p,q)$ is determined as a transverse knot by its maximal self-linking number $pq-p-q$, a consequence of Etnyre's work in \cite{torus}, and so we refer to it as ``standard." See Remark \ref{rmk:tknot} for further discussion, and the subsequent review of how to realize $T(p,q)$ as an elliptic Reeb orbit.

\begin{remark}
In the paper we restrict to positive torus knots (and unknots), meaning that $p,q \in \Z_{\geq 1}$ and $\gcd(p,q)=1$.  Our results for the unkots provide rotation angles which are whole numbers, which complement the results Hutchings previously established for irrational rotation angles in the irrational tight ellipsoid \cite[Prop.~5.5]{HuMAC}.  For our surface dynamics results, we additionally assume $p,q \in \Z_{\geq 2}$, as the disk was previously considered in \cite{HuMAC}.  We expect the analogous Reeb dynamics results to hold for torus links, with corresponding surface dynamics results applying to surfaces with additional boundary components.  
\end{remark}

The knot filtration for $T(p,q)$ is closely related to the sequence $N(p,q)$ for $a,b\in\R$, which we now define. Write all nonnegative integer linear combinations of $p$ and $q$, with multiplicities, in nondecreasing order. Then $N(p,q)$ refers to this entire sequence, while $N_k(p,q)$ refers to its $k^\text{th}$ element, including multiples, which will begin to occur at their LCM. We always start with $N_0(a,b)=0$. The following theorem contains our main result on Reeb dynamics, from which we deduce our dynamical applications. 

\begin{theorem}\label{thm:kech-intro}
Let $\xi_{std}$ be the standard tight contact structure on $S^3$.  
Let ${b}$ be the standard positive transverse $T(p,q)$ torus knot for $(p,q)$ relatively prime.  If $\rot({b}) = pq,$ then
\[
ECH_{2k}^{\fb \leq K}(S^3,\xi_{std},T(p,q),pq)=\begin{cases}\Z/2&K\geq{ N_k(p,q)  },
\\0&\text{otherwise,}\end{cases}
\]
and in all other gradings $*$,
\[
ECH_*^{\fb \leq K}(S^3,\xi_{std},T(p,q),pq)=0.
\]
If $\rot(b) = pq+\delta$, where $\delta$ is a sufficiently small positive irrational number, then up to grading $*$ and filtration threshold $K$ inversely proportional to $\delta$,
\[
ECH_{2k}^{\fb \leq K}(S^3,\xi_{std},T(p,q),pq+\delta)=\begin{cases}\Z/2&K\geq{ N_k(p,q) + \delta {(\$N_k(p,q) -1)}},
\\0&\text{otherwise,}\end{cases}
\]
where { $\$N_k(p,q)$ is the number of repeats in $\{ N_j(p,q)\}_{j\leq k}$ with value $N_k(p,q)$,} and in all other gradings $*$, up to the threshold inversely proportional to $\delta$,
\[
ECH_*^{\fb \leq K}(S^3,\xi_{std},T(p,q),pq+\delta)=0.
\]
\end{theorem}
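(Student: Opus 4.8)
The plan is to exploit that knot filtered ECH is a topological spectral invariant, depending only on $(S^3,\xi_{std})$, the orbit $b$, its rotation number $\rot(b)$, and the filtration threshold $K$ (and, for rational $\rot(b)$, on its realization as a Morse--Bott direct limit, per the extended invariance of \cite{kech}). This lets me replace the given contact form by a convenient model adapted to the $(p,q)$--Seifert fibration of $S^3$, in which $b = T(p,q)$ is realized as a regular fiber. I would first establish the irrational case $\rot(b)=pq+\delta$ directly, via a nondegenerate perturbation of the Morse--Bott Seifert form, and then recover the integral case $\rot(b)=pq$ by a Morse--Bott direct limit as $\delta\to 0$. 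This is precisely why the irrational computation holds only up to grading and threshold inversely proportional to $\delta$: the perturbation controls the Conley--Zehnder indices and reordering of only finitely many generators before extraneous orbits and high iterates of $b$ interfere, whereas the limiting rational statement is complete.

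Next I would enumerate the ECH generators of the model and their absolute gradings. The closed Reeb orbits consist of the two exceptional fibers $e_p, e_q$ (the core unknots of the two Heegaard solid tori), together with the orbits obtained by perturbing the Morse--Bott $S^2$-family of regular fibers through a Morse function on the base orbifold $\cpq$, one of which is $b$ itself. Using the rotation numbers prescribed by the Seifert data, I would compute the $\CZ$ indices of all iterates, hence the ECH index of each orbit set, and match the resulting graded chain groups against the known homology $ECH_*(S^3,\xi_{std})=\Z/2$ for $*\in 2\Z_{\geq 0}$ and $0$ otherwise. The expectation is that the surviving degree-$2k$ class is represented, at minimal filtration, by an orbit set of the form $e_p^{a}e_q^{c}$, whose index and multiplicity combinatorics reproduce those of the ellipsoid $E(p,q)$, so that the $k$-th grading pairs with the $k$-th element of the sorted list of nonnegative combinations $ap+cq$.

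In parallel I would compute the knot filtration $\fb(b^m\alpha)=m\,\rot(b)+\ell(\alpha,b)$ on each generator. The linking numbers with $b$ are read off from the topological Seifert framing: a regular fiber links $e_p$ and $e_q$ in $q$ and $p$ respectively (with the roles of $p,q$ fixed by the chosen framing), and two distinct regular fibers link in $pq$, consistent with $\rot(b)=pq$. Consequently $\ell(e_p^{a}e_q^{c},b)=aq+cp$ ranges over exactly the values occurring in $N(p,q)$, and I would show that the minimal filtration level of a representative of the degree-$2k$ homology class equals $N_k(p,q)$. For $\rot(b)=pq+\delta$ the irrational rotation number shifts the filtration of the iterates appearing in each representative; tracking this shift across the repeated values of $N(p,q)$ (which first occur at the LCM $pq$) yields the stated correction by $\delta$ times one less than the number of repeats with value $N_k(p,q)$. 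The $U$-map isomorphism on $ECH(S^3,\xi_{std})$ provides an inductive handle relating the spectral values $c^{\op{link}}_k$ at consecutive $k$, which I would use to organize the argument.

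The main obstacle, and the feature distinguishing this from the $T(2,q)$ computation of \cite{kech}, is that the perturbed regular fibers introduce extra generators that must cancel in homology through \emph{nonvanishing} ECH differentials. The crux is therefore to determine the differential precisely enough to pin down the filtration level at which each degree-$2k$ class first appears. I would constrain the differential using the simultaneous monotonicity of the ECH index, the symplectic action $\A$, and the knot filtration $\fb$ under $J$-holomorphic currents, together with the a priori knowledge that the total homology is $\Z/2$ in each even degree; where this is insufficient I would compute the relevant ECH index $1$ currents directly by Morse--Bott and obstruction-bundle gluing for the Seifert family. The delicate point is to rule out that some cancellation lowers the filtration of the surviving degree-$2k$ class below $N_k(p,q)$ (respectively its $\delta$-perturbed value), which amounts to showing that the differential is compatible with, and does not collapse, the knot filtration in the expected way.
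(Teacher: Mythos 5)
Your overall architecture coincides with the paper's: pass to the Seifert-fibration model via topological invariance of knot filtered ECH, perturb by an orbifold Morse function on $\CP^1_{p,q}$ to obtain the four orbits $\zb,\zh,\zp,\zq$, compute Conley--Zehnder and ECH indices, pin down the nonvanishing differential by combining cylinder counts over Morse flow lines with the known answer $ECH_*(S^3,\xi_{std})$, read off the knot filtration from linking numbers, and recover $\rot(b)=pq$ by a direct limit $\delta\to0$.

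However, there is a genuine gap at the crux of the filtration computation. You assert that the degree-$2k$ class is represented, at minimal filtration, by an orbit set of the form $e_p^{a}e_q^{c}$ (containing no factor of $b$). This is false precisely when $N_k(p,q)$ is a repeated value of the sequence, which happens for infinitely many $k$ beginning at degree $pq$. In the model complex, all even-grading generators of a fixed degree $N$ are cycles, but they are separated by the grading: if the line $px+qy=N$ contains $\$N$ lattice points, the corresponding generators $\zb^{i}\zp^{P_0}\zq^{Q_0-iq}$, $i=0,\dots,\$N-1$, all have degree $N$ yet occupy $\$N$ \emph{consecutive} even gradings, with the power of $\zb$ increasing with the grading (this is the content of Lemma \ref{lem:degNk} together with the bijection constructed in the proof of Proposition \ref{cor:nodiffp}). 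Since the differential identifies only generators with the same power of $\zb$ (via $\zp^p\sim\zq^q$), every chain-level representative of the grading-$2k$ class contains $\zb$ with multiplicity exactly $\$N_k(p,q)-1$; the $b$-free generator of the same degree lives in a strictly lower grading. This is exactly the source of the correction term $\delta\,(\$N_k(p,q)-1)$. Your proposal is internally inconsistent on this point: if the class were representable by $e_p^{a}e_q^{c}$, its filtration level would be $N_k(p,q)$ with no $\delta$-correction at all, contradicting the formula you are trying to prove, and your later sentence about ``tracking the shift across the repeated values'' has no mechanism behind it until you prove that the grading forces the power $\$N_k(p,q)-1$ of $b$ into every representative.
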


\begin{remark}\label{rmk:kd}
We explain the role of the adjective ``sufficiently" in the preceding theorem statement. In order to avoid destroying the structure of the $N(p,q)$ sequence, $\delta$ must be small enough with respect to $k$ and $K$. In particular, we must have $N_k(p,q)+\delta(\$N_k(p,q)-1)  \leq N_\ell(p,q)$ for all $k$, where $N_\ell(p,q)$ is the smallest entry in $N(p,q)$ larger than $N_k(p,q)$ (here, $K$ is $N_k(p,q)$).  Additionally, in the proof of Lemma \ref{lem:d0g0}, we utilize the bound $\delta < \min(1/p,1/q)$. See Remark \ref{rem:delta} for a longer discussion.
\end{remark}

\begin{remark}\label{rem:kechconj-intro}
Theorem \ref{thm:kech-intro} and the known calculations for the unknot in \cite[Prop.~5.5]{HuMAC}, are consistent with the conjecture that for any transverse knot $b \subset S^3$, 
\[
\lim_{k \to \infty} \frac{c_k^{\op{link}}(b,\rot(b))^2}{2k} = \rot(b).
\]
\end{remark}

Before explaining our results further, we mention a couple of corollaries of our above computation of positive torus knot filtered ECH.  The first yields obstructions to the existence of symplectic cobordisms between transverse positive torus knots.  (For further discussion and results for relative symplectic cobordisms to the empty set, see the work of Etnyre and Golla \cite{eghats}.) 
A \textit{symplectic cobordism} from a transverse knot ${b}_+$ to a transverse knot ${b}_-$, both in $\xi_{std}$, with rotation numbers $\theta_1>\theta_2$, is a symplectic submanifold $S\subset\R\times Y$ which is asymptotic to the ${b}_\pm$ as the real coordinate goes to $\pm\infty$. By combining \cite[Rmrk.~7.1]{HuMAC}, \cite[Thm.~1.5]{kech}, Theorem \ref{thm:kech-intro}, with the definition and properties of $c_k^{\op{link}}(b,\rot(b))$ described in Definition \ref{def:cklink}, we obtain the following.

\begin{corollary}\label{cor:scob}
Let $pq \geq p'q'$ and $T(p,q)$ and $T(p',q')$ be positive torus knots in $S^3$. If there is a symplectic cobordism from $T(p,q)$ to $T(p',q')$ in $\R \times S^3$ then $N_k(p,q) \geq N_k(p',q')$ for all $k$.
\end{corollary}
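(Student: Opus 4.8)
The plan is to reduce the statement to a monotonicity inequality for the ECH linking spectrum and then read off both sides from the main computation. First I would evaluate the linking spectrum on each torus knot using Theorem~\ref{thm:kech-intro}. Since $\rot(b)=pq$ is an integer, the relevant inclusion-induced map is the rational one \eqref{eq:kechi2}, and Theorem~\ref{thm:kech-intro} shows that $ECH_{2k}^{\fb \leq K}(S^3,\xi_{std},T(p,q),pq)=\Z/2$ exactly when $K\geq N_k(p,q)$ and vanishes otherwise. Because the total homology $ECH_{2k}(S^3,\xi_{std})=\Z/2$ in each nonnegative even grading, the degree $2k$ generator lies in the image of \eqref{eq:kechi2} precisely for $K\geq N_k(p,q)$. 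By Definition~\ref{def:cklink} this gives
\[
c_k^{\op{link}}(T(p,q),pq)=N_k(p,q), \qquad c_k^{\op{link}}(T(p',q'),p'q')=N_k(p',q')
\]
for every $k$.

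Next I would invoke the cobordism monotonicity of the linking spectrum. Because $pq\geq p'q'$, the rotation numbers satisfy $\rot(T(p,q))\geq\rot(T(p',q'))$, so a symplectic cobordism $S\subset\R\times S^3$ from $T(p,q)$ to $T(p',q')$ is oriented compatibly with the convention in the definition of a cobordism between transverse knots. Such a cobordism induces a map on knot-filtered ECH, established in \cite[Rmrk.~7.1]{HuMAC} and extended to the degenerate rational setting in \cite[Thm.~1.5]{kech}, which yields
\[
c_k^{\op{link}}(T(p,q),pq)\geq c_k^{\op{link}}(T(p',q'),p'q') \quad \text{for all } k.
\]
The geometric content is intersection positivity of the surface $S$ with the holomorphic currents representing the relevant ECH classes, which makes the two knot filtrations $\fb$ comparable and forces the spectral invariants to be ordered as above. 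Combining the two displays immediately gives $N_k(p,q)\geq N_k(p',q')$ for all $k$.

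The main obstacle is the second step rather than the first: one must confirm that the monotonicity of \cite[Rmrk.~7.1]{HuMAC} applies verbatim in our setting, including the correct orientation and rotation-number conventions and the use of the rational inclusion map \eqref{eq:kechi2} in Definition~\ref{def:cklink}, since \cite{HuMAC} works with irrational rotation numbers and the passage to the integer value $\rot(b)=pq$ relies on the invariance and direct-limit framework of \cite[Thm.~1.5]{kech}. Once the cobordism map and its interaction with the linking filtration are in hand, the readoff of both linking spectra from Theorem~\ref{thm:kech-intro} is immediate, so no further estimates are needed.
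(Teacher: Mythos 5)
Your proposal is correct and follows essentially the same route as the paper: the paper's proof consists precisely of combining the computation $c_k^{\op{link}}(T(p,q),pq)=N_k(p,q)$ from Theorem \ref{thm:kech-intro} and Definition \ref{def:cklink} with the cobordism monotonicity of the linking spectrum from \cite[Rmrk.~7.1]{HuMAC}, using \cite[Thm.~1.5]{kech} to justify working with the rational rotation number $pq$ via the inclusion map \eqref{eq:kechi2}. Your identification of the rational-rotation-number subtlety as the only point requiring care matches the paper's reliance on the direct-limit invariance framework of \cite{kech}.
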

We expect that more obstructions may be obtained by understanding how knot filtered ECH changes with respect to different choices of rotation angles, which in turn correspond to self-linking numbers, for a fixed transverse knot.

We also use Theorem \ref{thm:kech-intro} to prove our main quantitative existence results for Reeb orbits of $(S^3,\xi_{std})$ in \S\ref{s:spectral}.
\begin{theorem}\label{thm:toruslinking} 
Let $\lambda$ be a contact form on $(S^3,\xi_{std})$ with $\vol(\lambda)=V$.  Suppose that the Reeb vector field $R_\lambda$ admits the positive $T(p,q)$ torus knot, denoted by ${b}$, as an elliptic Reeb orbit with symplectic action 1 and rotation number $pq + \Delta$, where $\Delta$ is a positive irrational number.   If $V < \dfrac{pq}{(pq+\Delta)^2}$ then
\[
\inf \left\{ \frac{\A(\gamma)}{\ell(\gamma,b)} \ \bigg \vert \ \gamma \in \mathcal{P}(\lambda) \setminus \{ b\} \right\} \leq \sqrt{\frac{V}{pq}}.
\] 
\end{theorem}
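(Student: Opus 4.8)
The plan is to play the symplectic action filtration against the knot filtration on the embedded contact homology chain complex of the given contact form $\lambda$, and to extract the bound by comparing the two associated spectral invariants: the ECH action spectrum $c_k(\lambda)$ and the linking spectrum $c_k^{\op{link}}(b,pq+\Delta)$ supplied by Theorem \ref{thm:kech-intro}. Write $\sigma \eqdef \inf\{\A(\gamma)/\ell(\gamma,b) : \gamma \in \P(\lambda)\setminus\{b\}\}$. In the setting at hand all Reeb orbits other than $b$ link positively with $b$ (this is what makes $\fb$ a genuine filtration), so the defining inequality $\A(\gamma_i) \geq \sigma\,\ell(\gamma_i,b)$ holds for every embedded orbit $\gamma_i \neq b$, and hence $\A(\eta) \geq \sigma\,\ell(\eta,b)$ for every orbit set $\eta$ containing no copy of $b$. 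It suffices to prove $\sigma \leq \sqrt{V/pq}$, which I would do by assuming $\sigma > \sqrt{V/pq}$ and deriving a contradiction; the volume hypothesis enters through the elementary equivalence of $V < pq/(pq+\Delta)^2$ with $\sqrt{V/pq} < 1/(pq+\Delta)$.

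First I would record the two asymptotic inputs. On one hand, the Weyl-type volume asymptotics for the ECH action spectrum (Cristofaro-Gardiner--Hutchings--Ramos) give $c_k(\lambda)^2/(2k) \to \vol(\lambda) = V$, so $c_k(\lambda) \sim \sqrt{2Vk}$. On the other hand, Theorem \ref{thm:kech-intro} computes $c_k^{\op{link}}(b,pq) = N_k(p,q)$ for all $k$, and an elementary lattice-point count gives $N_k(p,q) \sim \sqrt{2pq\,k}$; combined with monotonicity of the linking spectrum in the rotation number this yields the lower bound $c_k^{\op{link}}(b,pq+\Delta) \geq N_k(p,q) \sim \sqrt{2pq\,k}$. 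Crucially, only the leading order of this lower bound is needed, so the delicate $\delta$-dependent corrections of Theorem \ref{thm:kech-intro} never enter.

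The main inequality is produced at chain level. Fix $k$ and $L > c_k(\lambda)$, and choose a cycle $z$ representing the degree $2k$ class of $ECH(S^3,\xi_{std})$ all of whose terms are ECH generators of action less than $L$. For a term $b^m\eta$ of $z$ (with $\eta$ containing no copy of $b$), the action-$1$ hypothesis on $b$ gives $m \leq \A(b^m\eta) < L$, while the definition of $\sigma$ gives $\ell(\eta,b) \leq \A(\eta)/\sigma = (\A(b^m\eta)-m)/\sigma$; together these bound the knot filtration by
\[
\fb(b^m\eta) = m(pq+\Delta) + \ell(\eta,b) < m\left(pq+\Delta - \tfrac1\sigma\right) + \frac{L}{\sigma}.
\]
Now I split into two cases according to the sign of $pq+\Delta-1/\sigma$. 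If $\sigma \leq 1/(pq+\Delta)$ the coefficient of $m$ is nonpositive, so every term of $z$ has $\fb < L/\sigma$; letting $L \downarrow c_k(\lambda)$ shows $c_k^{\op{link}}(b,pq+\Delta) \leq c_k(\lambda)/\sigma$, and the two asymptotics force $\sqrt{pq} \leq \sqrt V/\sigma$, i.e. $\sigma \leq \sqrt{V/pq}$, contradicting $\sigma > \sqrt{V/pq}$. If instead $\sigma > 1/(pq+\Delta)$ the coefficient is positive, so using $m < L$ every term satisfies $\fb < L(pq+\Delta)$; letting $L \downarrow c_k(\lambda)$ gives $c_k^{\op{link}}(b,pq+\Delta) \leq c_k(\lambda)(pq+\Delta)$, and the asymptotics force $\sqrt{pq} \leq \sqrt{V}(pq+\Delta)$, i.e. $V \geq pq/(pq+\Delta)^2$, contradicting the volume hypothesis. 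In either case we reach a contradiction, so $\sigma \leq \sqrt{V/pq}$.

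The step I expect to be the main obstacle is the lower bound $c_k^{\op{link}}(b,pq+\Delta) \geq N_k(p,q)$ for the \emph{actual} rotation number $pq+\Delta$, which need not be close to $pq$. Theorem \ref{thm:kech-intro} computes the linking spectrum only for $\rot(b)=pq$ and for $\rot(b)=pq+\delta$ with $\delta$ sufficiently small, and raising the rotation number shifts the ECH gradings of the iterates $b^m$, so one cannot simply compare filtration values term-by-term. The cleanest route is to establish that $c_k^{\op{link}}(b,\theta)$ is monotone nondecreasing in $\theta$ --- intuitively because increasing $\theta$ only raises the knot filtration $\fb(b^m\eta)=m\theta+\ell(\eta,b)$ of each term carrying a copy of $b$ and leaves the others fixed, so the minimal filtration level at which the degree $2k$ class appears cannot drop --- and to combine this with the rational computation, which holds for all $k$. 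Making this monotonicity precise across genuinely different rotation numbers, hence different chain complexes and gradings, via the topological invariance of knot filtered ECH and a continuation argument, is the technical heart of the matter; the remaining ingredients (the volume asymptotics and the chain-level estimate above) are routine once it is in place.
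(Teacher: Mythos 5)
Your proposal follows essentially the same route as the paper's proof: both play the ECH Weyl law against the lower bound $c_k^{\op{link}}(b,pq+\Delta)\geq c_k^{\op{link}}(b,pq)=N_k(p,q)$ coming from Theorem \ref{thm:kech-intro} plus monotonicity of the linking spectrum in the rotation number, and in both arguments the hypothesis $V<pq/(pq+\Delta)^2$ enters in exactly the same way, namely to control the contribution of the multiplicity $m$ of $b$. Two remarks on the details. First, the monotonicity you single out as the ``technical heart'' is not an open issue: it is inequality (\ref{eq:cklinkth}) in Definition \ref{def:cklink}, established by the cobordism map argument of \cite[\S 7]{kech}, so your proof can simply cite it, exactly as the paper does at (\ref{eq:cklinkbound}). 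Second, your endgame (contradiction with a case split on $\sigma$ versus $1/(pq+\Delta)$) is a clean repackaging of the paper's: there, one extracts a single term $b^m\gamma$ of a minimizing cycle with $\A(b^m\gamma)\leq\sqrt{2k(V+\epsilon)}$ and $\fb(b^m\gamma)\geq N_k(p,q)$ (Proposition \ref{prop:actionlinking}) and shows the resulting ratio is maximized at $m=0$; the two finishes are arithmetically equivalent. (Also, your parenthetical claim that all orbits link positively with $b$ is not automatic for an arbitrary $\lambda$, but this is harmless: an orbit with negative linking makes the infimum negative and the theorem vacuous, and orbits with zero linking satisfy $\A(\gamma)\geq\sigma\,\ell(\gamma,b)$ trivially.)

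The one genuine gap is that your chain-level argument presupposes that the chain complex $ECC_*(S^3,\lambda,J)$ exists, i.e.\ that $\lambda$ is nondegenerate, whereas the theorem is stated for arbitrary contact forms --- and the degenerate case is the one actually needed for the surface-dynamics application, since the suspension form $\lambda_\psi$ of Proposition \ref{prop:constructcontact} has no reason to be nondegenerate. This cannot be repaired by applying your nondegenerate statement to perturbations $\lambda_n=f_n\lambda$ and passing to the limit: the orbits of $\lambda_n$ nearly realizing $\sigma(\lambda_n)$ may have action growing without bound in $n$, so they need not converge to Reeb orbits of $\lambda$ at all, and there is no a priori comparison between $\sigma(\lambda_n)$ and $\sigma(\lambda)$ because the two forms have different orbit sets. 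The paper's fix (Step 2 of the proof of Proposition \ref{prop:actionlinking}) is to perform the extraction at fixed $k$, where the action bound $\sqrt{2k(V+\epsilon)}$ is uniform in $n$: this bounds $m(n)$, the actions, and the total multiplicities of the orbit sets $\gamma(n)$ independently of $n$, so a subsequence converges as currents to an orbit set of $\lambda$ itself, and irrationality of $\Delta$ (i.e.\ nondegeneracy of $b$) guarantees the limit contains no cover of $b$. Your case analysis then runs verbatim on these limiting orbit sets, with $k\to\infty$ taken last.
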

We call Theorem \ref{thm:toruslinking} a ``quantitative existence result" as its conclusion asserts the existence of a Reeb orbit: were no such orbit to exist, the infimum on the left hand side would be infinite.

\begin{remark}\label{rem:stronger}
One would prefer the stronger estimate
\[
\inf \left\{ \frac{\A(\gamma)}{\ell(\gamma,b)} \ \bigg \vert \ \gamma \in \mathcal{P}(\lambda) \setminus \{ b\} \right\} \leq \sqrt{\frac{V}{pq+\Delta}},
\]
under the weaker assumption that $V < \frac{1}{pq + \Delta}$. One would expect this to follow from a proof of the conjecture in Remark \ref{rem:kechconj-intro}. However, the proof of Theorem \ref{thm:kech-intro} requires computing the knot filtration for $k$ arbitrarily large given a fixed $\Delta$, which is not presently understood. See Remarks \ref{rmk:kd} and \ref{rem:delta}. 
To prove Theorem \ref{thm:kech-intro} we must therefore send $\Delta\to0$, providing a weaker lower bound counteracted by the hypothesis $V<pq/(pq+\Delta)^2$, as elucidated in Remark \ref{rmk:Vpqd}. We explain how to obtain the stronger estimate in the hypothetical scenario in which we know we have computed the knot filtration for $k$ as large as needed in Remark \ref{rmk:deltadenom}.
\end{remark}

One may be led to wonder about the existence of contact forms with knotted Reeb orbits.  In work by Etnyre and Ghrist \cite[Theorem 3.6]{eg}, it is shown that there is a tight contact form on $S^3$ whose Reeb vector field posesses periodic orbits of all possible knot and link types simultaneously.   However, it is not true that all transverse knot types are realized in their constructions.  As indicated by Ghrist, Holmes, and Sullivan \cite[Chapter 3]{ghs}, it is likely that some simple knot types may admit only ``complicated presentations," and as a result have arbitrarily negative self-linking number.   There are also natural relations to other dynamical properties.  For example, the work of Franks and Williams \cite{fw} demonstrates that under certain conditions, positive topological entropy forces the existence of infinitely many knot types in $S^3$. Methods for constructing flows (of vector fields more general than Reeb orbits) realizing knots in $S^3$ via braiding periodic orbits appear in the survey \cite{frankssullivan}.

\medskip

We now outline our method of proof for Theorem \ref{thm:kech-intro}. In order to compute the knot filtration, we must understand the ECH chain complex of contact forms with $T(p,q)$ as an elliptic orbit.  For this purpose, we continued our development of new non-toric methods, continuing our work from \cite{preech, kech}.

Since all orbit sets are homologous in $S^3$, a full understanding of the chain complex, including its differential and knot filtration, requires being able to compute the ECH index of any arbitrary pair of generators. This is combinatorially nontrivial. {(For prequantization bundles as in \cite{preech}, the ordinary homology separated most of the ECH generators, which made these calculations considerably less involved.)}

Using a clever perturbation, which allowed us to conclude that the differential vanished for index reasons, the case $p=2$ was handled entirely combinatorially in \cite{kech}. This is in contrast to the case of general $p$ studied in this paper (also encompassing $p=2$) which necessitates an alternate perturbation of the contact form, resulting in a nonvanishing differential.  We now sketch how we establish the result, indicating where various arguments appear in the paper.

\subsubsection*{\S\ref{s:calabi}-\ref{s:topology}: Contact forms and Reeb vector fields}
We introduce our contact forms using the open book decompositions of $S^3$ with positive torus knot binding $T(p,q)$, via the explicit correspondence between abstract open books with periodic monodromy and Seifert fibrations generated by the Reeb vector field of a supported contact form.  These constructions were observed by Colin and Honda \cite{CH} by way of Lisca and Mati\'c \cite{lm} and detailed further in Example \ref{symporb} and Remark \ref{rem:obd}. To provide further explanation, recall that there is a Seifert fibration on $S^3$ given by
\[
Y\left(0; -1; (p, p-m), (q, n)\right),
\]
where $m, n \in \Z$ such that $qm-pn=1$, and whose fibers are positive transverse $T(p,q)$ knots.  The derivative of the $S^1$-action can be realized as the Reeb vector field for a contact form defining the standard tight contact structure, which is positively transverse to the Seifert fibration.  These contact forms are strictly contactomorphic to the contact form arising from the connection 1-form on the associated prequantization orbibundle as a result of Kegel and Lange \cite{kl} and Cristofaro-Gardiner and Mazzucchelli \cite{cgm}. The prequantization orbibundle is denoted by $\frak{p}: S^3 \to \CP_{p,q}^1$, and it has Euler class $-\frac{1}{pq}$.  We are using $\CP^1_{p,q}$ to denote the one dimensional weighted complex projective space that is the quotient of the unit sphere in $\C^2$ by the almost free action
\[
e^{{2\pi}it} \cdot (z_1,z_2) = \left(e^{p{2\pi}it}z_1,e^{q{2\pi}it}z_2 \right)
\]
of $S^1\subset\C$. The contact form
\[
\lambda_{p,q} = \frac{\lambda_0}{p|z_1|^2 + q|z_2|^2}
\]
is invariant under this action, and its differential $d\lambda_{p,q}$ projects to an orbifold symplectic form on $\CP^1_{p,q}$. Its Reeb orbits form the fibers of the prequantization orbibundle as well as the Seifert fibration. The pages of the open book are used to induce the topological Seifert framed push off linking zero type trivialization along the $T(p,q)$ binding, which is used to compute the knot filtration.  The pages also play a supporting role in finding explicit surfaces and trivializations used in our computation of the ECH index.  

\begin{figure}[h]
 \begin{center}
 \begin{overpic}[width=.5\textwidth, unit=1.75mm]{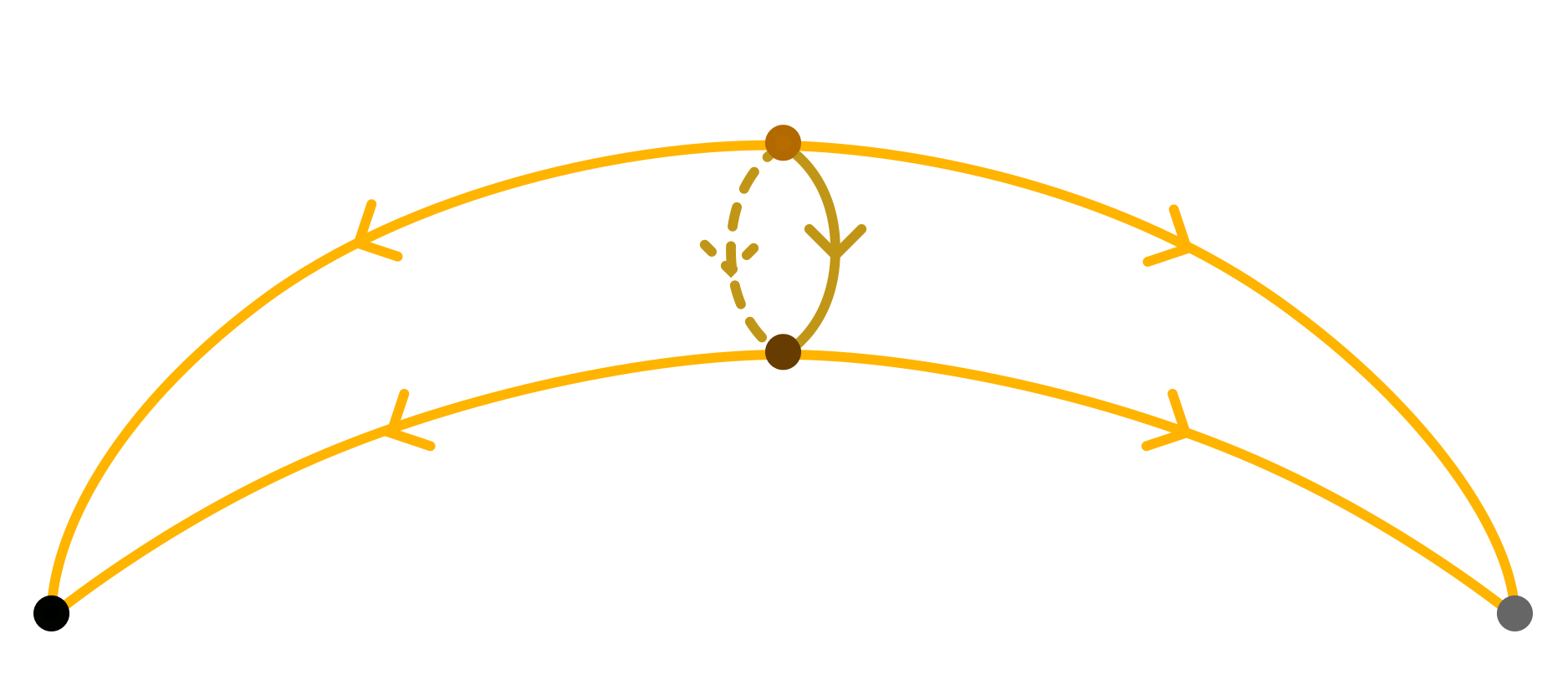}
\put(1,0){$\zp$}
\put(45,0){\textcolor{bg}{$\zq$}}
\put(22.5,18){ \textcolor{blb}{$\zb$}}
\put(22.5,7){\textcolor{bdb}{$\zh$}}
\end{overpic}
\end{center}
\caption{ Here we have depicted the gradient flow of $\mathpzc{H}_{p,q}$ on $\CP^1_{p,q}$ for $p\neq2$ and labeled the fibers of the prequantization orbibundle projecting to the respective critical points. The minima have isotropy $\Z/p$ and  isotropy \textcolor{bg}{$\Z/q$}; the fibers $\zp$ and $\textcolor{bg}{\zq}$ project to the respective minima. The fiber \textcolor{bdb}{$\zh$} projects to the index 1 critical point. The binding fiber \textcolor{blb}{$\zb$} projects to the maximum. }

\label{fig:banana}
\end{figure}

Then we perturb $\lambda_{p,q}$, which is degenerate, using a lift of an orbifold Morse-Smale function from the base orbifold $\CP^1_{p,q}$, to obtain a nondegenerate form. This is similar to the perturbations from \cite{leo, jo2, preech}. We obtain
\begin{equation}\label{eq:intropert}
\lambda_{p,q,\varepsilon}:=  (1+\varepsilon \frak{p}^*\mathpzc{H}_{p,q})\lambda_{p,q}.
\end{equation}

In Figure \ref{fig:banana} we have depicted representative gradient flow lines of the `banana' orbifold Morse function $\mathpzc{H}_{p,q}$ on $\CP^1_{p,q}$. Above the critical points of $\mathpzc{H}_{p,q}$, the Reeb vector field of $\lambda_{p,q,\varepsilon}$ has the following four embedded orbits.   
 \begin{itemize}
 \itemsep-.25em
\item The embedded orbits ${\mathpzc{p}}$ and ${\mathpzc{q}}$ are the respective singular fibers of the Seifert fibration; these project to minima at the orbifold points of respective isotropy $\Z/p$ and $\Z/q$. Both are elliptic.
\item The binding $ \mathpzc{b}$ is an embedded orbit; it is a regular $T(p,q)$ knotted fiber orbit that projects to the nonsingular maximum. It is elliptic.
\item Another regular $T(p,q)$ knotted fiber $\mathpzc{h}$ is an embedded Reeb orbit; it projects to a nonsingular index one critical point. It is positive hyperbolic.
\end{itemize}
All other Reeb orbits of $\lambda_{p,q,\varepsilon}$ have very large action, proportional to $1/\varepsilon$. Filtering the ECH chain complex by action, we fix an action $L$ and choose $\varepsilon$ so that the generators of $ECC_*^{L}(S^3,\lambda_{p,q,\varepsilon(L)})$ are only those admissible Reeb currents the form $\mathpzc{b}^B\mathpzc{h}^H\mathpzc{p}^P\mathpzc{q}^Q$. In this notation, we allow $B,P,Q \in \Z_{\geq 0}$ and $H=0,1$ so that the current is admissible. We obtain a direct system $\{ ECH^L_*(Y,\lambda_{p,q,\varepsilon(L)})\}$ whose direct limit 
\[
\lim_{L\to\infty}ECH^L_*(S^3,\lambda_{p,q,\varepsilon(L)})
\]
is the homology of the chain complex generated by orbit sets whose underlying embedded Reeb orbits all map to the critical points of $\mathpzc{H}_{p,q}$.

 \subsubsection*{\S \ref{s:ECHI}: The differential}
{In \S \ref{s:ECHI} we carry out the action filtered Morse-Bott means of computing embedded contact homology by leveraging our understanding of the cylindrical portion of the differential using a non-generic $S^1$-invariant almost complex structure.  The methods we employed for curve counts in  prequantization bundles as in \cite[\S 4-6]{preech}, \cite{farris}, cannot be directly used. As a result, it is nontrivial to conclude that the differential coefficient is determined by currents consisting of trivial cylinders together with the union of fibers over Morse flow lines in the base contributing to the orbifold Morse differential.  Our proof requires much of the intersection theory developed by Hutchings in \cite{Hrevisit}, as well as a number of additional subtle combinatorial arguments.}

\begin{remark}[Comparison to combinatorial toric methods]
In \S\ref{ss:toric} we explain a toric combinatorial chain complex that is equivalent to our chain complex in the case that $\mathpzc{H}_{p,q}$ is invariant under another $S^1$-action. (This is suggested in Figure \ref{fig:banana}, which is symmetric under vertical rotation). It agrees with the convex version of the chain complex explained in \cite[\S3]{intoconcave}, and also draws inspiration from \cite{T3}, \cite{letter}. 
However, the differential described in \S\ref{ss:toric} has only partially been proven to equal the ECH differential in \cite{T3}. The complication arises from the ``virtual edges" representing the elliptic orbits at the intersections of $S^3$ with the axes in $\C^2$.

 Our approach circumvents this gap and also gives a means to understand the embedded contact homology chain complexes of more general Seifert fiber spaces and open books. These can be much more topologically complicated than the manifolds carrying toric contact forms, which must be diffeomorphic to 3-spheres, $S^2 \times S^1$, or lens spaces. It is also more geometrically intuitive to understand the knot filtration from the perspective of the open book.  Moreover, by \cite{CH}, our methods can be used to study the chain complexes of contact forms adapted to periodic, positive fractional Dehn twist coefficient open books, permitting applications to surface dynamics on the pages of these open books.
\end{remark}

\subsubsection*{\S\ref{s:topology}-\ref{s:ECHI}: ECH index computation}
Since we do not have access to the combinatorial toric chain complex as in \cite{T3, beyond}, for reasons as detailed in \S \ref{ss:toric}, it is rather involved to derive the ECH index. We compute the ECH index in \S\ref{s:topology}-\ref{s:generalities} via three different trivializations. The most significant is the ``orbibundle" trivialization, arising from our prequantization orbibundle description. We use it to compute the Conley-Zehnder index, and hence the monodromy angles, of the fiber orbits, in \S \ref{ss:orbitau}. In \S\ref{ss:trivs} we relate the orbibundle trivialization to the constant and page trivializations, which are more natural when computing the other components of the ECH index, the relative first Chern number and relative intersection pairing, carried out in \S\ref{ss:chern}-\ref{ss:Q}. The formula for the ECH index is given in Theorem \ref{thm:pqI}.

The calculation of the contribution from the Conley-Zehnder indices to the ECH index is extremely combinatorially delicate.  We circumvent this issue by using the known computation of $ECH_*(S^3,\xi_{std})$; however, some study of the ECH index is still needed to compute the differential and desired spectral invariants. (In our previous work \cite{preech,kech}, alternate perturbations were available, which facilitated our understanding of the ECH index, but as $p$ grows, the analogous combinatorial problem becomes increasingly intractable.)

\subsubsection*{\S \ref{s:spectral}: Computing the action and knot filtrations}
 
The final section, \S\ref{s:spectral}, contains our proof of Theorems \ref{thm:kech-intro} and \ref{thm:toruslinking}. We use the idea of the \emph{degree} of a generator,
\[
d(\zb^B\zh^H\zp^P\zq^Q) = \frac{B+H+\frac{1}{p}P+\frac{1}{q}Q}{| e|} = pqB+pqH+qP+pQ,
\]
which computes the weighted algebraic multiplicity of its underlying fibers (there is also a notion of relative degree, given by the difference in the individual degrees of the Reeb currents). This degree equals the \emph{degree} of the curves counted in the ECH differential (see also \cite[\S4]{preech}); we expect this to be true in general for Seifert fibrations with negative Euler class.  Using the description of the chain complex from \S\ref{s:ECHI}, we compute a formula for the degree of an ECH generator in terms of its ECH index.

In particular, so long as $k$ and the action level $L(\varepsilon)$ satisfy the constraints from Lemma \ref{lem:orbitseh}, the group generator of $ECH_{2k}^{L(\varepsilon)}(S^3,\lambda_{p,q,\varepsilon})$ can be represented by an ECH generator of degree $N_k(p,q)$. Using this, in Proposition \ref{prop:ck} we show that
 \[
c_k(S^3,\lambda_{p,q}) = N_k({1/p}, {1/q}),
\]
where the $c_k(S^3,\lambda_{p,q})$ are the \textit{ECH spectral numbers}, defined in \S\ref{ss:ECHspectrum}, which record the actions of certain homologically essential ECH generators and whose limit recovers the contact volume.

Furthermore, degree controls the knot filtration.  Let $\alpha$ be any Reeb current with no $\alpha_i$ equal to $\zb$. Then
\[
\begin{split}
\mathcal{F}_{\zb}(\zb^B\alpha) = & m \rot(\zb) + \ell(\alpha,\zb), \\
=& d(\zb^B \alpha) + B\delta_{L(\varepsilon)},
\end{split}
\]
as we prove in Proposition \ref{prop:Fb}. Here $0<\delta_{L(\varepsilon)}<<1$ is the change in $\rot(\zb)$ under the perturbation (\ref{eq:intropert}) and all other terms are as defined in \S\ref{ss:overviewech}.
We prove Theorem \ref{thm:kech-intro} using these computations along with the description of the chain complex in \S \ref{s:ECHI}, which uses the Morse-Bott direct limit methods from \cite{preech, kech}.  

\begin{remark} We have $d(\zb^B \alpha) =  pq \ \A_{\lambda_{p,q}}(\zb^B \alpha)$. Thus, the estimates from knot filtered ECH (in the limit as $\varepsilon\to0$ and with $\vol(\lambda_{p,q})=1/pq$) realize the extremal values of the ratio between action and linking in these examples, cf. \cite[Prop.~1.3]{bhs}. By taking care with the estimates on action from Lemma \ref{lem:efromL}(ii), it could be possible to use our methods to estimate the systolic interval for perturbations of ellipsoids that go beyond the toric ones.
\end{remark}

\subsection{Applications to surface dynamics}\label{ss:introcalabi}

As in \cite{HuMAC} and \cite{weiler}, in order to study a symplectomorphism $\psi$ we construct a contact form on a three-manifold containing an embedded surface transverse to the Reeb flow, a \emph{global surface of section}, which the Reeb flow returns to itself globally while pointwise performing the symplectomorphism $\psi$, see Definition \ref{def:gss}. Other authors use a similar construction for opposite goals, i.e., they utilize  the existence of surfaces of section to understand Reeb flows, see for example \cite{cdhr} on topological entropy of generic Reeb flows.

We now give an outline of the arguments in our setting, deferring definitions and proofs of various lemmas to \S\ref{s:calabi}. Consider a surface $\Sdg$ of genus $g=(p-1)(q-1)/2$ with one boundary component and an exact symplectic form $\omega$ of area one. Put collar coordinates $(r,\theta)$ with $\sqrt{1-\varepsilon^2}\leq r\leq 1$ and $\theta\in\R/2\pi\Z$ on a neighborhood of $\partial\Sdg$. Near $\partial\Sdg$, any primitive $\beta$ of $\omega$ we consider will have the form
\begin{equation}\label{eqn:betaboundary1}
\beta=\frac{r^2}{2\pi}\,d\theta.
\end{equation}

Let $\psi$ be a symplectomorphism of $(\Sdg,\omega)$ which is freely isotopic to the monodromy $\phi_{p,q}$ of the positive $T(p,q)$ torus knot and a rotation in the boundary coordinates:
\[
\psi(r,\theta)=(r,\theta+\theta_0).
\]
(We suppress $p,q$ decorations from $\phi$ for simplicity, assuming they are clear from context.) We now make the following definition.
\begin{definition}\label{def:f} The \emph{action function} of $(\psi,\beta,\theta_0)$ is the unique function $f=f_{\psi,\beta,\theta_0}$ for which
\[
df=\psi^*\beta-\beta\text{ and }f|_{\partial\Sdg}=\theta_0.
\]
The fact that $f$ exists and is unique is a consequence of Lemma \ref{lem:psiexact}.  We will drop the subscripts from the action function when they are clear from context.
\end{definition}

\begin{example} The action function in the case where $\psi$ is the periodic representative of the monodromy when $(p,q)=(2,3)$ is given as follows. In $(x,y)\in\R^2/\Z^2$ coordinates on $\dot\Sigma_1$ we may write
\[
\psi(x,y)=(x+y,-x).
\]
It is a straightforward computation to show that $\psi^*\beta=\beta$, where $\beta$ is the quotient of $\frac{1}{2}(x\,dy-y\,dx)$ under the $\Z^2$-action. Thus $f\equiv\frac{1}{6}$ and in this case $\theta_0$ equals the fractional Dehn twist coefficient of $\psi$; see \S\ref{ss:OBD}.

\end{example}

\begin{definition} The \emph{Calabi invariant} of $\psi$ is the number
\[
\mathcal{V}(\psi)=\V_\beta(\psi):=\int_{\Sdg}f\omega.
\]
\end{definition}

In general, the Calabi invariant depends on $\beta$ (as in Pirnapasov-Prasad \cite{pp}). However, the variance in $\beta$ is controlled by the homotopy class of $\psi$, and so in the cases under consideration, all $\V_\beta(\psi)$ are equal; see Lemma \ref{lem:Vindep}.

\begin{remark}\label{rmk:f}
\begin{enumerate}[ (i)]
\itemsep-.25em
\item Usually, the Calabi invariant is defined for Hamiltonian symplectomorphisms, as is the case in \cite{pp}. Our definition of $f$, following \cite{HuMAC}, drops the requirement that $\psi$ be Hamiltonian (or even isotopic to the identity, although, as explained in the proof of Lemma \ref{lem:psiexact}, that requirement depends on the free isotopy class of $\psi$). See Joly \cite[\S1]{joly} for further discussion.

\item Because $\psi$ is a rotation near the boundary,
\[
\psi^*\left(\frac{r^2}{2}\,d\theta\right)=\frac{r^2}{2}\,d\theta,
\]
thus not only does $f|_{\partial\Sdg}=\theta_0$, but $f\equiv\theta_0$ in a collar neighborhood of $\partial\Sdg$.
\end{enumerate}
\end{remark}

Finally, we recall several definitions and facts from \cite{HuMAC, weiler}. An $\ell$-tuple $\gamma=(\gamma_1,\dots,\gamma_\ell)$ of points in $\Sdg$ is a \textit{periodic orbit} of $\psi$ if $\gamma_{i+1\mod\ell}=\psi(\gamma_i)$. A periodic orbit is said to be \textit{simple} if $\gamma_i\neq\gamma_j$ for $i\neq j$ and its \textit{total action} is given by
\[
\A(\gamma):=\sum_{i=1}^\ell f(\gamma_i).
\]
If $\ell(\gamma)=|(\gamma_1,\dots,\gamma_\ell)|$ then the \textit{mean action} of $\gamma$ is the ratio $\A(\gamma)/\ell(\gamma)$. As in the case of the Calabi invariant, the total action and mean action do not depend on $\beta$; see Lemma \ref{lem:Aindep}.

Let $\P(\psi)$ denote the simple periodic orbits of $\psi$.
In \S\ref{ss:mac} we will prove the following theorem. The proof will use a suspension construction, described in \S\ref{ss:3from2}, to relate the ECH computations in \S\ref{s:topology}-\ref{s:spectral} to the dynamics of $\psi$.

\begin{theorem}\label{thm:Calabi} Let $\psi$ be an area-preserving diffeomorphism of $(\Sdg,\omega)$, wherein
\begin{itemize}
\itemsep-.25em
\item $g=(p-1)(q-1)/2$,
\item $\psi$ is freely isotopic to the monodromy of the standard genus $g$ open book decomposition of $S^3$ with $T(p,q)$ as the binding, and further is isotopic relative to the boundary of $\Sdg$ to the Nielsen-Turston representative of this monodromy twisted positively near the boundary by an amount $-\frac{1}{pq}<d\leq0$,
\item  $\theta_0=\frac{1}{pq}+d$ and $f=f_{\psi,\beta,\theta_0}>0$ for any primitive $\beta$ of $\omega$ satsifying (\ref{eqn:betaboundary1}), 
\item $\V(\psi)<pq\cdot\theta_0^2$.
\end{itemize}
Then we have
\[
\inf\left\{\frac{\A(\gamma)}{\ell(\gamma)} \ \bigg \vert \ \gamma\in\P(\psi)\right\}\leq\sqrt{\frac{\V(\psi)}{pq}}.
\]
\end{theorem}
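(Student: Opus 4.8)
The plan is to deduce Theorem~\ref{thm:Calabi} from the Reeb-dynamical Theorem~\ref{thm:toruslinking} by means of the suspension construction of \S\ref{ss:3from2}. That construction produces from $\psi$ a contact form $\lambda$ on $(S^3,\xi_{std})$ for which $\Sdg$ is a global surface of section whose first return map is $\psi$ and whose binding is the standard positive $T(p,q)$ torus knot, realized as an elliptic Reeb orbit $b$. The heart of the reduction is the dictionary, assembled from the lemmas of \S\ref{s:calabi}, that translates the surface data into Reeb data: (i) the simple periodic orbits $\gamma\in\P(\psi)$ correspond bijectively to the embedded Reeb orbits in $\P(\lambda)\setminus\{b\}$, with the suspension of a period-$\ell$ orbit crossing each page transversally $\ell$ times, so that $\ell(\gamma',b)=\ell(\gamma)$; (ii) along this correspondence the symplectic action equals the total action, $\A(\gamma')=\A(\gamma)$, hence $\A(\gamma')/\ell(\gamma',b)=\A(\gamma)/\ell(\gamma)$; (iii) after normalizing the action of $b$ to $1$, the contact volume equals the Calabi invariant, $\vol(\lambda)=\V(\psi)=:V$; and (iv) the rotation number of the binding in the Seifert framing is the reciprocal of the boundary rotation, $\rot(b)=1/\theta_0=pq+\Delta$ with $\Delta=1/\theta_0-pq\geq 0$, strictly positive exactly when $d<0$.

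Granting this dictionary, I would first treat the case in which $\Delta$ is positive irrational. The hypothesis $\theta_0=1/pq+d$ with $-1/pq<d\le 0$ gives $\theta_0\in(0,1/pq]$ and $\rot(b)\geq pq$, and the identity $pq\,\theta_0^2=pq/(pq+\Delta)^2$ shows that the standing assumption $\V(\psi)<pq\,\theta_0^2$ is precisely the volume bound $V<pq/(pq+\Delta)^2$ required by Theorem~\ref{thm:toruslinking}. Since $b$ is then an elliptic Reeb orbit of action $1$ and rotation number $pq+\Delta$, that theorem yields $\inf\{\A(\gamma')/\ell(\gamma',b)\}\le\sqrt{V/pq}$, and translating through (i)--(iii) gives exactly $\inf\{\A(\gamma)/\ell(\gamma)\}\le\sqrt{\V(\psi)/pq}$.

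It remains to remove the irrationality assumption, which also covers the boundary case $d=0$ (where $\Delta=0$). Here I would argue by approximation: perturb $\psi$ within a thin collar of $\partial\Sdg$ to a diffeomorphism $\psi_n$, still freely isotopic to the monodromy and isotopic rel boundary to the appropriately twisted Nielsen--Thurston representative, whose boundary rotation $\theta_{0,n}<\theta_0$ makes $\Delta_n=1/\theta_{0,n}-pq$ positive irrational with $\Delta_n\searrow\Delta$. Because $f\equiv\theta_0$ on the collar and its area can be taken arbitrarily small, $\V(\psi_n)\to\V(\psi)$ while the interior periodic orbits, and hence their mean actions, are unchanged. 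Using strictness of $\V(\psi)<pq\,\theta_0^2$ together with $pq/(pq+\Delta_n)^2\nearrow pq\,\theta_0^2$, the volume hypothesis $\V(\psi_n)<pq/(pq+\Delta_n)^2$ holds for all large $n$; applying the previous paragraph to each $\psi_n$ and passing to the limit recovers the asserted inequality for $\psi$.

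The main obstacle is establishing the dictionary (i)--(iv) with the correct normalizations, above all the volume identity $\vol(\lambda)=\V(\psi)$ --- which requires carefully balancing the collar boundary contribution against the action-$1$ normalization of $b$ --- and the reciprocal relation $\rot(b)=1/\theta_0$ between the Seifert-framed Reeb rotation number and the boundary rotation of $\psi$. Once these geometric identifications are in hand, and the collar perturbation of the limiting argument is arranged compatibly with the free isotopy and boundary-twisting hypotheses, the reduction to Theorem~\ref{thm:toruslinking} is immediate.
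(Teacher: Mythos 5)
Your reduction — suspension via Proposition \ref{prop:constructcontact}, then Theorem \ref{thm:toruslinking} — is exactly the paper's route, and your first case ($\theta_0$ irrational, so $d<0$) is correct and coincides with Step 1 of the paper's proof. The gap is in your approximation step. Any perturbation supported in a collar that lowers the boundary rotation from $\theta_0$ to $\theta_{0,n}<\theta_0$ must interpolate between these two rotation numbers across the collar, and at every radius where the interpolated rotation number is rational it creates circles of new periodic orbits. These collar orbits lie in $\P(\psi_n)$ but not in $\P(\psi)$, so the inequality
\[
\inf\left\{\frac{\A_n(\gamma)}{\ell(\gamma)}\ \bigg\vert\ \gamma\in\P(\psi_n)\right\}\leq\sqrt{\frac{\V(\psi_n)}{pq}}
\]
obtained by applying your first case to $\psi_n$ says nothing, by itself, about the orbits of $\psi$: a priori the infimum could be realized entirely by collar orbits, which change with $n$ and have no counterpart for $\psi$. ``Passing to the limit'' therefore does not go through as written. (A smaller inaccuracy: the interior orbits' mean actions are not literally unchanged, since the action function shifts globally by an amount controlled by (\ref{eqn:hatdiff}); but that error is $O(|\varepsilon(1)|\zeta)$ and vanishes with the collar width, so it is harmless.)

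The missing idea — which is Step 2 of the paper's proof — is to use the strict hypothesis $\V(\psi)<pq\cdot\theta_0^2$ a second time, beyond verifying the volume hypothesis for the perturbed map, in order to exclude the collar orbits. Choose the twist amount $\varepsilon(1)$ in the window $\sqrt{\V(\psi)/pq}-\theta_0<\varepsilon(1)<0$, which is nonempty precisely because $\sqrt{\V(\psi)/pq}<\theta_0$; then the new boundary rotation $\theta_0+\varepsilon(1)$ still exceeds $\sqrt{\V(\psi)/pq}$. The explicit computation of Example \ref{ex:Vtwist}, specifically inequality (\ref{eqn:Cflb}), shows the twisted action function satisfies $\hat f\geq\theta_0+\varepsilon(1)$ on the collar, so every collar orbit has mean action strictly greater than $\sqrt{\V(\psi)/pq}\geq\sqrt{\V(\hat\psi)/pq}$. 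Consequently the infimum for $\hat\psi$ must be approached by interior orbits, which are shared with $\psi$, and the comparison (\ref{eqn:hatdiff}) transfers the bound to $\P(\psi)$ up to an error $|\varepsilon(1)|(2\zeta-\zeta^2)$ that disappears as $\zeta\to0$. (With your choice $\theta_{0,n}\nearrow\theta_0$ the threshold inequality $\theta_{0,n}>\sqrt{\V(\psi_n)/pq}$ does hold for large $n$, so your setup is salvageable — but the exclusion of collar orbits via the collar lower bound on the action function is an essential step, not an automatic consequence of ``interior orbits are unchanged.'')
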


We define the terms ``open book decomposition," ``monodromy," and ``Nielsen-Thurston representative," as well as discussing the particulars of a positive twist near the boundary, in \S\ref{s:calabi}. Note that the lower bound on $d$ is necessary to obtain the hypothesis $f>0$.

\begin{remark}
Several of the hypotheses of Theorem \ref{thm:Calabi} have no analogue in \cite{HuMAC} or \cite{weiler, weiler2}. The necessity of these new hypotheses is explained before the statement of Proposition \ref{prop:constructcontact} in \S\ref{ss:3from2} and in Remark \ref{rmk:Vpqd}.
\end{remark}

We would expect the right hand side of the conclusion of Theorem \ref{thm:Calabi} to be $\V(\psi)$, which by our assumption $\V(\psi)<pq\cdot\theta_0^2$ is smaller than the current value. The stronger estimate is expected by work in the generic setting, as first initiated by Irie \cite{irie, irie2} and further developed by Bechara Senior, Hryniewicz, and Salom\~ao \cite{bhs}. However, the stronger upper bound would require a stronger result than what Theorem \ref{thm:toruslinking} provides, as detailed in Remark \ref{rem:stronger}.

\subsubsection*{Related results}

The study of existence, multiplicity, and quantitative features of periodic points of surface diffeomorphisms dates back at least a century to the Poincar\'{e}-Birkhoff theorem. In the 90s, Franks' work in \cite{franks92, franks} provided the framework for modern work on the subject: any area-preserving homeomorphism of the open annulus with at least one periodic point must have infinitely many. Franks and Handel then showed in 2003 in \cite{frankshandel} that Hamiltonian diffeomorphisms of surfaces of genus at least one have periodic points of arbitrarily high minimal period. Most recently, work of Le Calvez in \cite{lc1} classifies orientation-preserving homeomorphisms of surfaces of all genera without wandering points (a class which includes symplectomorphisms) based on their number of periodic points and the quantitative characteristics of those points.

Our work continues the line of reasoning suggested by Asaoka and Irie in their study of Hamiltonian diffeomorphisms of surfaces via ECH spectral invariants in \cite{ai}. Since their work, many authors have used ECH or its relative for mapping tori, periodic Floer homology (PFH), to study surface diffeomorphisms, and we discuss the work most closely related to ours here. For example, as we have already discussed, Hutchings initiated the study of knot filtered ECH in order to prove an estimate on the infimum (and the supreum) of the mean action of periodic orbits of disk symplectomorphisms in terms of their Calabi invariant in \cite{HuMAC}. In other directions, see the introduction to work of Prasad \cite{prasad} for a nice discussion of the related Conley conjecture, which was also addressed via Floer-theoretic methods.

Work by Pirnapasov and Prasad \cite[Thm.~1.12]{pp} is similar to but applies to more general surfaces and isotopy classes of $\psi$ than Theorem \ref{thm:Calabi} under the assumption that $\psi$ is a $C^\infty$ generic Hamiltonian, which cannot be assumed to be compactly supported in the interior because perturbations are required all the way up to the boundary. (See Remark~\ref{rmk:f}~(i) for a discussion of the {difference between our case and the Hamiltonian setting}.) Moreover, the upper bound on the infimum of mean action in \cite[Thm.~1.12]{pp} is stronger than ours, because the Calabi invariant itself appears rather than $\sqrt{\V(\psi)/pq}$. Finally, they also obtain a lower bound on the supremum of mean action, which we cannot obtain using arguments similar to those in \cite{HuMAC} or \cite{weiler, weiler2} because taking an inverse of $\psi$ changes its isotopy class. Isotopy class is of no consequence in \cite{HuMAC}, while in \cite{weiler, weiler2} it affects the lens space appearing in the analogue of Proposition \ref{prop:constructcontact}.

Pirnapasov and Prasad \cite{pp} use the spectral invariants coming from periodic Floer homology. (The analogy between ECH and PFH is most obvious in the case of a Reeb flow transverse to the pages of an open book decomposition, which is the setting we explore in this paper.) The PFH spectral invariants, which have been recently developed by \cite{cghs, cgpz, eh}, are analogous to the ECH spectrum. However, ECH comes with a knot filtration, which up to the rotation parameter gives rise to a topological invariant of the three-manifold together with the binding of an open book.  We are therefore able to compute it using a  contact form of our choice, in contrast to the ECH spectrum. In PFH, the quantity analogous to linking number is called ``degree," but it plays a slightly different role than the knot filtration does and is more similar to the ECH index; see  \cite[Thm.~8.1]{eh}.

Significant features of \cite{pp} having no analogue in this paper are their generic density equidistribution results for periodic orbits \cite[\S 1.1]{pp}. While our Theorem \ref{thm:Calabi} can be interpreted as the fact that the action function takes an average value on some orbit set that is at most its global average, the results referenced from \cite{pp} identify, for any continuous function on a symplectic surface-with-boundary, a sequence of orbit sets on which the average values of $f$ approximate the global average of $f$.

We now turn to the work of authors who have identified additional features of their periodic points; many of these results did not require the use of Floer theoretic methods. In \cite{abror}, Pirnapasov removes the hypothesis of \cite[Thm.~1.2]{HuMAC}, which required that the disk map be a rotation near the boundary. Le Calvez (and collaborators) have used generating families and finite dimensional methods to study surface dynamics.  As mentioned above, Le Calvez classifies in \cite{lc1} the orientation-preserving homeomorphisms of surfaces without wandering points. In \cite{lc2} he shows that a $C^1$ pseudorotation of a disk equal to a rotation on the boundary must have Calabi invariant equal to its boundary rotation number, which under the additional assumptions of smoothness and rotation near the boundary implies Hutchings' original result. 

Guih\'{e}neuf, Le Calvez, and Passeggi found infinitely many periodic orbits for measure-preserving homeomorphisms isotopic to the identity of surfaces of genus at least two, along with an estimate on their rotation vectors in \cite{glcp}. {Their hypotheses require the measure to have total support and the rotation vector to be a real multiple of an integral one. Prasad proved a similar result in \cite{prasad}, without the conclusion about rotation vectors, using PFH; for a discussion of the relationship between these methods, see the remark following \cite[Cor.~1.1]{glcp} and the introduction to \cite{prasad}.} Finally, in \cite{lcs}, Le Calvez and Sambarino showed that for genus at least one, generic $C^r$ Hamiltonian diffeomorphisms admit infinitely many non contractible periodic orbits.

\subsubsection*{Acknowledgements} 
 We thank Michael Hutchings for thoughtful discussions and correspondence. We also thank Umberto Hryniewicz, Tom Mrowka, Rohil Prasad, Abror Pirnapasov, and Jeremy Van Horn-Morris for helpful conversations.  Finally, we thank the referee for their careful reading of our paper and their suggestions. Jo Nelson is partially supported by NSF grants DMS-2104411 and CAREER DMS-2142694.  During her stay at the Institute for Advanced Study, she was supported as a {Stacy and James Sarvis Founders’ Circle Member}. Morgan Weiler was partially supported by an NSF MSPRF grant DMS-2103245.

\section{From contact geometry to surface dynamics}\label{s:calabi}

Here we collect the material relevant to establishing our applications to the study of surface dynamics. First, in \S\ref{ss:OBD} we review open book decompositions and their relationships to Reeb vector fields. Next, \S\ref{ss:calabi} contains the proof of Lemma \ref{lem:psiexact}, which allowed us to define the action function in \S\ref{ss:introcalabi}.  We also compute the effect of a boundary twist on the Calabi invariant, which is key to the proof of Theorem \ref{thm:Calabi} in \S\ref{ss:mac}. The dictionary between our surface symplectomorphisms $\psi$ and contact geometry is proved in Proposition \ref{prop:constructcontact} in \S\ref{ss:3from2}. Finally, in \S\ref{ss:mac}, we prove Theorem \ref{thm:Calabi}, using the ECH results of \S\ref{s:spectral} as a black box.

\subsection{Open book decompositions and contact forms}\label{ss:OBD}

We review the relationship between open book decompositions and contact forms. Ultimately, our goal is to describe the restrictions on the return map of the Reeb vector field of a contact form supported by a given open book. We provide specific citations as well as summarize material from \cite[\S2.1]{kech}, \cite{CH}, \cite{etnlec}, and \cite[Ch.9]{osbook}.

\subsubsection{Basic definitions}

We now collect the basic notions associated to open books and compatible contact structures.

\begin{definition}
{Given a closed oriented three-manifold $Y$, the pair $(B,\pi)$ is an \emph{open book decomposition} of $Y$ if the \emph{binding} $B$ is an oriented link in $Y$ and $\pi:Y\setminus B\to S^1$ is a fibration over the circle so that each $\pi^{-1}(\theta)$ is a Seifert surface for $B$. The \emph{pages} of $(B,\pi)$ are the closures of the $\pi^{-1}(\theta)$.} 
Open book decompositions of $Y$ and $Y'$ are equivalent if there is a diffeomorphism between $Y$ and $Y'$ taking bindings to bindings and pages to pages.
\end{definition}

\begin{example} Define
\[
H^+=\{(z_1,z_2)\in S^3\ | \ z_1z_2=0\}\text{ and }H^-=\{(z_1,z_2)\in S^3\ | \ z_1\overline{z_2}=0\}.
\]
Both have the same underlying set, but they carry different orientations: $H^+$ as the boundary of the pages of the open book decomposition with projection map
\[
\pi_+:S^3\setminus H^+\to S^1\text{ given by }(z_1,z_2)\mapsto\frac{z_1z_2}{|z_1z_2|},
\]
and $H^-$ as the boundary of the pages of the open book decomposition with projection map
\[
\pi_-:S^3\setminus H^+\to S^1\text{ given by }(z_1,z_2)\mapsto\frac{z_1\overline{z_2}}{|z_1\overline{z_2}|}.
\]
In polar coordinates $(r_1,\theta_1,r_2,\theta_2)$ on $\C^2$, these are the maps $\pi_\pm(r_1,\theta_1,r_2,\theta_2)\mapsto\theta_1\pm\theta_2.$
\end{example}

It is often helpful to start by `ignoring' the total space $Y$ in the following sense. An \emph{abstract open book} is a pair $(\Sigma,\phi)$ where $\Sigma$ is an oriented compact surface with nonempty boundary and $\phi$ is a diffeomorphism of $\Sigma$ which is the identity near $\partial\Sigma$; the map $\phi$ is called the \emph{monodromy}. We have the following:
\begin{itemize}
\item {Knowing the data of an abstract open book allows us to construct a manifold $Y_\phi:=\Sigma\times[0,1]/\sim_\phi$, where $(z,1)\sim_\phi(\phi(z),0)$ for all $z\in\Sigma$ and $(z,t)\sim_\phi(z,t')$ for all $z\in\partial\Sigma$.  (This is essentially the mapping torus of $\phi$ with an additional boundary quotient.) The manifold $Y_\phi$ carries an open book decomposition with binding $B_\phi:=\partial\Sigma\times[0,1]/\sim_\phi$ and projection map, called $\pi_\phi$, the projection onto the $[0,1]$-coordinate.}
\item {Given abstract open books $(\Sigma,\phi)$ and $(\Sigma',\phi')$, if there is a diffeomorphism between $\Sigma$ and $\Sigma'$ under which $\phi$ and $\phi'$ are conjugate, then we say the two abstract open books are equivalent.} Equivalent abstract open books determine diffeomorphic open book decompositions: see \cite[Lem.2.4, Rmk.2.6]{etnlec}.
\item The monodromy $\phi$ of $(B,\pi)$ is the return map of the flow of a vector field which is positively transverse to the pages and meridional near $B$, see Definition \ref{def:return}. The diffeomorphism type of the pages determines $\Sigma$, therefore open book decompositions conversely determine abstract open books (again up to the appropriate equivalences).
\end{itemize}

\begin{example} The pages of $(H^+,\pi_+)$ are annuli and the monodromy is a right-handed Dehn twist around the core circle of the annulus, as shown in \cite[Ex.~4.4.8(2)]{geigesbook}. That the pages of $(H^-,\pi_-)$ are annuli and that its monodromy is a left-handed Dehn twist can be shown analogously.
\end{example}

The \emph{mapping class group} of a surface with boundary is the set of isotopy classes of diffeomorphisms of the surface which are the identity along the boundary. The same group is obtained if one starts with isotopy or homotopy classes of homeomorphisms. The notion of equivalence for abstract open books immediately motivates the role of the mapping class group in the study of open books: if two open books have isotopic monodromies, then they are equivalent.

Given an abstract open book $(\Sigma,\phi)$, it can be \emph{positively/negatively stabilized} by adding a 1-handle and composing $\phi$ with a right/left-handed Dehn twist about a closed curve intersecting the cocore of the new 1-handle exactly once. At the level of three-manifolds, stabilization amounts to connect summing with $S^3$.

\begin{example}\label{ex:Tpq} The well-known Milnor fibration
\[
\pi:S^3\setminus T(p,q) \to S^1, \ (z_1,z_2)\mapsto\frac{z_1^p+z_2^q}{|z_1^p+z_2^q|}
\]
is the projection map of an open book decomposition of $S^3$ with the positive torus knot
\[
T(p,q)=\left\{(z_1,z_2)\in S^3\ |\ z_1^p+z_2^q=0\right\}
\]
as its binding. We will denote this open book decomposition of $S^3$ by $(T(p,q),\pi_{p,q})$. Their corresponding abstract open books can be constructed via iteratively positively stabilizing $(H_+,\pi_+)$, as explained in \cite[\S 9]{osbook},\cite[\S 2]{ao}. (In these references, the stabilization procedure is performed ambiently, so that the resulting binding is drawn as $T(p,q)$.) The pages are genus $\frac{(p-1)(q-1)}{2}$, as can be computed using the Euler characteristic.

Torus links correspond to rational open books of lens spaces, see \cite{cabling} but as previously mentioned, we restrict our attention to torus knots, and continue to assume that $p$ and $q$ are relatively prime.
\end{example}

Open books are closely related to contact structures.
\begin{definition}
An open book decomposition $(B,\pi)$ of a 3-manifold $Y$ and a cooriented contact structure $\xi$ on $Y$ are \emph{compatible} if there is a contact structure isotopic to $\xi$ which is the kernel of a contact form $\lambda$ for which $B$ is a Reeb orbit, $d\lambda$ is an area form on every page, and the orientation of $B$ as a Reeb orbit agrees with the boundary orientation of the pages. We also say $(B,\pi)$ \emph{supports} $\xi$. A contact form $\lambda$ realizing the compatibility between $(B,\pi)$ and the isotopy class of $\ker(\lambda)$ is said to be \emph{supported} by $(B,\pi)$.\footnote{Here we use the most general terminology for this relationship from \cite{cdr}.}
\end{definition}

The contact connected sum $\xi \# \xi_{std}$ is compatible with the positive stabilization of an open book decomposition compatible with $\xi$, and is isotopic to $\xi$.
\begin{theorem}[Giroux Correspondence \cite{bhh, giroux}]\label{thm:giroux} Given a closed 3-manifold $Y$, there is a one to one correspondence between oriented\footnote{Here ``oriented" means that the contact structure is the kernel of a contact form; more general types of contact structures exist, but we do not consider them here.} contact structures on $Y$, up to isotopy through oriented plane fields, and open book decompositions of $Y$, up to positive stabilization.
\end{theorem}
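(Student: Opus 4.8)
The plan is to prove the two directions of the correspondence separately, in each case reducing to a standard piece of machinery: the Thurston--Winkelnkemper construction for producing a compatible contact form from an open book, and Giroux's convex surface theory (in the form completed by \cite{bhh}) both for extracting a supporting open book from a contact structure and for controlling the stabilization equivalence.

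First I would treat the \emph{construction} direction: every open book decomposition $(B,\pi)$ of $Y$ supports a contact structure, uniquely determined up to isotopy. Passing to the abstract open book $(\Sigma,\phi)$, choose a Liouville form $\beta$ on $\Sigma$ (so that $d\beta$ is an area form) with $\beta$ standard in the collar of $\partial\Sigma$, and isotope $\phi$ to a symplectomorphism of $(\Sigma,d\beta)$ rel boundary. On the mapping-torus region $\Sigma\times[0,1]/\!\sim_\phi$ one writes a $1$-form interpolating between $\beta$ and $\phi^*\beta$ and adds a large multiple of $dt$; for that multiple large enough the form is contact, with $d\lambda$ an area form on each page. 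One then glues in an explicit model contact form on the solid-torus neighborhood of the binding so that $B$ becomes a Reeb orbit whose orientation matches the page boundaries. The space of auxiliary choices (the Liouville form, the symplectic representative of $\phi$, the interpolation, and the gluing constant) is connected, which gives independence of the isotopy class of the resulting $\xi$. Finally I would check that the move in the correspondence is well defined by showing a single positive stabilization leaves $\xi$ isotopic: at the level of three-manifolds a positive stabilization is a connected sum with $S^3$ carrying the tight $\xi_{std}$, and $\xi \# \xi_{std}$ is isotopic to $\xi$, as recorded just above the statement.

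Next, for the \emph{recovery} direction, I would show every cooriented contact structure $\xi$ on $Y$ admits a supporting open book. The tool is a contact cell decomposition: build a Legendrian $1$-complex $G$ whose complement is a disjoint union of Darboux balls, take a convex ribbon $\Sigma$ of $G$ as a prospective page, and recover the monodromy from the characteristic foliation and the dividing set on $\partial(Y\setminus\Sigma)$. This yields an open book supporting $\xi$. The remaining, and genuinely hard, assertion is uniqueness: any two open books supporting isotopic contact structures admit a common positive stabilization. Here I would organize each open book as a convex foliation of $Y$ and interpolate between the two dividing configurations by a sequence of elementary bypass attachments, arranging that each elementary move is realized by a single positive stabilization and that no negative stabilizations occur.

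The main obstacle is exactly this last step. The construction and existence directions are essentially classical and follow from Thurston--Winkelnkemper together with Giroux's cell-decomposition argument, but the statement that two open books compatible with the same contact structure become equivalent after \emph{common positive stabilization} is where Giroux's original sketch was incomplete: one must track an isotopy of contact structures through convex-surface and bypass moves with each move absorbed by a positive stabilization, which requires careful control of the dividing sets and of the sign of each stabilization. I would not reprove this in the paper; rather I would cite \cite{bhh, giroux}, since it rests on the technically demanding theory of convex surfaces and bypasses and is orthogonal to the ECH computations that follow.
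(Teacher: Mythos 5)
Your proposal is correct and is consistent with the paper's treatment: the paper does not prove this theorem at all, but states it as background and cites \cite{giroux, bhh}, exactly as you do for the genuinely hard step (uniqueness of supporting open books up to common positive stabilization, where Giroux's sketch was completed by \cite{bhh}). Your outline of the two directions — Thurston--Winkelnkemper for construction, contact cell decompositions for existence, and the connected-sum argument showing positive stabilization preserves the isotopy class (which the paper records just above the theorem statement) — is the standard one, and deferring the stabilization-equivalence argument to the cited sources is the appropriate choice here.
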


\subsubsection{Periodic monodromies with positive fractional Dehn twist coefficients}

We now explain the class of contact forms to which the $\lambda_{p,q}$ belong.
If a mapping class is not reducible, it is either periodic or pseudo-Anosov, according to whether it contains a unique representative which is itself a periodic or pseudo-Anosov homeomorphism. When the surface has boundary, this homeomorphism might only ``represent" the class in the sense that it is freely isotopic (i.e. the isotopy may move the boundary) to other representatives of the class. When the mapping class is periodic, this homeomorphism is a diffeomorphism. Such a representative is called the \emph{Nielsen-Thurston representative} of the class. Identifying a single reference for the proof is complicated by its history (see the ``Historical remarks" at the end of \cite[\S13.3]{farbmarg}), however, the monograph \cite{flp} provides a complete proof, with the classification appearing in \cite[Thm.~11.7]{flp}.

The \emph{fractional Dehn twist coefficient} of a mapping class measures the difference between a true representative $\phi$ of the monodromy mapping class, which is the identity on the boundary, and its Nielsen-Thurston representative, which we will here call $\psi$ and which is likely not the identity on the boundary.  It is defined as follows. At each boundary component $C$ of $\Sigma$ attach an annulus collar $C\times[0,1]$ by gluing $C\times\{1\}$ and $C$. Let $H:\Sigma_x\times[0,1]_t\to\Sigma$ be the free isotopy from $\phi(x)=H(x,0)$ to $\psi(x)=H(x,1)$ and define $\beta:C\times[0,1]\to C\times[0,1]$ by $\beta(x,t)=(H(x,t),t)$. Notice that now the original map $\phi$ is isotopic  to $\psi\cup\beta$ relative to the boundary of the glued surface.

Choose an oriented identification of $C \simeq \R/ \Z$ and lift $\beta$ to $\widetilde{\beta}:\R\times[0,1]\to\R\times[0,1]$. Define 
\[
f(x):= \widetilde{\beta}(x,1)|_\R -\widetilde{\beta}(x,0)|_\R + x,
\]
where $|_\R$ denotes projection onto the $\R$ component. We call $\beta$ a \emph{fractional Dehn twist} by $c \in \Q$, where the \emph{fractional Dehn twist coefficient} $c$ is the rotation number of the circle map which $f$ projects to, e.g.
\[
c:= \lim_{n\to\infty} \frac{f^n(x)-x}{n} \mbox{ for any } x.
\]
When $\psi$ is periodic, $c = f(x) -x$ for any $x$, while {when $\psi$ is pseudo-Anosov, it is also possible to relate $c$ to a geometric quantity determined by the stable and unstable foliations.}  Furthermore, when $\partial \Sigma$ is not connected, number the boundary components and define a fractional Dehn twist coefficient $c_i$ for the $i^\text{th}$ component.

When $\phi$ is periodic, the total space $Y_\phi$ of its abstract open book is Seifert fibered, with the $S^1$ action induced by the mapping torus $[0,1]$ direction. When furthermore all the $c_i$ are positive, it is possible to find a compatible contact structure $\xi$ which is $S^1$-invariant and transverse to the $S^1$-fibers of the Seifert fibration. 

\begin{theorem}[{\cite[Thm.~4.1]{CH}}]\label{thm:CH} Suppose $(\Sigma,\phi)$ is an abstract open book with periodic monodromy $\phi$. Let $c_i$ be the fractional Dehn twist coefficient of the $i^\text{th}$ boundary component and assume all $c_i>0$. Then there is a Seifert fibration on $Y_\phi$ and a contact form $\lambda_\phi$ on $Y_\phi$ adapted to the open book decomposition $(B_\phi,\pi_\phi)$ whose Reeb vector field generates the Seifert fibration.
\end{theorem}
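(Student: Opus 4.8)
The plan is to build the Seifert fibration and the adapted contact form directly from the dynamics of the periodic monodromy, exploiting the fact that periodicity promotes the suspension flow to a genuine circle action. First I would replace $\phi$, which is the identity near $\partial\Sigma$, by its Nielsen-Thurston representative $\psi$: an honest finite-order diffeomorphism with $\psi^n=\id$ for some minimal $n$. The two agree as mapping classes only up to free isotopy, and the discrepancy at the $i^{\text{th}}$ boundary component is recorded precisely by the fractional Dehn twist coefficient $c_i$. Since $\psi$ is periodic, it acts on each boundary circle as a rigid rotation whose angle is governed by $c_i$, exactly as in the computation $c_i=f(x)-x$ from the definition preceding the statement.

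Next I would produce the Seifert fibration. Form the mapping torus $M_\psi=\Sigma\times\R/(x,s)\sim(\psi(x),s-1)$ and consider the suspension vector field $\partial_s$. Because $\psi^n=\id$, every orbit of $\partial_s$ is periodic, so $\partial_s$ generates a locally free $S^1$-action on $M_\psi$ whose quotient is the $2$-orbifold $\mathcal{O}=\Sigma/\langle\psi\rangle$, with cone points along the branch locus of $\Sigma\to\mathcal{O}$ and isotropy determined by the local rotation numbers of $\psi$. Filling in the binding solid tori to pass from $M_\psi$ to $Y_\phi$ extends this to a Seifert fibration of the closed manifold, the binding $B_\phi$ becoming a union of (possibly singular) fibers; the slope of each filling, hence the Seifert invariant at each binding component, is dictated by $c_i$. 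Comparing this with the abstract-open-book description of $Y_\phi$ confirms that the resulting Seifert structure is compatible with $(B_\phi,\pi_\phi)$, with each page projecting to $\mathcal{O}$ as a branched cover.

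Then I would construct $\lambda_\phi$ as a prequantization/connection form. The hypothesis that all $c_i>0$ is precisely what forces the orbifold Euler number of this Seifert fibration to have the correct (negative) sign, so that one may choose a connection $1$-form $\lambda_\phi$ on the $S^1$-orbibundle $Y_\phi\to\mathcal{O}$ whose curvature $d\lambda_\phi$ descends to a \emph{positive} orbifold area form on $\mathcal{O}$. Normalizing $\lambda_\phi$ to evaluate to $1$ on the infinitesimal $S^1$-generator makes it a contact form whose Reeb vector field is exactly that generator, i.e.\ generates the Seifert fibration. Adaptedness to the open book then follows: the binding fibers are Reeb orbits by construction, $d\lambda_\phi$ restricts to an area form on each page because the page projects to $\mathcal{O}$ and $d\lambda_\phi$ pulls back the orbifold area form, and the positivity of $c_i$ aligns the Reeb orientation of the binding with the boundary orientation of the pages.

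The step I expect to be the main obstacle is the third one: the precise bookkeeping translating positivity of the fractional Dehn twist coefficients into the sign condition on the orbifold Euler number needed for a connection with positive curvature, together with matching the Seifert invariants coming from the filling slopes to those coming from the rotation numbers of $\psi$. This is where orientation conventions must be tracked carefully and where one verifies that the construction yields the contact structure genuinely supported by the open book (as in the $\CP^1_{p,q}$ picture with Euler class $-\tfrac{1}{pq}$); the suspension and prequantization parts are otherwise a fairly standard package.
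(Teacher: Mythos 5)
The paper does not actually prove this statement; it imports it verbatim as \cite[Thm.~4.1]{CH}, and the proof given there (by way of Lisca--Mati\'c \cite{lm}) is exactly the construction you outline: pass to the finite-order Nielsen--Thurston representative, let the suspension flow on its mapping torus generate a locally free $S^1$-action with quotient the orbifold $\Sigma/\langle\psi\rangle$, fill in the binding along slopes dictated by the $c_i$ to obtain the Seifert fibration of $Y_\phi$, and use the resulting negative orbifold Euler number ($e = -\sum_i c_i$) to produce an $S^1$-invariant connection $1$-form whose Reeb vector field is the $S^1$-generator. Your proposal is therefore correct and takes essentially the same route as the cited proof; the one point to phrase carefully (which you do address at the end) is that when there are several boundary components, $e<0$ alone is weaker than the hypothesis, and it is the positivity of \emph{each} $c_i$ that both makes the filling slopes non-meridional and gives the orientation/adaptedness condition at every binding circle.
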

{In addition, \cite[Lem.~4.3]{CH} shows, via \cite{lm}, that given a Seifert fibered space $Y$ (with fibering fixed), if $\xi, \xi'$ are both $S^1$-invariant transverse contact structures, then they are isotopic.}

Explicit contact forms $\lambda_{p,q}$ and $\lambda_{p,q,\varepsilon}$ adapted to the open book $(T(p,q),\pi)$ are described in more detail in Example \ref{symporb} and Remark \ref{rem:obd}.

\subsubsection{Reeb return maps of supported contact forms}

Open book decompositions with supported contact forms are an example of a more general dynamical phenomenon, which we now define. (We draw from \cite[\S2.2]{cdhr}.)
\begin{definition}\label{def:gss}
Let $Y$ be a closed, connected, orientable smooth three-manifold with a vector field $X$. Denote the flow of $X$ for time $t$ by $\psi^t$. A \emph{global surface of section} for the flow of $X$ is an embedding $\iota:S\to Y$ of a compact surface $S$ such that:
\begin{enumerate}[(i)]
\item If $\partial S\neq\emptyset$ then $\iota(\partial S)$ is a link made up of periodic orbits of $X$.
\item The embedding $\iota:S\setminus\partial S\to Y\setminus\iota(\partial S)$ is transverse to $X$.
\item For every $p\in Y$ there are $t_-<0<t_+$ for which $\psi^{t_\pm}(p)\in\iota(S)$.
\end{enumerate}
\end{definition}

The pages of an open book decomposition are global surfaces of section for the Reeb flow of a supported contact form. When the monodromy $\phi$ is periodic, Colin-Honda's proof of Theorem \ref{thm:CH} implies a strong relationship between the fractional Dehn twist coefficients of $\phi$ and the \emph{rotation number} of the binding Reeb orbits, a concept we quickly review now: see \S\ref{ss:CZ} for a more detailed treatment.

Let $\gamma$ be a $T$-periodic elliptic Reeb orbit of $(Y,\lambda)$ and $\tau$ be the symplectic trivialization of $\xi|_\gamma$ which sends a constant pushoff of $\gamma$ into a Seifert surface to $\gamma\times\{(x,y)\}\in\gamma\times\R^2$, where $(x,y)$ is constant in the parameter of $\gamma$. Let $\psi^t$ denote the Reeb flow. Under the trivialization $\tau$, the path $d\psi^t:\xi|_{\gamma(0)}\to\xi|_{\gamma(t)}$ is homotopic relative to the conjugacy classes of its endpoints in $Sp(2;\R)=SL(2;\Z)$ to a path of rotation matrices. The \emph{rotation number} of $\gamma$ is the angle $\theta$ for which $d\psi^T$ is conjugate to rotation by $\theta$.

The proof of the following corollary is contained within the proof of Theorem \ref{thm:CH}.
\begin{corollary}\label{cor:FTDCrot} In the setting of Theorem \ref{thm:CH}, the rotation number of the $i^\text{th}$ binding component as a Reeb orbit is $1/c_i$.
\end{corollary}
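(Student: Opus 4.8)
The plan is to extract the proof of Corollary~\ref{cor:FTDCrot} directly from the geometric picture used to prove Theorem~\ref{thm:CH}, by matching the trivialization defining the rotation number against the product structure of the mapping torus near the binding. First I would recall that, since $\phi$ is periodic with period (say) $k$ on the page $\Sigma$, the flow generating the Seifert fibration is, near the $i^\text{th}$ binding component $B_i$, modeled on a standard local picture: a solid torus fibered by the $S^1$-action, with the binding orbit as the central singular (or regular) fiber and the nearby pages spiraling around it. The key point is that the rotation number is, by definition, the total angle by which the linearized Reeb flow rotates $\xi|_{B_i}$ relative to the Seifert-framed trivialization $\tau$ (the constant pushoff into a Seifert surface, i.e.\ a page) over one period $T$ of the binding orbit.

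The central computation I would carry out is to compare two natural ``clocks'' near $B_i$: the time it takes the Reeb flow to traverse the binding orbit once, versus the time it takes a nearby page to return to itself under the return map. Because $\phi$ is periodic and $\psi\cup\beta$ realizes the true monodromy $\phi$ relative to the boundary, the fractional Dehn twist coefficient $c_i$ records exactly how many times the meridional/page framing winds relative to the longitudinal framing over one full cycle of the fibration. Concretely, in the collar $C\times[0,1]$ the lift $\widetilde{\beta}$ shears the boundary circle by $c_i$, and when $\psi$ is periodic we have $c_i = f(x)-x$ for any $x$ as recorded in the text. I would then identify the binding orbit's period $T$ with the $S^1$-fiber through $B_i$, and observe that a page sweeps out angle $2\pi$ in the $[0,1]$-mapping-torus direction precisely once per return, so that over the fiber period the page-framing (which is the Seifert framing defining $\tau$) advances by $1/c_i$ of a full turn relative to the orbit. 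Thus $d\psi^T$ is conjugate to rotation by $\theta = 1/c_i$ in the $\tau$-trivialization.

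The step I expect to be the main obstacle is getting the normalization and orientation conventions exactly right: the rotation number is defined via the Seifert-framed trivialization $\tau$ (constant pushoff with linking number zero, per Remark~\ref{rem:linkingtriv}), whereas the fractional Dehn twist coefficient is defined via the boundary-parallel framing coming from the collar identification $C\simeq\R/\Z$. I would need to verify that these two framings agree (the page meets $B_i$ along its boundary, which is the natural Seifert framing), so that no integer-shift ambiguity is introduced, and that the sense in which $\psi$ rotates near the boundary matches the positive transverse coorientation of $\xi$. Once the framings are reconciled, the reciprocal relationship $\theta = 1/c_i$ follows because the rotation number measures angle-per-unit-Reeb-time while $c_i$ measures twisting-per-page-return, and these two parametrizations are inverse to one another along the singular fiber. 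I would close by noting that this identification is precisely what is established inside Colin–Honda's construction in \cite[Thm.~4.1]{CH}, so the corollary is a direct readout of that proof rather than a separate argument.
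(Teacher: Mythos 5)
Your proposal is correct and takes essentially the same route as the paper: the paper's entire proof is the observation that the statement is contained in Colin--Honda's proof of Theorem \ref{thm:CH}, and your local computation near the binding (meridional advance $2\pi$ per page return versus longitudinal advance $2\pi c_i$, read against the page/Seifert framing, giving rotation by $2\pi/c_i$ per binding period) is precisely the computation that proof contains; it also reappears explicitly in Step 5 of the proof of Proposition \ref{prop:constructcontact}, where the rotation number comes out as $1/\theta_0$. Your concern about reconciling the collar framing with the pushoff-linking-zero framing is the right normalization check, and resolving it as you indicate (the page framing is the Seifert framing) closes the argument.
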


Corollary \ref{cor:FTDCrot} implies that in some cases, there is a strong relationship between the periodic representative of the monodromy and the Reeb flow. We provide a few more definitions in order to explain this relationship.

\begin{definition}\label{def:return} Let $S$ be a global surface of section for the flow of $X$ on $Y$.
\begin{itemize}
\item The \emph{return time} of the flow of $X$ is a map $f:S\to\R_{>0}$ given by sending a point $x\in S$ to the minimum $t$ so that $\psi^t(\iota(x))\in\iota(S)$. It extends continuously to $\partial S$ in our setting (see \cite[Def.~2.5]{cdhr} for the precise requirements).
\item The \emph{return map} of the flow of $X$ is the diffeomorphism
\[
\psi:S\to S \mbox{ given by } x\mapsto\iota^{-1}\circ\psi^{f(x)}\circ\iota(x).
\]
\end{itemize}
\end{definition}

\begin{remark}
In the context of a contact form supported by an open book decomposition, the return map must be freely isotopic to the monodromy, but they must differ near the boundary. In some cases, including the setting of Theorem \ref{thm:CH}, the return map even equals the Nielsen-Thurston representative of the monodromy (providing a degenerate Morse-Bott contact form).
\end{remark}

\begin{example}\label{cor:CHdetails} The open book decomposition $(T(p,q),\pi_{p,q})$ of Example \ref{ex:Tpq} falls under the hypotheses of Theorem \ref{thm:CH}. Thus there is a contact form $\lambda_{p,q}$ on $S^3$ for which:
\begin{enumerate}[ (i)]
\itemsep-.25em
\item The (right handed) $pq$-periodic representative $\psi_{p,q}$ of the monodromy $\phi_{p,q}$ is the return map of the Reeb vector field $R$ of $\lambda_{p,q}$.
\item $R_{p,q}$ generates the $S^1$-action on $S^3$ with fundamental domain given by an orbifold 2-sphere with two exceptional points, one with isotropy group $\Z/p\Z$ and the other with isotropy group $\Z/q\Z$.
\item The binding  $T(p,q)$ is a regular fiber of the Seifert fibration, and as a Reeb orbit has rotation number $pq$.
\end{enumerate}
\end{example}

We directly construct $\lambda_{p,q}$ and establish these properties in \S \ref{s:topology}.

\subsection{The Calabi invariant}\label{ss:calabi}

We now provide further details on the action function and Calabi invariant, supplementing \S\ref{ss:introcalabi}. Recall that $\Sdg$ is a surface of genus $g=(p-1)(q-1)/2$ with one boundary component and an exact symplectic form $\omega$. Furthermore, we have put collar coordinates $(r,\theta)$ with $\sqrt{1-\varepsilon^2}\leq r\leq 1, \theta\in\R/2\pi\Z$ on a neighborhood of $\partial\Sdg$, and any primitive $\beta$ of $\omega$ near $\partial\Sdg$ has the form
\begin{equation}\label{eqn:betaboundary}
\beta=\frac{r^2}{2\pi}\,d\theta.
\end{equation}

Further recall that $\psi$ is a symplectomorphism of $(\Sdg,\omega)$, which is freely isotopic to the monodromy $\phi_{p,q}$ of $(T(p,q),\pi_{p,q})$ from Example \ref{ex:Tpq} and a rotation
\[
\psi(r,\theta)=(r,\theta+\theta_0)
\]
in the boundary coordinates. (We often suppress $p,q$ decorations for simplicity.)

The following lemma allows us to construct the action function $f$, i.e., the primitive of $\psi^*\beta-\beta$ with $f|_{\partial\Sdg}=\theta_0$.
\begin{lemma}\label{lem:psiexact} There is a primitive $\beta$ of $\omega$ for which $\psi$ is exact, that is,
\begin{equation}\label{eqn:psiexact}
[\psi^*\beta-\beta]=0\in H^1(\Sdg;\R).
\end{equation}
Moreover, any two such primitives $\beta,\beta'$ differ by an exact one form which is the derivative of a function which is constant near the boundary of $\Sdg$.
\end{lemma}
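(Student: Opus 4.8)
The plan is to obtain the desired primitive by correcting an arbitrary one through the action of $\psi$ on $H^1(\Sdg;\R)$, with the crucial input being that $\psi^*-\id$ is invertible on $H^1(\Sdg;\R)$. First I would fix any primitive $\beta_0$ of $\omega$ with the prescribed collar form $\frac{r^2}{2\pi}\,d\theta$. Since $\psi$ is a symplectomorphism, $d(\psi^*\beta_0-\beta_0)=\psi^*\omega-\omega=0$, so $\psi^*\beta_0-\beta_0$ is closed and determines a class in $H^1(\Sdg;\R)$. Moreover, because $\psi$ is the rotation $(r,\theta)\mapsto(r,\theta+\theta_0)$ in the collar and $\beta_0=\frac{r^2}{2\pi}\,d\theta$ there, one checks $\psi^*\beta_0=\beta_0$ near $\partial\Sdg$, so $\psi^*\beta_0-\beta_0$ vanishes near the boundary. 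Any other admissible primitive has the form $\beta=\beta_0+c$ with $c$ closed and vanishing near $\partial\Sdg$ (so that $\beta$ retains the collar form), and then $[\psi^*\beta-\beta]=[\psi^*\beta_0-\beta_0]+(\psi^*-\id)[c]$. Thus it suffices to solve $(\psi^*-\id)[c]=-[\psi^*\beta_0-\beta_0]$ in $H^1(\Sdg;\R)$ with $[c]$ realized by a form supported in the interior.

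Two points make this solvable. First, every class in $H^1(\Sdg;\R)$ admits a representative vanishing near $\partial\Sdg$: for closed $\alpha$, Stokes gives $\int_{\partial\Sdg}\alpha=\int_{\Sdg}d\alpha=0$, so the restriction $H^1(\Sdg;\R)\to H^1(\partial\Sdg;\R)$ is zero, and the long exact sequence of $(\Sdg,\partial\Sdg)$ shows $H^1(\Sdg,\partial\Sdg;\R)\to H^1(\Sdg;\R)$ is onto. Second, $\psi^*-\id$ is invertible on $H^1(\Sdg;\R)$: since $\psi$ is freely isotopic to $\phi_{p,q}$ it is homotopic to it, so $\psi^*=\phi_{p,q}^*$, and the characteristic polynomial of $\phi_{p,q}^*$ on $H_1$ of the genus-$g$ fiber is the Alexander polynomial $\Delta_{T(p,q)}$, which satisfies $|\Delta_{T(p,q)}(1)|=1$; hence $\det(\id-\phi_{p,q}^*)=\pm1\neq0$, so $1$ is not an eigenvalue. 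Solving for $[c]$ and choosing an interior-supported representative $c$ yields $\beta=\beta_0+c$ with all required properties, establishing existence.

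For uniqueness, suppose $\beta,\beta'$ are both admissible primitives making $\psi$ exact, and set $c:=\beta'-\beta$. Then $c$ is closed (both are primitives of $\omega$) and vanishes near $\partial\Sdg$ (both have the collar form). Subtracting $\psi^*\beta-\beta=df$ and $\psi^*\beta'-\beta'=df'$ gives $(\psi^*-\id)c=d(f'-f)$, so $(\psi^*-\id)[c]=0$ in $H^1(\Sdg;\R)$; invertibility forces $[c]=0$, whence $c=dg$ for some function $g$. Since $c$ vanishes on the connected collar, $dg=0$ there and $g$ is constant near $\partial\Sdg$, which is precisely the claimed description of $\beta'-\beta$.

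The main obstacle is the interplay between the prescribed collar normalization and the cohomological correction: the adjusting form $c$ must simultaneously realize a prescribed class in $H^1(\Sdg;\R)$ and vanish near $\partial\Sdg$ so that $\beta$ keeps the form $\frac{r^2}{2\pi}\,d\theta$. This is resolved by the Stokes computation, which shows the boundary restriction vanishes (so that interior-supported representatives suffice), together with the nonvanishing $\Delta_{T(p,q)}(1)=\pm1$ underlying invertibility of $\psi^*-\id$; I would take care to record that free isotopy, rather than isotopy rel boundary, is enough to guarantee $\psi^*=\phi_{p,q}^*$.
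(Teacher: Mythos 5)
Your proof is correct, and its skeleton is the same as the paper's: fix a primitive $\beta_0$, correct it by a closed form whose cohomology class solves $(\psi^*-\id)[c]=-[\psi^*\beta_0-\beta_0]$, and deduce the uniqueness clause from injectivity of $\psi^*-\id$ on $H^1(\Sdg;\R)$. The genuine difference is how the key input --- that $1$ is not an eigenvalue of $\psi^*$ --- is established. The paper argues by contradiction: capping off the boundary component does not change the action on $H_1$ \cite[\S6.3]{farbmarg}, and by Etnyre--Ozbagci's computation of $H_1$ of the total space of an open book in terms of $\id-\phi_*$ \cite[\S2.1]{eo}, an eigenvalue $1$ for $\phi_*$ would force $H_1(S^3;\Z)\neq0$. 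You instead use the fibered-knot fact that the characteristic polynomial of the monodromy acting on $H_1$ of the fiber is the Alexander polynomial, together with $\Delta_{T(p,q)}(1)=\pm1$, to conclude $\det(\id-\phi_{p,q}^*)=\pm1$. These are two packagings of the same unimodularity statement (both ultimately rest on $H_1(S^3)=0$, equivalently on the unimodular intersection pairing of the fiber), so your route is equally valid; the paper's version generalizes verbatim to open books in any integer homology sphere, while yours stays within classical knot theory but is specific to fibered knots in $S^3$. One respect in which your write-up is more careful than the paper's: you verify that the correcting class admits a representative vanishing near $\partial\Sdg$ (via Stokes' theorem and the long exact sequence of the pair), so that $\beta=\beta_0+c$ still satisfies the collar normalization (\ref{eqn:betaboundary}). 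The paper's proof does not address this point, even though the normalization is needed both for the uniqueness clause and for the way the lemma is used later (e.g.\ in Lemma \ref{lem:Vindep}). Your uniqueness argument otherwise matches the paper's.
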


\begin{proof} Let $\beta_0$ be any primitive of $\omega$. First note that $\psi^*\beta_0-\beta_0$ is immediately closed by the fact that $\psi$ is a symplectomorphism, and so defines a class in $H^1(\Sdg;\R)$.

\medskip

\textbf{Claim:} There is a closed 1-form $\alpha_0\in\Omega^1(\Sdg;\R)$ for which
\[
0=[\psi^*(\beta_0+\alpha_0)-(\beta_0+\alpha_0)]=[\psi^*\beta_0-\beta_0]+(\psi-id)^*[\alpha_0].
\]
Assuming the claim, we set $\beta=\beta_0+\alpha_0$, which is also a primitive for $\omega$ because $\alpha_0$ is closed.

\medskip

The claim is true so long as the pullback action of $\psi-id$ on $H^1(\Sdg;\R)$ is surjective. This action is dual to the pushforward action on $H_1(\Sdg;\R)$. Thus to prove the claim it remains to show that $\psi_*$ does not have one as an eigenvalue. Assuming by contradiction that $\psi_*$ does have one as an eigenvalue, then $\phi_*$ must also because capping off a single boundary component has no effect on the action on $H_1$ by \cite[\S6.3]{farbmarg}. In \cite[\S2.1]{eo}, Etnyre-Ozbagci compute $H_1$ of the total space of an open book decomposition in terms of $\phi$, and if $\phi$ has one as an eigenvalue then the total space must have nontrivial $H_1$. But this contradicts the fact that $H_1(S^3;\Z)=0$.

The proof that any two primitives of $\omega$ satisfying (\ref{eqn:psiexact}) can only differ by an exact one form is similar: if $\beta-\beta'=[\alpha]\in H^1(\Sdg;\R)$, then
\[
0=[\psi^*\beta'-\beta']=[\psi^*(\beta+\alpha)-(\beta+\alpha)]=[\psi^*\beta-\beta]+(\psi-id)^*[\alpha]=(\psi-id)^*[\alpha],
\]
and we know by the argument above that $\psi^*$ does not have one as an eigenvalue. Finally, if $\alpha=dh$ for $h:\Sdg\to\R$, the fact that both $\beta$ and $\beta+\alpha$ satisfy (\ref{eqn:betaboundary}) forces $h$ to be constant near the boundary of $\Sdg$.
\end{proof}

\begin{remark} The action function depends on $\beta$. If $h:\Sdg\to\R$ is a smooth function which is constant near the boundary, then
\[
\psi^*(\beta+dh)-(\beta+dh)=df+\psi^*(dh)-dh=d(f+h\circ\psi-h),
\]
showing that
\begin{equation}\label{eqn:fdg}
f_{\psi,\beta+dh}=f+h\circ\psi-h.
\end{equation}
Note that we do not need to consider any other variations in $\beta$ (e.g., by a closed but not exact 1-form) by the uniqueness part of Lemma \ref{lem:psiexact}.
\end{remark}

The Calabi invariant is defined by 
\[
\V(\psi):=\int_{\Sdg}f\omega
\]
and is independent of $\beta$ but does depend on $\theta_0$. While the action function depends on $\beta$, the Calabi invariant does not.
\begin{lemma}\label{lem:Vindep} If $\beta, \beta'$ both satisfy (\ref{eqn:betaboundary}) and (\ref{eqn:psiexact}) then $\V_\beta(\psi)=\V_{\beta'}(\psi)$.
\end{lemma}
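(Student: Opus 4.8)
The plan is to show that the Calabi invariant $\V_\beta(\psi) = \int_{\Sdg} f_{\psi,\beta,\theta_0}\,\omega$ is unchanged when we replace $\beta$ by another primitive $\beta'$ of $\omega$ satisfying both the boundary normalization (\ref{eqn:betaboundary}) and the exactness condition (\ref{eqn:psiexact}). By the uniqueness part of Lemma \ref{lem:psiexact}, the only freedom in $\beta$ is $\beta' = \beta + dh$ where $h:\Sdg\to\R$ is a smooth function that is \emph{constant near the boundary} $\partial\Sdg$. So the entire question reduces to computing $\V_{\beta+dh}(\psi) - \V_\beta(\psi)$ and verifying it vanishes for all such $h$.

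First I would invoke the explicit change-of-action-function formula (\ref{eqn:fdg}), namely $f_{\psi,\beta+dh} = f + h\circ\psi - h$, which was recorded in the remark immediately following the proof of Lemma \ref{lem:psiexact}. Substituting into the definition of the Calabi invariant gives
\[
\V_{\beta+dh}(\psi) - \V_\beta(\psi) = \int_{\Sdg}(h\circ\psi - h)\,\omega.
\]
The key step is then to show $\int_{\Sdg} (h\circ\psi)\,\omega = \int_{\Sdg} h\,\omega$. This is exactly where the hypothesis that $\psi$ is area-preserving does the work: since $\psi$ is a symplectomorphism of $(\Sdg,\omega)$, it preserves the area form $\omega$, so by the change-of-variables formula $\int_{\Sdg}(h\circ\psi)\,\omega = \int_{\Sdg} h\,(\psi^{-1})^*\omega = \int_{\Sdg} h\,\omega$. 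Hence the difference of the two integrals is zero and $\V_{\beta}(\psi) = \V_{\beta'}(\psi)$, as desired.

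The only point requiring genuine care — and what I expect to be the main (mild) obstacle — is ensuring that the change-of-variables step is legitimate on a surface \emph{with boundary}, i.e. that no boundary term is lost and that $\psi$ really maps $\Sdg$ to itself as an orientation- and area-preserving diffeomorphism including along $\partial\Sdg$. Here the hypotheses on $h$ and $\psi$ cooperate: $\psi$ is a diffeomorphism of $\Sdg$ restricting to the rotation $(r,\theta)\mapsto(r,\theta+\theta_0)$ near the boundary, so it preserves the collar and $\partial\Sdg$ setwise, and the substitution $\int_{\Sdg}(h\circ\psi)\,\omega = \int_{\psi(\Sdg)} h\,\omega = \int_{\Sdg} h\,\omega$ has no boundary correction because $\psi(\Sdg)=\Sdg$. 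Strictly speaking one does not even need $h$ to be constant near the boundary for this computation, but that property (guaranteed by Lemma \ref{lem:psiexact}) reassures us that $f_{\psi,\beta+dh}$ continues to satisfy $f|_{\partial\Sdg}=\theta_0$, so that $\beta+dh$ is an admissible primitive defining the \emph{same} boundary data $\theta_0$ and the comparison of Calabi invariants is being made between genuinely comparable objects. This completes the argument.
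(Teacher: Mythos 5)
Your proof is correct and takes essentially the same route as the paper: the paper's proof simply defers to \cite[Lem.~1.4]{weiler}, noting that Lemma \ref{lem:psiexact} (so $\beta'-\beta=dh$ with $h$ constant near $\partial\Sdg$) and formula (\ref{eqn:fdg}) are the key inputs, and your argument is exactly that cited computation written out in full, using $\psi^*\omega=\omega$ to show $\int_{\Sdg}(h\circ\psi-h)\,\omega=0$.
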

\begin{proof} The proof is the same as that of \cite[Lem.~1.4]{weiler} because $\beta$ and $\beta'$ must differ by an exact one form by Lemma \ref{lem:psiexact}. For $\psi$ whose action on $H^1$ does have one as an eigenvalue, the proof from \cite{weiler} would not translate to our setting, but we are lucky that our $\psi_*$ does not have one as an eigenvalue (see the proof of the claim in the previous lemma).\footnote{In the cases of the annulus studied in \cite{weiler} or the disk in \cite{HuMAC}, this difficulty does not arise, because the total space of an open book on the disk or annulus is always a lens space with $H^1=0$.} Note that (\ref{eqn:fdg}) is key to the proof of \cite[Lem.~1.4]{weiler}, which is why we make special note of it in the remark above.
\end{proof}

Similarly, the total action does not depend on $\beta$.
\begin{lemma}\label{lem:Aindep} The total action $\A(\gamma)$ is independent of the choice of $\beta$ satisfying (\ref{eqn:psiexact}).
\end{lemma}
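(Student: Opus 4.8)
The plan is to show that the total action $\A(\gamma) = \sum_{i=1}^\ell f(\gamma_i)$ of a periodic orbit $\gamma = (\gamma_1,\dots,\gamma_\ell)$ is unchanged when we replace the primitive $\beta$ by another primitive $\beta'$ satisfying \eqref{eqn:psiexact}. By Lemma \ref{lem:psiexact}, any two such primitives differ by an exact one-form $dh$ where $h\colon\Sdg\to\R$ is constant near $\partial\Sdg$, so it suffices to compare the action functions $f = f_{\psi,\beta,\theta_0}$ and $f' = f_{\psi,\beta+dh,\theta_0}$. The key input is the transformation formula \eqref{eqn:fdg}, which tells us that
\[
f' = f + h\circ\psi - h.
\]
First I would substitute this into the definition of the total action with respect to $\beta'$.

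The central computation is then the telescoping sum. Summing the transformation formula over the orbit gives
\[
\A_{\beta'}(\gamma) = \sum_{i=1}^\ell f'(\gamma_i) = \sum_{i=1}^\ell f(\gamma_i) + \sum_{i=1}^\ell\bigl(h(\psi(\gamma_i)) - h(\gamma_i)\bigr).
\]
The point is that along a periodic orbit we have $\psi(\gamma_i) = \gamma_{i+1 \bmod \ell}$ by the very definition of a periodic orbit, so the second sum becomes $\sum_{i=1}^\ell\bigl(h(\gamma_{i+1\bmod\ell}) - h(\gamma_i)\bigr)$, which telescopes to zero since each $h(\gamma_j)$ appears once with a plus sign and once with a minus sign as $i$ ranges over $\Z/\ell\Z$. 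Hence $\A_{\beta'}(\gamma) = \A_\beta(\gamma)$, which is exactly the claim. This is essentially the same mechanism that makes an action-type sum around a loop independent of a coboundary, and it is the discrete analogue of the fact that $\int_\gamma dh = 0$ over a closed loop.

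I expect this argument to be almost entirely routine: the only genuine step is invoking \eqref{eqn:fdg} correctly and observing that the periodicity $\psi(\gamma_i)=\gamma_{i+1\bmod\ell}$ makes the sum telescope cyclically. The mild subtlety worth noting is that \eqref{eqn:fdg} was derived for primitives differing by $dh$ with $h$ constant near the boundary, and Lemma \ref{lem:psiexact} guarantees precisely that any two primitives satisfying \eqref{eqn:psiexact} (and the boundary normalization \eqref{eqn:betaboundary}) differ in this way, so the hypotheses line up. One should also confirm that the boundary normalization $f|_{\partial\Sdg}=\theta_0$ is preserved under the change of primitive—but this is immediate from $f'=f+h\circ\psi-h$ together with the fact that $\psi$ preserves the boundary and $h$ is constant there, so $h\circ\psi - h \equiv 0$ near $\partial\Sdg$ and both action functions restrict to $\theta_0$ on the boundary. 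Thus no genericity, Hamiltonian, or nondegeneracy hypotheses enter, and the proof is a short cyclic telescoping argument identical in spirit to the proof of Lemma \ref{lem:Vindep}.
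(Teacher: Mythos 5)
Your proof is correct, but it takes a genuinely different route from the one in the paper. The paper's proof adapts Hutchings' path-based argument for \cite[Lem.~1.1]{HuMAC}: it expresses the difference of the two total actions as a line integral $\int_{\psi_*\eta_{\ell-1}-\eta_1}dh$ over a $1$-chain whose endpoints lie on $\partial\Sdg$, and where Hutchings could use that this chain is homologous to a path along the boundary of the disk, the paper instead invokes Lemma \ref{lem:psiexact} to know the difference of primitives is exactly $dh$ with $h$ constant near $\partial\Sdg$, so the integral equals a difference of boundary values of $h$ and vanishes. You bypass the path bookkeeping entirely: you apply the transformation formula (\ref{eqn:fdg}) pointwise at the orbit points and let the cyclic sum telescope, so the cancellation is purely combinatorial (coming from $\psi$ permuting the points of $\gamma$) and does not need the constancy of $h$ near the boundary at that final stage --- that hypothesis enters only through Lemma \ref{lem:psiexact} and the derivation of (\ref{eqn:fdg}) itself. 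Your argument is the exact discrete analogue of the paper's proof of Lemma \ref{lem:Vindep} (where (\ref{eqn:fdg}) is the key input and $\psi$-invariance of $\omega$ replaces the permutation of orbit points), it is self-contained, and it sidesteps the indexing issue in \cite{HuMAC} that the paper's footnote has to correct; what the paper's route buys instead is an explicit record of precisely which step of Hutchings' original argument fails in higher genus and what replaces it. One small logical point worth tightening in your write-up: the identity $f'=f+h\circ\psi-h$ is not something to verify \emph{after} checking the boundary normalization, but is pinned down \emph{by} it --- both sides have the same differential, and they agree (with value $\theta_0$) near $\partial\Sdg$ because $h\circ\psi-h\equiv0$ there --- which is exactly how the paper's remark establishes (\ref{eqn:fdg}), so citing that equation as you do is legitimate.
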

\begin{proof} The proof is the same as that of \cite[Lem.~1.1]{HuMAC}. However, instead of using the fact that the path $\psi_*\eta_{\ell-1}-\eta_1$ is homologous to a path along the boundary,\footnote{Note that in \cite[Lem.~1.1]{HuMAC}, the path referred to as $\phi_*\eta_d$ ought to be $\phi_*\eta_{d-1}$.} we instead use Lemma \ref{lem:psiexact} to argue that any other $\beta'$ must be of the form $\beta+dh$ where $h:\Sdg\to\R$ is constant near the boundary. Therefore the difference between the total action of a simple orbit $\gamma$ computed with $f_{\psi,\beta}$ and with $f_{\psi,\beta+dh}$ is precisely
\[
\int_{\psi_*\eta_{\ell-1}-\eta_1}dh,
\]
which equals the difference in the values of $h$ at the two boundary endpoints of the path $\psi_*\eta_{\ell-1}-\eta_1$. Since $h$ is constant near $\partial\Sdg$, this difference is zero.
\end{proof}

The following example, explaining the dependence of $\V(\psi)$ on $\psi$ near $\partial\Sdg$, will be key to the proof of Theorem \ref{thm:Calabi} in \S\ref{ss:mac}. Specifically, we analyze the effect on the action function $f$, the total action $\A$, and the Calabi invariant $\V$ of composing $\psi$ with an integrable\footnote{Here ``integrable" means of the form $(r,\theta)\mapsto(r,\theta+g(r))$ for some smooth function $g$ of $r$.} right-handed twist in the boundary annulus of $\Sdg$, where $\beta$ is standard. It is very similar to the computation appearing in Step 2 of the proof of \cite[Thm.~1.2]{HuMAC} assuming \cite[Prop.~2.2]{HuMAC}, but with the emphasis placed on the change in boundary rotation and explanations added for a few details.

\begin{example}\label{ex:Vtwist} Let $\zeta>0$ be small and let $\varepsilon:\left[1-\zeta,1\right]\to[\varepsilon(1),0]$ be a smooth monotonically decreasing function which is identically zero near $\{r=1-\zeta\}$ and identically $\varepsilon(1)<0$ near $\{r=1\}$. Define a right handed twist in the boundary annulus by
\[
T_\varepsilon(r,\theta)=(r,\theta+2\pi\varepsilon(r)),
\]
meaning that
\[
T_\varepsilon^*\beta-\beta=\begin{pmatrix}1&2\pi\varepsilon'(r)\\0&1\end{pmatrix}\begin{pmatrix}0\\r^2/2\pi\end{pmatrix}-\begin{pmatrix}0\\r^2/2\pi\end{pmatrix}=r^2\varepsilon'(r)\,dr=df_\varepsilon.
\]
Now let $\hat\psi=T_\varepsilon\circ\psi$ and $\hat f=f_{\hat\psi,\beta,\theta_0+\varepsilon(1)}$. The fact that $\psi^*=id$ on the collar containing the support of $T_\varepsilon$ means that on that collar
\[
d\hat f=\hat\psi^*\beta-\beta=\psi^*T_\varepsilon^*\beta-\beta=\psi^*(df_\varepsilon)=df_\varepsilon,
\]
as $df_\varepsilon$ is invariant under rotation.

Recall from Definition \ref{def:f} that the value of the action function at the boundary equals the amount by which the map rotates along the boundary. Thus
\[
\hat f(1,\theta)=\theta_0+\varepsilon(1).
\]
By the fundamental theorem of line integrals,
\[
\hat f(R,\theta)=\hat f(1,\theta)+\int_1^Rr^2\varepsilon'(r)\,dr.
\]
Combining the previous two equations results in
\begin{equation}\label{eqn:fd}
\hat f(R,\theta)=\theta_0+\varepsilon(1)-\int_R^1r^2\varepsilon'(r)\,dr.
\end{equation}
Integrating by parts, we obtain
\[
\hat f(R,\theta)=\theta_0+R^2\varepsilon(R)+\int_R^12r\varepsilon(r)\,dr.
\]
Using the estimates $0\geq\varepsilon(r)\geq\varepsilon(1)$, we calculate that on the collar support of $T_\varepsilon$,
\begin{equation}\label{eqn:Cdiff}
\theta_0+R^2\varepsilon(R)\geq\hat f\geq\theta_0+R^2\varepsilon(R)+\varepsilon(1)(1-R^2).
\end{equation}
In particular, the lower bound in (\ref{eqn:Cdiff}) and the fact that $\varepsilon$ is nonincreasing implies that on the support of $T_\varepsilon$,
\begin{equation}\label{eqn:Cflb}
\hat f\geq\theta_0+\varepsilon(1)=f+\varepsilon(1).
\end{equation}

Away from the collar we still have $d\hat f=df$. Therefore for $x\in\Sdg$ away from the collar, by the fundamental theorem of line integrals applied to any curve $\gamma$ from a point $(1-\zeta,\theta)$ abutting the collar support of $T_\varepsilon$ to $x$, we have:
\[
\hat f(x)-\hat f(1-\zeta,\theta)=\int_\gamma df=f(x)-f(1-\zeta,\theta).
\]
Using our computation in (\ref{eqn:fd}) to rewrite $\hat f(1-\zeta,\theta)$ and using the fact that $f(1-\zeta,\theta)=\theta_0$ because $(1-\zeta,\theta)$ is in the collar on which $f\equiv\theta_0$, explained in Remark~\ref{rmk:f}(ii), we may rearrange the equation above to obtain
\begin{align*}
\hat f(x)&=f(x)+\left(\theta_0+\varepsilon(1)-\int_{1-\zeta}^1r^2\varepsilon'(r)\,dr\right)-\theta_0
\\&=f(x)+\varepsilon(1)-\int_{1-\zeta}^1r^2\varepsilon'(r)\,dr.
\end{align*}
Integrating by parts and using our bounds on $\varepsilon(r)$ again, we obtain
\begin{equation}\label{eqn:hatdiff}
f\geq \hat f\geq f+\varepsilon(1)(2\zeta-\zeta^2)
\end{equation}
away from the support of $T_\varepsilon$.

We may now compute $\V(\hat\psi)$. From (\ref{eqn:hatdiff}) and (\ref{eqn:Cdiff}), using the fact that $\varepsilon(R)<0$, we obtain the upper bound $\V(\hat\psi)\leq\V(\psi)$. For the lower bound, let $C$ denote the collar support of $T_\varepsilon$. Using the lower bounds in (\ref{eqn:Cflb}) and (\ref{eqn:hatdiff}), as well as the fact that $\int_{\Sdg}\omega=1$, where $\omega=d\beta$ and on the collar $C$ we have $\beta=\frac{r^2}{2\pi}\,d\theta$,
\begin{align}
\V(\hat\psi)&\geq\int_0^{2\pi}\int_{1-\zeta}^1(f+\varepsilon(1))\cdot\frac{r}{\pi}\,dr\wedge d\theta+\int_{\Sdg\setminus C}\left(f+\varepsilon(1)(2\zeta-\zeta^2)\right)\omega \nonumber
\\&=\V(\psi)+\varepsilon(1)(2\zeta-\zeta^2).\label{eqn:Vdiff}
\end{align}
\end{example}

\subsection{Contact manifolds from Reeb return maps}\label{ss:3from2}

The following `suspension' construction provides a dictionary between the dynamics of an exact symplectomorphism $\psi$, which is freely isotopic to some $T(p,q)$ monodromy $\phi_{p,q}$ on $\Sdg$, and the Reeb dynamics of a contact form $\lambda_\psi$ on $(S^3,\xi_{std})$ adapted to the open book decomposition with binding $T(p,q)$ and Reeb return map $\psi$. The utility is that, by the Giroux correspondence (Theorem \ref{thm:giroux}), knowing only that $\lambda_\psi$ is adapted to $(T(p,q),\pi_{p,q})$ is enough to know its contactomorphism type. Along with a careful comparison between $\psi$ and $\phi_{p,q}$ near $\partial\Sdg$ (\`a la Corollary \ref{cor:FTDCrot}), this will allow us in \S\ref{ss:mac} to use our computation of knot-filtered ECH in Theorem \ref{thm:kech-intro} for the simpler contact form $\lambda_{p,q}$ of Example \ref{cor:CHdetails} and obtain consequences for Reeb orbits of $\lambda_\psi$.

In the course of the proof, we relate $\psi$ to the model map $\psi_{p,q}$, the Nielsen-Thurston representative of $\phi_{p,q}$ as in Example \ref{cor:CHdetails} (i),  via \emph{partial Dehn twists}, which we now define. We will use $T_d$ to denote the {partial Dehn twist} on the boundary collar annulus of $\Sdg$ in coordinates $(r,\theta)\in\left[\sqrt{1-\zeta^2},1\right]\times[0,2\pi]$ given by
\[
T_d(r,\theta)=(r,\theta+D_d(r)),
\]
where $D_d:\left[\sqrt{1-\zeta^2},1\right]\to[0,x]$ is a smooth monotone function which is identically zero near $\sqrt{1-\zeta^2}$ and identically $d$ near 1. In Example \ref{ex:Vtwist} we considered $T_\varepsilon$, which agrees with the case of $d=\varepsilon(1)<0$ so that the partial twist is a right-handed twist with some stretching of the collar radius from $\sqrt{1-\zeta^2}$ to $1-\zeta$ that is not significant. Here we will consider both $d<0$ and $d>0$ so that the partial twist might be left-handed. In both cases, we assume $D_d$ is monotone.

\begin{proposition}\label{prop:constructcontact}
Let $\psi$ be a symplectomorphism of $\Sdg$ which is freely isotopic to $\phi_{p,q}$ and which is isotopic relative to the boundary to $T_{2\pi d}\circ\psi_{p,q}$ for some $-\frac{1}{pq}<d<0$. Let $\theta_0=\frac{1}{pq}+d$ and assume that $f=f_{\psi,\beta,\theta_0}>0$. Then there is a tight contact form $\lambda_\psi$ on $S^3$ for which:
\begin{enumerate}[\em (i)]
\itemsep-.25em
    \item\label{1} The open book decomposition $(T(p,q),\pi_{p,q})$ of $S^3$ and associated abstract open book $(\Sdg,\phi_{p,q})$ 
    is adapted to $\lambda_\psi$. The return time of the flow of its Reeb vector field $R_\psi$ is $f$ and its return map is $\psi$.
    \item\label{2} The binding orbit $b$ has action $1$ and is elliptic with rotation number in the push off linking zero (equivalently, page; see \S\ref{ss:trivs}) trivialization
    \[
    \frac{1}{\theta_0}=\frac{1}{\frac{1}{pq}+d}.
    \]
    \item\label{3} There is a bijection $\P(\psi)\cup\{b\}\to\P(\lambda_\psi)$, which, when restricted to $\P(\psi)$, sends total action to symplectic action and period to intersection number with a single page (which equals the linking number with the binding orbit $b$).
    \item\label{4} $\operatorname{vol}(S^3,\lambda_\psi)=\mathcal{V}(\psi)$.
\end{enumerate}
\end{proposition}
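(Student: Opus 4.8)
The plan is to realize $\lambda_\psi$ as a suspension (contact mapping torus) of $\psi$, to cap it off along the binding, and then to identify the resulting contact manifold using the Giroux correspondence. First I would work on the complement of the binding. Lemma \ref{lem:psiexact} provides a primitive $\beta$ with $\psi^*\beta-\beta=df$; on the region $\{(x,s):x\in\Sdg\setminus\partial\Sdg,\ 0\le s\le f(x)\}$ I would set $\lambda=ds+\beta$ and glue $(x,f(x))$ to $(\psi(x),0)$. The one-form $\lambda$ descends to a contact form on the resulting mapping torus precisely because of exactness: pulling back across the gluing, $ds\mapsto ds-df$ and $\beta\mapsto\psi^*\beta$, so the identity $\psi^*\beta-df=\beta$ is exactly what makes $\lambda$ match. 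Here $d\lambda=\omega$ is pulled back from $\Sdg$, whence $\lambda\wedge d\lambda=ds\wedge\omega>0$ (using $f>0$), the Reeb vector field is $R=\partial_s$, the return time is $f$, and the return map is $\psi$. This yields the interior content of (\ref{1}).

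Next I would cap off the binding. Near $\partial\Sdg$ we have $\beta=\frac{r^2}{2\pi}\,d\theta$ and $\psi$ is the rotation $(r,\theta)\mapsto(r,\theta+\theta_0)$ with $f\equiv\theta_0$, so the mapping-torus form is the model $ds+\frac{r^2}{2\pi}\,d\theta$ on a thickened torus. I would glue in a standard solid-torus neighborhood of the binding $b$, interpolating to a rotationally symmetric local contact model in which $b$ is the embedded core circle and $R$ is tangent to $b$. The main obstacle is this local analysis: one must check that the form extends smoothly across $b$, that $b$ is an embedded elliptic orbit, and that its rotation number in the page (equivalently push-off-linking-zero) trivialization of \S\ref{ss:trivs} is exactly $1/\theta_0$. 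The computation is the non-periodic analogue of Corollary \ref{cor:FTDCrot}: the return map rotates the collar by $\theta_0$ per page-return, so over one traversal of $b$ the linearized flow winds $1/\theta_0$ times relative to the page framing. This is where the hypotheses enter — the bound $-\frac{1}{pq}<d<0$ with $\theta_0=\frac{1}{pq}+d$ forces $\theta_0\in(0,\tfrac{1}{pq})$, so that $1/\theta_0>pq$ and the binding has positive rotation number, while $f>0$ keeps the suspension region nonempty and fixes the handedness of the cap. Together with the binding action $\int_b\lambda=\int_{\text{page}}d\lambda=\int_{\Sdg}\omega=1$ by Stokes' theorem (the page being a Seifert surface with boundary $b$), this gives (\ref{2}).

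I would then identify the total space. By construction $\lambda_\psi$ is adapted to the open book with page $\Sdg$ and monodromy the mapping class of $\psi$; since $\psi$ is freely isotopic to $\phi_{p,q}$, this abstract open book is equivalent to $(\Sdg,\phi_{p,q})$, whose total space is $S^3$ with binding $T(p,q)$ as in Example \ref{ex:Tpq}. As $(T(p,q),\pi_{p,q})$ supports $\xi_{std}$, the Giroux correspondence (Theorem \ref{thm:giroux}) forces $\ker\lambda_\psi=\xi_{std}$, so $\lambda_\psi$ is tight, completing (\ref{1}). For (\ref{3}), the page is a global surface of section (Definition \ref{def:gss}), so every Reeb orbit other than $b$ meets each page transversally; recording its intersection points with a fixed page sets up the bijection $\P(\psi)\cup\{b\}\to\P(\lambda_\psi)$. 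Along a Reeb orbit covering the simple orbit $\gamma=(\gamma_1,\dots,\gamma_\ell)$, the period equals $\sum_i f(\gamma_i)=\A(\gamma)$, and since $\lambda(R)=1$ the symplectic action equals the period; the number of page intersections is $\ell$, which is the intersection number with a page and hence the linking number with $b$.

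Finally, (\ref{4}) is a flux computation that sidesteps the cap entirely. Parametrizing $S^3\setminus b$ by the flow map $(x,s)\mapsto\psi^s(x)$ on $\{0\le s\le f(x)\}$, the relations $\iota_R d\lambda=0$ and $\lambda(R)=1$ give $\lambda\wedge d\lambda=ds\wedge\omega$ in these coordinates, so
\[
\vol(S^3,\lambda_\psi)=\int_{S^3\setminus b}\lambda\wedge d\lambda=\int_{\Sdg}\left(\int_0^{f(x)}ds\right)\omega=\int_{\Sdg}f\,\omega=\V(\psi),
\]
the binding being a set of measure zero. I expect the genuinely delicate step to be the binding cap and its rotation-number bookkeeping; the gluing and flux computations are routine once exactness and the global-surface-of-section property are in hand.
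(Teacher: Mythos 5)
Your overall architecture --- suspend $\psi$ using the exact primitive from Lemma \ref{lem:psiexact}, cap off the binding with a rotationally symmetric model, and read off conclusions (\ref{2})--(\ref{4}) from the flow --- is the same as the paper's, and parts of it are cleaner: the variable-height model $\{0\le s\le f(x)\}$ with $\lambda=ds+\beta$ does descend smoothly to the mapping torus precisely because $\psi^*\beta-\beta=df$ (the paper instead interpolates on $[0,1]\times\Sigma$ with a cutoff function $\eta$), your Stokes computation $\A(b)=\int_{\mathrm{page}}d\lambda_\psi=\int_{\Sdg}\omega=1$ is correct, and your volume and global-surface-of-section arguments for (\ref{3}) and (\ref{4}) match the paper's Steps 1 and 4.

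The genuine gap is in your identification of the total space, i.e.\ the heart of conclusion (\ref{1}). You assert that since $\psi$ is freely isotopic to $\phi_{p,q}$, the capped-off open book is equivalent to $(\Sdg,\phi_{p,q})$ and hence the total space is $S^3$. That inference is false: an open book is determined by its monodromy as a mapping class \emph{rel boundary}, and freely isotopic monodromies differing by boundary twists yield different total spaces --- gluing the solid torus with extra meridional twisting is a nontrivial surgery on the binding, producing (say) a lens space or another Seifert fibered space rather than $S^3$. (For instance, all powers of the Dehn twist on the annulus are freely isotopic to the identity, yet their open books have total spaces $L(n,n-1)$.) Concretely, once you demand that the contact form and Reeb field extend over the cap, the meridian of the glued solid torus is forced, and the rel-boundary monodromy of the resulting fibration is not $\psi$ but $T_{-2\pi\theta_0}\circ\psi$: a vector field transverse to the pages and meridional near the binding returns each page to itself by the Reeb return map composed with a twist by $-2\pi\theta_0$ in the collar. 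This is exactly where the hypotheses you never used must enter: $\psi\simeq T_{2\pi d}\circ\psi_{p,q}$ rel boundary together with $\theta_0=\frac{1}{pq}+d$ gives $T_{-2\pi\theta_0}\circ\psi\simeq T_{2\pi(d-\theta_0)}\circ\psi_{p,q}=T_{-2\pi/pq}\circ\psi_{p,q}\simeq\phi_{p,q}$ rel boundary, and only then does Example \ref{cor:CHdetails} identify the result as $(T(p,q),\pi_{p,q})$ on $S^3$. You invoke $d$ and $\theta_0$ only to make the rotation number positive, which is not their main role; see Remark \ref{rmk:theta0} and the paper's Step 3, which exists precisely to carry out this monodromy bookkeeping. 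Without it your construction still produces a contact three-manifold with the stated dynamics, but you cannot conclude that it is $S^3$, and the appeal to the Giroux correspondence for tightness of $\lambda_\psi$ collapses along with it.
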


\begin{remark}\label{rmk:theta0} The preceding proposition will be applied in the case
\begin{equation}\label{eqn:delta}
d=\frac{1}{pq+\Delta}-\frac{1}{pq}<0,
\end{equation}
where $\Delta>0$. Equation (\ref{eqn:delta}) arises because as $\Delta\to0$, the rotation number in conclusion (\ref{2}) must equal the rotation number used in our computation of the knot filtration, as in Lemma \ref{lem:pagetrivCZp}, that is,
\[
\frac{1}{\frac{1}{pq}+d}=pq+\delta_{\zb,L},
\]
which is equivalent to (\ref{eqn:delta}) with $\Delta=\delta_{\zb,L}$.

In the case of disk maps in \cite{HuMAC} and annular maps in \cite{weiler, weiler2}, a broader class of maps could be considered because the irrational ellipsoid could be used to compute unknot and Hopf link filtered ECH.  Changing the parameters of the irrational ellipsoid allows one to realize any positive irrational rotation angle. 
For torus knots in $S^3$ there is not a similarly direct means of understanding large changes to the rotation angle.

Furthermore, if $d$ is too small (i.e., less than $-1/pq$, so that it cannot be written as in (\ref{eqn:delta}) for some $\Delta$), the hypothesis on the action function, that $f>0$, will not be satisfied, as shown by the computation in Example \ref{ex:Vtwist}. This did not appear in \cite{HuMAC} and \cite{weiler, weiler2} because there was no restriction on the free homotopy class of the monodromy obtained in the analogue to conclusion (\ref{1}). 
\end{remark}

\begin{proof}
We will generalize the method of the proof for \cite[Prop.~2.1]{HuMAC} to the higher genus setting.

\hfill

\noindent\textbf{Step 1: Mapping torus.} Let $z$ denote the coordinate on $\Sigma$. Let
\[
M_\psi=\frac{[0,1]_t\times\Sigma}{(1,z)\sim(0,\psi(z))}
\]
denote the mapping torus of $\psi$. Let $\eta$ be a smooth function of $t$ for which
\begin{itemize}
    \item $\eta(0)=0$ and $\eta'(0)=0$,
    \item $\eta(1)=0$ and $\eta'(1)=1$, and
    \item $-\frac{\min(f)}{\max(f)}<\eta'(t)\leq1$.
\end{itemize}
Let $\lambda_0$ denote the following one-form on $[0,1]\times\Sigma$:
\[
\lambda_0(t,z)=(1-\eta'(t))f\,dt+\eta'(t)f\circ\psi\,dt+\beta+(t-\eta(t))\,df+\eta(t)\psi^*df.
\]
As in the proof of \cite[Step~1,~Prop.~3.1]{weiler} we obtain
\begin{enumerate}[(a)]\setlength\itemsep{0em}
    \item $\lambda_0(t,z)=f(z)\,dt+\beta$ for $z$ near $\partial\Sigma$,
    \item $\lambda_0\in\Omega^1(M_\psi)$,
    \item $\lambda_0$ is a contact form,
    \item $R_0$ is a positive multiple of $\partial_t$,
    \item it takes time $f_{\psi,\beta,\theta_0}(z)$ to flow under $R_0$ from $(0,z)$ to $(1,z)$, and
    \item $\vol(M_\psi,\lambda_0)=\omega(\Sigma)\V(\psi)=\V(\psi)$.
\end{enumerate}

Note that it is in this step where we require the hypothesis $f>0$.

\hfill

\noindent\textbf{Step 2: Closing the binding.} Consider the oriented coordinates $(\rho,\mu,\tau)$ on the solid torus $\mathbb{T}=\D^2(\zeta)\times(\R/2\pi\Z)$, where $\tau\in\R/2\pi\Z$, $\rho\in[0,\zeta]$, and $\mu\in\R/2\pi\Z$. Let $g:\operatorname{int}(M_\psi)\to \mathbb{T}$ be given by
\[
g(t,r,\theta)=\left(\sqrt{1-r^2},2\pi t,\theta+2\pi t\theta_0\right).
\]
Let $Y_\psi$ denote the union of $\operatorname{int}(M_\psi)$ with $\mathbb{T}$ via $g$; we will prove in the next step that $Y_\psi$ is diffeomorphic to $S^3$.

\hfill

\noindent\textbf{Step 3: Open book decomposition.} Denote by $B$ the set $\{\rho=0\}$. Let $P:Y_\psi-B\to S^1$ be given by $(t,z)\mapsto t$. The preimages $P^{-1}(t)$ are diffeomorphic to $\operatorname{int}(\Sdg)$. We claim that $P$ is a projection map for an open book decomposition with page $\Sdg$ and monodromy $\phi_{p,q}$, which will prove that $Y_\psi$ and $S^3$ are diffeomorphic.

The meridional direction near $B$ (corresponding to $\partial\Sdg$) is given by $\partial_\mu$, which extends to the mapping torus part of $Y_\psi$ near $\partial\Sdg$ as $\frac{1}{2\pi}\partial_t-\theta_0\partial_\theta$. In $\operatorname{int}(M_\psi)$, the direction $\partial_t$ is transverse to the fibers of $P$. Choose a smooth monotone function
\[
D_{-\theta_0}:[\sqrt{1-\zeta^2},1]\to[-\theta_0,0] \mbox{ with } D_{\theta_0}\equiv0 \mbox{ near } \sqrt{1-\zeta^2} \mbox{ and } D_{\theta_0}(1)=-\theta_0.
\]
The smooth vector field
\[
V:=\frac{1}{2\pi}\partial_t+D_{\theta_0}(r)\partial_\theta
\]
interpolates between these two directions as $r$ varies from $\sqrt{1-\zeta^2}$ to $1$, and remains transverse to the fibers of $P$ throughout.

By the fact that $\operatorname{int}(M_\psi)$ is the mapping torus of $\psi$, the diffeomorphism induced on $P^{-1}(0)$ by the flow of $\partial_t$ on $Y-B$ is $\psi$.

Near $B$,
\[
g^{-1}(\rho,\mu,\tau)=\left(\frac{\mu}{2\pi},\sqrt{1-\rho^2},\tau-\theta_0\mu\right),
\]
therefore the flow of $\partial_\mu=V$ for time $2\pi$ will send $(t,r,\theta)$ to 
\[
\left(t+1,r,\theta-2\pi\theta_0\right)\sim\left(t,r,\theta\right),
\]
which is $T_{-2\pi\theta_0}\circ\psi$. By our hypothesis that $\psi$ is isotopic relative to the boundary to $T_{2\pi d}\circ\psi_{p,q}$ and that $\theta_0=\frac{1}{pq}+d$, the open book decomposition $(B,P)$ of $Y_\psi$ has abstract open book $(\Sdg,\phi_{p,q})$. By Example \ref{cor:CHdetails}, we may conclude that $Y_\psi$ and $S^3$ are diffeomorphic, and that $(B,P)=(T(p,q),\pi_{p,q})$.

\hfill

\noindent\textbf{Step 4: Contact form.} The gluing in Step 3 induces a contact form $\lambda_\psi$ on $S^3$ as follows. Define $\lambda_\psi=\lambda_0$ from Step 1 on $\operatorname{int}(M_\psi)$. We will define an extension to the binding $B=T(p,q)$. In the $\mathbb{T}$ coordinates,
\[
\lambda_0=\theta_0\,dt+\frac{r^2}{2\pi}\,d\theta=\frac{\theta_0}{2\pi}\,d\mu+\frac{\left(\sqrt{1-\rho^2}\right)^2}{2\pi}(-\theta_0\,d\mu+d\tau)=\frac{\theta_0\rho^2}{2\pi}\,d\mu+\frac{1-\rho^2}{2\pi}\,d\tau.
\]
Converting to Cartesian coordinates $x=\rho\cos\mu, y=\rho\sin\mu$ gives us
\begin{align*}
\lambda_0&=\frac{\theta_0(x^2+y^2)}{2\pi}\left(-\frac{y}{x^2+y^2}\,dx+\frac{x}{x^2+y^2}\,dy\right)+\frac{1-x^2-y^2}{2\pi}\,d\tau
\\&=\frac{\theta_0}{2\pi}(-y\,dx+x\,dy)+\frac{1-x^2-y^2}{2\pi}\,d\tau,
\end{align*}
which is perfectly well defined when $\rho=0$. Therefore we may extend $\lambda_0$ over the binding $B=T(p,q)=\{\rho=0\}$ by setting it equal to $\frac{1-\rho^2}{2\pi}\,d\tau$ there, and we call the extension $\lambda_\psi$.

We know that $\lambda_0$ is contact on $\operatorname{int}(M_\psi)$ by conclusion (c) of Step 1. To check that the extension $\lambda_\psi$ is contact when $\rho=0$, we compute
\[
d\lambda_\psi=\frac{\theta_0\rho}{\pi}\,d\rho\wedge d\mu-\frac{\rho}{\pi}\,d\rho\wedge d\tau,
\]
therefore
\[
\lambda_\psi\wedge d\lambda_\psi=\left(\frac{\theta_0\rho^3}{2\pi^2}+\frac{\theta_0\rho(1-\rho^2)}{2\pi^2}\right)\,d\rho\wedge d\mu\wedge d\tau=\frac{\theta_0\rho}{2\pi^2}\,d\rho\wedge d\mu\wedge d\tau.
\]
Because $\theta_0=\frac{1}{pq}+d>0$, the orientation induced by $\lambda_\psi\wedge d\lambda_\psi$ agrees with the orientation $\rho\,d\rho\wedge d\mu\wedge d\tau$ of $\mathbb{T}$, which agrees with the orientation of $\operatorname{int}(M_\psi)$ as a mapping torus because $g$ is orientation-preserving. Therefore the one-form $\lambda_\psi$ is a contact form on $S^3$, proving the first assertion.

The Reeb vector field of $\lambda_\psi$ near $\rho=0$ is
\[
R_\psi=\frac{2\pi}{\theta_0}\partial_\mu+2\pi\partial_\tau,
\]
which in Cartesian coordinates is
\[
R_\psi=\frac{2\pi}{\theta_0}(-y\partial_x+x\partial_y)+2\pi\partial_\tau
\]
and is therefore defined over the circle $\rho=0$ (even though $\mu$ is not defined there). Thus the circle $\rho=0$ is an orbit of $R_\psi$, oriented in the $\partial_\tau=\partial_\theta$ direction. Call this orbit $b$.

Using conclusions (d, e, f) of Step 1 and the fact that $M_\psi$ is the mapping torus of $\psi$, we have now proved assertions (\ref{1}), (\ref{3}), and (\ref{4}). (We obtain conclusion (iii) because the Reeb orbits are integral curves of $\partial_t$, and intersection number with a page equals linking number with the binding because pages are Seifert surfaces for the binding.) We may also compute at this point that $\A(b)=1$, proving part of assertion (\ref{2}).

\hfill

\noindent\textbf{Step 5: Binding orbits.} It remains to prove the rest of assertion (\ref{2}), namely that $b$ is elliptic and has rotation number $1/\theta_0$.

First note that the contact structure $\xi_\psi|_b$ is the bundle of disks $\tau=const$, oriented by $d\lambda_\psi$. This is the standard orientation with oriented coordinates $\rho,\mu$ on each disk. Therefore, in the trivialization of $\xi_\psi|_b$ as the subbundle of the tangent bundle 
\[
T\mathbb{T}=T\mathbf{D}^2\times T(R/2\pi\Z)=\xi_\psi|_b\times Tb
\]
 consisting of tangent planes to the disks, the rotation number of $b$ is the coefficient of $\partial_\mu$ in $R_\psi$ divided by the coefficient of $\partial_\tau$, which is $1/\theta_0$. Because the zero page consists of the points $\mu=0$ near the image of $b$, this trivialization of $\xi_\psi|_b$ is precisely the trivialization for which a constant pushoff of $b$ does not intersect the zero page. That is, it is the page trivialization. See \S\ref{ss:trivs} and in particular the paragraph before Remark \ref{rmk:trivs} for a more extensive discussion of the various trivializations in play.

Finally, the orbit $b$ is elliptic as its return map may be directly computed from either of the formulas for $R_\psi$ in Step 4. 

We have proved the rest of assertion (\ref{2}) and therefore the proposition.
\end{proof}

\subsection{Reduction of mean action bounds to contact geometry}\label{ss:mac}

We now prove our surface dynamics result, Theorem \ref{thm:Calabi}, using results on ECH from later in the paper as a black box. For convenience, we restate the theorem here.

\begin{theorem} Let $\psi$ be an area-preserving diffeomorphism of $(\Sdg,\omega)$, wherein:
\begin{itemize}
\itemsep-.25em
\item $g=(p-1)(q-1)/2$,
\item $\psi$ is isotopic relative to the boundary to $T_{2\pi d}\circ\psi_{p,q}$ for some $-\frac{1}{pq}<d\leq0$,\footnote{Because $\psi_{p,q}$ is the periodic Nielsen-Thurston representative of the monodromy $\phi_{p,q}$, we therefore have that $\psi$ is freely isotopic to $\phi_{p,q}$.}
\item $\theta_0=\frac{1}{pq}+d$ and $f=f_{\psi,\beta,\theta_0}>0$ for any primitive $\beta$ of $\omega$ satsifying (\ref{eqn:betaboundary}), and
\item $\V(\psi)<pq\cdot\theta_0^2$.
\end{itemize}
Then we have
\[
\inf\left\{\frac{\A(\gamma)}{\ell(\gamma)} \ \bigg \vert \ \gamma\in\P(\psi)\right\}\leq\sqrt{\frac{\V(\psi)}{pq}}.
\]
\end{theorem}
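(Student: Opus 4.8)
The plan is to convert the surface dynamics of $\psi$ into Reeb dynamics on $(S^3,\xi_{std})$ via the suspension of Proposition \ref{prop:constructcontact}, and then feed the result into the quantitative existence Theorem \ref{thm:toruslinking}. Suppose first that $d<0$ and that $\Delta:=\tfrac{1}{\theta_0}-pq$ is a positive irrational number. Since $\psi$ is freely isotopic to $\phi_{p,q}$, isotopic rel boundary to $T_{2\pi d}\circ\psi_{p,q}$, has $\theta_0=\tfrac1{pq}+d$, and satisfies $f>0$, Proposition \ref{prop:constructcontact} produces a tight contact form $\lambda_\psi$ on $S^3$ whose Reeb field has $b=T(p,q)$ as an elliptic orbit of action $1$ with rotation number $\tfrac1{\theta_0}=pq+\Delta$ in the push-off-linking-zero (page) trivialization, satisfying $\vol(S^3,\lambda_\psi)=\V(\psi)=:V$, together with a bijection $\P(\psi)\cup\{b\}\to\P(\lambda_\psi)$ sending, on $\P(\psi)$, total action to symplectic action and period to linking number with $b$.

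The hypotheses then match exactly. Writing $pq+\Delta=\tfrac1{\theta_0}$, the assumption $\V(\psi)<pq\,\theta_0^2$ becomes $V<pq/(pq+\Delta)^2$, which is precisely the volume hypothesis of Theorem \ref{thm:toruslinking}. Applying that theorem gives
\[
\inf\left\{\frac{\A(\gamma')}{\ell(\gamma',b)}\ \bigg\vert\ \gamma'\in\mathcal{P}(\lambda_\psi)\setminus\{b\}\right\}\le\sqrt{\frac{V}{pq}}.
\]
Transporting this back through the bijection of Proposition \ref{prop:constructcontact}(iii), under which $\A(\gamma)=\A(\gamma')$ and $\ell(\gamma)=\ell(\gamma',b)$ for corresponding orbits, identifies the two infima and yields $\inf_{\gamma\in\P(\psi)}\A(\gamma)/\ell(\gamma)\le\sqrt{\V(\psi)/pq}$.

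It remains to remove the genericity assumption on $\Delta$, in particular to allow $d=0$ (where $\Delta=0$) and rational $\Delta$, which is where a limiting argument enters. Given $\psi$ as in the statement, I would compose it with a small right-handed boundary twist $T_\varepsilon$ as in Example \ref{ex:Vtwist}, chosen so that $\hat\theta_0=\theta_0+\varepsilon(1)<\theta_0$ makes $\hat\Delta=\tfrac1{\hat\theta_0}-pq$ positive and irrational. For $|\varepsilon(1)|$ small the hypotheses survive: $\hat f>0$ since $\min f>0$ on the compact surface, and the strict inequality $\V(\hat\psi)<pq\,\hat\theta_0^2$ persists because the slack $pq\,\theta_0^2-\V(\psi)>0$ dominates the $O(\varepsilon(1))$ changes in both sides (using $\V(\hat\psi)\le\V(\psi)$ from Example \ref{ex:Vtwist}). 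Applying the case already treated to $\hat\psi$ gives the bound for $\hat\psi,\V(\hat\psi),\hat\theta_0$, and letting $\varepsilon\to0$ should recover the bound for $\psi$.

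The main obstacle is this final step, since composing with $T_\varepsilon$ changes both the periodic orbits and the action function. The key observation making it tractable is that $\psi$ is a rotation on a collar $C'$ of $\partial\Sdg$ on which both $\psi$ and $\hat\psi$ preserve the radial coordinate, so $C'$ is invariant and every orbit meeting $C'$ has mean action at least $\hat\theta_0>\sqrt{\V(\hat\psi)/pq}$ (using $\V(\hat\psi)<pq\,\hat\theta_0^2$ and $f\equiv\theta_0$ on the collar, as in Remark \ref{rmk:f}(ii)). Hence any orbit whose mean action is near the infimum must lie in the interior $\Sdg\setminus C'$, where $\hat\psi=\psi$ and the orbits are shared; on such orbits $f$ and $\hat f$ differ by the uniformly small amount estimated in Example \ref{ex:Vtwist} via $f\ge\hat f\ge f+\varepsilon(1)(2\zeta-\zeta^2)$, so $\A(\gamma)/\ell(\gamma)\le\A_{\hat\psi}(\gamma)/\ell(\gamma)+|\varepsilon(1)|(2\zeta-\zeta^2)$. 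Passing to the limit then needs only that this discrepancy and $\V(\hat\psi)\to\V(\psi)$ vanish as $\varepsilon\to0$. I expect that checking the compatibility of the page/push-off trivialization used for $\rot(b)$ in Proposition \ref{prop:constructcontact} with the one underlying Theorem \ref{thm:toruslinking}, together with the bookkeeping in this limiting argument, are the only genuinely delicate points, the remainder being the direct dictionary above.
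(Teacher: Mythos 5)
Your proposal is correct and follows essentially the same two-step route as the paper's proof: first the irrational-rotation case via Proposition \ref{prop:constructcontact} combined with Theorem \ref{thm:toruslinking6}, then the general case by composing with a small boundary twist as in Example \ref{ex:Vtwist} and observing that near-infimal orbits of the twisted map must avoid the invariant collar (where the mean action is at least $\hat\theta_0>\sqrt{\V(\hat\psi)/pq}$), so the twist's effect on their actions is uniformly small and vanishes in the limit. The trivialization compatibility you flag as delicate is in fact automatic, since Proposition \ref{prop:constructcontact}(\ref{2}) computes $\rot(b)$ in precisely the push-off-linking-zero (page) trivialization used for the rotation number in Theorem \ref{thm:toruslinking6}.
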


\begin{proof}[Proof of Theorem \ref{thm:Calabi}]
\noindent\textbf{Step 1:} Assume $\theta_0\in\R\setminus\Q$ (in particular, $d<0$). Then $\psi$ satisfies the hypotheses of Proposition \ref{prop:constructcontact}, providing us with a contact form $\lambda_\psi$. Now, because of the assumption $\V(\psi)<pq\cdot\theta_0^2$, we may apply Theorem \ref{thm:toruslinking6} to $\lambda_\psi$. We obtain
\[
\inf\left\{\frac{\A(\gamma)}{\ell(\gamma)} \ \bigg \vert \ \gamma\in\P(\psi)\right\}\leq\sqrt{\frac{\V(\psi)}{pq}},
\]
freely using the conclusions of Proposition \ref{prop:constructcontact}.

\hfill

\noindent\textbf{Step 2:} We now drop the assumption that $\theta_0\in\R\setminus\Q$. Apply Example \ref{ex:Vtwist} with $\zeta$ small enough that $T_\varepsilon$ is supported in the region where $\psi$ is a rotation,
\[
\sqrt{\frac{\V(\psi)}{pq}}-\theta_0<\varepsilon(1)<0,
\]
and $\theta_0+\varepsilon(1)\in\R\setminus\Q$. Let $\hat\psi$ denote the map $T_\varepsilon\circ\psi$, and use a hat to denote all its associated functions and quantities. Note that we may freely make $\zeta$ as close to zero and $\theta_0+\varepsilon(1)$ as close to $\sqrt{\V(\psi)/pq}$ as we like.

By Example \ref{ex:Vtwist}, we have:
\begin{enumerate}[{(i)}]
\item On the collar domain of $T_\varepsilon$, $\hat f\geq\theta_0+\varepsilon(1)$ by (\ref{eqn:Cflb}).
\item $\V(\psi)\geq\V(\hat\psi)\geq\V(\psi)+\varepsilon(1)(2\zeta-\zeta^2)$ by (\ref{eqn:Vdiff}) and the discussion beforehand.
\item $\hat\theta_0=\theta_0+\varepsilon(1)$, so by the bounds on $\varepsilon(1)$, we have $\V(\hat\psi)<pq\cdot\hat\theta_0^2$.
\item If $\gamma$ is a periodic orbit of $\psi$ (necessarily away from the collar on which $\psi$ is a rotation, because $\theta_0\in\R\setminus\Q$), then it is a periodic orbit of $\hat\psi$, so by adding the inequalities from (\ref{eqn:hatdiff}) applied to the value of the action function $f$ at each of the $\ell(\gamma)$ points in $\gamma$,
\[
\A(\gamma)\geq\hat\A(\gamma)\geq\A(\gamma)+\ell(\gamma)\varepsilon(1)(2\zeta-\zeta^2).
\]
\end{enumerate}

Because $\hat\psi$ satisfies the assumptions of Step 1, we use (ii)-(iii) to obtain
\begin{equation}\label{eqn:worsebound}
\inf\left\{\frac{\hat\A(\gamma)}{\ell(\gamma)} \ \bigg \vert \ \gamma\in\P(\hat\psi)\right\}\leq\sqrt{\frac{\V(\hat\psi)}{pq}}\leq\sqrt{\frac{\V(\psi)}{pq}}.
\end{equation}
By (i) and the fact that $\theta_0+\varepsilon(1)>\sqrt{\V(\psi)/pq}$, the infimum on the left hand side of (\ref{eqn:worsebound}) must be realized by some $\gamma$ away from the collar support of $T_\varepsilon$, thus by (iv),
\[
\inf\left\{\frac{\A(\gamma)}{\ell(\gamma)} \ \bigg \vert \ \gamma\in\P(\psi)\right\}\leq\sqrt{\frac{\V(\psi)}{pq}}-\varepsilon(1)(2\zeta-\zeta^2).
\]
Sending $\zeta\to0$ provides us with the desired conclusion.

\end{proof}

\begin{remark}\label{rmk:2.4}
One would prefer the stronger upper bound of $\V(\psi)$ instead of $\sqrt{\V(\psi)/pq}$ under the weaker assumption that $\V(\psi)<\theta_0$. To do so, we would require a similar strengthening of the Reeb dynamics result in Theorem \ref{thm:toruslinking6}, but it is not yet understood how to compute the full knot filtration on ECH when the rotation number is not $pq$. Note that requiring the stronger hypothesis $\V(\psi)<pq\cdot\theta_0^2$ means that we cannot decrease $\theta$ by a large amount, all the way to $\V(\psi)$, in order to improve the upper bound from Reeb dynamics via the twisting argument in \cite{HuMAC}.

\end{remark}

\section{Reeb currents of $T(p,q)$ fibrations}\label{s:topology}

As detailed in Example \ref{cor:CHdetails}, the open book decomposition with binding the right-handed $T(p,q)$ torus knot supports a contact form $\lambda_{p,q}$ whose Reeb vector field integrates to the Seifert fibration of $S^3$ with regular fiber $T(p,q)$. The contact form $\lambda_{p,q}$ is strictly contactomorphic to the connection 1-form on the prequantization orbibundle over complex one dimensional weighted projective space $\CP_{p,q}^1$ with Euler class $-\frac{1}{pq}$, as we explain in \S\ref{ss:OBDs}. Using the latter description, in \S\ref{ss:morse} we perturb $\lambda_{p,q}$ to be nondegenerate up to a given action level, using (the lift of) an appropriate Morse-Smale function on the base orbifold $\CP^1_{p,q}$. We then elucidate the associated perturbed Reeb dynamics and Conley-Zehnder indices (up to a large action threshold) in \S\ref{ss:Reeb} and \S\ref{ss:CZ}, respectively; the fibers which project to critical points comprise the Reeb currents generating the action filtered ECH chain complexes we study in \S\ref{s:generalities}-\ref{s:spectral}.

\subsection{Positive $T(p,q)$ fibrations of $S^3$}\label{ss:OBDs}

Let $\fp: Y \to \cO$ be an oriented three dimensional Seifert fibration with oriented base  of genus $g$ and normalized Seifert invariants $Y(g; b; (a_1,b_1),...,(a_r,b_r))$, per the description in \cite{orlik, lrbook} and using the conventions in \cite{lm}. The \emph{Euler class} of $Y$ is defined by the rational number $e(Y):= -b - \sum_{i=1}^r \frac{b_i}{a_i},$ and it does not depend on the choice of normalized or unnormalized Seifert invariants.  The unaccompanied $b$ is used for a fiber of type $(1,b)$, which does not project to a singular point.

\begin{remark}
Lisca-Mati\'c classified which Seifert fibered 3-manifolds admit contact structures transverse to their fibers, which moreover are shown to be universally tight, see \cite[Thm.~1.3, Cor.~2.2, Prop.~3.1]{lm}.  An oriented Seifert fibered 3-manifold admits a positive, $S^1$-invariant transverse contact structure if and only if $e(Y) < 0$.  So that we are in agreement with Giroux's conventions for his classification of tight contact structures transversal to the fibration of a circle bundle over a surface in terms of the Euler class, we utilize a negative Euler class, as in \cite{lm, lrbook}.   A Seifert fibration of a closed orientable 3-manifold $Y$ over an oriented orbifold $\cO$ can be realized by a Reeb flow if and only if the Euler class of the fibration is nontrivial by \cite[Thm 1.4]{kl}.\footnote{Our Euler class is referred to as the real Euler class by Kegel-Lange, who also use the opposite sign convention from us.}
\end{remark}

We recall the description of the classical Seifert fibering of $S^3$ along the positive torus knot $T(p,q)$, as in \cite{ch-sfs, moser}, which provides a partition of $S^3$ into orbits over the orbifold 2-sphere wherein the $z_1$ and $z_2$-axes correspond to singular fibers and each principal fiber is a positive torus knot.
 
   \begin{proposition}\label{prop:SeifertInvts}
The $S^1$-action $e^{2\pi it}\cdot(z_1,z_2)=\left(e^{2\pi pit}z_1,e^{2\pi qit}z_2\right)$
generates the Seifert fibration of $S^3$ given by ${Y\left(0; -1; (p, p-m), (q, n)\right),}$ where $m, n \in \Z$ such that $qm-pn=1$, and has Euler class ${e(Y)= -\frac{1}{pq}.}$
\end{proposition}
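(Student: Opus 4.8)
The plan is to extract the Seifert data combinatorially from the orbit structure of the $S^1$-action, then fix the Euler class by a homological shortcut, deferring the delicate orientation bookkeeping to the end. First I would record that, since $\gcd(p,q)=1$, the action $e^{2\pi it}\cdot(z_1,z_2)=(e^{2\pi pit}z_1,e^{2\pi qit}z_2)$ of $S^1=\R/\Z$ on $S^3\subset\C^2$ is effective and fixed-point free, and determine the isotropy: a point with both coordinates nonzero is free, the circle $\{z_2=0\}$ has isotropy $\Z/p$ (generated by $t=1/p$), and $\{z_1=0\}$ has isotropy $\Z/q$ (generated by $t=1/q$). Hence the only exceptional fibers are the two coordinate circles, of multiplicities $p$ and $q$, the quotient $S^3/S^1$ is the orbifold $S^2(p,q)$ of genus $0$ with exactly two cone points, and the normalized Seifert invariants therefore take the form $Y(0;b;(p,b_1),(q,b_2))$. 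I emphasize that the total space is $S^3$ by construction, so no appeal to a classification theorem (Moser, Orlik) is needed to identify it; the content is purely to compute $b,b_1,b_2$.

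Next I would extract $b_1,b_2,b$ from explicit equivariant solid-torus neighborhoods of the two exceptional fibers via the slice theorem. Near $\{z_2=0\}$ the normal disk is the $z_2$-plane, and the isotropy generator $t=1/p$ acts on it by rotation through the angle $2\pi q/p$; comparing this rotation to the product framing furnished by the $z_1$-coordinate over the complementary base disk determines the normalized pair $(p,b_1)$ through the congruence $qm\equiv 1\pmod p$, giving $b_1=p-m$. Symmetrically, near $\{z_1=0\}$ the isotropy acts on the $z_1$-plane by rotation $2\pi p/q$, producing $(q,b_2)$ with $b_2=n$, where $pn\equiv -1\pmod q$. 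The integer $b=-1$ and the B\'ezout relation $qm-pn=1$ are exactly the compatibility conditions ensuring that the two fibered solid tori glue to the trivial piece over the remaining base disk to recover the round $S^3$; matching the gluing matrices to the Lisca--Mati\'c normalization of \cite{lm} is the step requiring the most care.

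For the Euler class I would avoid grinding through the local slopes and instead use a homological shortcut. For a Seifert fibration of a rational homology sphere over $S^2$ with exceptional multiplicities $\alpha_1,\dots,\alpha_r$, one has $|H_1|=\alpha_1\cdots\alpha_r\cdot|e(Y)|$; since $H_1(S^3)=0$ has order $1$ and the multiplicities here are $p$ and $q$, this immediately forces $|e(Y)|=1/pq$. Evaluating the defining formula $e(Y)=-b-b_1/p-b_2/q$ on the normalized data $(0;-1;(p,p-m),(q,n))$ reproduces this magnitude $1/pq$, and the adopted orientation convention—under which the positively transverse $S^1$-action is realized as a Reeb flow, so that a positive transverse contact structure forces $e(Y)<0$ as recorded earlier in this section and in \cite{lm,kl}—pins down the sign, yielding $e(Y)=-1/pq$.

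The main obstacle is not the topology, which is transparent, but the bookkeeping of orientations and normalizations: producing precisely $b_1=p-m$, $b_2=n$, $b=-1$ rather than sign- or off-by-one variants, and fixing the overall sign of $e(Y)$ to agree with the conventions used throughout (recalling the footnote that Kegel--Lange adopt the opposite sign). For this reason I would present the homological computation $|e(Y)|=1/pq$ as the robust core of the argument and treat the exact normalized invariants and the negative sign as a convention-matching step, cross-checked against the explicit connection form $\lambda_{p,q}$ whose curvature descends to the orbifold symplectic form on $\CP^1_{p,q}$.
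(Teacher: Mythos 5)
Your proposal cannot be checked against a proof in the paper, because the paper does not prove Proposition \ref{prop:SeifertInvts}: it recalls the statement as classical, citing Moser and Cavicchioli--Hegenbarth, so a self-contained derivation like yours is necessarily a different route, and a welcome one. Your skeleton is the right one, and its ingredients are individually correct: the isotropy computation, the identification of the base orbifold as a sphere with two cone points of orders $p$ and $q$, the order formula $|H_1(Y)|=a_1\cdots a_r\,|e(Y)|$ giving $|e(Y)|=1/pq$, and the appeal to Lisca--Mati\'c (quoted in the Remark immediately preceding the proposition) to conclude $e(Y)<0$ from the existence of the $S^1$-invariant transverse contact structure $\ker\lambda_{p,q}$.

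The genuine gap is in your final ``convention-matching'' step, which is not merely delicate but incoherent as written. Once the normalized data and the paper's defining formula $e(Y)=-b-\sum_i b_i/a_i$ are both fixed, the sign of $e(Y)$ is computed, not chosen: on $\left(0;-1;(p,p-m),(q,n)\right)$ with $qm-pn=1$ one gets
\[
e(Y)=1-\frac{p-m}{p}-\frac{n}{q}=\frac{m}{p}-\frac{n}{q}=\frac{qm-pn}{pq}=+\frac{1}{pq},
\]
which contradicts the negativity you correctly extract from Lisca--Mati\'c; there is no residual orientation freedom left to ``pin down.'' So the two halves of your argument are mutually inconsistent, and the inconsistency sits exactly in the bookkeeping you deferred. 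A coherent execution of the slice-theorem step must output $b_1\equiv q^{-1}\pmod p$ and $b_2\equiv p^{-1}\pmod q$, i.e. the B\'ezout condition $pn-qm=1$ rather than $qm-pn=1$; then $qb_1+pb_2=pq+1$ forces $b=-1$ and the defining formula yields $e(Y)=-1/pq$ directly, with Lisca--Mati\'c demoted to a consistency check. (Sanity check at $p=q=1$: absorbing the multiplicity-one pairs into $b$ gives $e(Y)=m-n$, and only $pn-qm=1$ reproduces the Hopf fibration's Euler number $-1$.) Note this also shows that the proposition as printed carries a sign slip in its B\'ezout condition relative to the paper's own definition of $e(Y)$ --- precisely the kind of error your approach, carried out with the orientation bookkeeping done rather than deferred, is designed to catch; as it stands, deferring that bookkeeping is deferring the entire mathematical content of the sign.
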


Our subsequent computation of the Conley-Zehnder indices will make use of the following properties of weighted complex one dimensional projective space.  Further details can be found in \cite[\S 2]{kech}.

\begin{example}\label{symporb}
Weighted complex one dimensional projective space $\CP^1_{p,q}$ is defined\footnote{Weighted projective space $\CP^1_{p,q}$ is in fact an algebraic variety admitting two distinct orbifold structures.  The other orbifold structure is realized by $\CP^1/ (\Z_{p} \times \Z_{q})$,  as described on \cite[\S 3.a.2]{mann}.  To be absolutely precise,  one should decorate the algebraic variety $\CP_{p,q}^1$ to indicate that it is equipped with a specific orbifold structure. We forgo this as we will only consider one orbifold structure.} by the quotient of the unit sphere $S^3 \subset \C^2$ by the almost free action of $S^1 \subset \C$ of the form
\[
e^{{2\pi}it} \cdot (z_1,z_2) = \left(e^{p{2\pi}it}z_1,e^{q{2\pi}it}z_2 \right).
\]
The following 1-form is invariant under the above $S^1$-action
\begin{equation}
\lambda_{p,q} = \frac{\frac{1}{2\pi}\lambda_0}{p|z_1|^2 + q|z_2|^2}, \mbox{ where } \lambda_0 = \frac{i}{2} \sum_{j=1}^2 \left( z_j d\bar{z}_j - \bar{z}_jd z_j \right).
\end{equation}
As a result, $\omega_{p,q} := d\lambda_{p,q}$ descends to an orbifold symplectic form on $\CP^1_{p,q}$, and its cohomology class satisfies ${[\omega_{p,q}] = \frac{1}{pq}} \mbox{ in }H^2(\CP^1_{p,q}, \Q)\cong \Q$,  see \cite[Lem.~4.2]{hong-cz}.   (The orbifold chart that provides this desired orbifold structure on $\CP^1_{p,q}$ is described in \cite[Prop.~3.1]{mann}.)\end{example}

\begin{definition}\label{def:orbX}
We define the \emph{orbifold Euler characteristic} by
\[
\chi^{orb}(\cO) = \sum_S (-1)^{\dim(S)}\frac{\chi(S)}{|\Gamma(S)|}.
\]
where the sum is taken over all strata $S$ of the stratification of $\Sigma,$ $\chi(S)$ is the ordinary Euler characteristic, and $\Gamma(S)$ is the isotropy.  (This agrees with Boyer-Galicki  \cite[\S 4.3-4.4]{bgbook}.)
\end{definition}

\begin{example}\label{ex:orbX}
  We can give a Riemann surface $(\Sigma; z_1,...,z_k)$ of genus $g$ with $k$ marked points the structure of an orbifold by defining local uniformizing systems $(\tilde{U}_j, \Gamma_{a_j}, \varphi_j)$ centered at the point $z_j$, where $\Gamma_{a_j}$ is the cyclic group of order $a_j$ and $\varphi_j:\tilde{U}_j \to U_j = \tilde{U}_j/\Gamma_{a_j}$ is the branched covering map $\varphi_j(z)=z^{a_j}$. The orbifold Chern number agrees with the orbifold Euler characteristic:
\[
c_1^{orb}(\Sigma; z_1,...,z_k) = \chi^{orb}(\Sigma; z_1,...,z_k) = 2- 2g -k + \sum_{j=1}^k \frac{1}{a_j}.
\]
Thus for weighted projective space, 
\begin{equation}\label{eq:orbc}
{c_1^{orb}(\CP_{p,q}^1)=\frac{p+q}{pq}.}
\end{equation}
\end{example}

A \emph{principal $S^1$-orbibundle} or \emph{prequantization orbibundle} whose total space $Y$ is a manifold is the same as an almost free $S^1$-action on $Y$.  The \emph{connection 1-form} $A$ induces a contact form, as $iA(R) =1$ and $d(iA)(R, \cdot)=0$, where $R$ is the derivative of the $S^1$-action.  As is the case for prequantization bundles over smooth surfaces, the \emph{curvature form} satisfies $dA=i\mathfrak{p}^*\omega$, is $S^1$-invariant, and vanishes in the the $S^1$-direction of $R$.  Here $\omega$ is a symplectic form on the base orbifold, and ${-\frac{1}{2\pi}}[\omega]$ represents the  Euler class $e \in H^2_{\mbox{\tiny \em orb}}(\cO;\R)$ of the orbibundle.  Note that the cohomology class of $\omega$ can be canonically identified with an element in $H^2_{\mbox{\tiny \em orb}}(\cO;\R) $ via the equivariant de Rham theorem, cf. \cite[\S 3.2]{kl}.   Amalgamating the results of Kegel-Lange \cite{kl} with Cristofaro-Gardiner -- Mazuchelli \cite{cgm} provides the following classification result.  See \cite[\S 2]{kech} for additional details.

\begin{theorem}{\em \cite[Thm.~1.1, Thm.~1.4 Cor.~1.6]{kl}, \cite{cgm} }
The classification of prequantization orbibundles up to strict contactomorphism coincides with the classification of Seifert fibrations $Y \to \cO$ of orientable, closed 3-manifolds with nonvanishing Euler class.\end{theorem}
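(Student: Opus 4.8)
The plan is to exhibit an explicit bijection between strict-contactomorphism classes of prequantization orbibundles and isomorphism classes of Seifert fibrations $\fp\colon Y\to\cO$ with $e(Y)\neq 0$, construct it in both directions, and verify that the two constructions are mutually inverse.

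First I would build the forward map. Given a prequantization orbibundle $\fp\colon Y\to\cO$ with connection $1$-form $A$, the associated almost free $S^1$-action on the manifold $Y$ is precisely the data of a Seifert fibration whose fibers are the $S^1$-orbits; conversely an almost free $S^1$-action on a closed orientable $3$-manifold is exactly such a fibration. Since $dA=i\fp^*\omega$ with $\omega$ a symplectic (hence nonvanishing area) orbifold form on $\cO$, and $-\tfrac{1}{2\pi}[\omega]$ represents $e(Y)$ in $H^2_{\op{orb}}(\cO;\R)\cong\R$, the total area is positive, forcing $[\omega]\neq 0$ and hence $e(Y)\neq0$. This shows the forward map lands in the stated target. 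For surjectivity I would run the inverse construction: given a Seifert fibration with $e(Y)\neq 0$, realize it (up to the appropriate equivalence) as an almost free $S^1$-action, pick any connection $1$-form $A$, and choose its curvature to be a genuine orbifold area form representing $-2\pi\, e(Y)$; this is possible exactly because $e(Y)\neq0$, so that the prescribed class admits a nondegenerate representative. The induced $\lambda=iA$ is then contact, with Reeb field the generator $R$ of the action by the relations $iA(R)=1$ and $d(iA)(R,\cdot)=0$, exhibiting the Seifert fibration as the underlying fibration of a prequantization orbibundle; this is the realization part of \cite[Thm.~1.4]{kl}.

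It remains to show the two maps descend to mutually inverse bijections, i.e.\ that the strict-contactomorphism class depends only on the Seifert data. The easy inclusion is that a strict contactomorphism $\phi$ pulls back $\lambda_2$ to $\lambda_1$, hence carries the Reeb field $R_1$ to $R_2$ and therefore the $S^1$-orbits of one fibration to those of the other, giving a Seifert isomorphism. The hard part, and the main obstacle, is the converse: a Seifert isomorphism must be promoted to an honest strict contactomorphism, and in particular the inverse construction must be shown independent of the two choices made (the connection $A$ and the area representative of the curvature). Two such connections differ by $\fp^*\eta$ for a basic $1$-form $\eta$, and the path $A_t=A_0+t\fp^*\eta$ keeps $R$ as Reeb field; but its curvature $\omega_t=\omega_0+t\,d\eta$ need not stay nondegenerate, so a naive Moser interpolation upstairs on $Y$ fails.

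I would instead work downstairs. Since $\omega_0$ and $\omega_1$ are orbifold area forms in the same cohomology class, an orbifold version of Moser's theorem for surfaces produces a base orbifold diffeomorphism $\bar\phi$ with $\bar\phi^*\omega_1=\omega_0$, which I would lift to a bundle automorphism; the lifted and original connections then have equal curvature, so they differ by a closed basic $1$-form whose class is pinned down by the (equal) Euler classes, and I would kill this difference by a gauge transformation so that the connection forms, and hence the contact forms, agree exactly. Making this Moser-plus-gauge argument work equivariantly across the orbifold singular fibers, so that it yields a genuine strict contactomorphism rather than one only up to isotopy, is the crux, and is the combined content of Kegel-Lange \cite{kl} and Cristofaro-Gardiner--Mazzucchelli \cite{cgm}.
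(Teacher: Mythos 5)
The paper itself offers no proof of this statement---it is quoted wholesale from Kegel--Lange and Cristofaro-Gardiner--Mazzucchelli---so your proposal must stand on its own, and it has a genuine gap at exactly the step you call the crux. After matching curvatures by Moser downstairs, you assert that the residual difference of connection forms is a closed basic $1$-form ``whose class is pinned down by the (equal) Euler classes,'' to be killed by a gauge transformation. Neither claim is correct in general. The Euler class constrains only the curvature; the residual flat difference can be an \emph{arbitrary} class in $H^1_{orb}(\cO;\R)$, and gauge transformations $g\colon\cO\to S^1$ shift the holonomy class only by the integral lattice $H^1_{orb}(\cO;\Z)$ (the identity component of the gauge group shifts it by exact forms only). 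Thus connections with a fixed curvature, modulo gauge, form a torus $H^1_{orb}(\cO;\R)/H^1_{orb}(\cO;\Z)$, which your argument cannot collapse to a point. For the base $\CP^1_{p,q}$ relevant to this paper the issue is invisible, since $H^1_{orb}=0$ there; but the theorem as stated covers Seifert fibrations over bases of arbitrary genus, where the gap is fatal.

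The missing idea is flux, and ironically the ``naive Moser upstairs'' you dismissed is exactly how to implement it. If $\lambda_0,\lambda_1$ are two connection contact forms on the same fibration, their difference is $\fp^*\eta$ with $\eta$ basic, and the linear path $\lambda_t=\lambda_0+t\,\fp^*\eta$ has curvatures $\omega_t=(1-t)\omega_0+t\,\omega_1$, a convex combination of two area forms inducing the same orientation on the two-dimensional base (the common sign is forced by the common Euler class); hence $\omega_t$ is nondegenerate for all $t$, and your stated reason for rejecting this path is unfounded in base dimension two. Now solve Moser's equation with the unique \emph{horizontal} vector field $X_t$ satisfying $\iota_{X_t}d\lambda_t=-\fp^*\eta$: since $\iota_{X_t}\lambda_t=0$, Cartan's formula gives $L_{X_t}\lambda_t+\fp^*\eta=0$, so the time-one flow $\phi_1$ of $X_t$ satisfies $\phi_1^*\lambda_1=\lambda_0$ and is the desired strict contactomorphism (it conjugates Reeb flows, hence is automatically equivariant). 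Downstairs this is precisely the lift of a symplectic, in general non-Hamiltonian, isotopy of $(\cO,\omega_t)$ whose flux sweeps out all of $H^1_{orb}(\cO;\R)$---which is exactly what gauge transformations cannot do. With this replacement your outline (forward map via nondegenerate curvature, realization via prescribing an area-form curvature when $e(Y)\neq0$, and the easy direction that strict contactomorphisms are fibration isomorphisms) does assemble into a correct proof.
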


\subsection{Morse-Smale functions for $T(p,q)$ fibrations}\label{ss:morse}
The Reeb vector field associated to $\lambda_{p,q}$ realizing the associated Seifert fiber space of Proposition \ref{prop:SeifertInvts} as a principal $S^1$-orbibundle over $\CP^1_{p,q}$, as in Example \ref{symporb}, is given by
\[
R_{p,q} := 
2\pi i \left(p \left (z_1 \frac{\partial}{\partial z_1} - \bar{z}_1 \frac{\partial}{\partial \bar{z}_1}\right) + q \left (z_2 \frac{\partial}{\partial z_2} - \bar{z}_2 \frac{\partial}{\partial \bar{z}_2}\right)\right).
\]
A \emph{principal orbit} of a prequantization orbibundle is any orbit having the longest action (equivalently period) among all the periodic orbits, assuming one exists.   The principal orbits of the Reeb vector field $R_{p,q}$ are precisely those projecting to the nonsingular points of the base $\CP^1_{p,q}$ and have action 1.  We note that the $T(p,q)$ binding of the associated open book is a principal orbit.

There are two non-principal orbits of interest, which respectively project to each of the singular points of $\CP^1_{p,q}$. These orbits are said to be \emph{exceptional} and their actions are given by $1/|\Gamma_{x}|$ where $|\Gamma_{x}|$ is the {order} of the cyclic isotropy group at $x$. (A nonsingular point has $|\Gamma_{x}|=1$.) Denote 
\begin{equation}\label{eq:p}
\mathpzc{p}(t) = \left( e^{2\pi i t}, 0\right), \ \ \ t \in [0,1];
\end{equation}
we have that $\mathpzc{p}(t)$ projects to the orbifold point  whose isotropy group is $ \Z/p\Z$.  Similarly, denote
\begin{equation}\label{eq:q}
\mathpzc{q}(t) = \left(0, e^{2 \pi i t} \right), \ \ \ t \in [0,1];
\end{equation}
we have that $\mathpzc{q}(t)$ projects to the orbifold point whose isotropy group is $ \Z/q\Z$.

We use the following Morse-Smale functions $\mathpzc{H}_{p,q}$,  which are $C^2$ close to one, on the orbifolds $\CP^1_{p,q}$ to perturb $\lambda_{p,q}$.  

\begin{remark}
The Morse-Smale  function $\mathpzc{H}_{p,q}$ that we construct admits exactly 4 critical points.  The binding $\mathpzc{b}$ projects to the maximum, the principal Reeb orbit $\mathpzc{h}$ projects to the saddle,  $\mathpzc{p}$ projects to a {minimum} at the singular point with isotropy  $\Z/p\Z$, and $\mathpzc{q}$ projects to a minimum at the singular point with isotropy  $\Z/q\Z$. For $T(2,q)$, there is  different Morse-Smale function ${H}_{2,q}$ that we utilized to perturb $\lambda_{2,q}$ in \cite[\S 2]{kech}.  The benefit of using ${H}_{2,q}$ was that the ECH differential vanished for index reasons, but it does not work for $p\neq 2$.  \end{remark}

The construction of these orbifold Morse functions $\mathpzc{H}_{p,q}$ on the orbifolds $\CP^1_{p,q}$ is as follows, see  Figure \ref{fig:morsepq} for an illustration. 

\begin{proposition}[Morse functions $\mathpzc{H}_{p,q}$]\label{prop:morsep}
There exists a Morse function $\mathpzc{H}_{p,q}$ on $\CP^1_{p,q}$, such that $|\mathpzc{H}_{p,q}|$ is $C^2$ close to one, with exactly four critical points, such that the binding projects to the nonsingular index 2 critical point, there is a nonsingular index 1 critical point, and there are index 0 critical points at each of the two isotropy points. There are stereographic coordinates defined in a small neighborhood of $x\in X:=\op{Crit}(H_{p,q})$ in which $H_{p,q}$ takes the form  
\begin{enumerate}[\em (i)]
 \itemsep-.35em
    \item $r_0(u,v):=(u^2+v^2)/2-1$,\,\,\, if $x\in X_0$
    \item $r_1(u,v):=(v^2-u^2)/2$,\,\,\,\,\,\,\,\,\,\,\,\,\, if $x\in X_1$
        \item $r_2(u,v):=1-(u^2+v^2)/2$,\,\,\,\,\,if $x\in X_2$
\end{enumerate}
{Moreover, $\mathpzc{H}_{p,q}$ is invariant under the return map $\psi_{p,q}$   of the Reeb vector field $R_{p,q}$.} 
\end{proposition}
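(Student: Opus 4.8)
The plan is to build $\mathpzc{H}_{p,q}$ \emph{directly on the base orbifold} $\CP^1_{p,q}$ rather than on $S^3$, and then to exploit the fact that anything pulled back along $\fp$ is constant on the Seifert fibers (equivalently, on the orbits of $R_{p,q}$) to obtain the final invariance clause essentially for free. Topologically $\CP^1_{p,q}$ is a $2$-sphere carrying two cone points, of orders $p$ and $q$, which are the images of the exceptional fibers $\zp$ and $\zq$. I would therefore seek an orbifold Morse function whose two minima sit at these cone points, with a single regular saddle and a single regular maximum (the latter being the image of the binding $\zb$). The counts $c_0=2$, $c_1=1$, $c_2=1$ are consistent both with $\chi(S^2)=2$ and, after weighting the two minima by $1/p$ and $1/q$, with the orbifold Euler characteristic $c_1^{orb}(\CP^1_{p,q})=\tfrac1p+\tfrac1q$ recorded in \eqref{eq:orbc}.

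For the existence statement I would construct the ``banana'' function of Figure \ref{fig:banana} explicitly. Away from the two cone points the base is an ordinary smooth surface, so I would first fix a smooth height-type function on the sphere realizing the prescribed four critical points and indices (for instance by gluing two radial wells centered at the cone points to a single saddle and a single maximum), and then invoke the Morse Lemma at each critical point to produce stereographic coordinates $(u,v)$ in which the function agrees with $r_0$, $r_1$, or $r_2$ on a small neighborhood. The only orbifold subtlety arises at the two minima: for $\mathpzc{H}_{p,q}$ to descend to a genuine smooth orbifold function there, its expression in a local uniformizing chart must be invariant under the cyclic rotation of order $p$ (respectively $q$). This is exactly why the prescribed model at the minima, $r_0(u,v)=(u^2+v^2)/2-1$, is \emph{rotationally symmetric}, while the saddle and maximum are regular points and carry no such constraint. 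The requirement that $\mathpzc{H}_{p,q}$ be $C^2$-close to the constant function $1$ (measured in the fixed orbifold metric) is then arranged by an overall small rescaling of $\mathpzc{H}_{p,q}-1$, with a compensating rescaling of the stereographic coordinates so that the displayed quadratic normal forms persist; none of the critical-point data is affected.

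For the invariance clause, the key observation is that $\fp^*\mathpzc{H}_{p,q}$ is by construction constant on each fiber of the Seifert fibration, i.e. on each orbit of the $S^1$-action generated by $R_{p,q}$. By the definition of the return map (Definition \ref{def:return}), for a page point $x$ the image $\psi_{p,q}(x)$ is obtained by flowing $x$ along $R_{p,q}$ for the first-return time, so $\psi_{p,q}(x)$ lies on the \emph{same} $R_{p,q}$-orbit as $x$; equivalently $\psi_{p,q}$ covers the identity on $\CP^1_{p,q}$, that is $\fp\circ\psi_{p,q}=\fp$ along the page. Consequently
\[
\fp^*\mathpzc{H}_{p,q}\circ\psi_{p,q}=\mathpzc{H}_{p,q}\circ\fp\circ\psi_{p,q}=\mathpzc{H}_{p,q}\circ\fp=\fp^*\mathpzc{H}_{p,q},
\]
which is the asserted invariance. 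Near the exceptional fibers $\zp$ and $\zq$ this is also visible directly: there $\psi_{p,q}$ acts in the uniformizing chart as the order-$p$ (respectively order-$q$) rotation, under which the rotationally symmetric model $r_0$ is manifestly invariant, so the two descriptions are compatible.

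I expect the genuinely fiddly step to be the existence construction in the orbifold category, namely producing a single global function that realizes exactly the four critical points with the prescribed indices while simultaneously (a) matching the symmetric model $r_0$ at the two cone points so that it is orbifold-smooth and (b) staying $C^2$-close to $1$. The invariance clause, by contrast, is almost immediate once one records that the $R_{p,q}$-return map preserves fibers; the only point needing care there is that this fiber-preservation is compatible with the uniformizing charts at the cone points, which is precisely what the symmetry of $r_0$ guarantees.
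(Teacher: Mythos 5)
Your proposal is correct and takes essentially the same approach as the paper: the paper's proof simply cites the local construction in \cite[Lem.~2.16]{leo} of rotationally invariant Morse functions on geodesic discs centered at the orbifold points, which is exactly your construction of radial wells at the two cone points (rotational symmetry being what makes the function orbifold-smooth there) glued to a regular saddle and maximum on the sphere. Your explicit verification of the invariance clause via $\fp\circ\psi_{p,q}=\fp$ (the return map of $R_{p,q}$ moves points along their Seifert fibers, hence covers the identity on $\CP^1_{p,q}$) is left implicit in the paper but is the intended argument.
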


\begin{figure}[h]
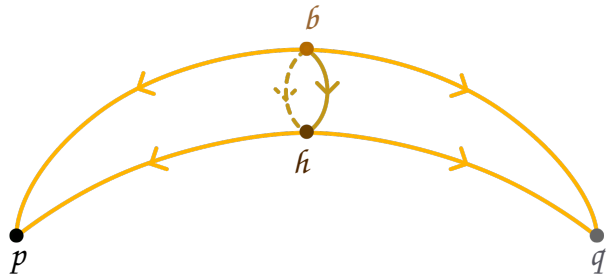

 \begin{center}
 \begin{overpic}[width=.5\textwidth, unit=1.75mm]{banana.png}
\put(1,0){$\zp$}
\put(45,0){\textcolor{bg}{$\zq$}}
\put(22.5,18){ \textcolor{blb}{$\zb$}}
\put(22.5,7){\textcolor{bdb}{$\zh$}}
\end{overpic}
\end{center}
\caption{ Here we have depicted the gradient flow of $\mathpzc{H}_{\zp,\zq}$ on $\CP^1_{p,q}$ and labelled the fibers of the prequantization orbibundle projecting to the critical points. The minima have isotropy $\Z/p$ and  isotropy \textcolor{bg}{$\Z/q$}; the fibers $\zp$ and $\textcolor{bg}{\zq}$ project to the respective minima. The fiber \textcolor{bdb}{$\zh$} projects to the index 1 critical point. The binding fiber \textcolor{blb}{$\zb$} projects to the maximum.  The arrangement of these gradient flow lines is key to the proof of Proposition \ref{prop:pqM}.}
\label{fig:morsepq}
\end{figure}

\begin{proof}
This follows from a simplification of the construction given in the proof of \cite[Lem. 2.16]{leo}.  In particular, we only need their local construction of a rotationally invariant Morse function in terms of geodesic discs centered at the desired critical points which correspond to the orbifold points.  (Their statement is one of existence for globally $G$-invariant Morse functions on $S^2$, where $G$ is a  subgroup of $\op{SO}(3)$.)
\end{proof}

The following lemma guarantees that the Morse functions in Proposition \ref{prop:morsep} are Smale with respect to $\omega_{p,q}(\cdot, j\cdot)$ restricted to $\CP^1_{p,q}$.

\begin{lemma}\label{lemma:smale}
If $H$ is a Morse function on a 2-dimensional {orbifold} $\Sigma$ with isolated quotient singularities such that $H(p_1)=H(p_2)$ for all $p_1, p_2\in\mbox{\em Crit}(H)$ with Morse index 1, then $H$ is Smale, given any metric on $S$.
\end{lemma}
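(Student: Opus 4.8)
The plan is to reduce the Smale property, in this two-dimensional setting, to the nonexistence of negative gradient trajectories joining two critical points of index $1$, and then to rule such trajectories out using the hypothesis on the values of $H$. Throughout I would take the negative gradient flow of $H$ with respect to the given metric, so that $H$ strictly decreases along every nonconstant trajectory.

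First I would establish the standard surface dictionary: $H$ is Smale if and only if its flow has no trajectory connecting two critical points of index $1$. Let $\gamma$ be a trajectory from $p$ to $q$, so that $\gamma$ lies in $W^u(p)\cap W^s(q)$ with $\dim W^u(p)=\op{ind}(p)$ and $\dim W^s(q)=2-\op{ind}(q)$. A local minimum emits no trajectory and a local maximum absorbs none, so $\op{ind}(p)\in\{1,2\}$ and $\op{ind}(q)\in\{0,1\}$. If $\op{ind}(p)=2$ then $W^u(p)$ is open, and if $\op{ind}(q)=0$ then $W^s(q)$ is open; in either case $T_xW^u(p)+T_xW^s(q)=T_x\Sigma$ automatically at every $x$ along $\gamma$. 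Thus transversality can fail only when $\op{ind}(p)=\op{ind}(q)=1$, in which case $W^u(p)$ and $W^s(q)$ are both one-dimensional and each contains the tangent line to $\gamma$, so $T_xW^u(p)=T_x\gamma=T_xW^s(q)$ fails to span $T_x\Sigma$.

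Next I would invoke the value hypothesis. If $p$ and $q$ were index $1$ critical points joined by a trajectory $\gamma$, then on the critical-point-free interior of $\gamma$ we would have $\tfrac{d}{dt}H(\gamma(t))=-|\nabla H(\gamma(t))|^2<0$, forcing $H(p)>H(q)$ and contradicting the assumption that $H$ is constant on its index $1$ critical set. Hence no such trajectory exists, and by the previous step $H$ is Smale.

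The only genuine subtlety, and where I expect the work to lie, is the orbifold structure, which I would address last. Near an isolated quotient singularity $\Sigma$ is modeled on $\R^2/\Gamma$ with $\Gamma$ finite cyclic acting by rotations fixing the origin; an orbifold-smooth function lifts to a $\Gamma$-invariant function, whose differential at the fixed point is a $\Gamma$-invariant covector and is therefore $0$. Thus every singular point is automatically a critical point of $H$, so the interior of every connecting trajectory lies in the smooth locus and the transversality computation of the first step is always carried out at smooth points. The stable and unstable manifolds of a critical point located at a singular point are defined by descending the (unique, hence $\Gamma$-invariant) local stable and unstable manifolds from the uniformizing cover, and with these conventions the dimension count above is unaffected by the presence of singular points. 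This completes the argument for any metric on $\Sigma$.
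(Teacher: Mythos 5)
Your proposal is correct and follows essentially the same route as the paper's proof: the paper likewise reduces the Smale condition (in dimension two) to the nonexistence of a gradient trajectory connecting two index-$1$ critical points, and then rules such trajectories out because $H$ strictly decreases along nonconstant flow lines while all index-$1$ critical points share the same value. The only difference is one of exposition—you spell out the transversality dictionary and verify that orbifold singular points are automatically critical, both of which the paper treats as standard and leaves implicit.
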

\begin{proof}
Given metric $g$ on $\Sigma$, $H$ fails to be Smale with respect to $g$ if and only if there are two distinct critical points of $H$ of Morse index 1 that are connected by a gradient flow line of $H$. Because all such critical points have the same $H$ value, no such flow line exists.
\end{proof}

\subsection{Perturbed Reeb dynamics for $T(p,q)$ fibrations}\label{ss:Reeb}
As in \cite{jo2, leo, preech, kech}, we use the Morse functions $\mathpzc{H}_{p,q}$ to perturb the degenerate contact form $\lambda_{p,q}$:
\begin{equation}\label{eqn:lambdapertR}
\begin{split}
\lambda_{p,q,\varepsilon}:= & (1+\varepsilon \frak{p}^*\mathpzc{H}_{p,q})\lambda_{p,q}. \end{split}
\end{equation}

By adapting a standard computation, \cite[Prop.~4.10]{jo2}, to the $S^1$-orbibundle framework, as elucidated in \cite[Rem.~2.1]{LT}, we obtain the following description of the perturbed Reeb vector field.

\begin{lemma}
The Reeb vector field of $\lambda_{p,q,\varepsilon}$ is given by
\begin{equation}
\label{perturbedreeb1}
R_{p,q,\varepsilon}=\frac{R_{p,q}}{1+\varepsilon \fp^*\mathpzc{H}_{p,q}} + \frac{\varepsilon \widetilde{X}_{\mathpzc{H}_{p,q}}}{{(1+\varepsilon \fp^*\mathpzc{{H}_{p,q}})}^{2}}, 
\end{equation}
where $\widetilde{X}_{\mathpzc{H}_{p,q}}$ denotes the horizontal lift of the Hamiltonian vector field\footnote{We use the convention $\omega(X_{{H}}, \cdot) = d{H}.$} $X_{\mathpzc{H}_{p,q}}$ on $\CP^1_{p,q}$.  
\end{lemma}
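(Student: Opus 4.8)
The plan is to verify directly that the vector field on the right-hand side of \eqref{perturbedreeb1} satisfies the two defining equations of a Reeb field, $\lambda_{p,q,\varepsilon}(R_{p,q,\varepsilon})=1$ and $\iota_{R_{p,q,\varepsilon}}d\lambda_{p,q,\varepsilon}=0$, and then invoke uniqueness. Write $f:=1+\varepsilon\fp^*\mathpzc{H}_{p,q}$, so that $\lambda_{p,q,\varepsilon}=f\lambda_{p,q}$ and $df=\varepsilon\,\fp^*d\mathpzc{H}_{p,q}$. The computation rests entirely on the orbibundle structure recorded in Example~\ref{symporb} and \S\ref{ss:OBDs}: the connection form satisfies $\lambda_{p,q}(R_{p,q})=1$ and $\iota_{R_{p,q}}d\lambda_{p,q}=0$, the curvature identity $d\lambda_{p,q}=\fp^*\omega_{p,q}$ holds, the fiber direction is vertical so $\fp_*R_{p,q}=0$, and the horizontal lift obeys $\fp_*\widetilde{X}_{\mathpzc{H}_{p,q}}=X_{\mathpzc{H}_{p,q}}$ together with $\widetilde{X}_{\mathpzc{H}_{p,q}}\in\xi=\ker\lambda_{p,q}$. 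The one place that genuinely uses the orbibundle (rather than manifold) framework is that these identities, and in particular the existence of a smooth horizontal lift, persist on $S^3$ across the singular fibers; this is exactly the content of adapting \cite[Prop.~4.10]{jo2} via \cite[Rem.~2.1]{LT}, which I would cite rather than reprove.

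First I would check the normalization. Since $\lambda_{p,q}(\widetilde{X}_{\mathpzc{H}_{p,q}})=0$ and $\lambda_{p,q}(R_{p,q})=1$,
\[
\lambda_{p,q,\varepsilon}(R_{p,q,\varepsilon})=f\left(\frac{\lambda_{p,q}(R_{p,q})}{f}+\frac{\varepsilon\,\lambda_{p,q}(\widetilde{X}_{\mathpzc{H}_{p,q}})}{f^2}\right)=1.
\]
The key input for the second equation is that $df$ annihilates $R_{p,q,\varepsilon}$: indeed $\fp^*d\mathpzc{H}_{p,q}(R_{p,q})=d\mathpzc{H}_{p,q}(\fp_*R_{p,q})=0$ because $R_{p,q}$ is vertical, while $\fp^*d\mathpzc{H}_{p,q}(\widetilde{X}_{\mathpzc{H}_{p,q}})=d\mathpzc{H}_{p,q}(X_{\mathpzc{H}_{p,q}})=\omega_{p,q}(X_{\mathpzc{H}_{p,q}},X_{\mathpzc{H}_{p,q}})=0$, so $df(R_{p,q,\varepsilon})=0$.

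The crux is the contraction $\iota_{R_{p,q,\varepsilon}}d\lambda_{p,q,\varepsilon}=0$, which I expect to be the only place requiring care with the signs and factors dictated by the convention $\omega_{p,q}(X_{\mathpzc{H}_{p,q}},\cdot)=d\mathpzc{H}_{p,q}$. Expanding $d\lambda_{p,q,\varepsilon}=df\wedge\lambda_{p,q}+f\,d\lambda_{p,q}$ and contracting, the wedge term gives
\[
\iota_{R_{p,q,\varepsilon}}(df\wedge\lambda_{p,q})=df(R_{p,q,\varepsilon})\,\lambda_{p,q}-\lambda_{p,q}(R_{p,q,\varepsilon})\,df=-\tfrac{1}{f}\,\varepsilon\,\fp^*d\mathpzc{H}_{p,q},
\]
using $df(R_{p,q,\varepsilon})=0$ and $\lambda_{p,q}(R_{p,q,\varepsilon})=1/f$. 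For the curvature term, $\iota_{R_{p,q}}\fp^*\omega_{p,q}=\iota_{R_{p,q}}d\lambda_{p,q}=0$, and for any $W$ one has $(\fp^*\omega_{p,q})(\widetilde{X}_{\mathpzc{H}_{p,q}},W)=\omega_{p,q}(X_{\mathpzc{H}_{p,q}},\fp_*W)=\fp^*d\mathpzc{H}_{p,q}(W)$, whence
\[
f\,\iota_{R_{p,q,\varepsilon}}d\lambda_{p,q}=f\cdot\frac{\varepsilon}{f^2}\,\fp^*d\mathpzc{H}_{p,q}=\frac{\varepsilon}{f}\,\fp^*d\mathpzc{H}_{p,q}.
\]
The two contributions cancel, so $\iota_{R_{p,q,\varepsilon}}d\lambda_{p,q,\varepsilon}=0$, and by uniqueness of the Reeb vector field this identifies $R_{p,q,\varepsilon}$ and completes the proof. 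The main (and essentially only) obstacle is bookkeeping the curvature and Hamiltonian conventions so that this cancellation comes out exactly, together with confirming that the horizontal-lift identities remain valid in the orbibundle setting.
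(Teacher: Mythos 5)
Your proposal is correct and is essentially the same argument the paper invokes: the paper proves this lemma by citing the standard computation of \cite[Prop.~4.10]{jo2} adapted to the orbibundle setting via \cite[Rem.~2.1]{LT}, and your direct verification of $\lambda_{p,q,\varepsilon}(R_{p,q,\varepsilon})=1$ and $\iota_{R_{p,q,\varepsilon}}d\lambda_{p,q,\varepsilon}=0$ (using $d\lambda_{p,q}=\fp^*\omega_{p,q}$, verticality of $R_{p,q}$, and horizontality of $\widetilde{X}_{\mathpzc{H}_{p,q}}$) is precisely that computation, with the signs coming out right under the convention $\omega(X_H,\cdot)=dH$. Your handling of the one genuinely orbifold-specific point --- smoothness of the horizontal lift across the singular fibers, delegated to the cited references --- matches what the paper does.
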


We additionally have:

\begin{lemma}\label{lem:efromL}
Given Morse functions $\mathpzc{H}_{p,q}$ as constructed in Proposition \ref{prop:morsep}, which are ${C^2}$ is close to 1:  
\begin{enumerate}[\em (i)]
\itemsep-.25em
\item For each $L>0$, there exists $\varepsilon(L)>0$ such that for all $\varepsilon<\varepsilon(L)$, all Reeb orbits of $R_{p,q,\varepsilon}$ with $\mathcal{A}(\gamma) < L$ are nondegenerate and project to critical points of $\mathpzc{H}_{p,q}$, where $\mathcal{A}$ is computed using $\lambda_{p,q,\varepsilon}$.
\item The action of a Reeb orbit $\gamma_x^{k|\Gamma_x|}$ of $R_{p,q,\varepsilon}$ over a critical point $x$ of $\mathpzc{H}_{p,q}$ is proportional to the length of the fiber, namely
\[
\mathcal{A}\left(\gamma_x^{k|\Gamma_x|}\right) = \int_{\gamma_x^{k|\Gamma_x|}} \lambda_{p,q,\varepsilon} = k (1+\varepsilon \fp^*\mathpzc{H}_{p,q}(x)).
\]
\end{enumerate}
\end{lemma}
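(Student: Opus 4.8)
The plan is to prove (ii) first, since it is a direct computation that also exposes the structure of the fiber orbits needed for (i). At a critical point $x$ of $\mathpzc{H}_{p,q}$ we have $dH=0$, so the Hamiltonian vector field $X_{\mathpzc{H}_{p,q}}$ vanishes at $x$, and hence its horizontal lift $\widetilde{X}_{\mathpzc{H}_{p,q}}$ vanishes along the entire fiber over $x$. By the formula (\ref{perturbedreeb1}), along this fiber the perturbed Reeb field reduces to $R_{p,q,\varepsilon}=R_{p,q}/(1+\varepsilon\fp^*\mathpzc{H}_{p,q}(x))$, a positive constant multiple of $R_{p,q}$, because $\fp^*\mathpzc{H}_{p,q}\equiv\mathpzc{H}_{p,q}(x)$ is constant on the fiber. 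Thus the fiber, reparametrized, is a Reeb orbit of $R_{p,q,\varepsilon}$ and $\gamma_x^{k|\Gamma_x|}$ is a genuine closed orbit. Since the conformal factor is constant on the fiber it pulls out of the action integral:
\[
\mathcal{A}\left(\gamma_x^{k|\Gamma_x|}\right)=\int_{\gamma_x^{k|\Gamma_x|}}(1+\varepsilon\fp^*\mathpzc{H}_{p,q})\lambda_{p,q}=(1+\varepsilon\mathpzc{H}_{p,q}(x))\int_{\gamma_x^{k|\Gamma_x|}}\lambda_{p,q}=k(1+\varepsilon\fp^*\mathpzc{H}_{p,q}(x)),
\]
where the final equality uses that the simple fiber over $x$ has $\lambda_{p,q}$-action $1/|\Gamma_x|$, so its $k|\Gamma_x|$-fold cover has $\lambda_{p,q}$-action $k$.

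For (i), I would first record that since $\lambda_{p,q,\varepsilon}(R_{p,q,\varepsilon})=1$, the action of a Reeb orbit equals its period. Projecting (\ref{perturbedreeb1}) to $\CP^1_{p,q}$ shows that a Reeb orbit covers an integral curve of the reparametrized Hamiltonian field $\varepsilon X_{\mathpzc{H}_{p,q}}/(1+\varepsilon\fp^*\mathpzc{H}_{p,q})^2$; as this is $\varepsilon$ times a fixed reparametrization of $X_{\mathpzc{H}_{p,q}}$, a closed Reeb orbit projects either to a critical point of $\mathpzc{H}_{p,q}$---giving exactly the fiber orbits of (ii)---or to a nonconstant closed orbit of $X_{\mathpzc{H}_{p,q}}$. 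The crucial input is that the nonconstant closed orbits of $X_{\mathpzc{H}_{p,q}}$ have periods bounded below by a positive constant $T_{\min}$: this holds because $\mathpzc{H}_{p,q}$ is Morse, so the period stays bounded below near the extrema (approaching the Hessian frequency) and diverges near the saddle separatrices, while being continuous and positive on the compact complement. Because the base flow is slowed by the factor $\varepsilon$, one traversal of such a base orbit consumes Reeb time at least $cT_{\min}/\varepsilon$, so the action of any such Reeb orbit exceeds $L$ once $\varepsilon<\varepsilon(L)$. This is precisely the orbibundle Morse--Bott perturbation picture, and I would invoke the framework of \cite{jo2, leo} (cf. \cite{preech, kech}) to make the bookkeeping uniform over the finitely many Morse--Bott families of $\lambda_{p,q}$ with period $\leq L$.

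The main obstacle, and the source of the $L$-dependence of $\varepsilon(L)$, is to arrange nondegeneracy simultaneously with the period bound above. Nondegeneracy of the surviving fiber orbits is checked by linearizing the Reeb flow in the contact planes over the fiber: the horizontal summand of the return map is $\exp$ of a multiple of the Hessian of $\mathpzc{H}_{p,q}$ at $x$, contributing a nonzero rotation---elliptic at the index $0,2$ extrema and hyperbolic at the index $1$ saddle---whose magnitude is proportional to $\varepsilon$ times the covering multiplicity. The eigenvalue $1$ is avoided exactly when this angle is not an integer multiple of $2\pi$. Since an orbit of action $\leq L$ has covering multiplicity bounded in terms of $L$, the threshold $\varepsilon(L)$ must be taken small enough that every such angle lies strictly inside $(0,2\pi)$, which is achievable for each fixed $L$ but not uniformly in $L$; together with the requirement from the previous paragraph that all nonconstant-base orbits be pushed above action $L$, this fixes the choice of $\varepsilon(L)$ and completes the argument.
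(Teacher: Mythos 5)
Your proposal is correct and takes essentially the same approach as the paper, which states this lemma without proof as the orbibundle adaptation of the standard perturbation analysis of \cite[Prop.~4.10]{jo2} and \cite[Lem.~3.1]{preech}: fibers over critical points are reparametrized Reeb orbits with the stated action, every other closed orbit projects to a nonconstant closed orbit of $X_{\mathpzc{H}_{p,q}}$ and hence has period of order $1/\varepsilon$, and nondegeneracy follows from the $\varepsilon$-small rotation (at the extrema) or hyperbolicity (at the saddle) of the linearized return map. The one detail your sketch glosses is that at the orbifold points a cover $\mathpzc{p}^m$ with $p \nmid m$ has linearized return map containing the nontrivial isotropy rotation, so its nondegeneracy is automatic for small $\varepsilon$ rather than a constraint on the Hessian angle; this only makes the choice of $\varepsilon(L)$ easier.
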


We choose $\varepsilon(L)$ so that the embedded orbits realizing the generators of $ECC^L_*(S^3,\lambda_{p,q,\varepsilon(L)};J)$ consist solely of fibers above critical points of $\mathpzc{H}_{p,q}$ and that $\varepsilon(L)\sim\frac{1}{L}$, as in  \cite[Lem. 3.1]{preech}.  Moreover, a sufficiently large choice of $L$ guarantees that the generators of $ECC^L_*(S^3,\lambda_{p,q,\varepsilon(L)});J)$ are comprised solely of Reeb orbits that are fibers above critical points.  As in \cite[\S 3.4]{preech}, we can form the following direct system from these action filtered complexes.

\begin{proposition}\label{prop:directlimitcomputesfiberhomology} With $(S^3,\lambda_{p,q})$ and $\varepsilon(L)$ as discussed above, there is a direct system formed by $\{ ECH^L_*(Y,\lambda_{p,q,\varepsilon(L)})\}$. The direct limit $\lim_{L\to\infty}ECH^L_*(S^3,\lambda_{p,q,\varepsilon(L)})$ is the homology of the chain complex generated by Reeb currents $\{(\alpha_i,m_i)\}$ where the $\alpha_i$ are fibers above critical points of $\mathpzc{H}_{p,q}$.
\end{proposition}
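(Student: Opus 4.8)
The plan is to adapt the Morse--Bott direct limit construction of \cite[\S 3.4]{preech} to the orbibundle $\fp\colon S^3 \to \CP^1_{p,q}$, with Lemma \ref{lem:efromL} supplying the required action estimates. First I would construct the structure maps of the direct system. Given $L<L'$, the thresholds in Lemma \ref{lem:efromL}(i) may be arranged so that $\varepsilon(L)\geq\varepsilon(L')$, and I would define
\[
ECH^L_*(S^3,\lambda_{p,q,\varepsilon(L)})\longrightarrow ECH^{L'}_*(S^3,\lambda_{p,q,\varepsilon(L')})
\]
as a composite. The first factor is a deformation isomorphism $ECH^L_*(\lambda_{p,q,\varepsilon(L)})\cong ECH^L_*(\lambda_{p,q,\varepsilon(L')})$ obtained by shrinking the perturbation parameter from $\varepsilon(L)$ to $\varepsilon(L')$ at fixed action level $L$; the second is the inclusion-induced map $ECH^L_*(\lambda_{p,q,\varepsilon(L')})\to ECH^{L'}_*(\lambda_{p,q,\varepsilon(L')})$ arising from the subcomplex inclusion $ECC^L\hookrightarrow ECC^{L'}$ at the fixed parameter $\varepsilon(L')$, which is legitimate since $\lambda_{p,q,\varepsilon(L')}$ is nondegenerate through action $L'>L$. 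Functoriality of the inclusion maps together with the composition property of the deformation maps then yields the compatibility needed for a direct system over $L$.

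The hard part will be justifying the deformation isomorphism, i.e.\ that $ECH^L_*$ is unchanged as $\varepsilon$ decreases from $\varepsilon(L)$ to $\varepsilon(L')$. I would control the action spectrum along the family $\{\lambda_{p,q,s}\}_{s\in[\varepsilon(L'),\varepsilon(L)]}$: by Lemma \ref{lem:efromL}(ii) the actions of the fiber currents over the four critical points of $\mathpzc{H}_{p,q}$ vary continuously in $s$ and, by the defining property of $\varepsilon(L)$, remain bounded away from $L$ throughout the family (as $s\to0$ they limit to their rational fiber-length values), while every orbit not projecting to a critical point has action proportional to $1/s\gg L$. Hence $L$ never lies in the action spectrum along the deformation, and the standard continuation maps on filtered ECH --- available from the $J$-independence in \cite[Thm.~1.3]{cc2} together with the cobordism maps applied to the conformal symplectization interpolating $\lambda_{p,q,\varepsilon(L)}$ and $\lambda_{p,q,\varepsilon(L')}$ --- give the desired canonical isomorphism on $ECH^L_*$, exactly as in \cite[\S 3.4]{preech}.

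Finally I would identify the direct limit. At the chain level, Lemma \ref{lem:efromL}(i) shows that the generators of $ECC^L_*(\lambda_{p,q,\varepsilon(L)})$ are exactly the admissible Reeb currents of action less than $L$ whose underlying embedded orbits are fibers over critical points of $\mathpzc{H}_{p,q}$, namely the currents $\zb^B\zh^H\zp^P\zq^Q$ with $B,P,Q\in\Z_{\geq0}$ and $H\in\{0,1\}$. By Lemma \ref{lem:efromL}(ii), as $L\to\infty$ (equivalently $\varepsilon(L)\to0$) the action of each such current converges to its fiber-length value, so every fixed fiber current eventually lies below the threshold. Using the deformation isomorphisms to identify the groups at a common small parameter, I would then present $ECC^L_*(\lambda_{p,q,\varepsilon(L)})$ as the subcomplex of the chain complex $\mathcal{C}$ generated by all admissible Reeb currents supported on fibers over critical points, spanned by those whose limiting fiber-length action is below $L$; these subcomplexes exhaust $\mathcal{C}$. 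Since the ECH differential is intrinsic to $\mathcal{C}$ and homology commutes with direct limits (direct limits being exact), I conclude
\[
\lim_{L\to\infty}ECH^L_*(S^3,\lambda_{p,q,\varepsilon(L)})\cong H_*(\mathcal{C}),
\]
which is the homology of the chain complex generated by Reeb currents on fibers over critical points of $\mathpzc{H}_{p,q}$, as claimed; the explicit computation of the differential on $\mathcal{C}$ is deferred to \S\ref{s:ECHI}.
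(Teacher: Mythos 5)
Your proposal follows the same overall route as the paper: the paper's own justification of this proposition is essentially a citation to the Morse--Bott direct limit construction of \cite[\S 3.4]{preech}, together with the remark that the construction passes through filtered Seiberg--Witten Floer cohomology. However, two steps in your sketch misstate what that machinery provides, and as written they are genuine gaps. First, there are no ``standard continuation maps'' on filtered ECH: the ECH differential is not known to be invariant under deformations of the contact form by any direct holomorphic-curve argument, and \cite[Thm.~1.3]{cc2} gives only independence of $J$ at a fixed contact form. The maps relating $\lambda_{p,q,\varepsilon(L)}$ and $\lambda_{p,q,\varepsilon(L')}$ are cobordism maps induced by the exact symplectic cobordism coming from the pointwise inequality $\lambda_{p,q,\varepsilon(L)}\geq\lambda_{p,q,\varepsilon(L')}$, and these maps exist only via Seiberg--Witten Floer cohomology (\cite[Thm.~2.17]{preech}, drawing on \cite{cc2}). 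More importantly, what is actually needed is not merely that some such map is an isomorphism when no action crosses $L$, but the stronger statement recorded later as Proposition \ref{prop:DL} (proved as in \cite[Prop.~3.2, Thm.~7.1]{preech}): the composite of the cobordism map and the inclusion is induced by the \emph{canonical bijection on generators}, i.e.\ it is controlled at the chain level.

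Second, that chain-level statement is exactly what your concluding step presupposes. You embed each $ECC^{L}_*(S^3,\lambda_{p,q,\varepsilon(L)})$ as a subcomplex of a single complex $\mathcal{C}$ ``since the ECH differential is intrinsic to $\mathcal{C}$,'' but a priori each of these complexes carries its own differential, depending on $\varepsilon(L)$ and on the generic almost complex structure $J_{\varepsilon(L)}$; nothing in your argument shows that the coefficients $\langle\partial\alpha,\beta\rangle$ between two fixed fiber currents agree for all sufficiently large $L$. Without this stabilization you do not yet have a direct system of chain complexes, so exactness of direct limits cannot be invoked; asserting that the differential is intrinsic assumes precisely what the Seiberg--Witten-based canonical-bijection argument is needed to prove. (A smaller point: your no-spectral-crossing claim does not follow from Lemma \ref{lem:efromL}(i) alone --- as $\varepsilon$ shrinks, the actions $k(1+\varepsilon\fp^*\mathpzc{H}_{p,q}(x))$ of fiber currents decrease and can in principle cross a fixed level $L$ --- one needs the quantitative choice $\varepsilon(L)\sim 1/L$ with appropriate constants, as in \cite[Lem.~3.1]{preech}.)
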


By taking a direct limit, we can compute ECH via Proposition \ref{prop:directlimitcomputesfiberhomology}; this involves passing to filtered Seiberg-Witten Floer cohomology as explained in \cite[\S 7]{preech} and further refined for the purposes of computing knot filtration in \cite[\S 7]{kech}.  We obtain the following classification of fiber Reeb orbits via the Conley-Zehnder computations that will be carried out in \S \ref{ss:CZ}. 

\begin{lemma}\label{lem:orbitseh} Up to large action $L(\varepsilon)$, as determined by Proposition \ref{lem:efromL}, the generators of $ECC_*^{L(\varepsilon)}(S^3,\lambda_{p,q,\varepsilon}, J)$ are of the form $\mathpzc{b}^B\mathpzc{h}^H\mathpzc{p}^P\mathpzc{q}^Q$, where $B,P,Q \in \Z_{\geq 0}$ and $H=0,1$. Moreover,
\begin{itemize}
\itemsep-.25em
\item $\mathpzc{b}$ is elliptic and projects to the nonsingular index 2 critical point of $\mathpzc{H}_{p,q}$;
\item $\mathpzc{h}$ is positive hyperbolic and projects to to the nonsingular index 1  critical point of $\mathpzc{H}_{p,q}$;
\item $\mathpzc{p}$ is elliptic  and projects to the index 0 critical point of $\mathpzc{H}_{p,q}$ with isotropy $\Z/p\Z$;
\item $\mathpzc{q}$ is elliptic  and projects to the index 0 critical point of $\mathpzc{H}_{p,q}$ with isotropy $\Z/q\Z$.
\end{itemize}
\end{lemma}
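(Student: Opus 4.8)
The plan is to split the statement into two parts: part (A), identifying which embedded Reeb orbits of $R_{p,q,\varepsilon}$ have action below the threshold $L(\varepsilon)$ and hence determining the form of the generators of $ECC_*^{L(\varepsilon)}(S^3,\lambda_{p,q,\varepsilon},J)$; and part (B), computing the elliptic/hyperbolic type of each of these orbits together with the critical point of $\mathpzc{H}_{p,q}$ to which it projects.

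For part (A) I would apply Lemma \ref{lem:efromL}(i): once $\varepsilon<\varepsilon(L)$, every Reeb orbit of $R_{p,q,\varepsilon}$ of action less than $L$ is nondegenerate and projects to a critical point of $\mathpzc{H}_{p,q}$. By Proposition \ref{prop:morsep} there are exactly four such critical points --- a nonsingular index $2$ point, a nonsingular index $1$ point, and two index $0$ points at the orbifold singularities of isotropy $\Z/p\Z$ and $\Z/q\Z$ --- and $\fp$ has a single fiber over each. This yields exactly four embedded orbits below action $L(\varepsilon)$, which I label $\zb,\zh,\zp,\zq$ according to the critical point they cover; their iterates are multiple covers, so every generator is an admissible Reeb current of the form $\zb^B\zh^H\zp^P\zq^Q$ with $B,P,Q\in\Z_{\geq 0}$, the multiplicities being bounded by the action constraint. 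The restriction $H\in\{0,1\}$ then follows from admissibility once $\zh$ is shown to be hyperbolic in part (B).

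For part (B) I would compute the linearized return map in the contact planes $\xi$ as the composition of the unperturbed orbibundle return map with the $O(\varepsilon)$ contribution from the horizontal lift of $X_{\mathpzc{H}_{p,q}}$ in \eqref{perturbedreeb1}. Over a regular point --- the case of $\zb$ and $\zh$ --- the unperturbed return map over the full period is the identity, since $z_1\mapsto e^{2\pi p i}z_1=z_1$ and $z_2\mapsto e^{2\pi q i}z_2=z_2$, so the fibers form a Morse--Bott family; over the exceptional fibers $\zp$ and $\zq$ it is rotation of the transverse plane by $2\pi q/p$ and $2\pi p/q$ respectively, already nondegenerate since $\gcd(p,q)=1$. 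Perturbing, the type is governed by the Morse index through the local normal forms of Proposition \ref{prop:morsep}: with the convention $\omega(X_{\mathpzc{H}_{p,q}},\cdot)=d\mathpzc{H}_{p,q}$, the Hamiltonian flows of $r_0$ and $r_2$ are rotations, so $\zp,\zq$ (index $0$) and $\zb$ (index $2$) are elliptic, while the flow of $r_1=(v^2-u^2)/2$ is hyperbolic, so $\zh$ (index $1$) is hyperbolic. The precise Conley--Zehnder indices and rotation numbers refining this qualitative picture are computed in \S\ref{ss:CZ}.

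The main obstacle is determining the \emph{sign} of the hyperbolic orbit $\zh$. Because $\zh$ covers a regular point, its unperturbed linearized return map is the identity, so for $\varepsilon$ small the perturbed return map is $C^1$-close to the identity; its eigenvalues are therefore real and close to $+1$, hence positive, so that $\zh$ is positive rather than negative hyperbolic, as claimed. Carrying out this perturbative estimate carefully, and checking that the small rotations introduced at the elliptic orbits do not combine with the orbibundle holonomy to create a degeneracy below the action threshold, is exactly the content deferred to the Conley--Zehnder computation of \S\ref{ss:CZ}.
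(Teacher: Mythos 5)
Your proposal is correct, and for the type classification it takes a genuinely different route from the paper. Part (A) coincides with the paper's argument: Lemma \ref{lem:efromL}(i) together with the four critical points of Proposition \ref{prop:morsep} gives the form $\zb^B\zh^H\zp^P\zq^Q$, with $H\in\{0,1\}$ forced by admissibility once $\zh$ is known to be hyperbolic. For part (B), however, the paper never linearizes the perturbed return map directly; it reads the types off from the Conley--Zehnder computations of \S\ref{ss:CZ}. Specifically, Lemma \ref{lempre} (cited from \cite{jo2}, \cite{vknotes}) gives $CZ_\tau(\gamma_x^k)=RS_\tau(\gamma^k)-1+\op{index}_x\mathpzc{H}_{p,q}$, the Robbin--Salamon terms are evaluated in the orbibundle trivialization via \cite{hong-cz}, and the resulting formula $CZ_{orb}(\gamma_x^{k|\Gamma_x|})=2(p+q)k+\op{index}_x\mathpzc{H}_{p,q}-1$ of Lemma \ref{lem:orbtrivCZp} determines everything: the values for $\zh$ are even and linear in $k$, so $\zh$ is positive hyperbolic by index parity, while the odd values for $\zb$, $\zp$, $\zq$ exhibit them as elliptic with the stated monodromy angles. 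Your direct computation---unperturbed return map (identity over regular fibers, rotation by $2\pi q/p$ and $2\pi p/q$ over $\zp$ and $\zq$) composed with the linearized local Hamiltonian flow of the Morse normal forms, plus the closeness-to-identity argument for the sign of $\zh$---is essentially the proof of the ingredient the paper cites (it is how Lemma \ref{lempre} and the analogous classification in \cite[\S 3]{preech} are established), and it buys a self-contained, geometrically transparent argument that pins down the sign of the hyperbolic orbit without any trivialization bookkeeping. What the paper's route buys is economy: the precise $CZ_{orb}$ values are needed later for the ECH index in any case, so the classification comes for free, and positivity of $\zh$ is immediate from index parity rather than from a perturbative estimate.

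Two small cautions. First, when $p=2$ (or $q=2$) the unperturbed eigenvalues at $\zp$ (resp.\ $\zq$) are $-1$, so your remark that this return map is ``already nondegenerate'' does not by itself rule out the perturbation producing a \emph{negative} hyperbolic orbit; there you genuinely need the rotational invariance of the normal form $r_0$ at the orbifold point, which your setup supplies but which should be invoked explicitly at that step. Second, your closing claim that the careful perturbative estimate ``is exactly the content'' of \S\ref{ss:CZ} slightly mischaracterizes that section, which proceeds via Robbin--Salamon indices and changes of trivialization rather than via such an estimate; the two arguments reach the same classification by different means.
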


\begin{remark}\label{rem:obd}
The contact forms $\lambda_{p,q, \varepsilon}$ and $\lambda_{p,q}$ are supported by the open book decomposition $(T(p,q),\pi)$ because $\widetilde X_{\mathpzc{H}_{p,q}}$ is tangent to the pages.  From (\ref{perturbedreeb1}), it follows that $R_{p,q,\varepsilon}$ is positively transverse to the pages while remaining tangent to the binding because $\widetilde X_{\mathpzc{H}_{p,q}}$ vanishes along the binding.
\end{remark}

We conclude by computing the following linking numbers, which will be relevant to our later computation of the knot filtration on ECH. 
\begin{corollary}\label{cor:linkingnumber} 
For $T(p,q)$, we have:
\begin{enumerate}[\em (i)]
\itemsep-.35em
\item $\ell(\zp,\zq)=1$,
\item $\ell(\zb,\zp)= \ell(\zh,\zp)=q $,
\item $\ell(\zb,\zq) = \ell(\zh,\zq) =p$,
\item $\ell(\zb,\zh)=pq$.
\end{enumerate}
\end{corollary}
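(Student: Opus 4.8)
The plan is to compute each linking number as a signed count of intersections of one orbit with a Seifert surface of the other, exploiting the explicit description of the four orbits as fibers of the $S^1$-action $e^{2\pi it}\cdot(z_1,z_2)=(e^{2\pi ipt}z_1,e^{2\pi iqt}z_2)$ of Proposition \ref{prop:SeifertInvts}. Recall from \eqref{eq:p} and \eqref{eq:q} that $\zp=\{z_2=0\}$ and $\zq=\{z_1=0\}$ are the coordinate circles, i.e.\ the cores of the two solid tori of the genus-one Heegaard splitting of $S^3$, while $\zb$ and $\zh$ are regular fibers, namely curves of slope $(p,q)$ on the Heegaard torus $\{|z_1|=|z_2|\}$ (they lie over distinct critical points by Lemma \ref{lem:orbitseh}, hence are disjoint). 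Because the contact structure is positive, the Reeb flow is positively oriented along every fiber and positively transverse to the pages of the open book (Remark \ref{rem:obd}); consequently every geometric intersection below will count with sign $+1$, so it suffices to count intersection points and each linking number will be a nonnegative integer.

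First I would dispatch $\ell(\zp,\zq)$ together with the linkings against the two exceptional fibers, since $\zp$ and $\zq$ are unknots bounding obvious meridian disks. For $\zp$ take the embedded disk $D_\zp=\{(z_1,z_2)\in S^3:\ z_2\in\R_{\geq0}\}$, parametrized by $z_1\in\bar D^2$ via $z_2=\sqrt{1-|z_1|^2}$, with $\partial D_\zp=\zp$ oriented compatibly. A fiber meets $D_\zp$ exactly where its $z_2$-coordinate is a nonnegative real, i.e.\ where $e^{2\pi iqt}=1$: for a regular fiber this occurs at $t=j/q$, $j=0,\dots,q-1$, giving $q$ points, so $\ell(\zb,\zp)=\ell(\zh,\zp)=q$, whereas $\zq$ meets $D_\zp$ only at $t=0$, giving $\ell(\zp,\zq)=1$. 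The symmetric disk $D_\zq=\{z_1\in\R_{\geq0}\}$ produces $\ell(\zb,\zq)=\ell(\zh,\zq)=p$ in exactly the same way. The only point requiring care is that the intersection signs are all $+1$; at a crossing one computes that the flow direction $\dot z_2$ points in the direction of increasing $\arg z_2$, matching the coorientation of $D_\zp$, which is the concrete manifestation of the positivity noted above.

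The remaining and most delicate value is $\ell(\zb,\zh)=pq$, the mutual linking of the two regular fibers. The clean route uses that $\zb$ is precisely the binding of the open book $(T(p,q),\pi_{p,q})$ (Lemma \ref{lem:orbitseh}), so a page $\Sigma_\theta=\pi_{p,q}^{-1}(\theta)$ is a genuine Seifert surface for $\zb$ and $\ell(\zb,\zh)=\#(\zh\cap\Sigma_\theta)$. Since $\zh$ is a closed Reeb orbit positively transverse to the pages, this count equals the period of the periodic point of the return map corresponding to $\zh$; as $\zh$ is a regular (non-exceptional) fiber it has full period under the $pq$-periodic map $\psi_{p,q}$ of Example \ref{cor:CHdetails}, namely $pq$, whence $\ell(\zb,\zh)=pq$. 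I expect this to be the main obstacle: one must verify that a regular fiber meets each page in exactly $pq$ points while the two exceptional fibers have strictly smaller period, which is precisely where the arithmetic $qm-pn=1$ of the Seifert data in Proposition \ref{prop:SeifertInvts} enters. As an independent consistency check I would recompute the value directly from the Heegaard picture: $\zb$ and $\zh$ are two parallel slope-$(p,q)$ curves on the Heegaard torus, whose induced surface framing is the product $pq$, recovering $\ell(\zb,\zh)=pq$ without any return-map bookkeeping.
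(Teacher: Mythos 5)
Your proof is correct, and for the bulk of the statement it takes a genuinely different route from the paper's. The paper proves (i) by observing that the Seifert fibration respects the genus-one Heegaard splitting whose cores are $\zp$ and $\zq$, which link once; it proves (ii)--(iii) through the open book rather than through meridian disks: the points of $\zp\cap\Sigma$ (resp.\ $\zq\cap\Sigma$) form a single periodic orbit of the $pq$-periodic return map $\psi_{p,q}$ of Example \ref{cor:CHdetails}, and linking with the binding equals intersection number with a page, i.e.\ the period of that orbit, with the first equalities in (ii)--(iii) coming from the isotopy of $\zh$ to $\zb$ in the complement of $\zp$ and $\zq$; and it proves (iv) by exactly the standard fact you invoke as a fallback, that disjoint $(p,q)$-torus knots on a torus link $pq$ times. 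Your unified meridian-disk count is more elementary and self-contained, and it has the concrete virtue of pinning down the $p$/$q$ asymmetry mechanically: note that the paper's prose attaches periods ``$p$ and $q$ respectively'' to the orbits over $\fp(\zp)$ and $\fp(\zq)$, whereas the periods that equal the linking numbers are $q$ and $p$ (the orbit $\zp$ has action $1/p$ and the return time is $1/pq$, so $\zp\cap\Sigma$ has $q$ points) --- an order swap that your explicit computation cannot commit, even though the paper's stated conclusions are of course correct. What the paper's route buys is brevity and the monodromy-period dictionary that is reused later, e.g.\ in Proposition \ref{prop:constructcontact}(iii), where period equals intersection with a page equals linking with the binding. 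Your primary argument for (iv) transplants that same dictionary, and the point you flag as the main obstacle --- that the $\Z/pq$-action generated by $\psi_{p,q}$ on a page is free away from the points over the two orbifold points, so a regular fiber has full period $pq$ --- is standard for Seifert fibrations; in any case your fallback for (iv) coincides with the paper's proof, so nothing essential is missing. (One shared, harmless imprecision: $\zb$ and $\zh$ lie on distinct concentric tori rather than literally on the Heegaard torus $\{|z_1|=|z_2|\}$, which changes nothing since linking numbers are isotopy invariants.)
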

\begin{proof}
Conclusion (i) follows from the fact that the Seifert fibration of $S^3$ in Proposition \ref{prop:SeifertInvts} respects the tori where $|z_1|$ and $|z_2|$ are constant, thus respects the splitting of $S^3$ into two solid tori whose cores are $\zp$ and $\zq$, which link once.

Conclusions (ii, iii)  follow from the fact that $\fp(\zp)$ and $\fp(\zq)$ are periodic points of the monodromy of the open book $(T(p,q),\pi)$ of periods $p$ and $q$, respectively. (The periods can be computed from Remark \ref{cor:CHdetails}(i, ii)). Their linking numbers with the binding equal their intersection numbers with the page, which are precisely these periods. For the first equalities in (ii) and (iii), note that $\mathpzc{h}$ is isotopic to $\mathpzc{b}$ in the complement of $\mathpzc{p}$ and $\mathpzc{q}$. 

Finally, conclusion (iv) follows from the fact that the linking number of disjoint $(p,q)$-torus knots on the same torus is $pq$.

\end{proof}

\subsection{Conley-Zehnder indicies}\label{ss:CZ}

We now compute the Conley-Zehnder indicies of the Reeb orbits with respect to various trivializations.  For the orbits $\mathpzc{b}$ and $\mathpzc{h}$ we utilize the constant and orbibundle trivializations, while for $\mathpzc{q}^q$ and $\mathpzc{p}^p$ we utilize the orbibundle trivialization, which also provides the formulae for other iterates of $\mathpzc{q}$ and $\mathpzc{p}$.  We make use of the change of trivialization formulae established in  \cite[\S 3]{kech} to switch to the page trivialization, which is used to determine the rotation angle relevant to our computation of knot filtered ECH.  Additional details for a different orbifold Morse function appeared in  \cite[\S 4]{kech}. 

The linearized Reeb flow along a Reeb orbit $\gamma: \R / T\Z \to Y$, as defined with respect to a choice of a trivialization $\tau \in \mathcal{T}(\gamma)$,  defines a symplectic map $P_{\gamma(t)}:\xi_{\gamma(0)} \to \xi_{\gamma(t)}$.   If $\gamma$ is nondegenerate then the path of symplectic matrices $\{ P_{\gamma(t)} \ | \ 0 \leq t \leq T \}
$ has a well-defined \emph{Conley-Zehnder} index $CZ_\tau(\gamma) := CZ(P_{\gamma(t)}) \in \Z.$

In three dimensions, the Conley-Zehnder index has the following properties, depending on the eigenvalues of the linearized return map.
\begin{itemize}
\itemsep-.25em
\item If the eigenvalues of the linearized return map are real, then $\gamma$ is said to be \emph{hyperbolic}.  In this case, there is an integer $n\in \Z$ such that the linearized Reeb flow along $\gamma$ rotates the eigenspaces of the linearized return map by angle $n\pi$ with respect to $\tau$, thus
\[
CZ_\tau(\gamma^k) = kn.
\]
Moreover, the integer $n$ is always even when the eigenvalues are positive while $n$ is always odd when the eigenvalues are negative.  We say that $\gamma$ is positive hyperbolic in the former case and negative hyperbolic in the latter.  The integer $n$ is the \emph{monodromy angle} of $\gamma$.
\item If the eigenvalues of the linearized return map lie on the unit circle, then $\gamma$ is said to be \emph{elliptic} and $\tau$ is homotopic to a trivialization wherein the linearized Reeb flow rotates by angle $2\pi\theta_t$ for each $t \in [0,T]$, where $\theta:[0,T] \to \R$ is continuous and $\theta_0=0$. The nondegeneracy assumption forces $\theta_T$ to be irrational and
\begin{equation}\label{CZ:elliptic}
CZ_\tau(\gamma^k) = 2 \lfloor k\theta_T \rfloor + 1.
\end{equation}
We call $\theta_T$ the \emph{monodromy angle} of $\gamma$.   We will use the terminology `rotation number' to designate the $\theta_T$ obtained from the homotopy class of trivializations satisfying the ``push off linking zero" property, which is equivalent to the ``page trivialization,"  and used to compute the knot filtration.
\end{itemize}

The Conley-Zehnder index depends only on the Reeb orbit $\gamma$ and the homotopy class of $\tau \in \mathcal{T}(\gamma)$.  Given another trivialization $\tau' \in \mathcal{T}(\gamma)$ we have the following change of trivialization formula
\begin{equation}\label{CZ:changetriv}
CZ_\tau(\gamma^k) - CZ_{\tau'}(\gamma^k) = 2k(\tau' - \tau);
 \end{equation}
see the discussion around \eqref{eqn:trivchange}, for a more precise description of the difference between two trivializations in the right hand side.

\begin{remark}\label{rem:linkingtriv}
In \S \ref{s:spectral}, we compute the knot filtration using the page trivialization, defined in \S \ref{ss:trivs}.  Since $\zb$ is the binding of the $T(p,q)$ open book, a pushoff of $\gamma$ into the page has linking number zero with $\gamma$.  Since $H_1(Y)=0$,  the page trivialization is well-defined as the trivialization in which this pushoff is constant. Let $\op{rot}(\gamma):=\theta_T \in \R$ denote the monodromy angle of $\gamma$ in the page trivialization; we refer to it as the \emph{rotation number} of $\gamma$.
\end{remark}

\begin{remark}\label{rem:c1sl}
Typically when computing the Conley-Zehnder index of a nondegenerate Reeb orbit, one uses a different homotopy class of trivialization $\tau$, which extends to a trivialization of $\xi$ over a surface bounded by $\gamma$, rather than one yielding a zero linking number with respect to the pushoff of the Reeb orbit.  These two trivializations differ by the self-linking number of the transverse knot $\gamma$.   The \emph{self-linking number} of any transverse knot (oriented and positively transverse to $\xi$) is defined as follows.

Let $\tau$ denote the homotopy class of a symplectic trivialization of $\xi|_\gamma$ for which a pushoff of $\gamma$ has linking number zero with $\gamma$.  If $\Sigma$ is a Seifert surface for $\gamma$ then 
\[
\op{sl}(\gamma):=-c_1(\xi|_\Sigma, \tau).
\]
As computed in Lemma~\ref{lem:relfirstCherncalcp}~(iii) (using the notation $c_\Sigma$ for $c_1(\xi|_\Sigma,\tau)$ explained in Remarks \ref{rmk:not} and \ref{rmk:trivs}), we have that $pq-p-q$ is the self linking number.  With respect to the push off linking zero trivialization, we get that the rotation angle of the binding is $pq$, whereas using the trivialization that extends over a disk, we show below that the monodromy angle is $p+q$.
\end{remark}

We first recall the following formula for the Conley-Zehnder indices of iterates of Reeb orbits associated to $\lambda_{p,q,\varepsilon}$ which project to critical points $x$ of ${\mathpzc{H}_{p,q}}$.  Here let $\gamma_p^k$ denote the $k$-fold iterate of an orbit which projects to $p \in \mbox{Crit}({\mathpzc{H}_{p,q}})$.  (Additional discussion of technicalities and proofs can be found in \cite[\S 4]{jo2}, \cite{vknotes}.)

\begin{lemma}\label{lempre}
Fix $L>0$ and let ${\mathpzc{H}_{p,q}}$ be a Morse-Smale function on $\CP^1_{2,q}$ which is $C^2$ close to 1.  Then there exists $\varepsilon >0$ such that all periodic orbits $\gamma$ of $R_{p,q,\varepsilon}$ with action $\mathcal{A}(\gamma) <L$ are nondegenerate and project to critical points of ${\mathpzc{H}_{p,q}}$.  The Conley-Zehnder index of such a Reeb orbit over $x \in \mbox{\em Crit}({\mathpzc{H}_{p,q}})$ is given by
\[
\begin{array}{lcl}
CZ_\tau(\gamma_x^k) &=& {RS}_\tau(\gamma^k) -1 + \mbox{\em index}_x{\mathpzc{H}_{p,q}},\\
\end{array}
\]
where $RS$ stands for the Robbin-Salamon index, which can be associated to a degenerate Reeb orbit \cite{RS}.
\end{lemma}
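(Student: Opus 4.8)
The plan is to derive the formula as the standard Morse--Bott perturbation index computation of Bourgeois, adapted to the $S^1$-orbibundle setting exactly as in \cite{jo2, leo, preech}. The nondegeneracy and projection-to-critical-points assertions are precisely Lemma \ref{lem:efromL}(i), so it remains to identify the Conley--Zehnder index of the surviving orbits. First I would observe that a fiber $\gamma_x$ over a critical point $x$ of $\mathpzc{H}_{p,q}$ remains a closed Reeb orbit of $R_{p,q,\varepsilon}$: at $x$ we have $X_{\mathpzc{H}_{p,q}}(x)=0$, so by \eqref{perturbedreeb1} the perturbed field restricts along $\gamma_x$ to a positive multiple of the unperturbed $R_{p,q}$, making $\gamma_x$ a merely reparametrized fiber.

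The heart of the argument is the splitting of the linearized return map along $\gamma_x^k$. Since $\xi=\ker\lambda_{p,q}$ is the horizontal distribution of the prequantization orbibundle $\fp\colon S^3\to\CP^1_{p,q}$, the projection $d\fp$ identifies $\xi|_{\gamma_x}$ with $T_x\CP^1_{p,q}$, working in the local uniformizing chart of Example \ref{symporb} when $x$ is one of the two orbifold points. Under this identification the time-$kT$ linearized flow of $R_{p,q,\varepsilon}$ factors, up to homotopy of symplectic paths relative to the conjugacy classes of the endpoints, as the composition of the linearized flow of the unperturbed $R_{p,q}$ --- a degenerate rotation whose contribution is recorded by the Robbin--Salamon index $RS_\tau(\gamma^k)$ --- with the linearization of the $\varepsilon\widetilde{X}_{\mathpzc{H}_{p,q}}$ term, which over the critical point $x$ is governed by $\mathrm{Hess}_x\mathpzc{H}_{p,q}$.

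For $\varepsilon$ sufficiently small, the latter moves the degenerate eigenvalue $1$ off the unit circle in the manner dictated by the signature of the Hessian, and the resulting shift in the Conley--Zehnder index is $\mathrm{index}_x\mathpzc{H}_{p,q}-\tfrac{1}{2}\dim\CP^1_{p,q}=\mathrm{index}_x\mathpzc{H}_{p,q}-1$, where $\dim$ denotes the real dimension of the base orbit space. Adding the two contributions yields $CZ_\tau(\gamma_x^k)=RS_\tau(\gamma^k)-1+\mathrm{index}_x\mathpzc{H}_{p,q}$, as claimed.

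The step requiring the most care is verifying this splitting and the $-1$ shift at the two orbifold points, where the identification $\xi|_{\gamma_x}\cong T_x\CP^1_{p,q}$ passes through the local branched cover and one must check that no spurious eigenvalue crossings are introduced. Here the explicit rotationally symmetric local normal forms (i)--(iii) of Proposition \ref{prop:morsep}, inherited from the construction in \cite[Lem.~2.16]{leo}, make $\mathrm{Hess}_x\mathpzc{H}_{p,q}$ completely explicit, so the eigenvalue analysis reduces to the smooth prequantization computation of \cite[\S4]{jo2} and \cite{vknotes} pulled back to the uniformizing system. This is exactly the orbibundle adaptation those references provide, which is why the formula retains the same shape as in the manifold case.
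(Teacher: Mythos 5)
Your proposal is correct and takes essentially the same route as the paper: the paper gives no independent proof of this lemma, recalling it as the standard Morse--Bott perturbation formula with technicalities deferred to \cite[\S 4]{jo2} and \cite{vknotes}, and your splitting of the linearized return map into the degenerate fiber contribution (the Robbin--Salamon term) plus the Hessian correction $\mathrm{index}_x\mathpzc{H}_{p,q}-\tfrac{1}{2}\dim\CP^1_{p,q}$, checked at the orbifold points through the uniformizing charts, is precisely the argument those references carry out. One minor wording point: when $\mathrm{Hess}_x\mathpzc{H}_{p,q}$ is definite (index $0$ or $2$) the perturbation moves the eigenvalue $1$ \emph{along} the unit circle (the orbit stays elliptic), and only at the saddle do the eigenvalues leave the circle, but this does not affect your index count.
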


\subsubsection{Constant trivialization}\label{ss:consttau}
The constant trivialization $\tau_0$, as for example considered in \cite[\S 3]{preech} and briefly reviewed below, yields the following expression of the Conley-Zehnder index of the fibers which project to nonsingular points below.  Let $x\in \CP^1_{p,q} $ be any point with with trivial isotropy.  Then for any point $y \in \fp^{-1}(x)$, a fixed trivialization of $T_x\CP^1_{p,q}$ yields a trivialization of $\xi_y$ because $\xi_y \cong T_x\CP^1_{p,q}$. This trivialization is invariant under the linearized Reeb flow and is regarded as a  \emph{constant trivialization} over the orbit $\gamma_x$ because the linearized Reeb flow, with respect to this trivialization, is the identity map.

Using this constant trivialization, we have the following results; see also \cite[Lem. 3.3]{vknotes}, \cite[Lem. 4.8]{jo2}.

\begin{lemma}\label{consttrivlem}
Let $x\in \CP^1_{p,q} $ be any point with with trivial isotropy, and let $\gamma_x = \fp^{-1}(x) $ be the $S^1$ fiber realizing a Reeb orbit of $\lambda_{p,q}$ which projects to $x$.   Then for the constant trivialization $\tau_0$ we obtain $RS_{0}(\gamma_x)=0$ and $RS_{0}(\gamma_x^k) =0$, where $RS$ denotes the Robbin-Salamon index. \end{lemma}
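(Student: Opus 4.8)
The plan is to show that, in the constant trivialization $\tau_0$, the linearized Reeb flow along $\gamma_x$ is the constant path at the identity of $Sp(2;\R)$, and then to observe that the Robbin--Salamon index of such a path vanishes.

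First I would make the identification underlying $\tau_0$ precise. Let $\psi^t$ denote the flow of the Reeb field $R_{p,q}$, which is a reparametrization of the almost free $S^1$-action; in particular $\psi^t$ preserves the fibers of $\fp$ and preserves $\lambda_{p,q}$, hence its kernel $\xi$. Fix $y\in\fp^{-1}(x)$. Since $\fp\circ\psi^t=\fp$, differentiating and restricting to $\xi$ yields
\[
d\fp|_{\xi_{\psi^t(y)}}\circ d\psi^t_y|_{\xi_y}=d\fp|_{\xi_y},
\]
where, because $x$ has trivial isotropy, each $d\fp|_{\xi_{\psi^t(y)}}\colon \xi_{\psi^t(y)}\to T_x\CP^1_{p,q}$ is a symplectic isomorphism (with respect to $d\lambda_{p,q}$ and $\omega_{p,q}$, using $d\lambda_{p,q}=\fp^*\omega_{p,q}$ along $\xi$). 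The constant trivialization $\tau_0$ is precisely the family of these isomorphisms, so the linearized Reeb flow expressed in $\tau_0$ is
\[
P_{\gamma_x}(t)=d\fp|_{\xi_{\psi^t(y)}}\circ d\psi^t_y|_{\xi_y}\circ\big(d\fp|_{\xi_y}\big)^{-1}=\id_{T_x\CP^1_{p,q}}.
\]
This recovers the assertion in the paragraph preceding the lemma that $\tau_0$ renders the linearized flow the identity. For the $k$-fold iterate $\gamma_x^k$ the associated path is simply this same constant path at the identity traversed over $[0,k]$, so $P_{\gamma_x^k}(t)\equiv I$ as well.

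It then remains to compute the Robbin--Salamon index of the constant path $\Phi(t)\equiv I$. I would argue directly from the crossing-form definition of $RS$: although every $t$ is a crossing, since $\ker(\Phi(t)-I)$ is all of $\R^2$, the crossing form at each $t$ is built linearly from $\dot\Phi(t)$, which vanishes identically for a constant path. Hence every crossing form is zero and all of its (half-)signatures vanish, giving $RS_0(\gamma_x)=0$ and $RS_0(\gamma_x^k)=0$. Equivalently, one may use the normalization property of $RS$: realizing the index as the relative Maslov index of the path of graphs $t\mapsto\operatorname{Gr}(\Phi(t))$ against the diagonal $\Delta$ in $(\R^4,(-\omega)\oplus\omega)$, the path $\operatorname{Gr}(\Phi(t))\equiv\Delta$ is constant, so its Maslov index relative to $\Delta$ is zero.

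I expect the only point requiring genuine care to be the first step, namely verifying that $\tau_0$ diagonalizes the linearized flow to the identity. This is a manifestation of the $S^1$-invariant connection underlying the prequantization description of $\lambda_{p,q}$, and it is essentially recorded in the discussion introducing $\tau_0$; once the relation $\fp\circ\psi^t=\fp$ is differentiated and the horizontal projection $d\fp|_\xi$ is identified as a symplectic isomorphism, the vanishing of the index is immediate from either formulation of $RS$.
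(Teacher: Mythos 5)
Your proposal is correct and follows essentially the same route as the paper: the paper defines $\tau_0$ exactly so that the linearized Reeb flow becomes the identity (asserting this in the paragraph preceding the lemma and deferring the index computation to the cited references \cite{vknotes, jo2}), and you supply the proof of that assertion by differentiating $\fp\circ\psi^t=\fp$ and identifying $d\fp|_\xi$ as a symplectic isomorphism, then conclude by computing $RS$ of the constant path. One small caution: for a constant path every crossing is non-regular (the crossing form is identically zero, hence degenerate), so the crossing-form signature formula does not literally apply; your second argument --- that the graph of $\Phi$ is the constant Lagrangian path at the diagonal $\Delta$, which stays in a single stratum and therefore has vanishing Maslov index by the Robbin--Salamon ``Zero'' axiom --- is the rigorous version and is all that is needed.
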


\begin{corollary}\label{cor:CZ0bh}
Fix $L>0$ and $\mathpzc{H}_{p,q}$ as in Proposition \ref{prop:morsep}.  Then there exists an $\varepsilon >0$ such that all $k$-fold iterates of $ \mathpzc{b}$ with action $\A(\mathpzc{b}^k) <L$ are nondegenerate and $CZ_{0}( \mathpzc{b}^k) = 1$ and $CZ_{0}( \mathpzc{h}^k) = 0$.
\end{corollary}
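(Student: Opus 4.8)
The plan is to deduce both index computations directly from the general Conley--Zehnder formula of Lemma \ref{lempre}, specialized to the constant trivialization, since $\mathpzc{b}$ and $\mathpzc{h}$ are precisely the two fibers lying over \emph{regular} points of the base. First I would apply Lemma \ref{lempre} (equivalently Lemma \ref{lem:efromL}(i)) to choose $\varepsilon>0$, depending on the fixed $L$, small enough that every Reeb orbit of $R_{p,q,\varepsilon}$ of action below $L$ is nondegenerate and projects to a critical point of $\mathpzc{H}_{p,q}$. This immediately settles the nondegeneracy assertion for all iterates $\mathpzc{b}^k,\mathpzc{h}^k$ with $\mathcal{A}<L$, and places us in the range where
\[
CZ_\tau(\gamma_x^k) = RS_\tau(\gamma_x^k) - 1 + \operatorname{index}_x \mathpzc{H}_{p,q}
\]
is valid.

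Next I would take $\tau=\tau_0$ the constant trivialization. By Lemma \ref{lem:orbitseh}, $\mathpzc{b}$ projects to the nonsingular index $2$ critical point and $\mathpzc{h}$ to the nonsingular index $1$ critical point; in particular both lie over points of trivial isotropy, so Lemma \ref{consttrivlem} applies and gives $RS_0(\mathpzc{b}^k)=RS_0(\mathpzc{h}^k)=0$ for every $k$ in the admissible range. Substituting the Morse indices $\operatorname{index}_{\mathpzc{b}}\mathpzc{H}_{p,q}=2$ and $\operatorname{index}_{\mathpzc{h}}\mathpzc{H}_{p,q}=1$ into the formula then yields
\[
CZ_0(\mathpzc{b}^k)=0-1+2=1, \qquad CZ_0(\mathpzc{h}^k)=0-1+1=0,
\]
as required.

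Since the entire argument is an assembly of two already-established inputs, there is no serious analytic obstacle; the only thing requiring care is the bookkeeping, namely matching each orbit to its correct Morse index and confirming that both orbits sit over regular points so that the constant trivialization is available (the exceptional orbits $\mathpzc{p},\mathpzc{q}$ would instead demand the orbibundle trivialization and a nonzero Robbin--Salamon term). As a consistency check I would verify the parities recalled in \S\ref{ss:overviewech}: $\mathpzc{h}$ is positive hyperbolic and indeed has even index $CZ_0=0$, while the elliptic orbit $\mathpzc{b}$ has odd index $CZ_0=1$.
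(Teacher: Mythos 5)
Your proposal is correct and is essentially the paper's own (implicit) argument: the corollary follows directly by combining Lemma \ref{lempre} with Lemma \ref{consttrivlem} (giving $RS_0=0$ for the regular fibers $\mathpzc{b}$ and $\mathpzc{h}$) and substituting the Morse indices $2$ and $1$, exactly as you do. Your parity consistency check against the elliptic/positive hyperbolic dichotomy is a nice addition but not part of the paper's reasoning.
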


\subsubsection{Orbibundle trivialization}\label{ss:orbitau}

To understand Conley-Zehnder indices of Reeb orbits which project to critical points we make use of a global orbibundle trivialization relating the Robbin-Salamon indices of the fiber to the orbifold Chern class of the base $\CP^1_{p,q}$.   When combined with Lemma \ref{lempre} this yields the following results by way of \cite[Thm.~3.1]{hong-cz}, and a direct adaptation of \cite[\S 4.2.2]{kech} using Example \ref{symporb} and \eqref{eq:orbc}.

\begin{lemma}\label{lem:orbtrivCZp}
Fix $L>0$ and $\mathpzc{H}_{p,q}$ a Morse-Smale function as in Proposition \ref{prop:morsep} on $\CP^1_{p,q}$ which is $C^2$ close to 1.  Then there exists $\varepsilon>0$ such that all periodic orbits $\gamma$ of $R_{p,q,\varepsilon}$ with action $\mathcal{A}(\gamma)<L$ are nondegenerate and project to critical points of $\mathpzc{H}_{p,q}$.  The Conley-Zehnder index of such a Reeb orbit over $x \in \op{Crit}(\mathpzc{H}_{p,q})$ is given by
\[
CZ_{orb}(\gamma_x^{k|\Gamma_x|}) = 2(p+q)k + \mbox{\em index}_x \mathpzc{H}_{p,q} -1 
\]
In particular, we have:
\[
\begin{array}{lcl}
CZ_{orb}(\mathpzc{b}^k) & = & 2(p+q)k + 1, \\
CZ_{orb}(\mathpzc{h}^k) & = & 2(p+q)k , \\
CZ_{orb}({\mathpzc{p}}^{pk}) &= & 2(p+q)k -1, \\
CZ_{orb}(\mathpzc{q}^{qk}) & = & 2(p+q)k -1. \\
\end{array}
\]
Thus,
\begin{itemize}
\itemsep-.35em
\item $\mathpzc{b}$ is elliptic of monodromy angle ${p+q} + \delta_{\mathpzc{b},L}$, where $0<\delta_{\mathpzc{b},L} \ll 1 $ is irrational;
\item $\mathpzc{h}$ is positive hyperbolic with  monodromy angle  $2(p+q)$; 
\item $\mathpzc{p}$ is elliptic of monodromy angle $\frac{p+q}{p} - \delta_{\mathpzc{p},L}$, where $0<\delta_{\mathpzc{p},L} \ll 1 $ is irrational;
\item $\mathpzc{q}$ is elliptic of  monodromy angle $\frac{p+q}{q} - \delta_{\mathpzc{q},L}$, where $0<\delta_{\mathpzc{q},L} \ll 1 $ is irrational.
\end{itemize}
\end{lemma}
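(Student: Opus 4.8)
The plan is to combine the index-splitting formula of Lemma \ref{lempre} with an orbibundle computation of the Robbin--Salamon index, following the template of \cite[\S4.2.2]{kech}. First I would invoke Lemma \ref{lempre} (via Lemma \ref{lem:efromL} and Proposition \ref{prop:morsep}) to produce, for the fixed $L$, an $\varepsilon>0$ for which every Reeb orbit of $R_{p,q,\varepsilon}$ of action below $L$ is nondegenerate and projects to a critical point of $\mathpzc{H}_{p,q}$, and for which the Conley--Zehnder index of such an orbit over $x\in\op{Crit}(\mathpzc{H}_{p,q})$ satisfies
\[
CZ_\tau(\gamma_x^k)=RS_\tau(\gamma_x^k)-1+\op{index}_x\mathpzc{H}_{p,q}.
\]
Applying this with iterate $k|\Gamma_x|$ reduces the main displayed formula to computing the single quantity $RS_{orb}(\gamma_x^{k|\Gamma_x|})$, the Robbin--Salamon index of the underlying degenerate fiber orbit of $\lambda_{p,q}$ in the orbibundle trivialization.

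The heart of the argument, and the step I expect to be the main obstacle, is this Robbin--Salamon computation. Here I would set up the global orbibundle trivialization of $\xi$ coming from the prequantization description $\fp\colon S^3\to\CP^1_{p,q}$ of Example \ref{symporb}, and apply \cite[Thm.~3.1]{hong-cz}, which expresses the Robbin--Salamon index of an iterated fiber in terms of the orbifold first Chern class of the base. Using $c_1^{orb}(\CP^1_{p,q})=\frac{p+q}{pq}$ from \eqref{eq:orbc} together with $[\omega_{p,q}]=\frac{1}{pq}$ from Example \ref{symporb}, so that $c_1^{orb}/|e(Y)|=p+q$, this yields the uniform value
\[
RS_{orb}\!\left(\gamma_x^{k|\Gamma_x|}\right)=2(p+q)k
\]
for every critical point $x$, independently of the isotropy $|\Gamma_x|$. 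The subtlety to control carefully is the bookkeeping of iterates: the orbifold structure forces the natural Morse--Bott return to occur after $|\Gamma_x|$ traversals of the simple exceptional fiber, so the formula is stated for the $(k|\Gamma_x|)$-fold iterate, and one must verify that the rotation rate per unit iterate of the simple fiber over a $\Z/|\Gamma_x|$ point is $\tfrac{p+q}{|\Gamma_x|}$.

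With the Robbin--Salamon indices in hand, I would assemble the Conley--Zehnder indices by substituting the Morse indices $2,1,0,0$ of $\zb,\zh,\zp,\zq$ into the displayed formula, using $|\Gamma_{\zb}|=|\Gamma_{\zh}|=1$, $|\Gamma_{\zp}|=p$, and $|\Gamma_{\zq}|=q$; this produces
\[
CZ_{orb}(\zb^k)=2(p+q)k+1,\quad CZ_{orb}(\zh^k)=2(p+q)k,\quad CZ_{orb}(\zp^{pk})=CZ_{orb}(\zq^{qk})=2(p+q)k-1.
\]
Finally, to extract the elliptic/hyperbolic classification and the monodromy angles I would back-solve the defining index relations from \S\ref{ss:CZ}. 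The even index $CZ_{orb}(\zh^k)=\big(2(p+q)\big)k$ identifies $\zh$ as hyperbolic with monodromy angle $2(p+q)$, and since this integer is even the orbit is positive hyperbolic. For the elliptic orbits I would solve $CZ_\tau(\gamma^k)=2\lfloor k\theta_T\rfloor+1$ from \eqref{CZ:elliptic}: the value $2(p+q)k+1$ forces the monodromy angle of $\zb$ to be $p+q+\delta_{\zb,L}$ with $0<\delta_{\zb,L}\ll1$, while the values $2(p+q)k-1$ at iterates $pk$ and $qk$ force the monodromy angles of $\zp$ and $\zq$ to be $\tfrac{p+q}{p}-\delta_{\zp,L}$ and $\tfrac{p+q}{q}-\delta_{\zq,L}$ respectively. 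The sign of each $\delta$ is dictated by whether the critical point is the maximum (pushing the angle up) or a minimum (pushing it down), and the irrationality and smallness of the $\delta$'s follow from the nondegeneracy and $C^2$-closeness to $1$ supplied by Lemma \ref{lempre}.
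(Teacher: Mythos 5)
Your proposal is correct and follows essentially the same route as the paper, which justifies the lemma exactly by combining Lemma \ref{lempre} with the Robbin--Salamon computation in the orbibundle trivialization via \cite[Thm.~3.1]{hong-cz}, Example \ref{symporb}, and \eqref{eq:orbc} (adapting \cite[\S 4.2.2]{kech}), and then reading off the monodromy angles from the parity and floor structure of the resulting Conley--Zehnder indices. Your write-up simply makes explicit the bookkeeping of the $|\Gamma_x|$-fold iterates and the back-solving of $\theta$ from $CZ_\tau(\gamma^k)=2\lfloor k\theta\rfloor+1$, which the paper leaves implicit.
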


In Table \ref{tab:CZ} we compute the Conley-Zehnder indices in the orbibundle trivialization of the first several Reeb orbits for $p=3, q=4$. Note that in Lemma \ref{lem:orbtrivCZp} we obtain the monodromy angles by analyzing nonlinearities in how the Conley-Zehnder indices depend on $k$ (up to some large $k$ depending on $\varepsilon$). For example, when $k$ increases by one, the Conley-Zehnder index $CZ_{orb}(\zp^{pk})$ increases by $p+q$, meaning
\[
2\lfloor (pk+p)\theta\rfloor+1=2\lfloor pk\theta\rfloor+1+2(p+q) \Rightarrow \lfloor pk\theta+p\theta\rfloor-\lfloor pk\theta\rfloor=p+q,
\]
letting $\theta$ denote the monodromy angle of $\zp$ in the orbibundle trivialization. Because $\lfloor x+y\rfloor\geq\lfloor x\rfloor+\lfloor y\rfloor$, we now have
\[
\lfloor p\theta\rfloor\geq p+q \Rightarrow p\theta\geq p+q-1\Rightarrow\theta\geq\frac{p+q-1}{p}.
\]
(Note that we already know $\theta<(p+q)/p$ directly from the value of $CZ_{orb}(\zp)$.) Comparing $CZ_{orb}(\zp^{pk+n})$ to $CZ_{orb}(\zp^{pk})$ for $n$ large, as well as the making the analogous comparisons for $\zb, \zh$, and $\zq$, provides us with the full slate of monodromy angles, whence we may compute the entries in Table \ref{tab:CZ} directly from (\ref{CZ:elliptic}).

\begin{table}[h!]
\centering
\begin{tabular}{ || c | c | c | c | c | c | c | c  | c  | c  | c  | c  | c | c | c  | c | c | c | c || } 
\hline orbit  & $\mathpzc{q}$ & $\mathpzc{p}$  & $\mathpzc{q}^2$ & $\mathpzc{p}^2$ & $\mathpzc{q}^3$ & $\mathpzc{p}^3$ & $\mathpzc{q}^4$ & $\mathpzc{h}$ & $\mathpzc{b}$ & $\mathpzc{q}^5$ & $\mathpzc{p}^4$ & $\mathpzc{q}^6$ & $\mathpzc{p}^5$ & $\mathpzc{q}^7$ & $\mathpzc{p}^6$ & $\mathpzc{h}^2$ & $\mathpzc{q}^8$ & $\mathpzc{b}^2$ \\ 
\hline  $CZ_{orb}$  &  3 & 5  & 7 & 9 & 11 & 13 & 13 & 14 & 15 & 17 & 19 & 21 & 23  & 25 & 27 & 28 & 29 & 29\\
\hline  
\end{tabular}
\caption{Conley-Zehnder indices for the $T(3,4)$ open book decomposition}
\label{tab:CZ}
\end{table}

\subsubsection{Page trivialization aka push-off linking zero trivialization}\label{ss:pagetau}
It remains to describe the Conley-Zehnder indices and  monodromy angles of the bindings with respect to the the page trivialization $\tau_\Sigma$, described in \S \ref{ss:trivs}.  Since the page trivialization is the push-off linking zero trivialization, the below monodromy angles are the rotation numbers used to compute the knot filtration on embedded contact homology.  
\begin{lemma}\label{lem:pagetrivCZp} For the $T(p,q)$ binding $\mathpzc{b}$, we have $CZ_\Sigma(\mathpzc{b}^B)=2pqB+1$.  Thus with respect to the page trivialization,  $\zb$ is elliptic with $\op{rot}(\mathpzc{b})=pq+\delta_{\mathpzc{b},L}$, where $\delta_{\mathpzc{b},L}$ is an irrational number such that $0 < \delta_{\mathpzc{b},L} \ll 1.$ (We are implicitly assuming  $\mathpzc{b}^B$ is nondegenerate with $\A(\mathpzc{b}^B)<L$.)
\end{lemma}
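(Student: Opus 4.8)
The plan is to bootstrap from the orbibundle computation of Lemma \ref{lem:orbtrivCZp} via the change of trivialization formula \eqref{CZ:changetriv}, rather than to recompute the linearized flow from scratch. Lemma \ref{lem:orbtrivCZp} already tells us that $\mathpzc{b}$ is elliptic with $CZ_{orb}(\mathpzc{b}^B) = 2(p+q)B + 1$, so its monodromy angle in the orbibundle trivialization is $p + q + \delta_{\mathpzc{b},L}$ with $0 < \delta_{\mathpzc{b},L} \ll 1$ irrational. Since passing between two trivializations of $\xi|_{\mathpzc{b}}$ shifts the monodromy angle only by an integer, $\mathpzc{b}$ remains elliptic in the page trivialization and the whole task reduces to pinning down that integer shift.

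The key step is therefore to identify the difference $\tau_{orb} - \tau_\Sigma$ between the orbibundle trivialization and the page (push-off-linking-zero) trivialization. First I would invoke the change-of-trivialization dictionary from \S\ref{ss:trivs} (following \cite[\S3]{kech}), which relates the page trivialization to the orbibundle one. The numerical input comes from the relative first Chern number computation of \S\ref{ss:chern}: as recorded in Remark \ref{rem:c1sl}, one has $c_\Sigma([\Sigma]) = p + q - pq$, so the self-linking number of the binding is $\op{sl}(\mathpzc{b}) = pq - p - q$. Tracking the sign conventions through the definition of the page trivialization, this identifies the required difference as $\tau_{orb} - \tau_\Sigma = pq - p - q$.

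Granting this, \eqref{CZ:changetriv} gives
\[
CZ_\Sigma(\mathpzc{b}^B) = CZ_{orb}(\mathpzc{b}^B) + 2B(\tau_{orb} - \tau_\Sigma) = 2(p+q)B + 1 + 2B(pq - p - q) = 2pqB + 1,
\]
which is the first assertion. For the second, I would compare with the elliptic index formula \eqref{CZ:elliptic}, namely $CZ_\Sigma(\mathpzc{b}^B) = 2\lfloor B\,\op{rot}(\mathpzc{b})\rfloor + 1$. Using that the standing hypothesis $\A(\mathpzc{b}^B) < L$ keeps $B$ in a range with $B\,\delta_{\mathpzc{b},L} < 1$, the equality $2\lfloor B\,\op{rot}(\mathpzc{b})\rfloor + 1 = 2pqB + 1$ forces $\op{rot}(\mathpzc{b}) = pq + \delta_{\mathpzc{b},L}$; the correction term is inherited verbatim from the orbibundle computation, hence small, positive, and irrational.

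The main obstacle is the sign bookkeeping in the second paragraph: correctly matching the conventions relating the orbibundle trivialization, the page trivialization, and the relative Chern number $c_\Sigma([\Sigma])$/self-linking number, so that the integer shift comes out as exactly $pq - p - q$ and not its negative or a version off by $p+q$. (As an internal consistency check one can route through the constant trivialization of \S\ref{ss:consttau}, where $CZ_0(\mathpzc{b}^B) = 1$ forces $\tau_0 - \tau_\Sigma = pq$ and $\tau_0 - \tau_{orb} = p+q$, whose difference reproduces $\tau_{orb} - \tau_\Sigma = pq - p - q$.) Once the shift is fixed, everything else is a direct substitution into \eqref{CZ:changetriv} and \eqref{CZ:elliptic}.
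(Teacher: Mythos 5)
Your proposal is correct and follows essentially the same route as the paper: the paper's proof applies the change-of-trivialization formula \eqref{CZ:changetriv} to $CZ_{orb}(\mathpzc{b}^B)=2(p+q)B+1$ from Lemma \ref{lem:orbtrivCZp} together with $\tau_{orb}(\mathpzc{b})-\tau_\Sigma(\mathpzc{b})=pq-p-q$ from Lemma \ref{lem:taudiffp}, exactly as you do. Your derivation of that trivialization difference (via $c_\Sigma([\Sigma])=p+q-pq$ and the consistency check $\tau_0-\tau_\Sigma=pq$, $\tau_0-\tau_{orb}=p+q$) is precisely the content of Lemma \ref{lem:taudiffp}(ii)--(iv), so no genuinely new argument is involved.
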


\begin{proof} Using (\ref{CZ:changetriv}) and Lemmas \ref{lem:orbtrivCZp} and \ref{lem:taudiffp}  we have
\[
CZ_\Sigma(\zb^B)=2B(\tau_{orb}(\zb)-\tau_\Sigma(\zb))+CZ_{orb}(b^B)=2B(pq-p-q)+2(p+q)B+1 = 2pqB+1.
\]
\end{proof}

\section{Components of the ECH index}\label{s:generalities}

In this section we provide the remaining calculations necessary to prove our embedded contact homology index result, Theorem \ref{thm:pqI}. Our computations of the relative Chern and relative self-intersection numbers differ only slightly from those in \cite[\S3-4]{kech}, therefore we provide only the detail necessary to understand the slight extensions of the proofs required in the present context.  In particular, the main difference is the presence of the Reeb orbit $\zh$.

\subsection{Definition and key properties of the ECH index}\label{ss:ECHI}

We define the ECH index and state its key properties. 
The ECH index has three components: 
\begin{itemize}
\itemsep-.25em
\item the relative first Chern number $c_\tau$, which is sensitive to $\xi$;
\item the relative intersection pairing $Q_\tau$, which is sensitive to the topology;
\item the Conley-Zehnder terms, which are sensitive to the Reeb dynamics.
\end{itemize}

Given Reeb currents $\alpha$ and $\beta$, we use $H_2(Y,\alpha,\beta)$ to denote the 2-chains $Z$ with $\partial Z=\alpha-\beta$, modulo boundaries of 3-chains. When $H_2(Y)=0$, the set $H_2(Y,\alpha,\beta)$ has only one element, and if $\beta=\emptyset$ let $Z_\alpha$ denote this lone element. If $Z \in H_2(Y,\alpha,\beta)$ and $\tau$ symplectically trivializes $(\xi,d\lambda)$ over the Reeb orbits $\{\alpha_i\}$ and $\{\beta_j\}$ the ECH index is:

\begin{definition}[ECH index]\label{defn:ECHI} 
Let $\alpha=\{(\alpha_i,m_i)\}$ and $\beta=\{(\beta_j,n_j)\}$ be Reeb currents with $\sum_i m_i [\alpha_i]=\sum_j n_j [\beta_j]= \Gamma\in H_1(Y).$   Given $Z \in H_2(Y,\alpha,\beta)$, the \emph{ECH index} is
\[
I(\alpha,\beta,Z) =  c_\tau(Z) + Q_\tau(Z) +  CZ^I_\tau(\alpha) -CZ^I_\tau(\beta),
\]
where $CZ^I(\gamma): =  \sum_i \sum_{k=1}^{m_i}CZ_\tau(\gamma_i^k)$. 
In some cases we simplify the notation: $I(Z)$ when $\alpha$ and $\beta$ are clear from context, and $I(\alpha)$ when $\beta=\emptyset$ and $Z$ is clear from context. The \emph{relative first Chern number} $c_\tau$ is defined in \S\ref{ss:chern}, the \emph{relative intersection pairing} $Q_\tau$ is defined in \S\ref{ss:Q}, and the \emph{Conley-Zehnder index} $CZ_\tau$ was defined in \S \ref{ss:CZ}.
\end{definition}

The ECH index has the following general properties, cf. \cite[\S 3.3]{Hindex}. 
 
\begin{theorem}[{\cite[Prop.~1.6]{Hindex}}]\label{thm:Iproperties}
The ECH index has the following basic properties: \hfill
\begin{enumerate}[{\em (i)}]
\itemsep-.25em
\item {\em(Well Defined)} The ECH index $I(Z)$ is independent of the choice of trivialization.
\item\label{property:indexamb} {\em (Index Ambiguity Formula) }If $Z' \in H_2(Y,\alpha,\beta)$ is another relative homology class, and $Z-Z'$ is as defined using the affine structure of $H_2(Y,\alpha,\beta)$ over $H_2(Y)$, then
\[
I(Z)-I(Z')= \langle Z-Z', c_1(\xi) + 2 \mbox{\em PD}(\Gamma) \rangle.
\]
\item {\em (Additivity) } If $\delta$ is another orbit set in the homology class $\Gamma$, and if $W \in H_2(Y,\beta,\delta)$, then $Z+W \in  H_2(Y,\alpha,\delta)$ is defined by gluing representatives along $\beta$ and 
\[
I(Z+W)=I(Z)+I(W).
\]
\item {\em (Index Parity) }If $\alpha$ and $\beta$ are generators of the ECH chain complex (in particular all hyperbolic orbits have multiplicity 1), then $(-1)^{I(Z)} = \varepsilon(\alpha)\varepsilon(\beta),$ where $\varepsilon(\alpha)$ denotes $-1$ to the number of positive hyperbolic orbits in $\alpha$.
\end{enumerate}
\end{theorem}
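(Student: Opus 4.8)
All four properties reduce to corresponding behavior of the three summands $c_\tau(Z)$, $Q_\tau(Z)$, and $CZ^I_\tau$ of Definition \ref{defn:ECHI}, so my plan is to establish each item component-by-component, following \cite[Prop.~1.6]{Hindex}. (In the present paper $Y=S^3$, so $H_2(Y)=0$, $\Gamma=0$, and $H_2(Y,\alpha,\beta)$ is a single class; property (ii) is then vacuous and it is (i) and (iv) that we actually use. I nonetheless sketch the general argument.)

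For (i) I would record the three change-of-trivialization formulas. Writing $\eta_\gamma\in\Z$ for the winding number of $\tau'$ relative to $\tau$ along an orbit $\gamma$, the relative Chern number changes linearly in the $\eta_\gamma$ (weighted by the multiplicities $m_i,n_j$), the relative intersection pairing $Q_\tau$ changes quadratically (weighted by $m_i^2,n_j^2$), and by \eqref{CZ:changetriv} one has $CZ_{\tau'}(\gamma^k)-CZ_\tau(\gamma^k)=2k\eta_\gamma$, so that $CZ^I_\tau$ picks up $\sum_{k=1}^{m_i}2k\,\eta_{\alpha_i}=m_i(m_i+1)\eta_{\alpha_i}$ at each $\alpha_i$. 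The crux is the arithmetic cancellation: in $I=c_\tau+Q_\tau+CZ^I_\tau(\alpha)-CZ^I_\tau(\beta)$ the total coefficient of each $\eta_{\alpha_i}$ is $-m_i-m_i^2+m_i(m_i+1)=0$, and symmetrically the coefficient of each $\eta_{\beta_j}$ vanishes, so $I$ is independent of the trivialization.

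For (ii), since the Conley--Zehnder terms depend only on the orbits and $\tau$ and not on $Z$, only $c_\tau$ and $Q_\tau$ vary with $Z$. For $A\in H_2(Y)$ one has $c_\tau(Z+A)-c_\tau(Z)=\langle c_1(\xi),A\rangle$ and $Q_\tau(Z+A)-Q_\tau(Z)=2(A\cdot\Gamma)=2\langle\op{PD}(\Gamma),A\rangle$ (the self-intersection $A\cdot A$ vanishing in a three-manifold); summing and setting $A=Z-Z'$ yields the stated formula. Property (iii) is likewise reduced component-wise: $c_\tau$ is additive under gluing representatives along $\beta$, and the Conley--Zehnder difference telescopes through $\beta$. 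The one genuinely substantive input---and the step I expect to be the main obstacle---is additivity of the relative intersection pairing, $Q_\tau(Z+W)=Q_\tau(Z)+Q_\tau(W)$: a priori the glued surface can pick up intersections near the shared orbit set $\beta$, and it is precisely the definition of $Q_\tau$ via admissible representatives together with its trivialization-dependent writhe/linking corrections that forces these cross terms to cancel. I would invoke Hutchings' proof of this additivity rather than reprove it.

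Finally, for (iv) I would compute $I(Z)\bmod 2$, which by (i) may be done with any convenient $\tau$. Using that $CZ_\tau(\gamma^k)=2\lfloor k\theta\rfloor+1$ is odd for every elliptic orbit and that $CZ_\tau(\gamma)=n$ is even for a positive hyperbolic orbit and odd for a negative hyperbolic orbit (all hyperbolic orbits occurring with multiplicity one in a generator), together with the mod-$2$ reductions of $c_\tau(Z)$ and $Q_\tau(Z)$, a bookkeeping argument shows that the contributions of the elliptic and negative hyperbolic orbits cancel, leaving $I(Z)\equiv\#\{\text{positive hyperbolic orbits in }\alpha\}+\#\{\text{positive hyperbolic orbits in }\beta\}\pmod 2$. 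This is exactly the assertion $(-1)^{I(Z)}=\varepsilon(\alpha)\varepsilon(\beta)$.
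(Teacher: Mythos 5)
This statement is not proved in the paper at all: it is imported verbatim from Hutchings, as the citation \cite[Prop.~1.6]{Hindex} in the theorem heading indicates, so there is no in-paper argument to compare yours against; the right comparison is with the proof in \cite{Hindex}, and your component-by-component outline is essentially that proof. Your mechanism for (i) is correct (with consistent sign conventions the changes $\mp m_i$, $\mp m_i^2$, $\pm m_i(m_i+1)$ of $c_\tau$, $Q_\tau$, and the Conley--Zehnder sum cancel), the splitting of (ii) into a $c_1(\xi)$ contribution from $c_\tau$ and a $2\op{PD}(\Gamma)$ contribution from $Q_\tau$ is how Hutchings argues, and deferring the additivity of $Q_\tau$ in (iii) to \cite{Hindex} defers to exactly the right place, since that is the step requiring the machinery of admissible representatives.

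Two caveats. In (ii), two closed surfaces in a three-manifold do not have a well-defined intersection \emph{number}, so ``$A\cdot A=0$'' is not quite the right justification; the correct statement is simply that $Q_\tau$ varies affine-linearly in $Z$ with slope $2\op{PD}(\Gamma)$, which is a lemma of \cite{Hindex}. More seriously, in (iv) the phrase ``the mod-2 reductions of $c_\tau(Z)$ and $Q_\tau(Z)$'' conceals the entire content of the parity property: the Conley--Zehnder bookkeeping you describe closes up only once one knows
\[
c_\tau(Z)+Q_\tau(Z)\equiv\sum_i m_i+\sum_j n_j \pmod 2,
\]
i.e.\ that this sum is congruent to the total multiplicity of $\alpha$ plus that of $\beta$. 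Granting this, elliptic and negative hyperbolic contributions cancel exactly as you say; as a sanity check against the present paper, $c_{orb}(Z_\zh)+Q_{orb}(Z_\zh)=0+(pq-p-q)$ is odd, matching the total multiplicity $1$ of $\zh$. But the congruence itself is not formal bookkeeping: mod $2$ it is independent of $\tau$ (by the same $m_i+m_i^2\equiv m_i(m_i+1)$ cancellation) and independent of $Z$ (because $c_1(\xi)$ is even on an oriented three-manifold), yet evaluating it for some representative requires the relative adjunction formula together with a writhe-parity argument, and that is the point where you must genuinely invoke \cite{Hindex} rather than arithmetic.
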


\subsection{Surfaces, trivializations, and changes of trivialization}\label{ss:trivs} 

We now describe the trivializations we will need to compute the contributions to the ECH index from the isotropy orbits $\zp$ and $\zq$ as well as the positive hyperbolic orbit $\zh$, and compute their differences.

For the Reeb orbit $\zb$, we may use three trivializations: the constant trivialization $\tau_0$, the page trivialization $\tau_\Sigma$, and the orbibundle trivialization $\tau_{orb}$. Both $\tau_0$ and $\tau_{orb}$ may be used for $\zh$, because it, like $\zb$, is a regular fiber. But $\tau_0$ cannot be used for $\zp$ and $\zq$ as they project to orbifold points. We geometrically ``extend" $\tau_0$ to construct trivializations over $\zh$, $\zp^p$, and $\zq^q$. {See Remarks \ref{rmk:0eh}-\ref{rmk:Tpq0Sig}.}   

\begin{remark}\label{rmk:not} For simplicity, in our computations we suppress $\tau$ in the notation. For example, we use the notation
\[
c_0:=c_{\tau_0}, \quad Q_0:=Q_{\tau_0}, \quad CZ_0:=CZ_{\tau_0}.
\]
\end{remark}

Our trivializations are defined using surface representatives of the elements of the individual $H_2(S^3,\alpha,\beta)$ sets. We now define these surfaces, using $[S]$ to denote the equivalence class in $H_2(Y,\alpha,\beta)$ of a surface $S$ in $Y$ with $\partial S=\alpha-\beta$.

\begin{definition}[{Surfaces for $T(p,q)$, $p\neq 2$}]\label{def:sfcsp}  
In this definition, the notation $\cup_{\mathpzc{b}}$ means we are taking the union along the common boundary component ${\mathpzc{b}}$:

\begin{itemize}
\itemsep-.25em
\item The surface $\Sigma$, which has one boundary component and genus $(p-1)(q-1)/2$, is the page of the open book decomposition discussed in \S\ref{ss:OBDs}.  Thus $\partial\Sigma=\mathpzc{b}={T(p,q)}$ and $[\Sigma]=Z_{\mathpzc{b}}$.

\item  $S_{\mathpzc{h}}$ is the preimage under $\fp$ of a short ray connecting $\fp(\mathpzc{h})$ to a regular fiber $\mathpzc{b}$. Thus $\partial S_{\mathpzc{h}}=\mathpzc{h}-\mathpzc{b}$ and $[S_{\mathpzc{h}}\cup_{\mathpzc{b}}\Sigma]
=Z_{\mathpzc{h}}$.

\item The surface $S_{\mathpzc{q}}$ is the preimage under $\fp$ of a short ray connecting $\fp(\mathpzc{q})$ to a regular fiber $\mathpzc{b}$. Thus $\partial S_{\mathpzc{q}}=\mathpzc{q}^q-\mathpzc{b}$ and $[S_{\mathpzc{q}}\cup_{\mathpzc{b}}\Sigma]
=Z_{\mathpzc{q}^q}
$. 

\item The surface $S_{\mathpzc{p}}$ is the preimage under $\fp$ of a short ray connecting $\fp({\mathpzc{p}})$ to a regular fiber ${\mathpzc{b}}$. Thus $\partial S_{\mathpzc{p}}={{\mathpzc{p}}^p}-{\mathpzc{b}}$ and $[S_{\mathpzc{p}}\cup_{\mathpzc{b}}\Sigma]
=Z_{\mathpzc{p}^p}
$.
\end{itemize}

\end{definition}

We require another piece of notation. Let $\gamma\in\partial S$. As in \cite{weiler}, a trivialization $\tau$ over an orbit $\gamma$ ``has linking number zero with respect to $S$" or ``is the $S$ trivialization" if the pushoff of $\gamma$ into $S$ is constant in $\tau$, or equivalently, has intersection number zero with $S$. See also Remarks \ref{rem:linkingtriv} and \ref{rem:c1sl}. We use the notation $\tau_S$.

\begin{remark}[Trivializations for $T(p,q)$, $p\neq 2$]\label{rmk:trivs}
The trivializations we will use are:
\begin{itemize}
\itemsep-.25em
\item The page trivialization $\tau_\Sigma$ over $\mathpzc{b}$,

\item The \emph{constant trivialization} $\tau_0$ over $\mathpzc{b}$ and $\mathpzc{h}$;
it has three related surface trivializations (see Remark \ref{rmk:0eh}), which are
\begin{itemize}
\itemsep-.25em
\item[*] $\tau_{\mathpzc{q}}:=\tau_{S_{\mathpzc{q}}}$ over $\mathpzc{b}$ and ${\mathpzc{q}}^q$,
\item[*] $\tau_{\mathpzc{p}}:=\tau_{S_{\mathpzc{p}}}$ over $\mathpzc{b}$ and ${{\mathpzc{p}}^p}$,
\item[*] $\tau_{\mathpzc{h}}:=\tau_{S_{\mathpzc{h}}}$ over $\mathpzc{b}$ and ${\mathpzc{h}}$.
\end{itemize}

\item The \emph{orbibundle trivialization} $\tau_{orb}$ over all four orbits $\zb, \zh, \zp$, and $\zq$; this is explained in \S\ref{ss:orbitau}.
\end{itemize}
\end{remark}

See \S\ref{ss:consttau} for the computation of the Conley-Zehnder index in $\tau_0$. We explain the relationship between $\tau_0$ and $\tau_\zq, \tau_\zp$, and $\tau_\zh$. 

\begin{remark}\label{rmk:0eh} The defining condition of the constant trivialization, i.e. that the linearized Reeb flow be the identity, also works for fiber orbits whose neighbors are all regular fibers. These can be exceptional, but we must take a cover of the orbit by a multiple of the underlying orbifold point's isotropy. We may also define these as the surface trivializations for a surface that is a union of nearby fibers, such as $S_\zp$ or $S_\zq$. One boundary of this surface will be the minimal cover discussed above. E.g., the boundary component of $S_\zp$ is a $p$-fold cover of $\zp$; for $S_\zq$, it is a $q$-fold cover. See also the mesh surface in \cite[Fig.~3]{weiler}. 
\end{remark}

In order to provide computational evidence for Remark \ref{rmk:0eh}, we introduce our convention for the difference between two trivializations. It will also be used throughout the rest of \S\ref{s:generalities} and was previously used in \S\ref{ss:pagetau}. 
Given a nondegenerate Reeb orbit $\gamma:\R/T\Z\to Y$,  denote the set of homotopy classes of symplectic trivializations of the 2-plane bundle $\gamma^*\xi$ over $S^1=\R/T\Z$ by $\mathcal{T}(\gamma)$.   After fixing trivializations $\tau_i^+ \in \mathcal{T}(\alpha_i)$ for each $i$ and $\tau_j^- \in \mathcal{T}(\beta_j)$, let $\tau \in \mathcal{T}(\alpha,\beta)$ denote this set of trivialization choices.  The trivialization $\tau$ determines a trivialization of $\xi|_C$ over the ends of $C$ up to homotopy.  As spelled out in \cite[Rem.~2.4]{preech}, we use the sign convention that if $\tau_1, \ \tau_2: \gamma^*\xi \to S^1 \times \R^2$ are two trivializations then
\begin{equation}\label{eqn:trivchange}
\tau_1 - \tau_2 = \mbox{deg} \left(\tau_2 \circ \tau_1^{-1} : S^1 \to \mbox{Sp}(2,\R) \cong S^1\right).
\end{equation}

Formulas allowing us to understand how the components $c_\tau, Q_\tau$, and $CZ_\tau$ of the ECH index change according to the trivialization difference appear in (\ref{CZ:changetriv}) and \cite[Lem.~3.4~(iv), Prop.~3.13]{kech}, which we use freely throughout the remainder of this section.

\begin{remark}[$T(p,q)$ constant and surface trivializations]\label{rmk:Tpq0Sig}
Continuing the discussion in Remark \ref{rmk:0eh}, we can think of the trivializations $\tau_\mathpzc{p}$ and $\tau_\mathpzc{q}$ as extensions of the constant trivialization over $\mathpzc{p}^p$ and $\mathpzc{q}^q$. Combining \cite[Prop.~3.13]{kech}, Lemma \ref{lem:orbtrivCZp}, and Lemma \ref{lem:qpdiff} shows that
\begin{align*}
CZ_{\mathpzc{q}}({\mathpzc{q}}^{qk})&=CZ_{orb}({\mathpzc{q}}^{qk})+2k(\tau_{orb}({\mathpzc{q}}^q)-\tau_{\mathpzc{q}}({\mathpzc{q}}^q))=2({p+q})k-1+2k({-p-q})=-1; \\
CZ_{\mathpzc{p}}({\mathpzc{p}}^{pk})&=CZ_{orb}({\mathpzc{p}}^{pk})+2k(\tau_{orb}({\mathpzc{p}}^p)-\tau_{\mathpzc{p}}({\mathpzc{p}}^p))=2({p+q})k-1+2k({-p-q})=-1.
\end{align*}
These are the values taken if they were regular fibers, analogous to \cite[Lem.~3.9]{preech}.
\end{remark}

We now collect the various change in trivialization formulas. We will use $\tau(\gamma)$ to indicate we are restricting the trivialization to an orbit $\gamma$. The proofs are almost identical to those in \cite[\S3.4]{kech}, so we only provide the proofs of results which have no analogue there or note where we must make a change. Some proofs require the relative first Chern number computations carried out in \S\ref{ss:chern} as a black box.

For the binding, by repeating the proof of \cite[Lem.~3.14]{kech}, we obtain the following.
\begin{lemma}\label{lem:taudiffp} For trivializations defined along the binding orbit $\mathpzc{b}$ realizing $T(p,q)$, we have:
\begin{enumerate}[{\em (i)}]
\itemsep-.25em
\item $\tau_\zp(\zb)=\tau_\zq(\zb)=\tau_\zh(\zb)=\tau_0(\zb)$,
\item $\tau_0(\zb)-\tau_\Sigma(\zb)={pq}$,
\item $\tau_0(\zb)-\tau_{orb}(\zb)={p+q}$, 
\item $\tau_{orb}(\zb)-\tau_\Sigma(\zb)={pq-p-q}$ 
\end{enumerate}
\end{lemma}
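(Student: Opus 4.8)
The plan is to treat the four assertions in sequence, deriving (i) and (ii) from the geometry of nearby fibers together with the definition of the page (linking number zero) trivialization, obtaining (iii) from the Conley--Zehnder computations already in hand, and deducing (iv) formally from (ii) and (iii). Throughout I would use the convention \eqref{eqn:trivchange} together with the elementary dictionary that, for any trivialization $\tau$ of $\xi|_\zb$, if $\zb_\tau$ denotes the pushoff of $\zb$ that is constant with respect to $\tau$, then $\tau(\zb)-\tau_\Sigma(\zb)=\op{lk}(\zb_\tau,\zb)$; this rests only on the fact that $\tau_\Sigma$ is by construction the pushoff-linking-zero trivialization.

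For (i), first I would observe that, by Definition \ref{def:sfcsp}, each of $S_\zp$, $S_\zq$, $S_\zh$ is $\fp^{-1}$ of a short ray ending at the regular value $\fp(\zb)$, so in a neighborhood of $\zb$ all three surfaces consist entirely of regular fibers. Hence the pushoff of $\zb$ into any one of them is a nearby regular fiber, exactly as for the constant trivialization; this is the precise content of Remark \ref{rmk:0eh} (cf.\ the mesh surface of \cite[Fig.~3]{weiler}). Since homotopy classes of trivializations of the rank-two bundle $\xi|_\zb$ over $S^1$ are classified by the winding of the pushoff, and all four pushoffs are nearby fibers with identical winding, the trivializations $\tau_0,\tau_\zp,\tau_\zq,\tau_\zh$ coincide along $\zb$.

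For (ii), I would note that $\zb_{\tau_0}$ is a nearby regular fiber, hence a parallel copy of the $T(p,q)$ torus knot on a neighboring torus, and two disjoint parallel $(p,q)$-torus knots link $pq$ times, as recorded in Corollary \ref{cor:linkingnumber}(iv); thus $\tau_0(\zb)-\tau_\Sigma(\zb)=\op{lk}(\zb_{\tau_0},\zb)=pq$. For (iii), I would compare the constant and orbibundle trivializations through the Conley--Zehnder index: Corollary \ref{cor:CZ0bh} gives $CZ_0(\zb^k)=1$ and Lemma \ref{lem:orbtrivCZp} gives $CZ_{orb}(\zb^k)=2(p+q)k+1$, so the change-of-trivialization formula \eqref{CZ:changetriv} yields
\[
1-\bigl(2(p+q)k+1\bigr)=2k\bigl(\tau_{orb}(\zb)-\tau_0(\zb)\bigr),
\]
whence $\tau_0(\zb)-\tau_{orb}(\zb)=p+q$. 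Finally (iv) follows by subtraction:
\[
\tau_{orb}(\zb)-\tau_\Sigma(\zb)=\bigl(\tau_0(\zb)-\tau_\Sigma(\zb)\bigr)-\bigl(\tau_0(\zb)-\tau_{orb}(\zb)\bigr)=pq-(p+q),
\]
which moreover matches the self-linking number $pq-p-q$ recorded in Remark \ref{rem:c1sl}.

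The main obstacle I anticipate is the careful justification of the geometric claims underlying (i) and (ii): that $\zb_{\tau_0}$ is genuinely a nearby fiber rather than a curve carrying extra winding, and that the three surface pushoffs taken in the distinct ray directions toward $\fp(\zp),\fp(\zq),\fp(\zh)$ all share this winding. This forces one to pin down the local model of the $S^1$-orbibundle near the regular fiber $\zb$ and to fix the sign conventions relating pushoff linking numbers to \eqref{eqn:trivchange}; everything else reduces to bookkeeping with the already-established index formulas. As an internal sanity check I would verify that the present values give the geometrically expected rotation number, namely that feeding (iv) into \eqref{CZ:changetriv} reproduces $CZ_\Sigma(\zb^B)=2pqB+1$ of Lemma \ref{lem:pagetrivCZp}; since that lemma depends on this one I would treat the agreement only as a consistency check rather than an input.
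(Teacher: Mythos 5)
Your proof is correct, but it is organized differently from the paper's. The paper does not write out an argument at all: it proves Lemma \ref{lem:taudiffp} by declaring it a repetition of the proof of the $T(2,q)$ analogue \cite[Lem.~3.14]{kech}, and it warns just before the statement that those arguments use the relative first Chern number computations of \S\ref{ss:chern} as a black box (this is visible in the parallel Lemma \ref{lem:qpdiff}, whose proof substitutes Lemma \ref{lem:relfirstCherncalcp} for \cite[Lem.~4.1]{kech}). Your parts (i) and (ii) --- all surface pushoffs near $\zb$ are nearby regular fibers, and a nearby fiber links $\zb$ exactly $pq$ times --- are in the same geometric spirit as the cited argument, but your part (iii) replaces the Chern-number bookkeeping with a Conley--Zehnder comparison: combining $CZ_0(\zb^k)=1$ (Corollary \ref{cor:CZ0bh}) with $CZ_{orb}(\zb^k)=2(p+q)k+1$ (Lemma \ref{lem:orbtrivCZp}) and \eqref{CZ:changetriv} to get $\tau_0(\zb)-\tau_{orb}(\zb)=p+q$. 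This is legitimate and non-circular, since neither CZ computation depends on the present lemma; indeed the paper itself endorses exactly this style of argument as the ``different argument'' for $\tau_\zh(\zh)-\tau_{orb}(\zh)$ in the proof of Lemma \ref{lem:qpdiff}(iii). What your route buys is self-containedness and independence from \S\ref{ss:chern}, removing a forward reference; what the paper's route buys is brevity and uniformity with \cite{kech}, where the same Chern-number machinery is needed anyway for the exceptional fibers. Two small points deserve the care you already flag: the sign in the dictionary $\tau(\zb)-\tau_\Sigma(\zb)=\op{lk}(\zb_\tau,\zb)$ must be pinned to the convention \eqref{eqn:trivchange} (your value $+pq$ is the one consistent with $\rot(\zb)=pq+\delta_{\zb,L}$), and your closing verification against Lemma \ref{lem:pagetrivCZp} is rightly treated as a consistency check only, since that lemma is deduced from this one in the paper.
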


For the exceptional fibers and $\zh$, we have:
\begin{lemma}\label{lem:qpdiff} 
The change of trivialization formulae  along $\zq^q,\zp^p$, and $\zh$ are:
\begin{enumerate}[{\em (i)}]
\itemsep-.25em
\item $\tau_{\mathpzc{q}}({\mathpzc{q}}^q)-\tau_{orb}({\mathpzc{q}}^q)=p+q$.
\item $\tau_{\mathpzc{p}}({\mathpzc{p}}^p)-\tau_{orb}({\mathpzc{p}}^p)=p+q$.
\item $\tau_\zh(\zh)-\tau_{orb}(\zh)=p+q$.
\end{enumerate}
\end{lemma}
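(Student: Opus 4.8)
The plan is to deduce all three identities from one principle: over a fiber that is either regular (as $\zh$) or a regular-fiber-like cover of an exceptional orbit (as $\zq^q$, $\zp^p$), the relevant surface trivialization agrees with the constant trivialization, after which the difference with $\tau_{orb}$ reduces to the already-established binding computation. Since parts (i) and (ii) concern the exceptional fibers $\zq^q$ and $\zp^p$, they have direct analogues in \cite[\S3.4]{kech} (with the isotropy order $2$ used there replaced by $p$), so I would simply cite those and prove the genuinely new part (iii), for the regular fiber $\zh$, in detail.

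For (iii), first I would show $\tau_\zh(\zh)=\tau_0(\zh)$. Recall $\tau_\zh=\tau_{S_\zh}$, where $S_\zh=\fp^{-1}(\rho)$ is the union of fibers over a short ray $\rho$ joining $\fp(\zh)$ to $\fp(\zb)$ (Definition \ref{def:sfcsp}). The pushoff of $\zh$ into $S_\zh$ is a nearby regular fiber $\fp^{-1}(x')$ with $x'\in\rho$. Viewed as a section of $\xi|_{\zh}$ under the identification $\xi_y\cong T_{\fp(\zh)}\CP^1_{p,q}$ that defines $\tau_0$ (see \S\ref{ss:consttau}), this pushoff is the horizontal displacement toward $\fp^{-1}(x')$, which at every $y\in\zh$ equals the single fixed tangent vector pointing along $\rho$. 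Hence the pushoff is constant in $\tau_0$, so $\tau_{S_\zh}=\tau_0$ over $\zh$. This is exactly the argument that \cite[Lem.~3.14]{kech} runs on the \emph{other} boundary component to obtain $\tau_\zh(\zb)=\tau_0(\zb)$, i.e.\ Lemma \ref{lem:taudiffp}(i); the only change is to apply it at the $\zh$ end of $S_\zh$ rather than the $\zb$ end.

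Next I would use that $\zh$ is a regular fiber exactly like $\zb$: the computation of $\tau_0-\tau_{orb}$ over a regular fiber in \cite[Lem.~3.14]{kech} depends only on $p,q$, through the curvature of the orbibundle connection, and not on the choice of nonsingular base point, so it applies verbatim to give $\tau_0(\zh)-\tau_{orb}(\zh)=p+q$, matching Lemma \ref{lem:taudiffp}(iii). Combining the two steps yields $\tau_\zh(\zh)-\tau_{orb}(\zh)=p+q$. For (i) and (ii) the same two-step scheme applies: one interprets $\tau_0$ over the regular-fiber-like covers $\zq^q$ and $\zp^p$ via Remark \ref{rmk:0eh}, whose pushoffs into $S_\zq$ and $S_\zp$ are again nearby regular fibers, so $\tau_\zq(\zq^q)=\tau_0(\zq^q)$ and $\tau_\zp(\zp^p)=\tau_0(\zp^p)$; since $\zq^q$ and $\zp^p$ are homotopic to a regular fiber through $S_\zq$ and $S_\zp$, the regular-fiber value $p+q$ of $\tau_0-\tau_{orb}$ again applies, which is precisely the content of the corresponding lemma in \cite[\S3.4]{kech}.

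I expect the main obstacle to be the careful justification of $\tau_\zh(\zh)=\tau_0(\zh)$, and its analogues for $\zq^q,\zp^p$: one must verify that the definitional pushoff of a fiber \emph{into} the ruled surface $S_\zh$ (a nearby fiber, which links $\zh$ with linking number $pq\neq0$ and so is \emph{not} the page-trivialization pushoff) nonetheless has no winding relative to the constant framing. The secondary subtlety is confirming that $\tau_0-\tau_{orb}$ is insensitive to the choice of regular fiber, so that the binding computation transfers to $\zh$; this is where the insensitivity of the connection curvature to the base point is essential, and it is what makes the common value $p+q$ appear in all three cases despite $\zq^q$ and $\zp^p$ sitting over singular points of different isotropy.
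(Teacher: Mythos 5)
Your proof is correct, and its skeleton --- reduce (i)--(ii) to \cite{kech}, and prove (iii) by first showing $\tau_\zh(\zh)=\tau_0(\zh)$ and then computing $\tau_0(\zh)-\tau_{orb}(\zh)=p+q$ --- matches the paper's; the difference lies in how each half is closed. The paper's official proof of all three parts repeats the proof of \cite[Lem.~3.15]{kech} with Lemma \ref{lem:relfirstCherncalcp} substituted for \cite[Lem.~4.1]{kech}, i.e.\ it runs through relative first Chern numbers; so when you cite \cite[\S3.4]{kech} for (i)--(ii), the honest statement is not just ``replace isotropy $2$ by $p$'' but that the argument there needs the new inputs $c_\zq(Z_{\zq^q})=c_\zp(Z_{\zp^p})=c_\zh(Z_\zh)=p+q$ and $c_{orb}=0$ from Lemma \ref{lem:relfirstCherncalcp}(iv)--(v). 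For (iii) the paper records precisely your first step ($\tau_\zh(\zh)=\tau_0(\zh)$, by the \cite[Lem.~3.14~(i)]{kech} pushoff argument run at the $\zh$ end of $S_\zh$; your flagged subtlety that this pushoff is \emph{not} the zero-linking pushoff is correct and is exactly the point of that argument), but it finishes with the change-of-trivialization formula \eqref{CZ:changetriv} applied to already-displayed computations:
\[
2\bigl(\tau_0(\zh)-\tau_{orb}(\zh)\bigr)=CZ_{orb}(\zh)-CZ_0(\zh)=2(p+q)-0,
\]
using Corollary \ref{cor:CZ0bh} and Lemma \ref{lem:orbtrivCZp}. Your alternative finish --- transferring Lemma \ref{lem:taudiffp}(iii) from $\zb$ to $\zh$ by base-point independence of $\tau_0-\tau_{orb}$ over regular fibers --- is true, and so is your continuity argument degenerating regular fibers onto $\zq^q$ and $\zp^p$ for (i)--(ii); but these are the sketchiest points of your write-up, since they require verifying that both $\tau_0$ (in the extended sense of Remark \ref{rmk:0eh}) and $\tau_{orb}$ vary continuously through the family, and the cleanest way to certify that is exactly the uniform Conley--Zehnder/Robbin--Salamon computation of Lemma \ref{lem:orbtrivCZp} (which holds at every critical point and is the precise form of your ``independent of the base point'' claim) or the Chern-number route the paper takes. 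In short: your route buys geometric uniformity, one principle covering all three parts, at the cost of a continuity verification you left implicit; the paper's route buys the certainty that every step is an already-established computation.
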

\begin{proof} The proof is identical to that of \cite[Lem.~3.15]{kech}, except we use Lemma \ref{lem:relfirstCherncalcp} in place of \cite[Lem.~4.1]{kech}. For (iii), we could also use a different argument. We know that $\tau_\zh(\zh)=\tau_0(\zh)$ by the argument used for \cite[Lem.~3.14~(i)]{kech}, and by (\ref{CZ:changetriv}), Corollary \ref{cor:CZ0bh}, and Lemma \ref{lem:orbtrivCZp},
\[
2(\tau_0(\zh)-\tau_{orb}(\zh))=CZ_{orb}(\zh)-CZ_0(\zh)=2(p+q)-0.
\]
\end{proof}

\subsection{Computing the relative first Chern numbers}\label{ss:chern}

Given $\xi$ and a $J$-holomorphic curve $C$, the restriction $\xi|_C$ forms a complex line bundle on a complex surface. It has a \emph{relative first Chern number} with respect to a trivialization $\tau \in \mathcal{T}(\alpha,\beta)$, denoted
\[
c_\tau(C)  = c_1(\xi|_C,\tau),
\]
and defined as follows: it is the algebraic count of zeros of a generic section $\psi$ of $\xi|_{[\pi_YC]}$, where $\pi_Y:\R\times Y\to Y$ denotes projection onto $Y$, which on each end is nonvanishing and constant under $\tau$.

The relative first Chern number depends only on $C$ up to boundaries of 3-chains, and can be defined for $Z\in H_2(Y,\alpha,\beta)$ as follows. Choose a representative of $Z$ by a smooth map $f: S \to Y$, where $S$ is a compact oriented surface with boundary.  Also choose a section $\psi$ of $f^*\xi$ over $S$ for which $\psi$ is transverse to the zero section of $f^*\xi$ (a bundle on $S$) and which is nonvanishing over each boundary component of $S$. Moreover, require $\psi$ to have winding number zero along $\partial S$ in the trivialization $\tau$.  Define 
\[
c_\tau(Z) : = \# \psi^{-1}(0),
\]
using `\#' to denote the signed count.

A virtual repeat of the proof of \cite[Lem.~4.1]{kech} yields the following formulae for the relative first Chern number $c_\tau(Z)$ where $Z=[S]$ for the surfaces $S$ of Definition \ref{def:sfcsp}.

\begin{lemma}\label{lem:relfirstCherncalcp} In the $T(p,q)$ setting, we have: \begin{enumerate}[{\em (i)}]
\itemsep-.25em
\item $c_{orb}([\Sigma])=0$,
\item $c_0([\Sigma])=p+q$,
\item\label{ctauS} $c_\Sigma([\Sigma])= {p+q-pq}$,
\item $c_{\mathpzc{q}}(Z_{{\mathpzc{q}}^q})=c_{\mathpzc{p}}(Z_{{\mathpzc{p}}^p})=c_\zh(Z_\zh)=p+q$,
\item $c_{orb}(Z_{\mathpzc{q}})=c_{orb}(Z_{\mathpzc{p}})=c_{orb}(Z_\zh)=0$.
\end{enumerate}
\end{lemma}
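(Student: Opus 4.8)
The proof is a near-verbatim repeat of \cite[Lem.~4.1]{kech}; the one new orbit, the regular fiber $\zh$, is handled exactly as the binding $\zb$, since both project to nonsingular points and admit the constant trivialization. The plan is to carry out a single genuinely geometric computation --- the relative first Chern number of the page --- and then to extract every remaining value by additivity together with the change of trivialization formula for $c_\tau$ (\cite[Lem.~3.4~(iv), Prop.~3.13]{kech}), fed by the trivialization differences already recorded in Lemmas \ref{lem:taudiffp} and \ref{lem:qpdiff}.

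First I would dispatch the auxiliary surfaces of Definition \ref{def:sfcsp} and item (v). Each of $S_\zh$, $S_\zp$, and $S_\zq$ is the $\fp$-preimage of a short embedded ray in $\CP^1_{p,q}$, hence projects to a contractible arc; pulling the orbibundle trivialization back along the ray, the $\tau_{orb}$-constant section of $\xi$ over the preimage is nowhere zero, so $c_{orb}([S_\zh])=c_{orb}([S_\zp])=c_{orb}([S_\zq])=0$. Granting the anchor computation $c_{orb}([\Sigma])=0$ (derived below from the page computation), additivity of $c_\tau$ under gluing along $\zb$ then gives $c_{orb}(Z_\zh)=c_{orb}(Z_{\zp^p})=c_{orb}(Z_{\zq^q})=0$, which is (v).

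The heart of the matter is the page $\Sigma$. Because $\Sigma$ is a global surface of section, the Reeb field $R$ is transverse to it, so projection along $R$ gives an orientation-preserving bundle isomorphism $\xi|_\Sigma\cong T\Sigma$. Under this identification the page trivialization $\tau_\Sigma$ (pushoff into $\Sigma$) corresponds to the boundary-tangent framing of $T\Sigma|_{\partial\Sigma}$, so a Poincar\'e--Hopf count of zeros of a generic section yields $c_\Sigma([\Sigma])=\chi(\Sigma)=1-2g=1-(p-1)(q-1)=p+q-pq$, which is (iii) and simultaneously recovers $\op{sl}(T(p,q))=pq-p-q$ as in Remark \ref{rem:c1sl}. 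I expect this identification to be the main obstacle: verifying that $\xi|_\Sigma\cong T\Sigma$ is compatible with orientations and that the Poincar\'e--Hopf boundary framing is exactly $\tau_\Sigma$ --- tracking the branching of $\fp|_\Sigma$ over the two cone points --- is the one place where real geometry enters, everything else being formal bookkeeping.

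Finally, items (i), (ii), and (iv) follow by change of trivialization along the single free boundary orbit of each class. From Lemma \ref{lem:taudiffp}(ii),(iv) applied to $[\Sigma]$ (with boundary $\zb$) I obtain $c_0([\Sigma])=c_\Sigma([\Sigma])+pq=p+q$ and $c_{orb}([\Sigma])=c_\Sigma([\Sigma])+(pq-p-q)=0$, giving (ii) and the anchor used above for (i) and (v). From Lemma \ref{lem:qpdiff}(i)--(iii) applied to $Z_{\zq^q}$, $Z_{\zp^p}$, and $Z_\zh$ (whose free boundaries are $\zq^q$, $\zp^p$, $\zh$ and whose $c_{orb}$ vanish by (v)), I obtain $c_\zq(Z_{\zq^q})=c_\zp(Z_{\zp^p})=c_\zh(Z_\zh)=0+(p+q)=p+q$, which is (iv). No genericity hypothesis or new transversality input is required beyond the single Poincar\'e--Hopf count on the page.
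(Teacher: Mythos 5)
Most of your proposal is sound and runs parallel to the paper's argument (which is itself deferred to \cite[Lem.~4.1]{kech}): the Poincar\'e--Hopf computation $c_\Sigma([\Sigma])=\chi(\Sigma)=p+q-pq$, with the boundary subtlety you correctly flag (the Reeb field is tangent to $\partial\Sigma$, not transverse, so the identification $\xi|_\Sigma\cong T\Sigma$ must be handled with care there); the passage to (i) and (ii) via Lemma \ref{lem:taudiffp}, which is legitimate input since that lemma is proved by repeating \cite[Lem.~3.14]{kech} without reference to the present lemma; and the nonvanishing-section-plus-additivity argument for (v).

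The genuine gap is your derivation of item (iv): it is circular. In this paper Lemma \ref{lem:qpdiff} is \emph{not} independent input --- its proof is explicitly stated to be that of \cite[Lem.~3.15]{kech} ``except we use Lemma \ref{lem:relfirstCherncalcp} in place of \cite[Lem.~4.1]{kech},'' and \S\ref{ss:trivs} says outright that some of its proofs use the Chern number computations of \S\ref{ss:chern} as a black box. Concretely, the differences $\tau_\zq(\zq^q)-\tau_{orb}(\zq^q)$ and $\tau_\zp(\zp^p)-\tau_{orb}(\zp^p)$ are extracted from the change-of-trivialization formula applied to $Z_{\zq^q}$ and $Z_{\zp^p}$, i.e.\ precisely from items (iv) and (v) of the lemma you are proving. (The paper's alternative Conley--Zehnder argument rescues only the $\zh$ case, Lemma \ref{lem:qpdiff}(iii); the analogous CZ values for $\zq^q$ and $\zp^p$ in Remark \ref{rmk:Tpq0Sig} are themselves computed using Lemma \ref{lem:qpdiff}.) The repair is easy and uses only tools you already deploy: by additivity under gluing along $\zb$, $c_\zq(Z_{\zq^q}) = c_{\tau_\zq}([S_\zq]) + c_{\tau_0}([\Sigma])$, where the matching of trivializations at the gluing orbit is legitimate because $\tau_\zq(\zb)=\tau_0(\zb)$ by Lemma \ref{lem:taudiffp}(i). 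The horizontal lift of the ray direction is a nowhere-zero section of $\xi|_{S_\zq}$ which along both boundary orbits is the pushoff-into-$S_\zq$ direction, hence constant in the surface trivialization, giving $c_{\tau_\zq}([S_\zq])=0$; combined with your item (ii) this yields $c_\zq(Z_{\zq^q})=p+q$, and the identical argument with $S_\zp$ and $S_\zh$ gives the other two equalities in (iv).
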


\subsection{Computing the relative intersection pairings}\label{ss:Q}

The final component of the ECH index is the relative intersection pairing $Q_\tau$. To define this, we first need to specify the type of surfaces $S\subset[-1,1]\times Y$ we are using to represent $Z\in H_2(Y,\alpha,\beta)$. 

\begin{definition}[{\cite[Def.~2.11]{Hrevisit}}] An \emph{admissible representative} of $Z\in H_2(Y,\alpha,\beta)$ is a smooth map $f: S \to [-1,1] \times Y$, where we require $S$ be an \emph{admissible surface}, i.e. it is an oriented compact surface satisfying:
\begin{itemize}
\itemsep-.25em
\item $f|_{\partial S}$ consists of positively oriented  covers of $\{ 1\} \times \alpha_i$ with total multiplicity $m_i$ along with negatively oriented covers of $\{ -1\} \times \beta_j$ with total multiplicity $n_j$;
\item $f$ is transverse to $\{-1,1\} \times Y$;
\item $\pi_Y: [-1,1] \times Y \to Y$ satisfies $[\pi_Y(f(S))]=Z$;
\item $f|_{\op{int}(S)}$ is an embedding.
\end{itemize}
\end{definition}

Admissible surfaces $S$ determine braids $S\cap(\{1-\varepsilon\}\times Y)$ for $\varepsilon>0$ sufficiently small (and the analogous intersection at $-1+\varepsilon$), which wind around the $\alpha_i$ and $\beta_j$, and which are denoted by $\zeta_i^+$ and $\zeta_j^-$. These braids are well defined up to isotopy in a small enough tubular neighborhood of the orbits. The braid $\zeta_i^+$ has $m_i$ strands. The data of these braids is the way in which $\tau$ affects $Q_\tau$.

Let $S'$ be an admissible representative of $Z' \in H_2(Y,\alpha',\beta')$ whose interior does not intersect the interior of $S$ near their boundaries. Let $\zeta_i^{+ '}$ and $\zeta_j^{- '}$ denote the braids of $S'$. Define the \emph{linking number of $S$ and $S'$} as
\[
\ell_\tau(S,S') := \sum_i \ell_\tau(\zeta_i^+,\zeta_i^{+ '}) - \sum_j \ell_\tau(\zeta_j^-,\zeta_j^{- '}).
\]
  
Now let $S$ and $S'$ be admissible representatives of $Z \in H_2(Y,\alpha,\beta)$ and $Z'\in H_2(Y,\alpha',\beta')$. Assume their interiors $\dot{S}$ and $\dot{S}'$ intersect transversely and away from the boundary.  The \emph{relative intersection pairing} is
\begin{equation}\label{eqn:Qdef}
Q_\tau(Z,Z'):= \# \left( \dot{S} \cap \dot{S}'\right) - \ell_\tau(S,S').
\end{equation}
It is an integer and depends only on $\alpha, \beta, Z, Z'$ and $\tau$. To simplify notation, we write $Q_\tau(Z):=Q_\tau(Z,Z)$ when $Z'=Z$, and call it the \emph{relative self-intersection number} of $Z$ or of $\partial Z$ if $H_2(Y,\partial Z,\emptyset)$ is a single element.

When $S$ satisfies the conditions described in the next definition, we may simplify (\ref{eqn:Qdef}).

\begin{definition} Let $S$ be an admissible representative of $Z$ for which $\pi_Y\circ f$ is an embedding near $\partial S$ and the $m_i$ (respectively, $n_j$) nonvanishing intersections of these embedded collars of $\partial S$ with $\xi$ lie in distinct rays emanating from the origin and do not rotate (with respect to $\tau$) as one goes around $\alpha_i$ (respectively, $\beta_j$). Such an $S$ is a \emph{$\tau$-representative} of $Z$ and
\[
Q_\tau(Z,Z'):= \# \left( \dot{S} \cap \dot{S}'\right).
\]
\end{definition}

We now compute the necessary relative intersection pairings. The proofs are similar to those in \cite[\S4.3]{kech}, and we largely omit them, noting the analogous result from \cite{kech} in the statement. The formatting is similar for the ease of comparison.

\begin{lemma}[c.f. {\cite[Lem.~4.14]{kech}}] The following surfaces are $\tau$-representatives:
\begin{enumerate}[{\em (i)}]
\itemsep-.25em
\item The surface $\Sigma$ is a $\tau_\Sigma$-representative of $Z_b$.
\item The surface $S_\zp$ is a $\tau_\zp$-representative of the class in $H_2(S^3,\zp^p,b)$.
\item The surface $S_\zq$ is a $\tau_\zq$-representative of the class in $H_2(S^3,\zq^q,b)$.
\item The surface $S_\zh$ is a $\tau_\zh$-representative of the class in $H_2(S^3,\zh,b)$.
\end{enumerate}
\end{lemma}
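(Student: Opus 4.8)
The plan is to unwind the definition of a $\tau$-representative into its two requirements and check them surface by surface. The first requirement is that $\pi_Y\circ f$ be an embedding near $\partial S$; the second is that the $m_i$ (respectively $n_j$) nonvanishing intersections of the embedded boundary collars with $\xi$ lie in distinct rays and not rotate with respect to the relevant trivialization. The key observation I would emphasize is that the non-rotation half of the second requirement is essentially built into the surface trivializations: by the definition recalled just before Remark \ref{rmk:trivs}, $\tau_S$ is the trivialization along $\partial S$ for which the pushoff of each boundary orbit into $S$ is constant (equivalently has linking number zero with $S$), so the collar of $S$ projected to $\xi$ cannot rotate with respect to $\tau_S$. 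Thus the pairings $(\Sigma,\tau_\Sigma)$, $(S_\zp,\tau_\zp)$, $(S_\zq,\tau_\zq)$, and $(S_\zh,\tau_\zh)$ each satisfy non-rotation automatically, and since the argument closely parallels \cite[Lem.~4.14]{kech} I would present it by verifying the remaining conditions and then isolating the single genuinely new feature, the orbit $\zh$.

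For embeddedness and admissibility, I would note that $\Sigma$ is the embedded page of the open book $(T(p,q),\pi)$ from \S\ref{ss:OBDs}, hence a Seifert surface for $\zb$, so $\pi_Y\circ f$ is an embedding in a collar of $\partial\Sigma=\zb$ and $\Sigma$ represents the unique class $Z_\zb\in H_2(S^3,\zb,\emptyset)$. Each of $S_\zp,S_\zq,S_\zh$ is the $\fp$-preimage of a short embedded ray in $\CP^1_{p,q}$ joining the relevant critical point to a regular value; away from the singular fibers $\fp$ is a genuine circle fibration, so these preimages are embedded surfaces whose projection is an embedding in a collar of each boundary component, and their boundary multiplicities and orientations match $\zp^p-\zb$, $\zq^q-\zb$, and $\zh-\zb$. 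Hence they are admissible representatives of the stated classes in $H_2(S^3,\zp^p,\zb)$, $H_2(S^3,\zq^q,\zb)$, and $H_2(S^3,\zh,\zb)$.

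It then remains to check the ``distinct rays'' half of the second condition. Along the regular-fiber boundary components $\zb$ (on all four surfaces) and $\zh$ (on $S_\zh$), the collar meets a transverse disk in a single strand, so ``distinct rays'' is vacuous and the non-rotation already noted finishes those cases; in particular the new surface $S_\zh$ has both ends regular and is handled exactly as the regular-fiber ends, so it introduces no additional difficulty beyond the $\zp,\zq$ analysis. Along $\zp^p$ and $\zq^q$ the braid has $p$, respectively $q$, sheets emanating from the exceptional fiber, and as recorded in Remark \ref{rmk:0eh} and illustrated by the mesh surface of \cite[Fig.~3]{weiler}, passing to the $p$-fold (respectively $q$-fold) cover dictated by the isotropy returns these sheets to their starting angular positions, so in $\tau_\zp$ (respectively $\tau_\zq$) they occupy $p$ (respectively $q$) distinct rays without rotating. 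I expect the only subtle point to be precisely this behavior of the multi-strand braid at the exceptional fibers; however, since $\zp$ and $\zq$ are treated identically to \cite[Lem.~4.14]{kech} and the sole new boundary orbit $\zh$ is a regular fiber contributing a single, trivially non-rotating strand, no analytic input beyond \cite{kech} is needed.
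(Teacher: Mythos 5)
Your proof is correct and takes essentially the same route as the paper, which omits the argument entirely and defers to \cite[Lem.~4.14]{kech}: the content of that verification is exactly what you spell out — non-rotation is built into the definition of the surface trivializations, the exceptional ends $\zp^p$ and $\zq^q$ contribute distinct, non-rotating rays once one passes to the isotropy covers (cf.\ Remark \ref{rmk:0eh}), and admissibility follows from the explicit fibered descriptions of $\Sigma, S_\zp, S_\zq, S_\zh$. Your isolation of $\zh$ as the only genuinely new case, handled trivially because it is a regular fiber with a single boundary strand, matches the paper's own remark that the sole difference from \cite{kech} in this section is the presence of the orbit $\zh$.
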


\begin{lemma}[c.f. {\cite[Lem.~4.15]{kech}}]\label{lem:pzero} The following relative intersection pairings are zero.
\begin{enumerate}[{\em (i)}]
\itemsep-.25em
\item $Q_\zp([S_\zp])=Q_\zq([S_\zq])=0$,
\item $Q_\tau([S_\zp],[S_\zq])=0$, where $\tau(\zp)=\tau_\zp(\zp), \tau(\zq)=\tau_\zq(\zq)$, and $\tau(\zb)=\tau_0(\zb)$.
\end{enumerate}
\end{lemma}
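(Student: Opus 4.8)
The strategy is to use that, by the preceding lemma, each of $S_\zp$ and $S_\zq$ is a $\tau$-representative of its relative homology class, so that by the definition of $\tau$-representative the linking term in \eqref{eqn:Qdef} drops out and the relative intersection pairing reduces to the signed count of interior intersections of admissible representatives. I would then exploit that $S_\zp=\fp^{-1}(\gamma_\zp)$ and $S_\zq=\fp^{-1}(\gamma_\zq)$ are preimages under the orbibundle projection $\fp\colon S^3\to\CP^1_{p,q}$ of embedded rays $\gamma_\zp,\gamma_\zq$ in the base joining the orbifold points $\fp(\zp),\fp(\zq)$ to the regular point $\fp(\zb)$ (cf. the mesh surfaces of \cite[Fig.~3]{weiler}). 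Because $\fp$ is a fibration away from the orbifold points and the orbits $\zp,\zq,\zb$ appear only as boundary components of the relevant surfaces, any interior intersection of two such fiber-preimage surfaces must project to an interior intersection of the underlying rays. The problem therefore reduces to choosing the rays to have disjoint interiors, which is always possible on the base sphere $\CP^1_{p,q}$.

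For part (i): since $H_2(S^3)=0$, the class of $S_\zp$ in $H_2(S^3,\zp^p,\zb)$ is the unique such class, so $Q_\zp([S_\zp])=Q_{\tau_\zp}([S_\zp],[S_\zp'])$ for any admissible representative $S_\zp'$ of the same class. I would take $S_\zp'=\fp^{-1}(\gamma_\zp')$, where $\gamma_\zp'$ is a second embedded ray from $\fp(\zp)$ to $\fp(\zb)$ agreeing with $\gamma_\zp$ only at its two endpoints and otherwise having interior disjoint from that of $\gamma_\zp$. The two endpoints $\fp(\zp)$ and $\fp(\zb)$ lie over the boundary orbits $\zp^p$ and $\zb$, so the only coincidences of $S_\zp$ and $S_\zp'$ occur on the boundary and their interiors are disjoint, giving $Q_\zp([S_\zp])=\#(\dot S_\zp\cap\dot S_\zp')=0$. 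The computation for $Q_\zq([S_\zq])$ is identical.

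For part (ii): first I would confirm that the chosen trivialization is admissible for applying the $\tau$-representative simplification to $S_\zp$, whose boundary orbits are $\zp^p$ and $\zb$: here $\tau(\zp)=\tau_\zp(\zp)$ by hypothesis and $\tau(\zb)=\tau_0(\zb)=\tau_\zp(\zb)$ by Lemma \ref{lem:taudiffp}(i), so $Q_\tau([S_\zp],[S_\zq])=\#(\dot S_\zp\cap\dot S_\zq)$. The points $\fp(\zp)$ and $\fp(\zq)$ are distinct, being the two orbifold points of $\CP^1_{p,q}$, so I would choose $\gamma_\zp$ and $\gamma_\zq$ to meet only at their common endpoint $\fp(\zb)$, approaching it from distinct directions, and to be otherwise disjoint. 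Since $\fp(\zb)$ lies over the shared boundary orbit $\zb$, the surfaces $S_\zp$ and $S_\zq$ then meet only along the boundary orbit $\zb$ and not in their interiors, so $\#(\dot S_\zp\cap\dot S_\zq)=0$.

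The only genuinely delicate point is confirming that no interior intersection is forced near the shared endpoint $\fp(\zb)$, where both fiber-preimage surfaces limit onto the orbit $\zb$. Working in $[-1,1]\times S^3$ with $\zb$ as a negative end of both surfaces, one must check that the distinct angular approaches of the two rays translate into distinct, non-rotating boundary braids over $\zb$, so that the limiting behavior feeds into the boundary linking data rather than into interior intersections; this is precisely what the $\tau$-representative property guarantees. Once that reduction is invoked the vanishing is immediate, and everything else is a routine choice of disjoint arcs on the base sphere, parallel to \cite[Lem.~4.15]{kech}.
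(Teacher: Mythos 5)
Your proposal is correct and takes essentially the same route as the paper, which omits the proof and points to the analogous \cite[Lem.~4.15]{kech}: represent the classes by preimages under $\fp$ of arcs in $\CP^1_{p,q}$ chosen with disjoint interiors, and use the $\tau$-representative property to discard the linking correction in \eqref{eqn:Qdef}, so that $Q_\tau$ reduces to a vanishing count of interior intersections. The one genuinely delicate point---that the parallel copy $S_\zp'$ (resp.\ $S_\zq$ in part (ii)) is a $\tau$-representative for the \emph{same} trivialization, which you correctly flag at the end---holds because any two nearby fibers have equal linking number with the boundary orbits, so the pushoff trivializations induced by different rays agree, consistent with Remark \ref{rmk:0eh} and Lemma \ref{lem:taudiffp}(i).
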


The only difference between the computations in the proof of the following lemma and those in the proof of \cite[Lem.~4.16]{kech} is the use of the change-of-trivialization formulas in Lemma \ref{lem:taudiffp} rather than those in \cite[Lem.~3.14]{kech}.
\begin{lemma}[c.f. {\cite[Lem.~4.16]{kech}}]\label{lem:pQb} The following formulas hold.
\begin{enumerate}[{\em (i)}]
\itemsep-.25em
\item $Q_\Sigma(Z_b)=0$,
\item $Q_{orb}(Z_b)=pq-p-q$,
\item $Q_0(Z_b)=pq$.
\end{enumerate}
\end{lemma}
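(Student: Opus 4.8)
The plan is to compute one of the three values directly and then obtain the other two purely by changing the trivialization, following the structure of \cite[Lem.~4.16]{kech}. The natural value to compute by hand is $Q_\Sigma(Z_b)$, since the page trivialization is adapted to the page $\Sigma$.

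First I would use the preceding lemma, which asserts that $\Sigma$ is a $\tau_\Sigma$-representative of $Z_b$. For such a representative the linking correction vanishes, so $Q_\Sigma(Z_b)=\#(\dot\Sigma\cap\dot\Sigma')$ for any admissible representative $\Sigma'$ of $Z_b$ whose interior meets $\dot\Sigma$ transversely and away from $\partial\Sigma$. I would take $\Sigma'$ to be a second page $\pi_{p,q}^{-1}(\theta')$ of the open book $(T(p,q),\pi_{p,q})$ for a regular value $\theta'$ distinct from the one defining $\Sigma$. Since distinct pages of an open book decomposition share only the binding $\zb=\partial\Sigma$, their interiors are disjoint, hence $\#(\dot\Sigma\cap\dot\Sigma')=0$ and $Q_\Sigma(Z_b)=0$, proving (i). The one point requiring care here is precisely that $\Sigma$ is embedded and its $\tau_\Sigma$-pushoff links $\zb$ zero times (so that the $\tau_\Sigma$-representative simplification of $Q_\tau$ applies with no writhe term); this is exactly why the page trivialization is the right one to start with.

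For (ii) and (iii) I would invoke the change-of-trivialization formula for the relative self-intersection number, as in \cite[Prop.~3.13]{kech}. Because $\zb$ appears with multiplicity one in $\partial\Sigma$, a change from $\tau'$ to $\tau$ contributes $m^2(\tau-\tau')(\zb)=(\tau-\tau')(\zb)$. Combining this with the trivialization differences recorded in Lemma \ref{lem:taudiffp}(ii),(iv) gives
\begin{align*}
Q_0(Z_b) &= Q_\Sigma(Z_b) + (\tau_0-\tau_\Sigma)(\zb) = 0 + pq = pq,\\
Q_{orb}(Z_b) &= Q_\Sigma(Z_b) + (\tau_{orb}-\tau_\Sigma)(\zb) = 0 + (pq-p-q) = pq-p-q,
\end{align*}
which are (iii) and (ii) respectively. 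As a consistency check one verifies $Q_0(Z_b)-Q_{orb}(Z_b)=p+q=(\tau_0-\tau_{orb})(\zb)$, matching Lemma \ref{lem:taudiffp}(iii).

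The main (and essentially only) geometric obstacle is the disjointness-of-pages argument in the first step; once $Q_\Sigma(Z_b)=0$ is established, the remaining content is bookkeeping with the multiplicity-one change-of-trivialization formula together with the already-computed differences in Lemma \ref{lem:taudiffp}. The only thing to watch is the sign normalization in the change-of-trivialization formula, which is pinned down by the consistency check above and is identical to the one used in \cite[Lem.~4.16]{kech}; the sole substantive difference from that reference is that one feeds in the $T(p,q)$ differences of Lemma \ref{lem:taudiffp} in place of the $T(2,q)$ differences of \cite[Lem.~3.14]{kech}.
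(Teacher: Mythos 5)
Your proposal is correct and follows essentially the same route as the paper: the paper proves this lemma by citing the proof of \cite[Lem.~4.16]{kech}, which is exactly your argument --- establish $Q_\Sigma(Z_b)=0$ from the $\tau_\Sigma$-representative property of the page together with the disjointness of the interiors of two distinct pages, then obtain (ii) and (iii) from the multiplicity-one change-of-trivialization formula fed with the differences in Lemma \ref{lem:taudiffp}. Your sign normalization also matches the convention the paper uses (compare the analogous computation in the proof of Lemma \ref{lem:Qh}), so nothing is missing.
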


Similarly, the following lemma differs from its counterpart only in the use of Lemma \ref{lem:qpdiff} instead of \cite[Lem.~3.15]{kech}.
\begin{lemma}[c.f. {\cite[Lem.~4.17]{kech}}]\label{lem:QpQq} The following formulas hold.
\begin{enumerate}[{\em (i)}]
\itemsep-.25em
\item $Q_{orb}(Z_\mathpzc{p})=-1$,
\item $Q_{orb}(Z_\mathpzc{q})=-1$.
\end{enumerate}
\end{lemma}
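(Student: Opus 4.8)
The plan is to follow the proof of \cite[Lem.~4.17]{kech} essentially verbatim, substituting the change-of-trivialization data of Lemma \ref{lem:qpdiff} for that of \cite[Lem.~3.15]{kech}, exactly as the remark preceding the statement advertises. The starting point is Lemma \ref{lem:pzero}(i), which records that in the surface trivializations one has $Q_\zp([S_\zp]) = Q_\zq([S_\zq]) = 0$. Here the classes $Z_\zp$ and $Z_\zq$ of the statement are precisely $[S_\zp]$ and $[S_\zq]$, represented by the preimage surfaces of Definition \ref{def:sfcsp} with $\partial S_\zp = \zp^p - \zb$ and $\partial S_\zq = \zq^q - \zb$. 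By the $\tau$-representative lemma above, $S_\zp$ and $S_\zq$ are $\tau_\zp$- and $\tau_\zq$-representatives, so that $Q_\tau$ is computed as a pure count of interior intersections and vanishes in those trivializations.

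First I would invoke the change-of-trivialization formula for the relative intersection pairing from \cite[Prop.~3.13]{kech} (together with \cite[Lem.~3.4~(iv)]{kech}) to pass from $Q_\zp$ to $Q_{orb}$ on $Z_\zp$, and likewise from $Q_\zq$ to $Q_{orb}$ on $Z_\zq$. This formula writes the difference $Q_{orb} - Q_\zp$ as a contribution over the boundary orbits of $S_\zp$, namely $\zp^p$ at the positive end and $\zb$ at the negative end, built from the respective trivialization differences with the appropriate boundary weights.

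The required differences are supplied directly: $\tau_\zp(\zp^p) - \tau_{orb}(\zp^p) = p+q$ from Lemma \ref{lem:qpdiff}(ii), together with $\tau_\zp(\zb) - \tau_{orb}(\zb) = p+q$, which follows from Lemma \ref{lem:taudiffp}(i) and (iii). The $\zq$ computation is identical, using Lemma \ref{lem:qpdiff}(i) for the $\zq^q$ end. Assembling these corrections against the vanishing surface-trivialization values then yields $Q_{orb}(Z_\zp) = -1$ and $Q_{orb}(Z_\zq) = -1$, matching the symmetric form of the statement.

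The main obstacle I anticipate is purely bookkeeping: getting the signs and multiplicity weights right in the $Q$ change-of-trivialization formula when one boundary component is an exceptional fiber traversed as a connected $p$-fold (respectively $q$-fold) cover. Since $Q_\tau$ depends quadratically on framing data, I must ensure that the difference $\tau_\zp(\zp^p) - \tau_{orb}(\zp^p)$ — which Lemma \ref{lem:qpdiff} already packages for the \emph{full} $p$-fold cover — is inserted with the correct normalization and not double-counted against the single-cover convention governing the $\zb$ end. Checking the resulting arithmetic against the already-established value $Q_{orb}(Z_b) = pq - p - q$ of Lemma \ref{lem:pQb}(ii) provides a convenient consistency test that the weights and signs have been applied correctly.
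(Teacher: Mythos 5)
Your overall strategy---rerun the proof of \cite[Lem.~4.17]{kech} with Lemma \ref{lem:qpdiff} substituted for \cite[Lem.~3.15]{kech}---is the right one, but your proposal contains a genuine error at the first step which propagates through the whole outline: the class $Z_\zp$ in the statement is \emph{not} $[S_\zp]$. By the paper's conventions (since $H_2(S^3)=0$), $Z_\zp$ denotes the unique class in $H_2(S^3,\zp,\emptyset)$, bounded by the embedded orbit $\zp$ with multiplicity one and with no negative end; by contrast $[S_\zp]\in H_2(S^3,\zp^p,\zb)$ has positive end the $p$-fold cover $\zp^p$ and negative end $\zb$, and Definition \ref{def:sfcsp} records $[S_\zp\cup_\zb\Sigma]=Z_{\zp^p}$, which is still not $Z_\zp$. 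Because of this misidentification your arithmetic cannot close: applying the change-of-trivialization formula to $[S_\zp]$ with the two corrections you quote (both equal to $p+q$, one at $\zp^p$ and one at $\zb$) makes them cancel against each other and returns $Q_{orb}([S_\zp])=0$, not $-1$; your proposal simply asserts the value $-1$ at the end without any computation that produces it.

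The three missing ingredients are the following. First, the correction at the $\zp^p$ end carries a multiplicity weight, since $Q_\tau$ changes quadratically in the multiplicities of the ends: the contribution is $p\,\bigl(\tau_{orb}(\zp^p)-\tau_\zp(\zp^p)\bigr)=-p(p+q)$, not $-(p+q)$, which gives $Q_{orb}([S_\zp])=-p(p+q)+(p+q)=-(p-1)(p+q)$. Second, one must concatenate with the page, exactly as in the proof of Lemma \ref{lem:Qh}: $Q_{orb}(Z_{\zp^p})=Q_{orb}([S_\zp])+Q_{orb}(Z_\zb)=-(p-1)(p+q)+(pq-p-q)=-p^2$, so Lemma \ref{lem:pQb}(ii) is an essential input here, not the ``convenient consistency test'' your proposal relegates it to (it is needed because $Z_\zp$ has no negative end, while $S_\zp$ does). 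Third, one passes from the multiplicity-$p$ class to the multiplicity-one class using that $Q_{orb}$ is a quadratic form under addition of relative classes, $Q_{orb}(Z_{\zp^p})=p^2\,Q_{orb}(Z_\zp)$, whence $Q_{orb}(Z_\zp)=-p^2/p^2=-1$. (As a cross-check, this is forced by Theorem \ref{thm:pqI}: since $c_{orb}(Z_{\zp^P})=0$ by Lemma \ref{lem:relfirstCherncalcp}(v), the $-(P-Q)^2$ term in the index formula is exactly the $Q_{orb}$ contribution, so $Q_{orb}(Z_{\zp^P})=-P^2$.) The $\zq$ case is identical with $q$ in place of $p$.
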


To prove the following lemma, replace \cite[Cor.~2.28]{kech} with Corollary \ref{cor:linkingnumber} in the proof of \cite[Lem.~4.18]{kech}.
\begin{lemma}[c.f. {\cite[Lem.~4.18]{kech}}]\label{lem:pqcrossQ} The following formulas hold.
\begin{enumerate}[{\em (i)}]
\itemsep-.25em
\item $Q_{orb}(Z_\mathpzc{p},Z_\mathpzc{q})=1$,
\item $Q_{orb}(Z_\mathpzc{p},Z_\mathpzc{b})=Q_{orb}(Z_\mathpzc{p},Z_\mathpzc{h})=q$,
\item $Q_{orb}(Z_\mathpzc{q},Z_b)=Q_{orb}(Z_\mathpzc{q},Z_\mathpzc{h})=p$, 
\item $Q_{orb}(Z_\mathpzc{h},Z_\mathpzc{b})=pq$.
\end{enumerate}
\end{lemma}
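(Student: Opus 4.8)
The plan is to follow the structure of the proof of \cite[Lem.~4.18]{kech} essentially verbatim, substituting the torus-knot linking numbers recorded in Corollary \ref{cor:linkingnumber} for the lens-space linking numbers of \cite[Cor.~2.28]{kech}. The organizing observation, which I would verify entry by entry, is that in every case the answer is simply the linking number of the underlying orbits: (i) matches $\ell(\zp,\zq)=1$, (ii) matches $\ell(\zb,\zp)=\ell(\zh,\zp)=q$, (iii) matches $\ell(\zb,\zq)=\ell(\zh,\zq)=p$, and (iv) matches $\ell(\zb,\zh)=pq$. Establishing this pattern and reading off the correct value from Corollary \ref{cor:linkingnumber} is the entirety of the content, once the geometric set-up is in place.

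To carry this out, recall from \eqref{eqn:Qdef} that $Q_{orb}(Z,Z')=\#(\dot S\cap\dot S')-\ell_{orb}(S,S')$ for admissible representatives $S,S'$ with transversely intersecting interiors. I would represent $Z_{\zp^p}$, $Z_{\zq^q}$, $Z_\zh$ by the capped fiber surfaces $S_\zp\cup_\zb\Sigma$, $S_\zq\cup_\zb\Sigma$, $S_\zh\cup_\zb\Sigma$ of Definition \ref{def:sfcsp}, and $Z_\zb$ by $\Sigma$, pushing them to distinct levels of the symplectization and choosing distinct capping pages $\pi_{p,q}^{-1}(\theta)$ so that the interiors meet transversely away from the boundary. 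Both the interior intersection number and the $\tau_{orb}$-braid linking are then computed from the Seifert-fibration geometry: since the orbibundle trivialization is the one furnished by the fibration, the relevant boundary braids are genuine nearby fibers, so both contributions are governed by the fiber linking numbers $\ell(\cdot,\cdot)$ of Corollary \ref{cor:linkingnumber}, with signs fixed by the conventions of \eqref{eqn:trivchange}.

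The main obstacle will be the bookkeeping at the shared binding boundary $\zb$: because each of $Z_{\zp^p}$, $Z_{\zq^q}$, $Z_\zh$ is capped by a page whose boundary is $\zb$, the surfaces are not disjoint near $\zb$, and one must simultaneously track the signed interior intersections of the distinct pages with the opposing fiber cylinders $S_\zp, S_\zq, S_\zh$ and the braid linking contributions at the $\zb$ ends, exactly as in \cite[Lem.~4.18]{kech}. These combine so that the interior count and the $\tau_{orb}$-linking collapse to the single linking number $\ell(\alpha,\beta)$; the only change from \cite{kech} is the input data. The one genuinely new feature relative to \cite{kech} is the hyperbolic orbit $\zh$, but since $\zh$ is isotopic to $\zb$ in the complement of $\zp$ and $\zq$ (as already used in the proof of Corollary \ref{cor:linkingnumber}), its pairings are computed identically to those of $\zb$, which accounts for the coincidences $Q_{orb}(Z_\zp,Z_\zb)=Q_{orb}(Z_\zp,Z_\zh)$ and $Q_{orb}(Z_\zq,Z_\zb)=Q_{orb}(Z_\zq,Z_\zh)$.
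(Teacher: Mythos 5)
Your proposal is correct and is essentially the paper's own proof: the paper disposes of this lemma in one line by instructing the reader to repeat the proof of \cite[Lem.~4.18]{kech} with Corollary \ref{cor:linkingnumber} substituted for \cite[Cor.~2.28]{kech}, which is precisely the substitution you carry out, and your reconstruction of the internals (capped surfaces from Definition \ref{def:sfcsp}, the intersection-minus-braid-linking definition of $Q_\tau$, and the bookkeeping at the shared binding $\zb$) matches that argument. Your treatment of the genuinely new entries involving $\zh$ — reducing them to the $\zb$ computations via the isotopy of $\zh$ and $\zb$ in the complement of $\zp$ and $\zq$ — is also exactly how the paper's cited linking-number input handles them.
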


Finally, we compute the relative self-intersection pairing of $Z_\zh$, which has no analogue in the case $p=2$.

\begin{lemma}\label{lem:Qh} We have $Q_{orb}(Z_\zh)=pq-p-q$.
\end{lemma}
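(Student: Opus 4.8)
The plan is to reduce this to the value $Q_{orb}(Z_\zb)=pq-p-q$ already established in Lemma \ref{lem:pQb}(ii), by exploiting that $\zh$ and $\zb$ are interchangeable regular $T(p,q)$ fibers. The starting observation is that $\zh$ is isotopic to $\zb$ inside $S^3\setminus(\zp\cup\zq)$: this is exactly the isotopy realized by the band $S_\zh$ of Definition \ref{def:sfcsp} (the preimage of a ray in the base joining $\fp(\zh)$ to the regular value $\fp(\zb)$), and it is the same isotopy already invoked in the proof of Corollary \ref{cor:linkingnumber}. Since $H_2(S^3)=0$, the classes $Z_\zh$ and $Z_\zb$ are the unique elements of $H_2(S^3,\zh,\emptyset)$ and $H_2(S^3,\zb,\emptyset)$, so this isotopy necessarily carries $Z_\zb$ to $Z_\zh$.

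First I would argue that the orbibundle trivialization is carried to itself by this isotopy. The point is that $\tau_{orb}$ is not merely a fiberwise recipe but a genuine global trivialization of $\xi$ over the regular locus $\fp^{-1}\big(\CP^1_{p,q}\setminus\{\text{two orbifold points}\}\big)$, pulled back from a trivialization of $T\CP^1_{p,q}$ over the regular base. Because that base is an annulus (a sphere with two orbifold points removed), its tangent bundle is trivial, so there is no framing monodromy obstructing the extension of $\tau_{orb}$ across the ray; hence the isotopy takes $\tau_{orb}|_{\zb}$ to $\tau_{orb}|_{\zh}$. As $Q_\tau$ is invariant under an ambient isotopy carrying the pair $(Z_\zb,\tau_{orb})$ to $(Z_\zh,\tau_{orb})$, I would conclude $Q_{orb}(Z_\zh)=Q_{orb}(Z_\zb)=pq-p-q$. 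I expect this naturality of $\tau_{orb}$ under the isotopy to be the main point needing care, since it is what lets $\zh$ inherit every relative intersection datum of $\zb$.

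Equivalently, and as a self-contained check matching the computational style of the surrounding lemmas, I would carry out the same reduction in the constant trivialization and then change trivialization. Using that $\tau_\zh(\zh)=\tau_0(\zh)$ (as noted in the proof of Lemma \ref{lem:qpdiff}(iii)) together with the band representative $\hat\Sigma:=S_\zh\cup_\zb\Sigma$, which equals $S_\zh$ near $\zh$ and is therefore a $\tau_0$-representative of $Z_\zh$, the isotopy above identifies $Q_0(Z_\zh)=Q_0(Z_\zb)=pq$ by Lemma \ref{lem:pQb}(iii). Applying the change-of-trivialization formula for $Q_\tau$ (the analogue of \eqref{CZ:changetriv}, cf. \cite[Prop.~3.13]{kech}) with the single orbit $\zh$ of multiplicity one, and using $\tau_\zh(\zh)-\tau_{orb}(\zh)=p+q$ from Lemma \ref{lem:qpdiff}(iii), gives $Q_{orb}(Z_\zh)=Q_0(Z_\zh)-(p+q)=pq-(p+q)=pq-p-q$, in agreement with the value for $\zb$ and confirming the conceptual argument.
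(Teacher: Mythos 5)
Your second, ``self-contained'' computation is correct and reaches the paper's answer by a genuinely different mechanism. The paper's own proof never leaves the orbibundle trivialization: it writes $Z_\zh=S_\zh\cup_\zb\Sigma$, invokes additivity of $Q$ under concatenation to get $Q_{orb}(Z_\zh)=Q_{orb}([S_\zh])+Q_{orb}(Z_\zb)$, and then kills the correction term $Q_{orb}([S_\zh])$ via the change-of-trivialization formula of \cite[Lem.~3.4~(iv)]{kech} together with Lemmas \ref{lem:qpdiff}(iii) and \ref{lem:taudiffp}(i, iii): the two trivialization differences are both $p+q$ and cancel, while $Q_\zh([S_\zh])=0$ because $S_\zh$ is a $\tau_\zh$-representative, exactly as for $S_\zp,S_\zq$ in Lemma \ref{lem:pzero}. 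You instead use $S_\zh$ as the track of an ambient isotopy rather than as a concatenated cobordism: naturality of $Q_\tau$ under a fiber-preserving isotopy carrying $(\zb,\tau_0|_\zb)$ to $(\zh,\tau_0|_\zh)$ gives $Q_0(Z_\zh)=Q_0(Z_\zb)=pq$ by Lemma \ref{lem:pQb}(iii), and then a single change of trivialization, with the same sign convention the paper uses, gives $Q_{orb}(Z_\zh)=pq-(p+q)$. What the paper's route buys is that every step is a property of $Q$ already recorded in the text; what your route buys is geometric transparency, at the cost of an invariance statement (isotopy naturality of $Q_\tau$ with the framing carried along) that is true from the purely topological definition of $Q_\tau$ but is stated nowhere in the paper, and which moreover requires choosing the isotopy fiber-preserving so that constant framings are carried to constant framings.

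However, the justification in your first paragraph is incorrect, and it is exactly the point you yourself flag as ``the main point needing care.'' The trivialization of $\xi$ over the regular locus obtained by pulling back a trivialization of $T\CP^1_{p,q}$ over the punctured base restricts on each regular fiber to the \emph{constant} trivialization $\tau_0$ (this is precisely how $\tau_0$ is defined in \S\ref{ss:consttau}), not to $\tau_{orb}$. If $\tau_{orb}$ were of that pulled-back form it would agree with $\tau_0$ on every regular fiber, contradicting $\tau_0(\zb)-\tau_{orb}(\zb)=p+q\neq0$ from Lemma \ref{lem:taudiffp}(iii); equivalently, $CZ_{orb}(\zb)=2(p+q)+1$ while $CZ_0(\zb)=1$, and $c_{orb}([\Sigma])=0$ while $c_0([\Sigma])=p+q$. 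So ``the isotopy carries $\tau_{orb}$ to $\tau_{orb}$'' does not follow from your stated reason. The statement is nonetheless true, but the honest argument is the one implicit in your second paragraph: the isotopy carries $\tau_0$ to $\tau_0$, and the difference $\tau_0-\tau_{orb}$ equals the same constant $p+q$ on both regular fibers $\zb$ and $\zh$ (Lemmas \ref{lem:taudiffp}(i, iii) and \ref{lem:qpdiff}(iii)), so the isotopy matches the orbibundle framings as well. With the first paragraph either repaired in this way or simply deleted in favor of the second, your proof stands.
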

\begin{proof} Recall as in Definition \ref{def:sfcsp} that $Z_\zh=S_\zh\cup_\zb\Sigma$. The relative intersection pairing is linear with respect to this concatenation of surfaces,\footnote{Note that this is not addition in $H_2(Y,\alpha,\beta)$ using the affine structure over $H_2(Y)$. We refer to it as ``concatenation addition" in \cite[Rmk.~3.7~(iii)]{kech} and it also appears in Theorem~\ref{thm:Iproperties}~(iii).} therefore
\[
Q_{orb}(Z_\zh)=Q_{orb}([S_\zh])+Q_{orb}(Z_\zb)=Q_{orb}([S_\zh])+pq-p-q,
\]
using Lemma \ref{lem:pQb}(ii) for the second equality. We then use the change-of-trivialization formula in \cite[Lem.~3.4~(iv)]{kech} to compute
\[
Q_{orb}([S_\zh])=Q_\zh([S_\zh])+\tau_{orb}(\zh)-\tau_\zh(\zh)-(\tau_{orb}(\zb)-\tau_\zh(\zb)).
\]
This is zero by Lemma \ref{lem:qpdiff}(iii) and Lemma \ref{lem:taudiffp}(i, iii).
\end{proof}

\section{The ECH chain complex}\label{s:ECHI}
 {We now turn to our computation of the ECH chain complex of $S^3$ with the contact form $\lambda_{p,q,\varepsilon}$ up to action level $L(\varepsilon)$.} Since the contact form $\lambda_{p,q}$ is degenerate, there is no chain complex ``$ECC_*(S^3,\lambda_{p,q},J)$," and therefore no filtration with respect to $T(p,q)$. Instead, in \S\ref{s:spectral} we will compute the knot filtration with respect to $T(p,q)$ on equivalence classes of homology classes in the complexes
\[
ECH^{L(\varepsilon)}_*(S^3,\lambda_{p,q,\varepsilon}).
\]

We give a summary our arguments in \S \ref{ss:pnot2} and review how we establish the behavior of the differential.  This requires going beyond the methods we developed in \cite[\S 4-6]{preech}.  We set up the needed intersection theory machinery in \S \ref{ss:currents}, which we use to rule out noncylindrical contributions to the differential in  \S\ref{sss:orbipreech}.  Combinatorial arguments are carried out in \S \ref{ss:combinatorics} and we complete the computation of the ECH chain complex in \S \ref{sss:finalcomp}.  Finally, in \S\ref{ss:toric}, we explain the comparison between our complexes and the combinatorial complex associated to a convex toric perturbation.

Before we continue, we review some notation and define the degree of a pair of homologous Reeb currents. We obtain $\lambda_{p,q,\varepsilon}$ by perturbing $\lambda_{p,q}$ using the the Morse function $\mathpzc{H}_{p,q}$ of Proposition \ref{prop:morsep} on the base orbifold $\CP^1_{p,q}$.  The Reeb vector field $R_{p,q,\varepsilon}$ admits two orbits realizing the singular fibers of the Seifert fibration, called ${\mathpzc{p}}$ and ${\mathpzc{q}}$, which project to critical points of index 0 which are of respective isotropy $\Z/p$ and $\Z/q$.  All regular fibers are positive $T(p,q)$ knots; the orbit $ \mathpzc{b}$ projects to an index two critical point and the  orbit $ \mathpzc{h}$ projects to an index one critical point.  As described in Lemma \ref{lem:orbitseh}, up to large action the only Reeb orbits of $R_{p,q,\varepsilon}$ are iterates of  ${\mathpzc{q}}$ (elliptic), ${\mathpzc{p}}$ (elliptic), $\mathpzc{h}$ (positive hyperbolic), $\mathpzc{b}$ (elliptic). We prove that
\[
\lim_{\varepsilon\to0}ECH^{L(\ve)}_*(S^3,\lambda_{p,q,\varepsilon})= ECH_*(S^3,\xi_{std})=\begin{cases}\Z/2&\text{ if }* \in 2\Z_{\geq 0}
\\0&\text{else},
\end{cases}
\]
and furthermore, we compute the chain complexes $ECH^{L(\ve)}_*(S^3,\lambda_{p,q,\ve})$ in Theorem \ref{thm:ECC}.

\begin{remark}\label{rmk:nochainbij}
In analogy with \cite{preech},  we have that ECH recovers the exterior algebra of the orbifold Morse homology of the base, and for the perturbation used, we also see this correspondence at the level of the chain complex. \end{remark}

\begin{notation}
When we discuss a specific Reeb current in multiplicative notation $\gamma_1^{m_1}\cdots\gamma_n^{m_n}$,  all $\gamma_i$ which appear will have $m_i>0$. However, if the Reeb current is not specified, it will be understood that $m_i=0$ is possible, which indicates the orbit set in the usual notation with that pair removed.  For example, the generators of $ECC_*^{L(\varepsilon)}(S^3,\lambda_{p,q,\varepsilon}, J)$ are of the form $\mathpzc{b}^B\mathpzc{h}^H\mathpzc{p}^P\mathpzc{q}^Q$, where $B,P,Q \in \Z_{\geq 0}$ and $H=0,1$. 
\end{notation}

Before proceeding, we make a final definition.
\begin{definition}\label{def:degreep}
Given a pair of homologous Reeb currents $\alpha$ and $\beta$ expressed in terms of embedded orbits realizing fibers of the prequantization orbibundle or Seifert fiber space, we define their (relative) \emph{degree} to be the relative algebraic multiplicity of the associated fiber sets.  For $\alpha = \mathpzc{b}^B\mathpzc{h}^H\mathpzc{p}^P\mathpzc{q}^Q$ and $\beta = \mathpzc{b}^{B'}\mathpzc{h}^{H'}\mathpzc{p}^{P'}\mathpzc{q}^{Q'}$, we have
\[
d(\alpha, \beta) =  \frac{B+H+\frac{1}{p}P+\frac{1}{q}Q-B'-H'-\frac{1}{p}P'-\frac{1}{q}Q'}{| e|}, \ \ \ \ |e| = \frac{1}{pq}.
\]
\end{definition}

When the differential does not vanish for index reasons, the degree $d$ of a pair $(\alpha,\beta)$ corresponds to the \textit{degree} of any curves counted in $\langle\partial\alpha,\beta\rangle$, similar to arguments in \cite[\S 4]{preech}; this is carried out in \S \ref{sss:orbipreech}. We also expect this to be true for more general Seifert fiber spaces and prequantization orbibundles. In \S \ref{ss:index-degree} we establish the relationship between the ECH index of a generator and its degree, which governs the behavior of the ECH spectral invariants computed in \S \ref{s:spectral}.

\subsection{Summary of the chain complex}\label{ss:pnot2}

\begin{table}[h!]
\begin{subtable}[c]{0.45\textwidth}
\centering
\begin{tabular}{||c|c|c||}
\hline
degree&generator&index
\\\hline 0& $\emptyset$ &0
\\\hline 3&$\zq$&2
\\\hline 4&$\zp$&4
\\\hline 6&$\zq^2$&6
\\\hline 7&$\zp\zq$&8
\\\hline 8&$\zp^2$&10
\\\hline 9&$\zq^3$&12
\\\hline 10&$\zp\zq^2$&14
\\\hline 11&$\zp^2\zq$&16
\\\hline 12&$\zq^4$&18
\\\hline 12&$\zp^3$&18
\\\hline 12&$\zh$&19
\\\hline 12&$\zb$&20
\\\hline 13&$\zp\zq^3$&22
\\\hline 14&$\zp^2\zq^2$&24
\\\hline 15&$\zq^5$&26
\\\hline 15&$\zp^3\zq$&26
\\\hline 15&$\zh\zq$&27
\\\hline 15&$\zb\zq$&28
\\\hline 16&$\zp\zq^4$&30
\\\hline 16&$\zp^4$&30
\\\hline 16&$\zh\zp$&31
\\\hline 16&$\zb\zp$&32
\\\hline 17&$\zp^2\zq^3$&34
\\\hline 18&$\zq^6$&36
\\\hline 18&$\zp^3\zq^2$&36
\\\hline 18&$\zh\zq^2$&37
\\\hline 18&$\zb\zq^2$&38
\\\hline 19&$\zp\zq^5$&40
\\\hline 19&$\zp^4\zq$&40
\\\hline 19&$\zh\zp\zq$&41
\\\hline 19&$\zb\zp\zq$&42
\\\hline 20&$\zp^5$&44
\\\hline 20&$\zp^2\zq^4$&44
\\\hline 20&$\zh\zp^2$&45

\\\hline
\end{tabular}
\subcaption{generators for $T(3,4)$}
\end{subtable}
\begin{subtable}[c]{0.45\textwidth}
\centering
\begin{tabular}{||c|c|c||}
\hline
degree&generator&index
\\\hline 0& $\emptyset$ &0
\\\hline 3&$\zq$&2
\\\hline 5&$\zp$&4
\\\hline 6&$\zq^2$&6
\\\hline 8&$\zp\zq$&8
\\\hline 9&$\zq^3$&10
\\\hline 10&$\zp^2$&12
\\\hline 11&$\zp\zq^2$&14
\\\hline 12&$\zp^4$&16
\\\hline 13&$\zp^2\zq$&18
\\\hline 14&$\zp\zq^3$&20
\\\hline 15&$\zq^5$&22
\\\hline 15&$\zp^3$&22
\\\hline 15&$\zh$&23
\\\hline 15&$\zb$&24
\\\hline 16&$\zp^2\zq^2$&26
\\\hline 17&$\zp\zq^4$&28
\\\hline 18&$\zq^6$&30
\\\hline 18&$\zp^3\zq$&30
\\\hline 18&$\zh\zq$&31
\\\hline 18&$\zb\zq$&32
\\\hline 19&$\zp^2\zq^3$&34
\\\hline 20&$\zp\zq^5$&36
\\\hline 20&$\zp^4$&36
\\\hline 20&$\zh\zp$&37
\\\hline 20&$\zb\zp$&38
\\\hline 21&$\zq^7$&40
\\\hline 21&$\zp^3\zq^2$&40
\\\hline 21&$\zh\zq^2$&41
\\\hline 21&$\zb\zq^2$&42
\\\hline 22&$\zp^2\zq^4$&44
\\\hline 23&$\zp\zq^6$&46
\\\hline 23&$\zp^4\zq$&46
\\\hline 23&$\zh\zp\zq$&47
\\\hline 23&$\zb\zp\zq$&48
\\\hline
\end{tabular}
\subcaption{generators for $T(3,5)$}
\end{subtable}

\caption{Since the homology must be the ECH of $S^3$, these tables indicate that the only nonzero differentials are a union of trivial cylinders together with two nontrivial cylinders with boundary $\zh-\zp^p$ and $\zh-\zq^q$. Because $\zh$ and $\zb$ are torus knots on the same torus, there are two cylinders with boundary $\zb-\zh$ which cancel, and therefore $\zb$ is closed. 
}
\label{table:genp}
\end{table}

When $p=2$, we utilized the very special symmetric presentation of the open book return map, enabling the use of an alternate orbifold Morse function wherein the differential vanished, as explained in \cite{kech}. However, in the setting at hand generators may contain the positive hyperbolic orbit $\zh$, thus by Theorem \ref{thm:Iproperties}(iv) (ECH Index Parity property), there are now odd ECH index generators in the complexes $ECC^{L(\varepsilon)}_*(S^3,\lambda_{p,q,\varepsilon},J)$, and we have nonzero differentials. 

The below theorem concerning the computation of the ECH index is proven in \S \ref{ss:combinatorics}.

\begin{theorem}\label{thm:pqI} The ECH index $I$ for $(S^3,\lambda_{p,q,\varepsilon})$ satisfies the following formula. {For any Reeb current $\zb^B\zh^H\zp^P\zq^Q$ with action less than $L(\varepsilon)$}, we have
\[
I(\zb^B\zh^H\zp^P\zq^Q)=-(P-Q)^2+2qP(H+B)+2pQ(H+B)+(pq-p-q)(H^2+B^2)+2HBpq+\mathbf{CZ},
\]
where $\mathbf{CZ}$ is the total Conley-Zehnder term
\[
\mathbf{CZ}=CZ^I_{orb}(\zb^B)+2(p+q)H+CZ^I_{orb}(\zp^P)+CZ^I_{orb}(\zq^Q),
\]
and
\begin{align*}
CZ^I_{orb}(\zb^B)&=(p+q)B^2+(p+q+1)B;
\\CZ^I_{orb}(\zp^P)&=\sum_{i=1}^P2\left\lfloor\left(\frac{p+q}{p}-\delta_{\zp,L}\right)i\right\rfloor+1;
\\CZ^I_{orb}(\zq^Q)&=\sum_{i=1}^Q2\left\lfloor\left(\frac{p+q}{q}-\delta_{\zq,L}\right)i\right\rfloor+1.
\end{align*}
\end{theorem}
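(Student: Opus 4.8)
The plan is to assemble $I(\zb^B\zh^H\zp^P\zq^Q)$ from its three constituent pieces by directly invoking Definition~\ref{defn:ECHI}, which writes the ECH index as $c_\tau(Z)+Q_\tau(Z)+\mathbf{CZ}$, where the Conley-Zehnder term is the summed contribution $CZ^I_\tau$. Since all orbit sets are homologous in $S^3$ and $H_2(S^3)=0$, there is a unique relative homology class $Z\in H_2(S^3,\alpha,\emptyset)$, so I work with $Z=Z_{\zb^B\zh^H\zp^P\zq^Q}$ throughout and need not worry about the index ambiguity. The natural choice is to compute everything in the orbibundle trivialization $\tau_{orb}$, since Lemmas~\ref{lem:orbtrivCZp}, \ref{lem:relfirstCherncalcp}, and the $Q_{orb}$ computations in \S\ref{ss:Q} are all stated in that trivialization.

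First I would handle the topological terms $c_{orb}(Z)+Q_{orb}(Z)$ by exploiting bilinearity. The class $Z$ decomposes along the concatenation structure from Definition~\ref{def:sfcsp} into $B$ copies of $Z_\zb$, $H$ copies of $Z_\zh$, $P/p$-worth of $Z_\zp$ (more precisely the class realized by $S_\zp$, whose boundary is $\zp^p$), and similarly for $\zq$. For the relative first Chern number, additivity over concatenation together with Lemma~\ref{lem:relfirstCherncalcp}(i),(v) gives $c_{orb}(Z_\zb)=0$ and $c_{orb}(Z_\zp)=c_{orb}(Z_\zq)=c_{orb}(Z_\zh)=0$, so the entire $c_{orb}(Z)$ contribution vanishes; this is why no relative Chern term appears in the final formula. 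For the relative intersection pairing I would use that $Q_{orb}$ is bilinear (quadratic) with respect to this concatenation, expanding $Q_{orb}(Z)$ as a sum of self-pairings $Q_{orb}(Z_\bullet)$ weighted by the squares of multiplicities and cross-pairings $Q_{orb}(Z_\bullet,Z_\circ)$ weighted by products of multiplicities. Feeding in Lemmas~\ref{lem:pQb}(ii), \ref{lem:QpQq}, \ref{lem:pqcrossQ}, and \ref{lem:Qh}---namely $Q_{orb}(Z_\zb)=Q_{orb}(Z_\zh)=pq-p-q$, $Q_{orb}(Z_\zp)=Q_{orb}(Z_\zq)=-1$, the cross terms $q,p,pq$, and the value $1$ for $Z_\zp$ crossed with $Z_\zq$---and collecting by total multiplicity of each exceptional fiber (so that $\zp^P$ contributes through $P$, not $P/p$) should reproduce exactly
\[
-(P-Q)^2+2qP(H+B)+2pQ(H+B)+(pq-p-q)(H^2+B^2)+2HBpq.
\]
The $-(P-Q)^2=-P^2+2PQ-Q^2$ piece is the delicate bookkeeping: the $-1$ self-pairings and the $+1$ cross-pairing must be correctly normalized against the multiplicity conventions, and I expect this repackaging to be the main obstacle, since one must be careful that the surfaces $S_\zp,S_\zq$ carry boundary $\zp^p,\zq^q$ while the generator records $\zp^P,\zq^Q$, so the weighting by $P,Q$ versus $P/p,Q/q$ has to be tracked scrupulously.

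Finally I would assemble the Conley-Zehnder term $\mathbf{CZ}=CZ^I_{orb}(\zb^B)+CZ^I_{orb}(\zh^H)+CZ^I_{orb}(\zp^P)+CZ^I_{orb}(\zq^Q)$, where $CZ^I_{orb}(\gamma^m)=\sum_{k=1}^m CZ_{orb}(\gamma^k)$. For $\zh$, since $H\in\{0,1\}$, the sum is just $CZ_{orb}(\zh)=2(p+q)$ when $H=1$, giving the stated $2(p+q)H$. For $\zb$, summing the elliptic formula $CZ_{orb}(\zb^k)=2(p+q)k+1$ from Lemma~\ref{lem:orbtrivCZp} over $k=1,\dots,B$ yields $2(p+q)\tfrac{B(B+1)}{2}+B=(p+q)B^2+(p+q+1)B$, matching the claimed $CZ^I_{orb}(\zb^B)$. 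For $\zp$ and $\zq$ the elliptic monodromy angles $\tfrac{p+q}{p}-\delta_{\zp,L}$ and $\tfrac{p+q}{q}-\delta_{\zq,L}$ from Lemma~\ref{lem:orbtrivCZp} plug into the elliptic index formula \eqref{CZ:elliptic}, giving the floor-function sums $\sum_{i=1}^P\left(2\lfloor(\tfrac{p+q}{p}-\delta_{\zp,L})i\rfloor+1\right)$ and its $\zq$-analogue, which are left unsimplified precisely because the irrational $\delta$'s make the floors genuinely combinatorial. Adding the topological and Conley-Zehnder contributions then gives the asserted formula for $I$; the whole argument is essentially a bilinear expansion with careful attention to the multiplicity normalizations inherited from the surfaces in Definition~\ref{def:sfcsp}.
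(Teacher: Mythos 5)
Your proposal is correct and follows essentially the same route as the paper: the paper's proof of Theorem \ref{thm:pqI} is exactly this assembly in the orbibundle trivialization, citing Lemma \ref{lem:relfirstCherncalcp} for the vanishing of the $c_{orb}$ contribution, the quadratic expansion of $Q_{orb}$ via Lemmas \ref{lem:pzero}, \ref{lem:pQb}, \ref{lem:QpQq}, \ref{lem:pqcrossQ}, and \ref{lem:Qh}, and the Conley--Zehnder sums from Lemma \ref{lem:orbtrivCZp}. The normalization worry you flag resolves itself because the classes $Z_\zp$, $Z_\zq$ appearing in Lemmas \ref{lem:QpQq} and \ref{lem:pqcrossQ} are bounded by \emph{single} copies of $\zp$, $\zq$ (consistent with the cross-pairing values $1$, $q$, $p$ equaling the linking numbers of the embedded orbits in Corollary \ref{cor:linkingnumber}), so the quadratic expansion is weighted exactly by $B, H, P, Q$ and the $-(P-Q)^2$ term falls out directly as you computed.
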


\begin{remark}[Parity of the ECH index] As a sanity check, we show that the ECH index as computed in Theorem \ref{thm:pqI} is even if $H=0$ and odd if $H=1$. 
Consider the sum of the terms in the formula for the ECH index which are not always even integers; these are
\begin{equation}\label{eqn:paritypq}
-(P-Q)^2,\quad (pq-p-q)(H^2+B^2),\quad CZ^I_{orb}(\zb^B),\quad P,\quad Q.
\end{equation}
Here $P$ and $Q$ arise from the respective contributions from $CZ^I_{orb}(\zp^P)$ and $CZ^I_{orb}(\zq^Q)$ besides the $2\lfloor i\theta_T\rfloor$ terms, i.e. arising from the sums $\sum_{i=1}^P1$ and $\sum_{i=1}^Q1$. 
When $H=0$, the sum of the five terms in (\ref{eqn:paritypq}) is
\[
-P^2+2PQ-Q^2+pqB^2+(p+q+1)B+P+Q \equiv pqB^2+(p+q+1)B\mod2,
\]
hence if $B$ is even, then this term is necessarily even. If  $B$ is odd, note then $pqB+p+q+1$ cannot be odd unless both $p$ and $q$ are even, which cannot happen because they are relatively prime.  When $H=1$, we need only to show that $pq-p-q$ is odd, which again follows because both $p$ and $q$ cannot be even.
\end{remark}

 We will show {the first equality below} in Proposition \ref{prop:DL}, {a more refined version of Proposition \ref{prop:directlimitcomputesfiberhomology}:}
\begin{equation}\label{eqn:dirlimS3}
\lim_{\varepsilon\to0}ECH^{L(\varepsilon)}_*(S^3,\lambda_{p,q,\varepsilon},J)=ECH_{*}(S^3,\xi_{std})=\begin{cases}
\Z/2&\text{ if }* \in 2\Z_{\geq0}
\\0&\text{ otherwise}.
\end{cases}
\end{equation}
{In particular, this implies that there can be no odd index homologically essential generators.} Furthermore, Corollary \ref{prop:bijectionp} allows us to conclude that there must be some nonzero differential coefficients in $ECC^{L(\varepsilon)}_*(S^3,\lambda_{p,q,\varepsilon},J)$ for grading small enough relative to $L(\varepsilon)$ in the sense of Lemma \ref{lem:orbitseh}, and they must come from moduli spaces $\M_1(\alpha,\beta,J)$ with $H\neq0$ in either $\alpha$ or $\beta$ (but not both).  Later, in the proof of Theorem \ref{thm:kECH} we analyze the effect of the maps in the direct system in (\ref{eqn:dirlimS3}) on the knot filtration.

In order to understand which generators do not appear in homology, namely the cycles which are also boundaries, we relate {the associated} ECH index one moduli spaces to the moduli spaces determining the orbifold Morse differential of $\mathpzc{H}_{p,q}$ in \S \ref{sss:orbipreech}. { These arguments are rather subtle and involved as we first employ a non-generic $S^1$ invariant almost complex structure used to determine the relevant cylinder counts and then explain how the cylinder counts agree for nearby generic $J$.

The following theorem summarizes our results.
\begin{theorem}\label{thm:ECC}
The chain complex $ECC^{L(\ve)}_*(S^3,\lambda_{p,q,\ve})$ is generated by admissible Reeb currents of the form $\zb^B\zh^H\zp^P\zq^Q$ with action at most $L(\ve)$. All differential coefficients are zero besides $\langle \partial \zh\gamma,\zp^p\gamma \rangle = \langle \partial \zh\gamma,\zq^q\gamma,J \rangle=1$, 
for any Reeb current $\gamma$ with $H=0$. In particular:
\begin{itemize}
\itemsep-.25em
\item $\partial(\zb^B\zp^P\zq^Q)=0$ for all $B,P,Q$;
\item If $I(\alpha)=I(\beta)$ then $\alpha$ and $\beta$ are homologous in ECH; 
\item $\alpha$ and $\beta$ have the same degree if one may be obtained from the other by repeatedly replacing one end of a Morse flow line of $\mathpzc{H}_{p,q}$ with the other.
\end{itemize}
\end{theorem}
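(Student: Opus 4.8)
The plan is to pin down the differential by combining three ingredients: the ECH index parity recorded after Theorem~\ref{thm:pqI}, the known total homology \eqref{eqn:dirlimS3}, and an intersection-theoretic identification of the ECH index~$1$ moduli spaces with the orbifold Morse trajectory spaces of $\mathpzc{H}_{p,q}$. First I would extract the consequences of parity. By the parity computation following Theorem~\ref{thm:pqI} (equivalently Theorem~\ref{thm:Iproperties}(iv)), a generator $\zb^B\zh^H\zp^P\zq^Q$ has even ECH index exactly when $H=0$ and odd index exactly when $H=1$. Since $\partial$ lowers the ECH index by one, every nonzero differential coefficient connects an $H=1$ generator to an $H=0$ generator; a differential between two generators of the same value of $H$ is impossible. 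As $\zh$ is positive hyperbolic it occurs with multiplicity at most one, so the source and target of a nonzero coefficient differ by the presence of the factor $\zh$. Comparing actions via Lemma~\ref{lem:efromL}(ii)---using that $\fp(\zh)$ is the saddle and $\fp(\zp),\fp(\zq)$ are the minima of $\mathpzc{H}_{p,q}$, so that $\mathpzc{H}_{p,q}(\fp(\zh))$ exceeds $\mathpzc{H}_{p,q}(\fp(\zp))$ and $\mathpzc{H}_{p,q}(\fp(\zq))$---shows that the $\zh$ factor must sit in the \emph{source}. Thus every nonzero coefficient has the form $\langle\partial(\zh\gamma),\beta\rangle$ with $\gamma=\zb^B\zp^P\zq^Q$ and $\beta$ having $H=0$.

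The analytic heart of the argument, and the step I expect to be the main obstacle, is to show that the only such $\beta$ are $\zp^p\gamma$ and $\zq^q\gamma$. Following the philosophy of \cite[\S4--6]{preech}, but now in the presence of nonvanishing differentials, I would deploy the intersection-theoretic machinery of \cite{Hrevisit} (set up in \S\ref{ss:currents}) to prove that any ECH index~$1$ current counted in the differential is a union of trivial cylinders together with a single nontrivial component, which is the $\fp$-preimage of a gradient flow line of $\mathpzc{H}_{p,q}$. The relative degree of Definition~\ref{def:degreep} is the key bookkeeping device: since $d(\zh)=pq=d(\zp^p)=d(\zq^q)$, such a cylinder has relative degree zero, matching a fiber over a single Morse flow line, and adjunction and writhe bounds rule out multiply covered or higher-genus noncylindrical pieces. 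The flow lines of $\mathpzc{H}_{p,q}$ issuing from the index~$1$ critical point terminate at the two index~$0$ critical points (Figure~\ref{fig:banana}), and the isotropy orders $p$ and $q$ force the lower ends of the corresponding cylinders to be $\zp^p$ and $\zq^q$. This identifies the ECH index~$1$ moduli spaces with the orbifold Morse trajectory spaces of $\mathpzc{H}_{p,q}$, whose mod-$2$ count realizes $\partial_{\mathrm{Morse}}(\text{saddle})=\min_{\zp}+\min_{\zq}$. Hence $\langle\partial(\zh\gamma),\zp^p\gamma\rangle=\langle\partial(\zh\gamma),\zq^q\gamma\rangle=1$ and all other coefficients vanish.

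With the differential in hand, the three bulleted consequences follow. The vanishing $\partial(\zb^B\zp^P\zq^Q)=0$ is immediate, since such a generator has $H=0$ and so cannot be written as $\zh\gamma$, hence cannot be the source of any nonzero coefficient. The degree statement is the observation that replacing one end of a Morse flow line of $\mathpzc{H}_{p,q}$ by the other exchanges the factors $\zh,\zp^p,\zq^q$, all of which have degree $pq$ by Definition~\ref{def:degreep}, so the degree is preserved.

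Finally, for the homology statement I would invoke \eqref{eqn:dirlimS3}, which makes $ECH$ of rank one in each even grading and zero in odd gradings. Every $H=0$ generator is a cycle since $\partial$ vanishes on it, and no single such generator is a boundary: over $\Z/2$ the image of $\partial$ is spanned by the two-term vectors $\zp^p\gamma+\zq^q\gamma$, which consists of the vectors of even total weight on each connected component of the ``swap graph'' whose edges join $\zp^p\gamma$ to $\zq^q\gamma$, whereas a single generator has odd weight. Thus every $H=0$ generator represents a nonzero homology class; if two generators $\alpha,\beta$ of the same even index were not homologous, then $[\alpha]$, $[\beta]$, and $[\alpha]+[\beta]$ would be three distinct nonzero classes, forcing the rank to be at least two and contradicting \eqref{eqn:dirlimS3}. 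Therefore $I(\alpha)=I(\beta)$ implies $\alpha$ and $\beta$ are homologous, which also certifies that the same-index generators form a single component of the swap graph, consistent with the combinatorics of \S\ref{ss:combinatorics} and the index--degree correspondence of \S\ref{ss:index-degree}.
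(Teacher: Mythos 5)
Your first step already breaks down: parity allows differentials in both directions between the $H=0$ and $H=1$ sectors, and the action comparison you invoke does not force $\zh$ into the source. The orbit $\zb$ projects to the \emph{maximum} of $\mathpzc{H}_{p,q}$, so by Lemma \ref{lem:efromL}(ii) one has $\A(\zb\gamma)>\A(\zh\gamma)$, and a coefficient such as $\langle\partial(\zb\gamma),\zh\gamma\rangle$ is perfectly compatible with parity, with action decrease, and with the index computation (Lemma \ref{lem:ECHd}(iii) gives exactly $I(\zb\gamma)=I(\zh\gamma)+1$). Its vanishing is a nontrivial fact: in the paper it is Proposition \ref{prop:pqM}(iii), where the count is zero because the \emph{two} gradient flow lines of $\mathpzc{H}_{p,q}$ from the maximum to the saddle cancel mod $2$; and the more general coefficients $\langle\partial(\zb^B\zp^P\zq^Q),\zh\gamma'\rangle$ with $\gamma'$ unrelated to the source are not touched by any Morse-theoretic count at all.

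The deeper gap is at what you call the analytic heart. Your degree bookkeeping is circular: the relative degree depends on \emph{both} ends of the current (Definition \ref{def:degreep}, Proposition \ref{prop-degree}), and it is zero only when the target is $\zp^p\gamma$ or $\zq^q\gamma$ --- precisely what you are trying to prove. Since $H_1(S^3)=0$, all Reeb currents are homologous, so (unlike in the prequantization bundles of \cite{preech}, where ordinary homology pins down the degree of a differential) nothing forces an index-one current out of $\zh\gamma$ to have degree zero, and positive-degree currents with higher-genus, multiply-ended nontrivial components are not excluded by the adjunction/writhe/$J_0$ machinery of \S\ref{ss:currents}. This is exactly why Lemma \ref{lem:d0g0} is proved only for the three special pairs \eqref{eq:1pairs}, in which both ends are prescribed in advance, and why Remark \ref{rmk:comp} states explicitly that Proposition \ref{prop:pqM} ``doesn't rule out any other differentials.'' The paper closes this gap by a different mechanism (Proposition \ref{cor:nodiffp}): it shows that the ECH index gives a bijection from the $H=0$ generators, modulo the equivalence generated by the two known differentials, onto $2\Z_{\geq0}$ (the lattice-point Claim), so that any additional nonzero coefficient would force some $ECH^{L(\varepsilon)}_{2k}$ to be smaller than the $\Z/2$ guaranteed by Proposition \ref{prop:DL} and Corollary \ref{prop:bijectionp}. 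Your concluding homology-rank argument presupposes the differential you have not established, so it cannot substitute for this step. The parts of your proposal that do work --- the identification of the two nonvanishing coefficients with orbifold Morse counts, and the rank-one argument for the second bullet --- agree with Proposition \ref{prop:pqM} and with the paper's use of Corollary \ref{prop:bijectionp}, but the vanishing of all remaining coefficients, which is the main content of the theorem, is left unproved.
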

The proof of Theorem \ref{thm:ECC} (besides the third bullet point) appears as the first paragraph of the proof of Lemma \ref{lem:degNk}. The third bullet point is an immediate consequence of Definition \ref{def:degreep}. A converse to the third bullet point in Theorem \ref{thm:ECC} is proved in Lemma \ref{lem:degNk}, see Remark \ref{rmk:converse}.  The description of the chain complex provided by Proposition \ref{cor:nodiffp} allows us to complete the computation of the spectral invariants in \S\ref{s:spectral}. 

\begin{remark}\label{rem:nongeneric}
{We will {initially} use a non-generic $S^1$-invariant $\lambda_{p,q,\varepsilon}$-compatible almost complex structure $\frak{J} = \frak{p}^*j_{\CP^1_{p,q}}$ to compute {a portion of} the differential for the cylindrical curves in our appropriately action filtered chain complex $(ECC^{L(\ve)}_*(S^3,\lambda_{p,q,\varepsilon},J),\partial)$ in Proposition \ref{prop:pqM}.   {Lemma \ref{lem:pqM} will establish that these cylinder counts agree for any sufficiently nearby $J$, and since the space of $\lambda_{p,q,\varepsilon}$-compatible almost complex structures is contractible, there will also be $J$ which are generic and sufficiently close to $\frak{J} $ .} 
 That this cylindrical portion of the differential is well-defined will follow from Lemma \ref{lem:d0g0}.   Using alternate arguments, we will show that no other curves contribute to the complex; these are carried out in \S \ref{ss:combinatorics} and \S \ref{sss:finalcomp}.  }
\end{remark}

\subsection{Review of index inequalities}\label{ss:currents}
We collect some applications of Hutchings' intersection theory \cite{Hindex, Hrevisit}, which we make use of to help establish the behavior of the ECH differential in \S \ref{sss:orbipreech}.  We begin with the ECH index inequality, which is key to the existence of the theory of embedded contact homology, and follows from the relative adjunction formula and writhe bounds, cf. \cite[\S 3.3-3.4]{lecture}.

\begin{theorem}[{The ECH index inequality \cite[Thm.~1.7]{Hindex}, \cite[Thm.~4.15]{Hrevisit}}]\label{thm:indexineq} \ \\ Suppose that $C \in \mathcal{M}(\alpha,\beta,J)$ is somewhere injective.  Then \[ \mbox{\em ind}(C) \leq I(C) - 2 \delta(C). \] 
Equality holds only if the $C$ is embedded and the partition conditions hold: $\{  q_{i,k}^+ \} = P_{\alpha_i}^+(m_i)$ for each $i$ and $\{  q_{j,k}^- \} = P_{\beta_j}^-(n_j)$ for each $j$. \end{theorem}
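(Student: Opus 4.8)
The plan is to compare the Fredholm index $\operatorname{ind}(C)$, which governs the deformation theory of $C$, with the purely topological ECH index $I(C) = I(\alpha,\beta,[C])$, the correction being the nonnegative singularity count $\delta(C)$ that vanishes exactly when $C$ is embedded. The route is to pass through Hutchings' relative adjunction formula to rewrite the relative first Chern number geometrically, and then to absorb the resulting writhe term using an asymptotic bound on the winding of the ends of $C$ expressed through Conley-Zehnder data. First I would record the Fredholm index formula for a curve $C$ asymptotic to the $\alpha_i$ at $+\infty$, where the ends partition $m_i$ as $\{q_{i,k}^+\}$, and to the $\beta_j$ at $-\infty$, where the ends partition $n_j$ as $\{q_{j,k}^-\}$, namely
\[
\operatorname{ind}(C) = -\chi(C) + 2c_\tau(C) + \sum_i\sum_k CZ_\tau(\alpha_i^{q_{i,k}^+}) - \sum_j\sum_k CZ_\tau(\beta_j^{q_{j,k}^-}),
\]
together with the relative adjunction formula for a somewhere injective curve,
\[
c_\tau(C) = \chi(C) + Q_\tau(C) + w_\tau(C) - 2\delta(C),
\]
where $w_\tau(C)$ is the total writhe of the asymptotic braids and $\delta(C)\geq 0$ is the algebraic count of singularities, with $\delta(C)=0$ if and only if $C$ is embedded.

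Next I would substitute the adjunction formula into the difference $I(C)-\operatorname{ind}(C)$. Writing $I(C)=c_\tau(C)+Q_\tau(C)+CZ^I_\tau(\alpha)-CZ^I_\tau(\beta)$ and using the rearrangement $\chi(C)+Q_\tau(C)-c_\tau(C)=-w_\tau(C)+2\delta(C)$, a direct cancellation gives
\[
I(C) - \operatorname{ind}(C) = 2\delta(C) - w_\tau(C) + \Big(CZ^I_\tau(\alpha) - \sum_i\sum_k CZ_\tau(\alpha_i^{q_{i,k}^+})\Big) - \Big(CZ^I_\tau(\beta) - \sum_j\sum_k CZ_\tau(\beta_j^{q_{j,k}^-})\Big).
\]
Thus the desired inequality $\operatorname{ind}(C) \leq I(C) - 2\delta(C)$ reduces to showing that the total writhe is bounded above by the Conley-Zehnder discrepancy between the full ECH sum $CZ^I_\tau$ and the actual asymptotic sum cut out by the partitions $\{q_{i,k}^+\},\{q_{j,k}^-\}$, that is,
\[
w_\tau(C) \leq \Big(CZ^I_\tau(\alpha) - \sum_i\sum_k CZ_\tau(\alpha_i^{q_{i,k}^+})\Big) - \Big(CZ^I_\tau(\beta) - \sum_j\sum_k CZ_\tau(\beta_j^{q_{j,k}^-})\Big).
\]

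The main work, and the principal obstacle, is this writhe bound. I would decompose $w_\tau(C)$ into the contributions of the individual braids $\zeta_i^+$ and $\zeta_j^-$ at each end, and treat each one separately. Here I would invoke the asymptotic analysis of holomorphic curves near their ends (Siefring, following Hofer-Wysocki-Zehnder): each strand of $\zeta_i^+$ approaches $\alpha_i$ with a well-defined asymptotic winding number relative to $\tau$, bounded above for elliptic orbits by $\lfloor q\theta\rfloor$-type quantities (and governed by the eigenvalue data for hyperbolic orbits), and these windings control both the self-writhe of each braid and the pairwise linking of its strands. Crucially, somewhere-injectivity of $C$ guarantees that distinct strands do not coincide, so the asymptotic winding data genuinely constrains the braid; assembling the strand-by-strand estimates yields the displayed writhe bound. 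Finally I would analyze the equality case: equality forces $w_\tau(C)$ to meet its bound and forces $\delta(C)=0$, so $C$ is embedded, and equality in each braid estimate requires each end to realize the extremal winding, which by Hutchings' combinatorial characterization of the partition functions occurs precisely when the incoming partitions $\{q_{i,k}^+\}=P_{\alpha_i}^+(m_i)$ are used at the positive ends and the outgoing partitions $\{q_{j,k}^-\}=P_{\beta_j}^-(n_j)$ at the negative ends.
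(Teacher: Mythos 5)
This theorem is not proved in the paper at all---it is quoted from Hutchings (\cite[Thm.~1.7]{Hindex}, \cite[Thm.~4.15]{Hrevisit}), with the paper remarking only that it ``follows from the relative adjunction formula and writhe bounds,'' and your proposal reconstructs exactly that standard argument: the Fredholm index formula, the relative adjunction formula to trade $c_\tau(C)$ for $\chi(C)+Q_\tau(C)+w_\tau(C)-2\delta(C)$, and then the asymptotic writhe bound whose equality analysis produces the partition conditions, so your route coincides with the intended proof. The one blemish is your claim that equality forces $\delta(C)=0$: from your own identity, equality in $\operatorname{ind}(C)=I(C)-2\delta(C)$ forces only sharpness of the writhe bound (hence the partition conditions), while embeddedness is what gets forced at equality in the weaker inequality $\operatorname{ind}(C)\leq I(C)$, where the term $-2\delta(C)\leq 0$ must vanish---a conflation already present in the paper's restatement of Hutchings' theorem, so your slip mirrors the statement rather than introducing a genuinely new gap.
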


Here $\delta(C)$ is a count of the singularities of a somewhere-injective $J$-holomorphic curve $C$ in $\R \times Y$ with positive integer weights as in \cite[\S 7]{mw} that equals zero when $C$ is embedded.\footnote{In this section we use notation consistent with \cite{Hrevisit}, using the quantity $\delta(C)$, which is distinct from the $\delta$s with or without subscripts used to modify monodromy angles appearing throughout this paper, and we never require $\delta(C)>0$ in our proofs, so the exact definition is not needed.} %{\color{magenta}I just noticed that we explain $\delta(C)$ after Theorem \ref{thm:indexineq}, should we cut these sentences and change the response letter?}
The above statement also invokes the \emph{ECH partition conditions}, which are a topological type of data, namely the covering multiplicities of the Reeb orbits at the ends of the nontrivial components associated to the $J$-holomorphic curves underlying the currents which can be obtained indirectly from various ECH and Fredholm index relations. Namely, given a $J$-holomorphic curve $C \in \mathcal{M}(\alpha,\beta,J)$, where $\alpha = \{ (\alpha_i,m_i)\}$ and $\beta = \{ (\beta_j,n_j \}$, for each $i$ let $a_i^+$ denote the number of positive ends of $C$ at $\alpha_i$ and let $\{ q_{i,k}^+\}_{k=1}^{a_i^+}$ denote their multiplicities, $\sum_{k=1}^{a_i^+} q_{i,k}^+=m_i$.   Likewise, for each $j$ let $b_j^-$ denote the number of negative ends of $C$ at $\beta_j$ and let $\{ q_{j,k}^-\}_{k=1}^{b_j^-}$ denote their multiplicities, $\sum_{k=1}^{b_j^-} q_{j,k}^-=n_j$. The definition and computation of the partition conditions is carried out at end of this section, cf. Definition \ref{def:part}.  

The \emph{Fredholm index} of a $J$-holomorphic curve $C \in \M(\alpha,\beta,J)$ is given by
\begin{equation}\label{eqn:Find}
\op{ind}(C) = -\chi(C) + 2c_\tau(C) + \sum_i\sum_{k=1}^{a_i^+} CZ_\tau\left( (\alpha_i)^{q_{i,k}^+}\right) - \sum_j\sum_{k=1}^{b_j^-} CZ_\tau\left( (\beta_j)^{q_{j,k}^-}\right).
\end{equation}

We also have an inequality on the ECH index of the union of two $J$-holomorphic curves.

\begin{theorem}{\em \cite[Thm.~5.1]{Hrevisit}}\label{thm:intI}
If $C$ and $C'$ are $J$-holomorphic curves in $\R \times Y$, then
\[
I(C \cup C') \geq I(C) + I(C') + 2 C \cdot C,'
\]
where $C \cdot C'$ is an ``intersection number" of $C$ and $C'$ defined in \cite[\S 5.1]{Hrevisit}. \end{theorem}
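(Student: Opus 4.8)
The plan is to reduce the inequality to the three separate components of the ECH index and then to show that the only contribution which is not manifestly additive, the Conley--Zehnder term, is controlled from below by the asymptotic linking of the ends. Write $Z\in H_2(Y,\alpha,\beta)$ and $Z'\in H_2(Y,\alpha',\beta')$ for the relative classes of $C$ and $C'$, so that the union $C\cup C'$ represents $Z+Z'$ with orbit multiplicities given by the sums of those of $C$ and $C'$ at each shared Reeb orbit. First I would record the effect of the union on each piece of $I$. The relative first Chern number is additive, $c_\tau(Z+Z')=c_\tau(Z)+c_\tau(Z')$, directly from its definition as a signed count of zeros of a section that is constant near the ends. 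The relative intersection pairing is a quadratic form, so the polarization identity gives $Q_\tau(Z+Z')=Q_\tau(Z)+Q_\tau(Z')+2Q_\tau(Z,Z')$, with $Q_\tau(Z,Z')$ the symmetric pairing of \eqref{eqn:Qdef}. Combining these yields
\[
I(C\cup C')-I(C)-I(C')=2Q_\tau(Z,Z')+\Delta CZ,
\]
where $\Delta CZ$ collects the discrepancy between $\sum_{k=1}^{m_i+m_i'}CZ_\tau(\gamma_i^k)$ and $\sum_{k=1}^{m_i}CZ_\tau(\gamma_i^k)+\sum_{k=1}^{m_i'}CZ_\tau(\gamma_i^k)$ over the shared positive ends, minus the analogous quantity over the shared negative ends.

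Next I would unwind the definition of $C\cdot C'$ from \cite[\S 5.1]{Hrevisit}. The geometric count $\#(\dot C\cap\dot C')$ is nonnegative by positivity of intersections of distinct $J$-holomorphic curves, and it differs from $Q_\tau(Z,Z')$ precisely by the $\tau$-linking numbers $\ell_\tau$ of the actual asymptotic braids of $C$ and $C'$ at each shared orbit, since $Q_\tau(Z,Z')$ was computed in \eqref{eqn:Qdef} from arbitrary admissible representatives whose braids need not match those of the holomorphic curves. Thus the whole inequality reduces to a single statement about the ends: that the Conley--Zehnder discrepancy $\Delta CZ$ dominates twice the asymptotic linking distinguishing $C\cdot C'$ from $Q_\tau(Z,Z')$, i.e. $\Delta CZ\ge 2\bigl(C\cdot C'-Q_\tau(Z,Z')\bigr)$.

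The crux, and the step I expect to be the main obstacle, is this last asymptotic estimate. The plan is to invoke the asymptotic analysis of Hofer--Wysocki--Zehnder and Siefring: near a puncture asymptotic to a cover $\gamma_i^{q}$, the end of a holomorphic curve is governed by an asymptotic eigenfunction whose winding number relative to $\tau$ is squeezed between the extremal winding numbers determined by $CZ_\tau(\gamma_i^{q})$. These winding bounds control the $\tau$-linking of two ends of $C$ and $C'$ asymptotic to covers of a common orbit, and the key point is that the relevant combination of extremal windings is exactly what appears, with the correct sign, in the telescoping difference defining $\Delta CZ$. Assembling the per-end estimates over all shared positive and negative ends, together with the positivity $\#(\dot C\cap\dot C')\ge 0$, then yields the desired inequality and hence the theorem. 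I would expect the bookkeeping of signs between positive and negative ends, and the verification that the extremal winding bounds line up with the floor functions implicit in the Conley--Zehnder indices of iterates (as in \eqref{CZ:elliptic}), to be the most delicate and error-prone part of the argument.
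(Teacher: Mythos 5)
The statement you were asked to prove is not actually proved in this paper: Theorem \ref{thm:intI} is quoted verbatim from Hutchings \cite[Thm.~5.1]{Hrevisit} and is used in \S\ref{s:ECHI} purely as a black box. Judged against Hutchings' own proof in \cite{Hrevisit}, your outline follows essentially the same route: additivity of $c_\tau$, the polarization identity $Q_\tau(Z+Z')=Q_\tau(Z)+Q_\tau(Z')+2Q_\tau(Z,Z')$, the identification of $C\cdot C'-Q_\tau(Z,Z')$ with the $\tau$-linking of the asymptotic braids of the two curves, and the reduction of the whole inequality to asymptotic winding estimates (Hofer--Wysocki--Zehnder type) whose extremal windings are controlled by the Conley--Zehnder indices of the iterates and telescope against the discrepancy $\Delta CZ$. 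That is exactly the skeleton of Hutchings' argument, and you have correctly located the crux in the per-end linking bounds.

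The one genuine omission is the case in which $C$ and $C'$ share an irreducible component (in particular $C=C'$), which the theorem as stated allows. There your reduction fails at its first step: the geometric count $\#(\dot C\cap\dot C')$ is infinite, and positivity of intersections does not apply, so $C\cdot C'$ cannot be "unwound" as a count of intersection points. In \cite[\S 5.1]{Hrevisit} the intersection number is in this case defined by bilinear extension over irreducible components together with a separately defined self-intersection term for each somewhere injective component, and the inequality for those diagonal contributions is established via the relative adjunction formula and writhe bounds rather than by the distinct-curves argument you sketch. Note that every application of Theorem \ref{thm:intI} in this paper (e.g.\ in the proof of Lemma \ref{lem:d0g0}) explicitly arranges that the two curves have no common component, so your sketch suffices for the paper's purposes; but as a proof of the theorem as stated it needs the diagonal case added.
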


The ECH index inequality permits the following classification of low ECH index currents.

\begin{proposition}{\em \cite[Prop. 3.7]{lecture}}\label{lowiprop}
Suppose that $J$ is a generic $\lambda$-compatible almost complex structure.  Let $\alpha$ and $\beta$ be Reeb currents and let $\mathcal{C} \in \M(\alpha,\beta,J)$ be any $J$-holomorphic current in $\R \times Y$, not necessarily somewhere injective.  Then:
\begin{enumerate}[{\em (i)}]
\item We have $I(\mathcal{C})\geq 0$ with equality if and only if $\mathcal{C}$ is a union of trivial cylinders with multiplicities.
\item If $I(\mathcal{C}) =1$ then $\mathcal{C}= \mathcal{C}_0 \sqcup C_1$,  where $I(\mathcal{C}_0)=0$, and $C_1$ is embedded and has $\ind(C_1)=I(C_1)=1$.
\item If $I(\mathcal{C}) =2$, and if $\alpha$ and $\beta$ are generators of the chain complex $ECC_*(Y,\lambda,\Gamma,J)$, then $\mathcal{C}= \mathcal{C}_0 \sqcup C_2$, where $I(\mathcal{C}_0)=0$, and $C_2$ is embedded and has $\ind(C_2)=I(C_2)=2$.
\end{enumerate}
\end{proposition}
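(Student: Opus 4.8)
The plan is to follow Hutchings' strategy: reduce everything to the somewhere injective building blocks of $\mathcal{C}$ and then deploy the two index inequalities, Theorem \ref{thm:indexineq} and Theorem \ref{thm:intI}, in tandem. Write the current as $\mathcal{C}=\{(C_k,d_k)\}$, where the $C_k$ are the distinct irreducible somewhere injective curves underlying $\mathcal{C}$ and the $d_k$ are their covering multiplicities. Since distinct irreducible $J$-holomorphic curves have nonnegative local intersection numbers, the pairing satisfies $C_k\cdot C_l\geq 0$ for $k\neq l$, and iterating Theorem \ref{thm:intI} across the components yields
\[
I(\mathcal{C})\geq\sum_k I\big(\{(C_k,d_k)\}\big)+2\sum_{k<l}d_kd_l\,(C_k\cdot C_l)\geq\sum_k I\big(\{(C_k,d_k)\}\big).
\]
This isolates the contribution of each somewhere injective curve together with its multiplicity, and reduces part (i) to the single-component estimate $I(\{(C,d)\})\geq 0$, with equality precisely when $C$ is a trivial cylinder.

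For the single-component estimate I would argue by cases. If $C$ is a trivial cylinder $\R\times\gamma$, then $I(\{(C,d)\})=0$ for every $d$ by direct computation of the ECH index of iterated trivial cylinders. If $C$ is somewhere injective and not a trivial cylinder, then $C$ is not $\R$-invariant, so $\R$ acts freely on its translates; genericity of $J$ then makes $\M/\R$ near $C$ a manifold of dimension $\op{ind}(C)-1\geq 0$, giving $\op{ind}(C)\geq 1$. Combined with the ECH index inequality $\op{ind}(C)\leq I(C)-2\delta(C)$ and $\delta(C)\geq 0$, this yields $I(C)\geq 1$. The genuinely delicate point, which I expect to be the main obstacle, is promoting this to a lower bound for the multiply-covered contribution $I(\{(C,d)\})$ with $d>1$: the ECH index is \emph{not} additive under covering, so one must invoke the relative adjunction formula and writhe bounds that underlie Theorem \ref{thm:indexineq}, together with the behavior of the ECH partition conditions, to see that covering a non-trivial-cylinder curve cannot push the index below that of the underlying curve. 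This is exactly where the partition-condition equality clause of Theorem \ref{thm:indexineq} becomes essential.

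With part (i) and its refined inequality in hand, parts (ii) and (iii) follow by bookkeeping on how the total budget of $I(\mathcal{C})$ is distributed among the components. For part (ii), the equation $I(\mathcal{C})=1$ forces every intersection term $C_k\cdot C_l$ to vanish, all trivial-cylinder components to collect into a sub-current $\mathcal{C}_0$ with $I(\mathcal{C}_0)=0$, and exactly one non-trivial component $C_1$ to carry the entire index with $d_1=1$ and $I(C_1)=1$. Equality in the ECH index inequality then forces $\delta(C_1)=0$, so $C_1$ is embedded with $\op{ind}(C_1)=I(C_1)=1$, and the vanishing of the intersection terms shows $\mathcal{C}=\mathcal{C}_0\sqcup C_1$ is a genuine disjoint union. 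For part (iii) the same accounting produces a single non-trivial embedded component $C_2$ with $\op{ind}(C_2)=I(C_2)=2$; here the hypothesis that $\alpha$ and $\beta$ are ECH generators, so that hyperbolic orbits appear with multiplicity one, is what rules out the a priori configurations of two distinct non-trivial index-$1$ components or a doubly covered index-$1$ component sharing the total index, since the admissibility of the end multiplicities constrains the partitions at the orbits so as to forbid them. The remaining trivial cylinders again assemble into $\mathcal{C}_0$ with $I(\mathcal{C}_0)=0$, completing the classification.
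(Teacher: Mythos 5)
This proposition is not proved in the paper at all: it is quoted from \cite[Prop.~3.7]{lecture}, and the paper's own arguments (e.g.\ the proof of Lemma \ref{lem:d0g0}) simply point to \cite[\S 3.4]{lecture} for its justification. Measured against that standard argument, your skeleton is the same one Hutchings uses: split off trivial cylinders, iterate Theorem \ref{thm:intI} over the somewhere injective components, get $\op{ind}(C_k)\geq 1$ for nontrivial $C_k$ from genericity plus the free $\R$-action, and convert via Theorem \ref{thm:indexineq}. The genuine gap is the multiply covered case, which you correctly flag but do not close --- and, more importantly, the statement you propose to prove there is the wrong one. You ask for ``covering cannot push the index below that of the underlying curve,'' i.e.\ $I(\{(C,d)\})\geq I(C)$. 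What is actually needed, and what \cite{Hrevisit} proves via writhe bounds for cables of the asymptotic braids (machinery strictly beyond Theorems \ref{thm:indexineq} and \ref{thm:intI} as stated), is an inequality of the shape
\[
I(\{(C,d)\})\;\geq\; d\,\op{ind}(C)+(\text{nonnegative corrections}).
\]
The factor of $d$ multiplying the \emph{Fredholm} index is what drives all of your subsequent bookkeeping: $I(\cur)=1$ forces the nontrivial component to have $d_1=1$ because otherwise $I\geq d_1\geq 2$. With only $I(\{(C,d)\})\geq I(C)\geq 1$, your assertion in part (ii) that $d_1=1$ --- which you must have before you may invoke the equality clause of Theorem \ref{thm:indexineq}, valid only for somewhere injective curves --- has no justification, so embeddedness does not follow.

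Your part (iii) also contains an outright error: admissibility of $\alpha$ and $\beta$ does \emph{not} rule out two distinct nontrivial index-one components, nor does it need to. Two disjoint embedded curves with $\op{ind}=I=1$ whose hyperbolic ends lie on \emph{distinct} positive hyperbolic orbits can perfectly well occur between admissible generators, and they are consistent with the conclusion because $C_2$ there is allowed to be disconnected. What the generator hypothesis genuinely kills is the connected double cover of an $\op{ind}=1$ curve: Fredholm index parity forces such a curve to have an end at (a cover of) a hyperbolic orbit, so the doubled current would hit that hyperbolic orbit with total multiplicity at least $2$, contradicting admissibility. Separately, you need the parity $\op{ind}(C)\equiv I(C)\ (\mathrm{mod}\ 2)$ for somewhere injective components between admissible currents in order to exclude a single component with $\op{ind}=1$ and $I=2$, a configuration the index inequality alone permits (since $1\leq 2-2\delta$ with $\delta=0$) but which would violate the conclusion $\op{ind}(C_2)=I(C_2)=2$; this parity step is absent from your write-up.
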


In order to establish the behavior of the differential, we will show that currents between {certain pairs of admissible Reeb orbit sets}, which are counted by the differential, must be given in terms of unions of cylinders (one of which is nontrivial).  Lemma \ref{lem:d0g0}, establishes for any $\lambda_{p,q,\varepsilon}$-compatible almost complex structure that the differential coefficient for key index difference one pairs is well-defined and counts only cylinders.  The proof makes use of the topological index $J_0$, a variant of the ECH index\footnote{Note that $J_0$ is NOT the standard almost complex structure on $\C^n$.} from \cite[\S 6]{Hrevisit}, which bounds the topological complexity of a $J$-holomorphic curve $C$.  The index $J_0$ and its related variants enjoy many of the same basic properties as the ECH index $I$, which we summarize as needed below.

\begin{definition}\label{def:J0}
If $\alpha=\{ (\alpha_i,m_i) \}$ and $\beta = \{ (\beta_j,n_j) \}$ are Reeb currents with $[\alpha]=[\beta] \in H_1(Y),$ and if $Z \in H_2(Y,\alpha,\beta)$, then we define
\[
J_0(\alpha,\beta,Z):=-c_\tau(Z) + Q_\tau(Z)+ CZ_\tau^J(\alpha,\beta),
\] 
where 
\[
CZ_\tau^J(\alpha,\beta) = \sum_i \sum_{k=1}^{m_i-1}CZ_\tau(\alpha_i^k) -\sum_j \sum_{k=1}^{n_j-1}CZ_\tau(\beta_j^k). 
\]
The differences between the definition of $I$ in Definition \ref{defn:ECHI} and $J_0$ are that the sign of the relative Chern class term is switched and the Conley-Zehnder terms are slightly different, as we do not sum to the last respective $m_i$ and $n_j$ iterates of $\alpha$ and $\beta$.
\end{definition}

The following result is the analogue of the ECH index inequality for $J_0$, which incorporates the genus of a $J$-holomorphic curve. 
\begin{proposition}{\em\cite[Prop.~6.9]{Hrevisit}}\label{prop:J0}
Suppose $C \in \M(\alpha,\beta,J)$ is a somewhere injective and irreducible $J$-holomorphic curve.  Then
\begin{equation}\label{eq:J0irr}
J_0(C) \geq 2(g-1 + \delta(C)) + \sum_\gamma \left\{ \begin{array}{ll}2n_\gamma - 1 & \gamma \mbox{ elliptic,} \\ m_\gamma & \gamma \mbox { positive hyperbolic,} \\ (m_\gamma + n_\gamma^{\op{odd}})/2 & \gamma \mbox{ negative hyperbolic.} \\ \end{array}\right.
\end{equation}
Here the sum is over all embedded Reeb orbits $\gamma$ in $Y$ at which $C$ has ends; $n_\gamma$ denotes the number of ends of $C$ at {covers of} $\gamma$, $m_\gamma$ denotes the total multiplicity of the ends of $C$ at $\gamma$; and $ n_\gamma^{\op{odd}}$ denotes the number of ends of $C$ at covers of $\gamma$ with odd multiplicity.
\end{proposition}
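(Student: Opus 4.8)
The plan is to derive this from the \emph{relative adjunction formula} together with Hutchings' asymptotic \emph{writhe bounds}, reorganizing the proof of the ECH index inequality (Theorem~\ref{thm:indexineq}) so as to extract the genus rather than bound the Fredholm index. Since $C$ is somewhere injective, the relative adjunction formula reads
\[
c_\tau(C) = \chi(C) + Q_\tau(C) + w_\tau(C) - 2\delta(C),
\]
where $w_\tau(C) = \sum_i w_\tau(\zeta_i^+) - \sum_j w_\tau(\zeta_j^-)$ is the total writhe of the asymptotic braids $\zeta_i^\pm$ of $C$ in the trivialization $\tau$. Substituting into Definition~\ref{def:J0} and cancelling the $Q_\tau$ terms gives
\[
J_0(C) = -\chi(C) + 2\delta(C) - w_\tau(C) + CZ_\tau^J(\alpha,\beta).
\]
Because $C$ is connected with genus $g$ and a total of $\sum_\gamma n_\gamma$ punctures, we have $-\chi(C) = 2g - 2 + \sum_\gamma n_\gamma$, so after writing $2g-2+2\delta(C) = 2(g-1+\delta(C))$ it remains only to prove the purely asymptotic inequality
\[
\sum_\gamma n_\gamma - w_\tau(C) + CZ_\tau^J(\alpha,\beta) \;\ge\; \sum_\gamma \left\{\begin{array}{ll}2n_\gamma - 1 & \gamma \text{ elliptic,}\\ m_\gamma & \gamma \text{ positive hyperbolic,}\\ (m_\gamma + n_\gamma^{\op{odd}})/2 & \gamma \text{ negative hyperbolic.}\end{array}\right.
\]

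The key observation is that this inequality decouples orbit by orbit, so I would fix an embedded orbit $\gamma$ and bound its contribution separately. For a positive end this contribution is $n_\gamma - w_\tau(\zeta_\gamma^+) + \sum_{k=1}^{m_\gamma - 1}CZ_\tau(\gamma^k)$, with negative ends entering with the opposite sign on both the writhe and the $CZ^J$ term. The essential input is Hutchings' writhe bound, which estimates each $w_\tau(\zeta_\gamma^\pm)$ from above in terms of the asymptotic winding numbers of the covers $\gamma^{q_{\cdot,k}}$ occurring among the ends; these winding numbers are pinned by the eigenvalues of the asymptotic operator via Siefring's analysis of the ends of a pseudoholomorphic curve. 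Feeding the writhe bound into the per-orbit contribution and minimizing over all partitions $\{q_k\}$ of the multiplicity $m_\gamma$ into $n_\gamma$ ends reduces each case to a finite combinatorial optimization involving floor functions. For elliptic $\gamma$ the optimum is realized and equals $2n_\gamma - 1$; for hyperbolic $\gamma$ the winding numbers are integers and the optimization collapses to $m_\gamma$ in the positive case and to $(m_\gamma + n_\gamma^{\op{odd}})/2$ in the negative case, the latter reflecting the parity shift introduced by odd covers of a negative hyperbolic orbit.

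The hard part is the writhe bound and the subsequent combinatorial optimization, not the bookkeeping: establishing sharp upper bounds of the form $w_\tau(\zeta_\gamma^\pm) \le (\cdots)$ requires the full asymptotic control of the braids and the conversion of winding data into writhe of a multi-strand braid, which I would quote from \cite{Hindex, Hrevisit} rather than reprove. The one genuinely new feature relative to the ECH index inequality is that $CZ_\tau^J$ omits the top iterate, the inner sum running only to $m_\gamma - 1$ rather than $m_\gamma$; consequently the partition optimization must be redone with this truncated Conley--Zehnder sum, and verifying that the elliptic optimum is exactly $2n_\gamma - 1$ (rather than the value produced by the ECH partitions $P^\pm_\gamma$) is the step demanding the most care.
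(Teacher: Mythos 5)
Your proposal is correct, but note that this paper never proves the proposition at all: it is quoted verbatim from Hutchings \cite[Prop.~6.9]{Hrevisit} and used as a black box, so the only proof to compare against is Hutchings' original one — and your outline is essentially a faithful reconstruction of it. The algebra in your reduction is right (relative adjunction cancels the $Q_\tau$ terms, $-\chi(C)=2g-2+\sum_\gamma n_\gamma$ by irreducibility, and the remaining inequality decouples orbit by orbit into writhe-versus-$CZ^J_\tau$ estimates), and you correctly identify both the genuinely hard input (the asymptotic writhe bounds, which must be quoted rather than reproved) and the one feature distinguishing this from the ECH index inequality, namely that $CZ^J_\tau$ truncates the Conley--Zehnder sum at $m_\gamma-1$, forcing the per-orbit combinatorial optimization to be redone.
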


\begin{remark}\label{rem:J0sharp}
One can also deduce the original ECH index inequality from the $J_0$ index inequality. Additionally, if $\op{ind}(C) = I(C)$, e.g. $C$ is an irreducible curve in $\R \times Y$ contributing to the ECH differential and does not contain trivial cylinders,  then \eqref{eq:J0irr} is sharp. These facts are explained in \cite[\S6]{Hrevisit}. 
\end{remark}

This provides the useful important corollary that we will make use of to obtain genus bounds on the curves we will count.

\begin{corollary}{\em \cite[Cor.~6.10]{Hrevisit}}\label{cor:J0genus}
If $C \in \M(\alpha,\beta,J)$ is somewhere injective, but not necessarily assumed to be irreducible, then
\[
-\chi(C) \leq J_0(C) - 2\delta(C).
\]
\end{corollary}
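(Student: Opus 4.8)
The plan is to bootstrap from the irreducible $J_0$ index inequality of Proposition \ref{prop:J0} in two stages: first I would dispose of the irreducible case by an elementary comparison of the orbit-type boundary terms against the number of ends, and then I would assemble a general somewhere injective curve from its irreducible components, using additivity of the Euler characteristic together with a superadditivity property of $J_0$ to cancel the intersection contributions against those hidden in $\delta(C)$.

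For the irreducible case, suppose $C$ is somewhere injective and irreducible of genus $g$. As a genus-$g$ surface with one puncture at each end, $C$ satisfies $-\chi(C) = 2(g-1) + \sum_\gamma n_\gamma$, while Proposition \ref{prop:J0} gives $J_0(C) - 2\delta(C) \geq 2(g-1) + \sum_\gamma t_\gamma$, where $t_\gamma$ denotes the orbit-type term on the right-hand side of \eqref{eq:J0irr}. It therefore suffices to check $t_\gamma \geq n_\gamma$ at each embedded orbit $\gamma$ at which $C$ has ends. This is immediate in all three cases: for $\gamma$ elliptic, $t_\gamma = 2n_\gamma - 1 \geq n_\gamma$ since $n_\gamma \geq 1$; for $\gamma$ positive hyperbolic, $t_\gamma = m_\gamma \geq n_\gamma$ because the total multiplicity $m_\gamma = \sum_k q_k$ of the ends has each $q_k \geq 1$; and for $\gamma$ negative hyperbolic, $t_\gamma = (m_\gamma + n_\gamma^{\op{odd}})/2 \geq n_\gamma$ rearranges to $m_\gamma - n_\gamma \geq n_\gamma - n_\gamma^{\op{odd}}$, which holds because $m_\gamma - n_\gamma = \sum_k (q_k - 1)$ collects at least one from each end of even (hence $\geq 2$) multiplicity. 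Summing over $\gamma$ yields $-\chi(C) \leq J_0(C) - 2\delta(C)$.

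For the reducible case I would write $C = \bigcup_a C_a$ as a disjoint union of distinct irreducible somewhere injective components. The Euler characteristic is additive, $-\chi(C) = \sum_a (-\chi(C_a))$; the singularity count splits as $\delta(C) = \sum_a \delta(C_a) + \sum_{a<b} C_a \cdot C_b$, with the cross terms the nonnegative intersection numbers furnished by positivity of intersections for distinct $J$-holomorphic curves; and I would invoke the superadditivity $J_0(C) \geq \sum_a J_0(C_a) + 2\sum_{a<b} C_a \cdot C_b$, the $J_0$-analogue of Theorem \ref{thm:intI}. Feeding in the irreducible inequality $-\chi(C_a) \leq J_0(C_a) - 2\delta(C_a)$ established above then gives
\[
-\chi(C) = \sum_a (-\chi(C_a)) \leq \sum_a \big( J_0(C_a) - 2\delta(C_a) \big) \leq J_0(C) - 2\delta(C),
\]
since the $+2\sum_{a<b} C_a \cdot C_b$ in the $J_0$ bound cancels the $-2\sum_{a<b} C_a \cdot C_b$ coming from $\delta(C)$.

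The main obstacle will be the superadditivity of $J_0$ under disjoint unions, which the irreducible step does not address. This is delicate precisely because the truncated Conley--Zehnder term $CZ^J_\tau$ of Definition \ref{def:J0} omits the top iterate at each orbit, so when multiplicities from different components are combined the Conley--Zehnder contributions fail to add; controlling this discrepancy and matching it against the $Q_\tau$ cross term via the relative adjunction/intersection machinery and positivity of intersections is exactly the $J_0$ version of Theorem \ref{thm:intI}, which I would take from \cite[\S6]{Hrevisit}. Once that is in hand the cancellation of intersection terms is automatic and the corollary follows; the irreducible step, by contrast, is entirely elementary given Proposition \ref{prop:J0}.
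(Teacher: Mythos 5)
Your proposal is correct, but note that the paper never proves this statement: it is imported verbatim as \cite[Cor.~6.10]{Hrevisit}, so the only ``proof'' in the paper is the citation, and what you have written is essentially a reconstruction of Hutchings' own derivation rather than an alternative to anything in this paper. Your irreducible step is exactly right: since $-\chi(C)=2(g-1)+\sum_\gamma n_\gamma$, the corollary follows from Proposition \ref{prop:J0} once each orbit term on the right of \eqref{eq:J0irr} dominates $n_\gamma$, and your three case checks (elliptic $2n_\gamma-1\geq n_\gamma$; positive hyperbolic $m_\gamma\geq n_\gamma$; negative hyperbolic $m_\gamma-n_\gamma\geq n_\gamma-n_\gamma^{\op{odd}}$ because every even-multiplicity end has multiplicity at least $2$) are all valid. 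Your assembly of the reducible case is also sound, with two points worth making explicit: first, Proposition \ref{thm:intJ} is stated for a pair of curves, so you must iterate it over the components, using that the intersection number is additive over unions, $\bigl(\bigcup_{a<k}C_a\bigr)\cdot C_k=\sum_{a<k}C_a\cdot C_k$; second, the cancellation works because for \emph{distinct} irreducible somewhere injective components the intersection number $C_a\cdot C_b$ of \cite[\S 5.1]{Hrevisit} is the finite algebraic count of intersection points (each positive), which is precisely the amount by which $\delta(C)$ exceeds $\sum_a\delta(C_a)$, and because the correction terms $E,N\geq 0$ in Proposition \ref{thm:intJ} can simply be dropped. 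With those two remarks supplied, your argument is complete and agrees with the source's; there is no genuinely different route here to compare.
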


We will also make use of the following inequality on the $J_0$ index of the union of two $J$-holomorphic curves.\footnote{There is an erratum to \cite[Prop. 6.14, Thm.~6.6]{Hrevisit}; Proposition \ref{thm:intJ} has been corrected accordingly.  See: \url{https://floerhomology.wordpress.com/2014/07/09/erratum-to-the-ech-index-revisited/}}

\begin{proposition}{\em \cite[Prop.~6.14]{Hrevisit}}\label{thm:intJ}
If $C$ and $C'$ are $J$-holomorphic curves in $\R \times Y$, then
\[
J_0(C \cup C') \geq J_0(C) + J_0(C') + 2 C \cdot C' + E + N,
\]
where 
\begin{itemize}
\itemsep-.25em
\item $C \cdot C'$ is an ``intersection number" of $C$ and $C'$ defined in \cite[\S 5.1]{Hrevisit}; 
\item $E$ denotes the number of elliptic Reeb orbits in $Y$ at which both $C$ and $C'$ have ends;
\item If $C$ and $C'$ have no component in common then $N$ denotes the number of negative hyperbolic orbits $\gamma$ in $Y$ such that the total multiplicity of the ends of $C$ at $\gamma$ and the total multiplicity of the ends of $C'$ at $\gamma$ are both odd;
\item If $C$ and $C'$ have a component in common, set $N=0$.
\end{itemize}
\end{proposition}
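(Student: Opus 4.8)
The plan is to unwind the definition $J_0(\alpha,\beta,Z) = -c_\tau(Z) + Q_\tau(Z) + CZ^J_\tau(\alpha,\beta)$ from Definition \ref{def:J0} and to track how each of its three constituents behaves when the relative homology class of $C \cup C'$ is assembled from those of $C$ and $C'$. Writing $Z$ for the class of $C$ and $Z'$ for that of $C'$, the first term is immediate: the relative first Chern number is additive under concatenation of admissible representatives, so $-c_\tau(Z \cup Z') = -c_\tau(Z) - c_\tau(Z')$ and contributes nothing to the correction. The relative intersection pairing is quadratic, so its bilinear expansion produces a single cross term,
\[
Q_\tau(Z \cup Z') = Q_\tau(Z) + Q_\tau(Z') + 2Q_\tau(Z, Z').
\]
Thus the entire content of the inequality is concentrated in bounding $2Q_\tau(Z,Z')$ from below in terms of $2\,C \cdot C'$, and in controlling the change in the Conley-Zehnder sums, which together must produce the surplus $E + N$.

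First I would handle the intersection-theoretic part. By the definition of the relative intersection pairing in \eqref{eqn:Qdef}, for admissible representatives $S, S'$ one has $Q_\tau(Z,Z') = \#(\dot S \cap \dot S') - \ell_\tau(S, S')$, where the geometric count $\#(\dot S \cap \dot S')$ is nonnegative by positivity of intersections for $J$-holomorphic curves. Comparing this with the definition of the intersection number $C \cdot C'$ from \cite[\S 5.1]{Hrevisit}, which repackages the interior intersections together with the asymptotic linking data $\ell_\tau$ of the ends, one extracts the term $2\,C \cdot C'$ exactly as in the proof of the corresponding $I$-inequality \cite[Thm.~5.1]{Hrevisit} (stated as Theorem \ref{thm:intI} above). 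I would split into the two cases of the statement: when $C$ and $C'$ share no irreducible component the asymptotic analysis proceeds as written, whereas when they do share a component the convention $N=0$ is adopted and the shared-component contribution is handled separately, again following \cite[\S 6]{Hrevisit}.

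The main obstacle, and the genuinely new input beyond the $I$-version, is the Conley-Zehnder bookkeeping. Since $CZ^J_\tau$ sums iterates only up to multiplicity $m_i - 1$ (rather than $m_i$, as $CZ^I_\tau$ does), combining ends at a shared embedded orbit $\gamma$ carrying total multiplicity $m$ from $C$ and $m'$ from $C'$ produces the local difference
\[
\sum_{k=m}^{m+m'-1} CZ_\tau(\gamma^k) - \sum_{k=1}^{m'-1} CZ_\tau(\gamma^k),
\]
and I would bound this orbit-by-orbit using the monotonicity and parity properties of $CZ_\tau(\gamma^k)$ recorded in \S\ref{ss:CZ}. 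For an elliptic orbit the formula \eqref{CZ:elliptic} makes $CZ_\tau(\gamma^k)$ nondecreasing in $k$, so pairing the $m'-1$ subtracted terms against $m'-1$ of the added terms and retaining the leftover $CZ_\tau(\gamma^{m+m'-1}) \geq 1$ yields a surplus of at least one unit whenever both curves have an end at $\gamma$, which accounts for the summand $E$. For a negative hyperbolic orbit the linear growth $CZ_\tau(\gamma^k) = kn$ with $n$ odd forces an analogous parity correction governing $N$, while a positive hyperbolic orbit ($n$ even) contributes nothing extra.

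Assembling the interior-intersection bound with these local Conley-Zehnder surpluses gives the stated inequality, provided the asymptotic counting of ends is done without over- or under-counting; this is exactly the point repaired by the erratum to \cite{Hrevisit}. Since the result is precisely \cite[Prop.~6.14]{Hrevisit} with its published correction, in the body of the paper we simply invoke it; the sketch above indicates the shape of Hutchings' argument, isolates the Conley-Zehnder analysis as the crux, and records where the correction enters.
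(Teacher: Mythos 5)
Your bottom line coincides with what the paper actually does: Proposition \ref{thm:intJ} is not proved in the paper at all, but imported verbatim from \cite[Prop.~6.14]{Hrevisit} together with its published correction (see the footnote attached to the statement in the paper), so ``simply invoke it'' is exactly the paper's approach, and at that level your proposal is fine.

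Since you also offer a sketch of Hutchings' internal argument, one caveat so that you do not mistake the sketch for a completable proof. The orbit-by-orbit Conley--Zehnder estimates you propose are not trivialization-invariant statements: the monodromy angle $\theta$ appearing in \eqref{CZ:elliptic} depends on the choice of $\tau$, so ``$CZ_\tau(\gamma^k)$ is nondecreasing in $k$'' holds only when $\theta>0$ in the chosen trivialization, and a bound such as $CZ_\tau(\gamma^{m+m'-1})\geq 1$ has no invariant meaning; only the full combination $-c_\tau+Q_\tau+CZ^J_\tau$ in Definition \ref{def:J0} is independent of $\tau$. Relatedly, your claim that positive hyperbolic orbits ``contribute nothing extra'' is misleading at the level of the proof: for a hyperbolic orbit with $CZ_\tau(\gamma^k)=kn$, the cross term $\sum_{k=m}^{m+m'-1}CZ_\tau(\gamma^k)-\sum_{k=1}^{m'-1}CZ_\tau(\gamma^k)=mm'n$ is large, and in Hutchings' argument such cross terms must be played off against the writhe/linking-number bounds packaged into $2\,C\cdot C'$ (the same mechanism as in the proof of Theorem \ref{thm:intI}); the surplus $E+N$ emerges from that interplay and from the parity constraints on asymptotic braids near the ends, not from CZ monotonicity alone. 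None of this affects the paper, which uses the proposition strictly as a black box, but the sketch as written would not close up into a proof.
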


We conclude this section with the definition of partition conditions and compute them in some examples, which will be relevant to our computation of the chain complex.

\begin{definition}\label{def:part} \cite[\S 3.9]{lecture} Let $\gamma$ be an embedded Reeb orbit and $m$ a positive integer.  We define two partitions of $m$, the \emph{positive partition} $P^+_\gamma(m)$ and  the \emph{negative partition} $P^-_\gamma(m)$ as follows.\footnote{Previously the papers \cite{Hindex, Hrevisit} used the terminology incoming and outgoing partitions.}  
\begin{itemize}
\itemsep-.25em
\item If $\gamma$ is positive hyperbolic, then $P_\gamma^+(m): = P_\gamma^-(m): = (1,...,1).$
\item If $\gamma$ is negative hyperbolic, then $
P_\gamma^+(m): = P_\gamma^-(m): = \left\{ \begin{array}{ll}
(2,...,2) & m \mbox{ even,} \\
(2,...,2,1) & m \mbox{ odd. } \\
\end{array}
\right .
$
\item If $\gamma$ is elliptic then the partitions are defined in terms of the monodromy angle $\theta \in \R / \Z $.  We write $P_\gamma^\pm(m): = P_\theta^\pm(m),$ with the right hand side defined as follows. 

 Let $\Lambda^+_\theta(m)$ denote the highest concave  polygonal path in the plane that starts at $(0,0)$, ends at  $(m,\lfloor m \theta \rfloor)$, stays below the line $y = \theta x$, and has corners at lattice points.  Then the integers $P^+_\theta(m)$ are the horizontal displacements of the segments of the path $\Lambda^+_\theta(m)$ between the lattice points.

Likewise, let  $\Lambda^-_\theta(m)$ denote the  lowest convex polygonal path in the plane that starts at $(0,0)$, ends at $(m,\lceil m \theta \rceil)$, stays above the line $y = \theta x$, and has corners at lattice points.  Then the integers $P^-_\theta(m)$ are the horizontal displacements of the segments of the path $\Lambda^-_\theta(m)$ between the lattice points.

Both $P_\theta^\pm(m)$ depend only on the class of $\theta$ in $\R / \Z$.  Moreover, $P_\theta^+(m) = P_{-\theta}^-(m)$.
\end{itemize}

\end{definition}

\begin{figure}[h]
 \begin{center}
\includegraphics[width=.5\textwidth]{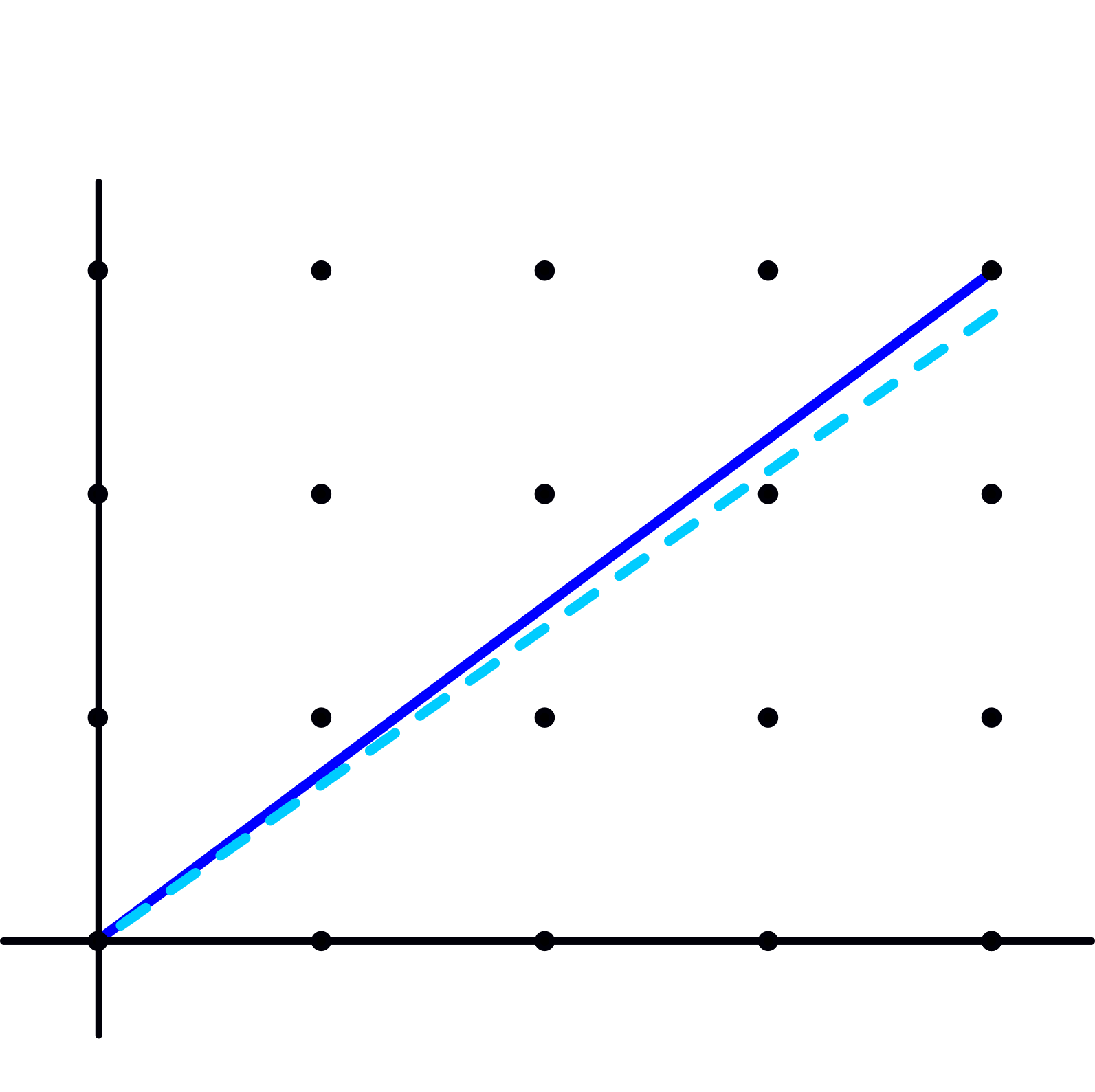}
\end{center}
\caption{The path $\Lambda^-(q)$ for $(p,q)=(3,4)$ is in solid dark blue; it connects the origin to $(q,\lceil pq-q\delta_{\zq,L}\rceil)=(q,pq)$. The line $y=(3/4-\delta_{\zq,L})x$ is in dashed light blue.  Because $p$ and $q$ are coprime, we have that for the embedded Reeb orbit $\zq$, $P^-_{p/q-\delta_{\zq,L}}(q)=(q)$.} 
\label{fig:p-}
\end{figure}

\begin{example}\label{ex:partitions}
If the class of the monodromy angle of an elliptic orbit $\gamma$ satisfies $[\theta] \in (0,1/m)$ then $P_\theta^+(m) = (1,...,1) $ and
$P_\theta^-(m) = (m),$ whereas the partitions are quite complex for other $\theta$ values, see \cite[Fig. 1]{lecture}.
\end{example}

\begin{remark}\label{rem:partb}
{For the binding $\zb$, the class of its monodromy angle $ [\theta ] \in \R/\Z$ is $\delta_{\zb,L} \in (0, 1/m)$, for $\varepsilon$ sufficiently small, thus} $P_{\zb}^+(m) =   (1, ... ,1)$ and  $P_{\zb}^-(m) = (m)$.
\end{remark}

\begin{remark}\label{rmk:p-} As a sanity check, we verify that the partition conditions determined by the ECH and Fredholm index 1 curves agree with the expression of the curve as a lift of gradient flow lines of $\mathcal{H}_{p,q}$ using an $S^1$-invariant $\lambda_{p,q,\varepsilon}$-compatible almost complex structure $\frak{J} = \frak{p}^*j_{\CP^1_{p,q}}$.  We consider the moduli spaces %an appropriate choice of $J$ for
\[
\M(\zh\gamma,\zq^q\gamma,\frak{J}), \ \ \  \M(\zh\gamma,\zp^p\gamma,\frak{J}), \ \ \  \M(\zb\gamma,\zh\gamma,\frak{J}),
\]
where $\gamma$ is an admissible Reeb current not containing $\zh$. 
Observe that the curves in the moduli spaces of Proposition \ref{prop:pqM} which are lifts of
\begin{enumerate}[{(i)}]
\itemsep-.25em
\item The gradient flow line of $\mathpzc{H}_{p,q}$ from $\zh$ to $\zq^q$;
\item The gradient flow line of $\mathpzc{H}_{p,q}$ from $\zh$ to $\zp^p$;
\item Either of the gradient flow lines of $\mathpzc{H}_{p,q}$ from $\zb$ to $\zh$;
\end{enumerate}
satisfy the following expected respective partition conditions: 
\begin{enumerate}[{(i)}]
\itemsep-.25em
\item The  partition of $(\zh,1)$ is $(1)$ and $(\zq,q)$ is $(q)$. 
\item The partition of $(\zh,1)$ is $(1)$ and $(\zp,p)$ is $(p)$. 
\item The partition of $(\zb,1)$ is $(1)$ and $(\zh,1)$ is $(1)$. 
\end{enumerate}
Partitions of one are always one, but it remains to check that the negative partition of $q$ of an elliptic Reeb orbit with rotation number $(p+q)/q-\delta_{\zq,L}$ is $q$ and the analogous fact for $p$ (whose proof we omit).  Because partitions only depend on the rotation number mod $\Z$, we may assume that $\zq$ has rotation number $p/q-\delta_{\zq,L}$.  The path $\Lambda^-(q)$ goes from the origin to $(q,\lceil pq-q\delta_{\zq,L}\rceil)=(q,pq)$; because $p$ and $q$ are coprime, we must have ${P^-_{\zq}(q)= }P^-_{p/q-\delta_{\zq,L}}(q)=q$. {See Figure \ref{fig:p-} for an illustration.}
\end{remark}

\subsection{Topological complexity and cylinder counting}\label{sss:orbipreech}
In this section we establish the topological complexity of certain ECH and Fredholm index one, degree zero curves, which we use to show that the action filtered differential counts cylinders which are the union of fibers over orbifold Morse flow lines in the base $\CP_{p,q}^1$, along with trivial cylinders.   These contributions to the action filtered ECH differential occur for the following pairs of ECH index difference 1 admissible Reeb currents, namely
\begin{equation}\label{eq:pairs5}
(\zh\gamma,\zq^q\gamma), \ \ \  (\zh\gamma,\zp^p\gamma), \ \ \  (\zb\gamma,\zh\gamma),
\end{equation}
where $ \gamma$ is an admissible Reeb current generating $ECC_*^{L}(S^3,\lambda_{p,q,\varepsilon(L)})$ not admitting $\zh$. 

As previously observed, automatic transversality \cite{jo2, wendl}, \cite[Prop.~4.7]{preech}, allows us to  count cylinders with a generic fiberwise $S^1$-invariant almost complex structure  $\frak{J}:= \mathfrak{p}^*j_{\CP_{p,q}^1}$,   the $S^1$-invariant lift of $j_{\CP_{p,q}^1}$, but we cannot use $\frak{J}$ for higher genus curves because it cannot be independently perturbed at the intersection points of $\pi_{S^3} C$ with a given $S^1$-orbit by an $S^1$-invariant perturbation.  { We will show that there are no noncylindrical curves of interest to count via Lemma \ref{lem:d0g0} and \S \ref{sss:finalcomp}.  } %\textcolor{red}{Does this suffice to address the referee remark 25 wrt subtlties since I already added some info to \S 5.1 in this regard?} \tc{I think so since Remark 5.10 is very clear. Maybe we only need to explicitly add why $\partial^2=0$. I think we just need to say that any computation of $\partial^2$ would have to factor through $\langle\zb\alpha,\zh\alpha\rangle$, and that count is zero}

We first clarify how the degree of a pair of admissible homologous Reeb currents expressed in terms of embedded orbits realizing fibers of the prequantization orbibundle, as in Definition \ref{def:degreep}, corresponds to the degree of completed projection of a pseudoholomorphic curve in the symplectization of the prequantization orbibundle, similar to \cite[\S 4]{preech}.    

\begin{definition}[Degree of a completed projection]\label{def:degree}
Composing the $J$-holomorphic curve $C: \ds \to \R \times S^3$ with the projection $\fp: S^3 \to \cpq,$ yields a map $\fp \circ \pi_{S^3} C: \ds \to \cpq,$ which has a well-defined non-negative \emph{degree} because $\fp \circ \pi_{S^3} C$ extends to a map of closed surfaces. We define the \textit{degree} of $C$, denoted $\op{deg}(C)$, to be the degree of this map.  
\end{definition}

The degree of the completed projection map $\fp \circ \pi_{S^3} C$ is related to the number of positive and negative ends via the $d\lambda_\varepsilon$-energy and Stokes' Theorem as follows.   Recall from Lemma \ref{lem:efromL} that the action of a Reeb orbit $\gamma_x^{k|\Gamma_x|}$ of $R_{p,q,\varepsilon}$ over a critical point $x$ of $\mathpzc{H}_{p,q}$ is proportional to the length of the fiber, namely
\begin{equation}\label{e:lemM}
\mathcal{A}\left(\gamma_x^{k|\Gamma_x|}\right) = \int_{\gamma_x^{k|\Gamma_x|}} \lambda_{p,q,\varepsilon} = k (1+\varepsilon \fp^*\mathpzc{H}_{p,q}(x)).
\end{equation}
because $\fp^*\mathpzc{H}_{p,q}$ is constant on critical points $x$ of $\mathpzc{H}_{p,q}$.  Tracing this proportionality through the repeated use of Stokes' theorem, as in the proof of \cite[Prop.~4.4]{preech}, and using the fact that $\omega_{p,q}[\CP^1_{p,q} ]= |e|$, yields the following relationship between the difference in weighted multiplicity of the fiber orbits and $|e|$ times degree of a curve.

\begin{proposition}\label{prop-degree} The relation between the degree $\op{deg}(C)$ of a curve $C\in\mathcal{M}(\alpha,\beta,J)$ and the total multiplicity of the admissible fiber Reeb currents generating $ECC_*^{L(\varepsilon)}(S^3,\lambda_{p,q,\varepsilon},J)$ at the positive and negative ends is given by
\[
|e|\op{deg}(C) = B+H+\frac{1}{p}P+\frac{1}{q}Q-B'-H'-\frac{1}{p}P'-\frac{1}{q}Q',
\]
where $\alpha=\zb^B\zh^H\zp^P\zq^Q$ and $\beta=\zb^{B'}\zh^{H'}\zp^{P'}\zq^{Q'}$, with $|e|=\frac{1}{pq}$.
\end{proposition}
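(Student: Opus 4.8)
The plan is to compute the $d\lambda_{p,q,\varepsilon}$-energy of the curve $C$ in two ways: geometrically via the degree of its completed projection, and via Stokes' theorem in terms of the actions of its asymptotic Reeb orbits. First I would set $\lambda_\varepsilon := \lambda_{p,q,\varepsilon}$ and recall that the $d\lambda_\varepsilon$-energy of a $J$-holomorphic current $C\in\mathcal{M}(\alpha,\beta,J)$ is $E(C)=\int_{\dot\Sigma}C^*d\lambda_\varepsilon$. By Stokes' theorem, since $C$ is asymptotic to $\alpha$ at $+\infty$ and to $\beta$ at $-\infty$, this equals $\mathcal{A}(\alpha)-\mathcal{A}(\beta)$, where $\mathcal{A}$ is the symplectic action computed with $\lambda_\varepsilon$ as in \eqref{eq:action}. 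Using Lemma \ref{lem:efromL}(ii), the action of each embedded fiber orbit over a critical point $x$ equals $1+\varepsilon\fp^*\mathpzc{H}_{p,q}(x)$ on the underlying embedded orbit, and the multiple-covered contributions are proportional to the fiber length; the key point is that $\fp^*\mathpzc{H}_{p,q}$ is constant on each critical fiber, so as $\varepsilon\to 0$ the action of each embedded orbit tends to its ``fiber length'' $1/|\Gamma_x|$.

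Next I would compute the same energy through the base. Because $C$ is $J$-holomorphic and $d\lambda_\varepsilon$ agrees (up to the perturbation) with the pullback $\fp^*\omega_{p,q}$ of the orbifold symplectic form, the energy is (in the $\varepsilon\to 0$ limit) the symplectic area of the image of the completed projection $\fp\circ\pi_{S^3}C:\dot\Sigma\to\CP^1_{p,q}$. Since this map extends to a map of closed surfaces of non-negative degree $\op{deg}(C)$ (Definition \ref{def:degree}), its symplectic area is exactly $\op{deg}(C)\cdot[\omega_{p,q}][\CP^1_{p,q}]=\op{deg}(C)\cdot|e|$, using $\omega_{p,q}[\CP^1_{p,q}]=|e|=\frac{1}{pq}$ as recorded in Example \ref{symporb} and \S\ref{ss:OBDs}. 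This is the geometric input that converts the analytic energy into $|e|\op{deg}(C)$.

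Equating the two expressions, the term-by-term bookkeeping of the actions gives
\[
|e|\op{deg}(C)=\mathcal{A}(\alpha)-\mathcal{A}(\beta)=B+H+\tfrac{1}{p}P+\tfrac{1}{q}Q-B'-H'-\tfrac{1}{p}P'-\tfrac{1}{q}Q',
\]
where the weights $1,1,\tfrac1p,\tfrac1q$ on $B,H,P,Q$ are precisely the fiber lengths $1/|\Gamma_x|$ of $\zb,\zh,\zp,\zq$ (the singular fibers $\zp$ and $\zq$ having isotropy $\Z/p$ and $\Z/q$, while $\zb$ and $\zh$ are regular). This matches the weighted multiplicity of the fiber sets in Definition \ref{def:degreep}, proving the claim. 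The analytic/topological identifications are modeled on the proof of \cite[Prop.~4.4]{preech}, which I would cite and adapt.

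The main obstacle is handling the perturbation terms carefully and the subtlety at the orbifold (exceptional) fibers. The energy identity $E(C)=\mathcal{A}(\alpha)-\mathcal{A}(\beta)$ is exact for any fixed $\varepsilon$, but the identification of $E(C)$ with $|e|\op{deg}(C)$ only holds cleanly in the $\varepsilon\to 0$ limit, so I must track the $\varepsilon$-dependent error terms from $\varepsilon\fp^*\mathpzc{H}_{p,q}$ and confirm they cancel or vanish appropriately; since $\op{deg}(C)$ is an integer and the statement is an exact equality, the $\varepsilon$-corrections to the action differences must be shown to reassemble into the $|e|$-multiple of the integer degree rather than merely approximate it. Concretely, one shows that the projection $\fp\circ\pi_{S^3}C$ sweeps out $\op{deg}(C)$ copies of the base, and that the multiple-covered exceptional ends over the orbifold points with multiplicities $P,Q$ contribute fractional fiber lengths $\tfrac1p,\tfrac1q$ exactly; verifying this fractional accounting through the orbibundle structure, as opposed to the smooth prequantization bundle case treated in \cite{preech}, is where the new care is required.
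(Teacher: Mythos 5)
Your overall strategy---computing one integral over $C$ in two ways, once via the degree of the completed projection and once via Stokes' theorem, with the fiber lengths $1,1,\tfrac1p,\tfrac1q$ supplying the weights, all modeled on \cite[Prop.~4.4]{preech}---is exactly the paper's approach. But your implementation has a genuine flaw: you run the argument with the $d\lambda_{p,q,\varepsilon}$-energy, and for the fixed $\varepsilon>0$ at hand \emph{both} equalities in your displayed chain are false. By Lemma \ref{lem:efromL}(ii) the $\lambda_{p,q,\varepsilon}$-action of a fiber orbit over a critical point $x$ carries the factor $1+\varepsilon\fp^*\mathpzc{H}_{p,q}(x)$, so $\mathcal{A}(\alpha)-\mathcal{A}(\beta)$ is neither the weighted multiplicity difference nor $|e|\op{deg}(C)$. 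Concretely, a cylinder from $\zh$ to $\zp^p$ (such as those counted in Proposition \ref{prop:pqM}) has $\op{deg}=0$, yet its $d\lambda_{p,q,\varepsilon}$-energy equals $\varepsilon\left(\mathpzc{H}_{p,q}(\fp(\zh))-\mathpzc{H}_{p,q}(\fp(\zp))\right)>0$: the $\varepsilon$-corrections you hope will ``cancel or vanish'' are genuinely present and do not reassemble into $|e|\op{deg}(C)$. Nor can you ``send $\varepsilon\to0$'': $C$ is a $J$-holomorphic curve for one fixed perturbation, the proposition asserts an exact identity at that fixed $\varepsilon$, and there is no family of curves along which to pass to a limit.

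The repair is to abandon the perturbed energy entirely and integrate the \emph{unperturbed} form over the same curve. Since $d\lambda_{p,q}=\fp^*\omega_{p,q}$, one has
\[
\int_{\ds}(\pi_{S^3}C)^*d\lambda_{p,q}\;=\;\int_{\ds}(\fp\circ\pi_{S^3}C)^*\omega_{p,q}\;=\;\op{deg}(C)\cdot\omega_{p,q}[\CP^1_{p,q}]\;=\;|e|\op{deg}(C)
\]
exactly, because $\fp\circ\pi_{S^3}C$ extends to a map of closed surfaces and $\omega_{p,q}$ is closed. On the other hand, applying Stokes' theorem to $\lambda_{p,q}$ (not to $\lambda_{p,q,\varepsilon}$) expresses the left-hand side as the difference of the $\lambda_{p,q}$-actions of the asymptotic limits; and since $X_{\mathpzc{H}_{p,q}}$ vanishes at critical points, $R_{p,q,\varepsilon}$ is there a positive rescaling of $R_{p,q}$, so $\zb,\zh,\zp,\zq$ are set-wise honest fibers of the Seifert fibration and their $\lambda_{p,q}$-integrals are exactly $1,1,\tfrac1p,\tfrac1q$. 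This yields the stated identity with no limits and no error terms, and is what the paper means by ``tracing this proportionality through the repeated use of Stokes' theorem.'' Your closing paragraph correctly identifies the orbifold-fiber bookkeeping as the new point relative to \cite{preech}, but the $\varepsilon$-difficulty you flag is an artifact of your choice of $2$-form, not an intrinsic obstacle.
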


As a consequence of Proposition \ref{prop-degree}, the degree of any two curves in $\mathcal{M}(\alpha,\beta,J)$ must be equal, and agree with the definition of degree for a pair of Reeb currents in Definition \ref{def:degreep}.  Next, we need to verify that certain curves, which contribute nontrivially to the ECH differential, are degree zero cylinders.

\begin{remark}
For prequantization bundles, in \cite[Lem.~4.6]{preech}, we proved that a curve $C$ which contributes nontrivially to the differential is degree zero if and only if it is a cylinder.  While we expect the result to be true in greater generality, it is sufficient for the purposes of this paper to restrict to certain orbit sets. See Remark \ref{rmk:comp} and \S\ref{sss:finalcomp}.
\end{remark}

\begin{lemma}\label{lem:d0g0} {Let $J$ be any $\lambda_{p,q,\varepsilon}$-compatible almost complex structure} and $\cur \in\mathcal{M}_1(\alpha,\beta,J)$ be any $J$-holomorphic current in $\R \times Y$ with $I(\cur)=1$.  Further suppose that the pair $(\alpha,\beta)$ is of one of the forms
\begin{equation}\label{eq:1pairs}
 (\zb\gamma,\zh\gamma),  (\zh\gamma,\zp^p\gamma), (\zh\gamma,\zq^q\gamma),
\end{equation}
where $ \gamma$ is an admissible nondegenerate fiber Reeb current associated to $\lambda_{p,q,\varepsilon}$ of the form $\zb^B\zp^P\zq^Q$ such that the total actions of $\alpha$ and $\beta$ are less than $L(\varepsilon)$.  Then any $J$-holomorphic curve $C \in\mathcal{M}_1(\alpha,\beta,J)$ must consist of a {union of trivial cylinders and one nontrivial cylinder of ECH and Fredholm index one in the moduli space with $\gamma = \emptyset$.}
\end{lemma}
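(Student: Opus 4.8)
The plan is to show that any $C\in\mathcal{M}_1(\alpha,\beta,J)$ for the three pairs in \eqref{eq:1pairs} is forced to have degree zero, and that a degree-zero, index-one curve between these particular orbit sets must be a disjoint union of trivial cylinders together with a single nontrivial index-one cylinder whose positive and negative ends lie over the relevant pair of critical points, i.e.\ the case $\gamma=\emptyset$. First I would apply Proposition \ref{lowiprop}: since $I(\cur)=1$, we may write $\cur=\cur_0\sqcup C_1$ where $\cur_0$ is a union of trivial cylinders ($I=0$) and $C_1$ is embedded with $\op{ind}(C_1)=I(C_1)=1$. So it suffices to analyze the somewhere-injective piece $C_1$ and rule out the trivial-cylinder bookkeeping absorbing any nontrivial behavior.

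The main tool for pinning down topology is Corollary \ref{cor:J0genus} together with the sharpness of the $J_0$-inequality (Remark \ref{rem:J0sharp}): because $C_1$ is an index-one curve contributing to the differential and contains no trivial cylinders, \eqref{eq:J0irr} is an equality, giving $-\chi(C_1)=J_0(C_1)-2\delta(C_1)$ with $\delta(C_1)=0$ by Theorem \ref{thm:indexineq}. I would next compute $J_0(C_1)$ directly from Definition \ref{def:J0}, feeding in the relative Chern numbers from Lemma \ref{lem:relfirstCherncalcp}, the relative intersection pairings from Lemmas \ref{lem:pQb}, \ref{lem:QpQq}, \ref{lem:pqcrossQ}, \ref{lem:Qh}, and the Conley–Zehnder data from Lemmas \ref{lem:orbtrivCZp} and \ref{lempre}, for the three specific orbit pairs. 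For these low-multiplicity configurations the $J_0$ computation should force $-\chi(C_1)\le 0$, and combined with the partition conditions verified in Remark \ref{rmk:p-} (that $\zp^p,\zq^q$ have the single-block negative partition $(p),(q)$ and that $\zh,\zb$ have trivial partition $(1)$), this forces $C_1$ to have exactly one positive and one negative end, hence genus zero with two punctures: a cylinder.

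The degree-zero conclusion is where I expect the real work to be. I would use Proposition \ref{prop-degree} to equate $|e|\op{deg}(C)$ with the weighted multiplicity difference; for each pair in \eqref{eq:1pairs} the components over $\gamma=\zb^B\zp^P\zq^Q$ could in principle carry degree, so I must show that any nontrivial current between $\zb\gamma$ and $\zh\gamma$ (and the analogues) cannot ``use up'' the extra fibers over $\gamma$. The argument is that $C_1$, being a single nontrivial cylinder with one end on each of $\{\zb,\zh\}$ or $\{\zh,\zp^p\}$ or $\{\zh,\zq^q\}$, matches exactly the multiplicity difference between $\alpha$ and $\beta$ \emph{before} stripping $\gamma$, so the fibers over $\gamma$ must be carried by $\cur_0$ as trivial cylinders. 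Comparing the $d\lambda$-energy via Stokes (as in Lemma \ref{lem:efromL} and the energy estimate preceding Proposition \ref{prop-degree}) then yields $\op{deg}(C_1)=0$; here the hypothesis $\delta<\min(1/p,1/q)$ from Remark \ref{rmk:kd} enters to exclude a competing higher-degree branched configuration. The hardest step is confirming that no positive-degree noncylindrical curve with the correct index and partition data can exist: this requires combining the $J_0$ genus bound with Theorem \ref{thm:intJ} applied to $C_1$ against the trivial cylinders in $\cur_0$ (controlling the intersection and $E,N$ terms) to rule out extra ends winding around the elliptic orbits $\zp,\zq,\zb$, and verifying that the resulting index constraints are incompatible unless $C_1$ is the single fiber-over-a-flow-line cylinder with $\gamma=\emptyset$.
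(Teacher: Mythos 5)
Your toolkit is the right one (the $J_0$ computations, Corollary \ref{cor:J0genus}, Theorem \ref{thm:intJ}, partition conditions), but your opening move creates a genuine gap: you invoke Proposition \ref{lowiprop}(ii) to decompose $\cur=\cur_0\sqcup C_1$ with $C_1$ embedded and $\op{ind}(C_1)=I(C_1)=1$. That proposition assumes $J$ is \emph{generic}, whereas this lemma exists precisely to serve the nongeneric $S^1$-invariant $\frak{J}=\frak{p}^*j_{\CP^1_{p,q}}$ used in Proposition \ref{prop:pqM} to identify index-one cylinders with Morse flow lines (see Remark \ref{rem:nongeneric}). The paper explicitly declines to use Proposition \ref{lowiprop}(ii) for exactly this reason: it only borrows from that proposition's \emph{proof} the decomposition of $\cur$ into somewhere injective irreducible components $C_1,\dots,C_k$ plus trivial cylinders, and must then rule out bad components by hand. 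Everything downstream in your sketch that relies on $\op{ind}(C_1)=I(C_1)$ --- in particular your appeal to Remark \ref{rem:J0sharp} and to $\delta(C_1)=0$ via Theorem \ref{thm:indexineq} --- inherits this problem.

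The second gap is your claim that the $J_0$ computation ``should force $-\chi(C_1)\le 0$.'' It does not: when $\gamma$ contains $\zb$ (i.e.\ $B\ge 1$) one gets $J_0(\zb\gamma,\zh\gamma,[C])=1$, and the estimates for $P\ge1$ or $Q\ge1$ are likewise only $\le 1$, so the genus bound still permits a thrice-punctured sphere. Excluding that pair-of-pants configuration is the crux of the argument (the paper's Step 4): apply Theorem \ref{thm:intJ} to the putative three-punctured component against the remaining components, use the $E$ term to reduce to $E=0$, then combine the partition condition at $\zb$ (Remark \ref{rem:partb}), the computed $J_0$ values, and the special form of the pairs \eqref{eq:1pairs} to show the only allowable asymptotics ($\zh$, $\zp^P$, $\zq^Q$ with $P,Q\ge1$, with the other component a trivial cylinder over $\zb^B$) are incompatible with \eqref{eq:1pairs}. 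You do name the right tools in your final sentence, but you frame the obstruction as excluding a ``positive-degree'' configuration; degree is a red herring here, since by Proposition \ref{prop-degree} the degree is determined by $(\alpha,\beta)$ alone and is zero by inspection --- so the ``real work'' you assign to your third paragraph is actually immediate. Relatedly, the hypothesis $\delta<\min(1/p,1/q)$ is not used to kill a branched configuration: it enters the Step 1 estimate $CZ_{orb}(\zp^{P+p})\le CZ_{orb}(\zp^{P})+2(p+q)$, which is what yields $J_0\le 1$ in the first place. Finally, single-punctured components (planes) need their own exclusion --- $I(\zh)=pq+p+q>1$ together with Theorem \ref{thm:intI} --- which your sketch omits entirely.
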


\begin{proof} 
In Lemma \ref{lem:ECHd}, we establish that each of the pairs in \eqref{eq:1pairs} all satisfy $I(\alpha,\beta)=1$.  By inspection, we also have that  $\op{deg}(C) =0$.  Let $\gamma = \zb^B\zp^P\zq^Q$ where $B,P,Q \geq 0$; we will also separately treat the cases where  $B=0$, $P=0$, and $Q=0$.  \\

\textbf{Step 1.} First, we compute $J_0$ of the pairs in \eqref{eq:1pairs}. Using the fact that $I(\alpha,\beta)=1$, Definition \ref{def:J0}, and Lemma \ref{lem:relfirstCherncalcp}, we have that
\[
Q_{orb}(\alpha,\beta) + CZ_{orb}^J(\alpha,\beta) = 1- \sum_i CZ_{orb}(\alpha_i^{m_i}) + \sum_j CZ_{orb}(\beta_j^{n_j}). \]
Thus
\[
J_0(\alpha,\beta, [C]) = 1 - \sum_i CZ_{orb}(\alpha_i^{m_i}) + \sum_j CZ_{orb}(\beta_j^{m_j}).
\]
Unwinding further, and using Lemma \ref{lem:orbtrivCZp}, we obtain
\begin{align}
J_0 (\zb\gamma,\zh\gamma, [C]) = & \ 1 - CZ_{orb}(\zb) + CZ_{orb}(\zh)  \ \ = \ 0,  \ \ \ \mbox{if $\gamma = \zp^P\zq^Q$;} \label{eq:pairsnogamma1} \\
J_0 (\zh \gamma, \zp^p \gamma, [C]) = & \ 1 - CZ_{orb}(\zh) + CZ_{orb}(\zp^p) \ = \ 0, \ \ \ \mbox{if $\gamma = \zb^B\zq^Q$;}  \label{eq:pairsnogamma2} \\
J_0 (\zh\gamma, \zq^q\gamma, [C]) = & \ 1 - CZ_{orb}(\zh) + CZ_{orb}(\zq^q) \ = \ 0, \ \ \ \mbox{if $\gamma = \zb^B\zp^P$}. \label{eq:pairsnogamma3}
\end{align}
For $\gamma =\zb^B \zp^P\zq^Q$ with $B \geq 1$, we have
\begin{equation}\label{eq:J1}
\begin{split}
J_0 (\zb\gamma,\zh\gamma, [C]) = & \ 1 - CZ_{orb}(\zb^{B+1}) + CZ_{orb}(\zh) + CZ_{orb}(\zb^B), \\
=& \ 1- 2(B+1)(p+q) -1 + 2(p+q) + 2B(p+q) + 1, \\
=& \ 1. 
\end{split}
\end{equation}
Additionally, for $\gamma =\zb^B \zp^P\zq^Q$ with $P \geq 1$
\begin{equation}\label{eq:J2}
\begin{split}
J_0 (\zh\gamma,\zp^p\gamma, [C]) = & \ 1 - CZ_{orb}(\zh) - CZ_{orb}(\zp^P) + CZ_{orb}(\zp^{P+p}), \\
 \leq & \ 1,
\end{split}
\end{equation}
because by Lemma \ref{lem:orbtrivCZp} and for sufficiently small positive $\delta$, {namely $\delta < 1/p$}, 
\[
\begin{split}
CZ_{orb}(\zp^{P+p}) & = \ 2 \left \lfloor \left( \frac{p+q}{p} - \delta \right)(P+p) \right \rfloor + 1 \\
& = \ 2\left \lfloor \left( \frac{p+q}{p} - \delta \right)P + (p+q -p\delta) \right \rfloor + 1 \\
& \leq \ CZ_{orb}(\zp^{P}) + 2(p+q),
\end{split}
\]
since $\lfloor a + b \rfloor \leq \lfloor a  \rfloor +  \lfloor b \rfloor + 1.$ An identical argument shows for $\gamma =\zb^B \zp^P\zq^Q$, for $Q \geq 1$
\begin{equation}\label{eq:J3}
J_0 (\zh\gamma,\zq^q\gamma, [C]) \leq 1.
\end{equation}

\textbf{Step 2.} Next, we use the index inequalities detailed in \S \ref{ss:currents} to deduce topological information based on the $J_0$ bounds above.   By invoking Proposition \ref{prop:J0} in conjunction with \eqref{eq:pairsnogamma1}-\eqref{eq:pairsnogamma3}, we obtain that if $C \in \M(\zb,\zh, J), \ \M(\zh, \zp^p, J),$ or $\M(\zh, \zq^q, J)$ then $g(C)=0$, $m_\zh=1$, and $n_\gamma=1$ for $\gamma=\zb,\zp,$ or $\zq$. 
Thus $C$ is a cylinder.  This conclusion alternatively follows from the discussion about the associated partition conditions in Remark \ref{rmk:p-}.

By Corollary \ref{cor:J0genus}, if $C \in \M(\zb \gamma,\zh\gamma, J), \ \M(\zh\gamma, \zp^p\gamma, J),$ or $\M(\zh\gamma, \zq^q\gamma, J)$ is somewhere injective, but not necessarily assumed to be irreducible, then
\begin{equation}\label{eq:J0-bound}
-\chi(C) \leq J_0(C) - 2\delta(C).
\end{equation}
By the proof of Proposition \ref{lowiprop} (cf. \cite[\S 3.4]{lecture}) and Remark \ref{rem:nongeneric} we can assume that $\cur$ is a union of $k$ (not necessarily distinct) somewhere injective irreducible curves $C_i$, 
as well as a union of trivial cylinders $\mathcal{T}$:
\[
\cur = \left(\bigcup_{i=1}^k C_i \right) \sqcup \mathcal{T}.
\]
Since we are not assuming $J$ is generic, we do not want to invoke the full classification result of Proposition \ref{lowiprop}(ii) to immediately conclude that there is only one $C_i$ with $\op{ind}(C_i) = 1$.  This would follow if we demonstrated directly without a genericity assumption on $J$ that the Fredholm index of every somewhere injective curve which is not a trivial cylinder is at least one, since the Fredholm index is additive with respect to taking unions of curves.  Instead we mimic the proof of Proposition \ref{lowiprop}(ii), employ the above computations of $J_0$, and a few more results from \S \ref{ss:currents} on the ECH index and $J_0$ with respect to unions of curves to deduce that $\cur = C_1 \sqcup \mathcal{T}$, where $C_1$ is a nontrivial Fredholm and ECH index one cylinder.  
By \eqref{eq:J0-bound} and the preceding computations of $J_0$,
\begin{equation}
\begin{split}
\sum_{i=1}^k - \chi(C_i) \leq & \ \ J_0( C_1 \cup ... \cup C_k), \\
\sum_{i=1}^k ( 2g(C_i) + \# \mbox{punctures}(C_i) - 2) \leq & \ \  0 \mbox{ or } \  1. \label{eq:*}
\end{split}
\end{equation}
\medskip

\textbf{Step 3.} We now observe that no $C_i$ can have a single puncture. 
Suppose $C_i$ has a single puncture.  Any holomorphic plane of ECH index one must, by Theorem \ref{thm:Iproperties}(ii, iv), appear in the moduli space $\M(h,\emptyset,J)$. But there are no such planes of ECH index one by an elementary computation using Theorem \ref{thm:pqI},
\[
I(\zh)=pq-p-q+2(p+q)=pq+p+q>1.
\]
If $C_i$ has a positive puncture at an elliptic Reeb orbit $\zb, \zp,$ or $\zq$ then $I(C_i) > 1$ by Theorem \ref{thm:pqI} or Theorem \ref{thm:Iproperties}(iv).  If we write $C = C' \cup C_i $ where $C' = \left( C_1 \cup ... \cup \widehat{C_i} \cup ... \cup C_k \right) \sqcup \mathcal{T}$  then  by Theorem \ref{thm:intI}, 
\[
1 = I(C) = I(C' \cup C_i) \geq I(C') + I(C_i)  + 2 C' \cdot C_i.
\]
This produces a contradiction because $I(C') \geq 0$, $ I(C_i) > 1$ and $C' \cdot C_i \geq 0$ because $C'$ and $C_i$ cannot have an irreducible component in common. 

\medskip

\textbf{Step 4.}  {Finally, we observe that it is not possible for any $C_i$ to be a thrice punctured genus zero curve.  (At most one irreducible component can have 3 punctures; 
the rest must be cylinders by the preceding argument and \eqref{eq:*}.) Suppose $C_i$ is an irreducible component with three punctures, {which contributes to the ECH differential}. Since $I(C_i)=1$, then by Index Parity, Theorem \ref{thm:Iproperties}(iv) $C_i$ must have one end on $h$, thus $C_i$ is somewhere injective (and in fact embedded by  Remark \ref{rem:nongeneric}.)

 By \eqref{eq:J0-bound} we must have $J_0(C_i) =1$. (This rules out the configurations in \eqref{eq:pairsnogamma1}-\eqref{eq:pairsnogamma3}.)} If we write $C = C' \cup C_i$  where $C' = \left( C_1 \cup ... \cup \widehat{C_i} \cup ... \cup C_k \right) \sqcup \mathcal{T}$ then by Theorem \ref{thm:intJ}, 
\[
J_0(C' \cup C_i) \geq J_0(C') + J_0(C_i) + 2 C' \cdot C_i + E,
\]
where $E$ is the number of elliptic Reeb orbits in $Y$ at which both $C'$ and $C_i$ have ends.  By intersection positivity and the fact that $C'$ and $C_i$ cannot have an irreducible component in common we have that $C' \cdot C_i \geq 0$, with equality if and only if $C'$ and $C_i$ are disjoint. 
If $E \geq 1$ we get an immediate contradiction to the existence of the genus zero thrice punctured curve $C_i$  because $J_0(C'\cup C_i)\leq1$ by (\ref{eq:J1})-(\ref{eq:J3}).  

Thus it remains to consider the possibilities for $C_i$ when $E=0$ and the curves $C'$ and $C_i$ are disjoint.  We previously established that $C_i$ must have an end on $\zh$.  By Proposition \ref{prop:J0} and Remark \ref{rem:J0sharp}, $C_i$ must have its remaining two ends on two different elliptic orbits.  Thus the options for $C'$ are quite limited. In particular, $C'$ can only have ends on the remaining elliptic orbit. 
The ECH index of unions, Theorem~\ref{thm:intI}, tells us $I(C') = 0$, thus $C'$ must be a trivial cylinder over this remaining elliptic orbit. 
 We now explain why these conditions on the asymptotics for $C_i$ and $C'$ cannot be satisfied if $\cur = C_i \sqcup C'$ belongs to $\M(\alpha,\beta,J)$ when $(\alpha,\beta)$ are one of the special pairs of Reeb currents in \eqref{eq:1pairs}.

By the partition conditions described in Remark \ref{rem:partb}, if there is a positive end at $\zb^B$ then $B=1$.  
If $B=1$, and $C_i$ has a positive end on $\zb$ then $J_0(C_i \cup C') = 0$ by \eqref{eq:pairsnogamma1} since $C'$ is trivial cylinder over a cover of either $\zp$ or $\zq$.  However, this is a contradiction to \eqref{eq:*}, thus $C_i$ cannot have a positive end on $\zb$.  The previous discussion on the asymptotics of $C'$ and $C_i$ when $E=0$ in conjunction with the special pair condition on the Reeb currents in \eqref{eq:1pairs} implies that $C_i$ cannot have a negative end on any cover of $\zb$.  Thus the ends of $C_i$ must be asymptotic to $\zh$, $\zp^P$, and $\zq^Q$, where $P\geq 1$, $Q \geq 1$ and $C'$ must be a trivial cylinder over $\zb^B$. But this is not achievable for a special pair of Reeb currents in \eqref{eq:1pairs} because we would need one of $Q=0$ or $P=0$.  

\medskip

Thus for every $i$, $C_i$ has exactly two punctures and $g(C_i) = 0$.
\end{proof}

We have the following computation of three cylindrical differential coefficients using a non-generic $S^1$-invariant $\lambda_{p,q,\varepsilon}$-compatible almost complex structure $\frak{J} = \frak{p}^*j_{\CP^1_{p,q}}$.   

\begin{proposition}\label{prop:pqM}
 If $\alpha$ is an admissible Reeb current for $(S^3,\lambda_{p,q,\varepsilon(L)})$ not admitting $\zh$, then for an $S^1$-invariant  $\lambda_{p,q,\varepsilon}$-compatible almost complex structure $\frak{J} = \frak{p}^*j_{\CP^1_{p,q}}$,
\begin{enumerate}[{\em(i)}]
\itemsep-.25em
\item $\#_2\M(\zh\alpha,\zq^q\alpha,\frak{J})=1$,
\item $\#_2\M(\zh\alpha,\zp^q\alpha,\frak{J})=1$, 
\item $\#_2\M(\zb\alpha,\zh\alpha,\frak{J})=0$,
\end{enumerate}
where $\#_2$ denotes the mod 2 count.
\end{proposition}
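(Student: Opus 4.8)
The plan is to combine Lemma \ref{lem:d0g0} with the correspondence between $\frak{J}$-holomorphic cylinders and gradient flow lines of $\mathpzc{H}_{p,q}$, thereby reducing each of the three counts to a count of gradient flow lines of the banana Morse function that can be read off from Figure \ref{fig:morsepq}. First I would apply Lemma \ref{lem:d0g0} with $\gamma=\alpha$ (which contains no $\zh$) to each of the pairs $(\zh\alpha,\zq^q\alpha)$, $(\zh\alpha,\zp^p\alpha)$, and $(\zb\alpha,\zh\alpha)$. This shows that every current in the relevant index-$1$ moduli space is a disjoint union of trivial cylinders over the orbits comprising $\alpha$ together with a single nontrivial cylinder of ECH and Fredholm index $1$ lying in the moduli space with $\gamma=\emptyset$. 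Since the trivial-cylinder factor is uniquely determined by $\alpha$, the mod-$2$ count $\#_2\M(\cdot,\cdot,\frak{J})$ (modulo $\R$-translation) equals the mod-$2$ count of nontrivial index-$1$ cylinders in $\M(\zh,\zq^q,\frak{J})$, $\M(\zh,\zp^p,\frak{J})$, and $\M(\zb,\zh,\frak{J})$ respectively. The use of the non-generic $S^1$-invariant $\frak{J}$ is justified exactly as in Remark \ref{rem:nongeneric}.

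Next I would set up the fibered correspondence. Because $\frak{J}=\fp^*j_{\CP^1_{p,q}}$ is $S^1$-invariant, a nontrivial $\frak{J}$-holomorphic cylinder of degree zero (as forced by Proposition \ref{prop-degree} together with Lemma \ref{lem:d0g0}) is the lift of a single gradient flow line of $\mathpzc{H}_{p,q}$ connecting the projections of its asymptotic orbits, exactly as in \cite[\S4--6]{preech} and \cite{farris}; conversely each such flow line lifts to such a cylinder. Near the exceptional fibers $\zp$ and $\zq$ the relevant asymptotic cover has multiplicity $p$ and $q$, consistent with the partition-condition computation in Remark \ref{rmk:p-}, which is what matches the orbifold flow line to the orbit $\zp^p$ or $\zq^q$. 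Automatic transversality for these genus-zero, index-one cylinders—via \cite[Prop.~4.7]{preech}, \cite{jo2, wendl}—guarantees that $\frak{J}$ is regular for them, so that $\M(\zh,\zq^q,\frak{J})/\R$, $\M(\zh,\zp^p,\frak{J})/\R$, and $\M(\zb,\zh,\frak{J})/\R$ are finite sets in bijection with the corresponding sets of gradient flow lines of $\mathpzc{H}_{p,q}$.

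It then remains to count these flow lines, which I would do by inspecting the banana Morse function of Proposition \ref{prop:morsep} (Figure \ref{fig:morsepq}), which is Morse--Smale by Lemma \ref{lemma:smale}. The index-$1$ critical point $\fp(\zh)$ has a one-dimensional unstable manifold whose two ends limit to the two distinct minima $\fp(\zp)$ and $\fp(\zq)$; hence there is exactly one flow line from $\fp(\zh)$ to each minimum, giving $\#_2\M(\zh\alpha,\zq^q\alpha,\frak{J})=\#_2\M(\zh\alpha,\zp^p\alpha,\frak{J})=1$. The one-dimensional stable manifold of $\fp(\zh)$ has two ends, both of which limit to the unique maximum $\fp(\zb)$; hence there are exactly two flow lines from $\fp(\zb)$ to $\fp(\zh)$, so these cylinders cancel in pairs and $\#_2\M(\zb\alpha,\zh\alpha,\frak{J})=0$. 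Consistently, this is precisely the orbifold Morse differential computing $H_*(S^2)$: the maximum $\zb$ is a cycle because $H_2(S^2;\Z/2)=\Z/2$, while the saddle $\zh$ bounds because $H_1(S^2;\Z/2)=0$.

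The main obstacle is the fibered correspondence of the second paragraph in the orbifold setting: one must confirm that every nontrivial degree-zero index-one $\frak{J}$-cylinder is genuinely the lift of a single flow line—including the correct asymptotics and multiplicity at the exceptional fibers $\zp$ and $\zq$—and that automatic transversality applies to the non-generic $\frak{J}$ for exactly these cylinders. This is where the analysis of \cite[\S4--6]{preech} and \cite{farris} must be extended from honest prequantization bundles to the orbibundle $\fp\colon S^3\to\CP^1_{p,q}$; Lemma \ref{lem:d0g0} and the partition-condition computations of Remark \ref{rmk:p-} carry out the essential parts of this reduction, so the remaining work is largely bookkeeping around the isotropy.
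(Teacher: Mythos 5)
Your architecture matches the paper's proof: apply Lemma \ref{lem:d0g0} with $\gamma=\alpha$ to reduce each count to the single nontrivial index-one cylinder in $\M(\zh,\zq^q,\frak{J})$, $\M(\zh,\zp^p,\frak{J})$, or $\M(\zb,\zh,\frak{J})$; identify such cylinders with gradient flow lines of $\mathpzc{H}_{p,q}$; count flow lines mod $2$. The reduction step and the invocation of automatic transversality for the nongeneric $\frak{J}$ are exactly as in the paper. One remark on your final count: the paper's count is purely homological rather than pictorial --- since orbifold Morse homology computes $H_*(\CP^1_{p,q};\Z/2)$ and the base is connected, the two minima $x_p=\fp(\zp)$ and $x_q=\fp(\zq)$ represent the same class in $H_0$, forcing $\partial x_h=x_p+x_q$; and $x_b$ is the unique index-$2$ generator, so it must be closed. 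Your geometric claim that the two ends of the saddle's unstable manifold limit to \emph{distinct} minima is not automatic for a Morse function on a surface and is itself only justified by this homological fact, so the ``consistency check'' in your third paragraph is actually the argument.

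The genuine gap is the step you yourself flag as ``the main obstacle'' and then defer as ``largely bookkeeping around the isotropy'': the correspondence between degree-zero $\frak{J}$-cylinders and orbifold Morse flow lines. The analyses of \cite[\S 4--6]{preech} and \cite{farris} are carried out for honest prequantization bundles and do not directly apply near the exceptional fibers, where the cylinder ends are $p$- and $q$-fold covers of orbits with nontrivial isotropy; this is precisely the kind of issue (compare the unresolved ``virtual edge'' differentials discussed in \S\ref{ss:toric}) that cannot be dismissed as bookkeeping. The paper does not redo the prequantization-bundle analysis in the orbifold setting. Instead it uses a specific device you are missing: quotient $S^3$ by the cyclic subgroup $\Z/pq\Z\subset S^1$ of the Seifert action. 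The quotient $Y'=S^3/(\Z/pq\Z)$ carries a \emph{free} residual $S^1$-action (the isotropy groups $\Z/p$ and $\Z/q$ are absorbed into $\Z/pq$), so $\frak{p}'\colon Y'\to\CP^1_{p,q}$ is a genuine bundle where the flow-line/cylinder correspondence of \cite[\S 5]{jo2} applies with $\frak{J}'=(\frak{p}')^*j_{\CP^1_{p,q}}$. One then lifts: $\pi\colon Y\to Y'$ satisfies $\pi_*\frak{J}=\frak{J}'$ and $\op{id}\times\pi$ is holomorphic, and using $\pi\circ\zb=(\zb')^{pq}$, $\pi\circ\zh=(\zh')^{pq}$, $\pi\circ\zp^p=(\zp')^{pq}$, $\pi\circ\zq^q=(\zq')^{pq}$, each cylinder $u'_z$ over a flow line $z$ lifts along an unbranched $pq$-fold cover of $\R\times S^1$ to a $\frak{J}$-holomorphic cylinder in the correct moduli space upstairs. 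Without this quotient-and-lift argument (or an actual extension of the correspondence to orbibundles), your second paragraph is an assertion rather than a proof, and the three counts do not follow.
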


\begin{proof} The fact that the pairs appearing below have index difference one and equal degrees is proven in Lemma \ref{lem:ECHd}. Proposition \ref{prop:morsep} and Lemma \ref{lem:orbitseh} provide us with a Morse function $\mathpzc{H}_{p,q}$ on $\CP^1_{p,q}$ with
\begin{itemize}
\itemsep-.25em
\item Two index 0 critical points, denoted $x_p:=\fp(\zp)$ and $x_q:=\fp(\zq)$, with respective isotropy groups $\Z/p\Z$ and $\Z/q\Z$;
\item One index 1 critical point, denoted $x_h:=\fp(\zh)$, with no isotropy;
\item One index 2 critical point, denoted $x_b:=\fp(\zb)$, also with no isotropy.
\end{itemize}

The cyclic group $\Z/pq\Z$ acts on $Y$, the $T(p,q)$ fibration of $S^3$ as detailed in Proposition \ref{prop:SeifertInvts}, when it is regarded as a subgroup of $S^1$.  The quotient $Y'=Y/(\Z/pq\Z)$ is a genuine $S^1$ bundle over the same base $\CP^1_{p,q}$; let $\pi':Y \to Y'$ and $\frak{p}': Y' \to \CP^1_{p,q}$ denote the associated projection maps. Thus we may think of $Y'$ as a prequantization bundle, albeit with a finite symmetry. This allows us to invoke the setup of \cite[\S 5]{jo2} to count Fredholm index one cylinders in the relevant orbifold setting; see also the discussion about cylinder counts over orbifold Morse trajectories in \cite[\S 2.4]{leo}.   Note that by Lemma \ref{lem:d0g0}, each of the moduli spaces in (i)-(iii) consist solely of cylinders, and moreover, the curve counts in (i)-(iii) are given by the cylinders which are not trivial, namely those respectively in $\M(\zh,\zq^q,\frak{J})$, $\M(\zh,\zp^p,\frak{J}),$ and $\M(\zb, \zh,\frak{J}).$ 

To more precisely understand this correspondence, we observe the following. 
Let $z$ be a negative gradient flow line of  $\mathpzc{H}_{p,q}$  on $ \CP^1_{p,q}$ in $\M(x_b,x_h), \ \M(x_h,x_p),$ or $\M(x_h,x_q)$. Let $u_z'$ be the corresponding $\frak{J}':= (\frak{p}')^*j_{\CP^1_{p,q}}$-holomorphic cylinder in $\R \times Y'$ over $z$.  This correspondence is guaranteed by  \cite[Thm.~5.1, 5.5]{jo2}.  That the contact forms and contact structure on $Y$ induce an $S^1$-invariant transverse contact structure $\xi'$ on $Y'$ is guaranteed by \cite[Prop.~3.1]{lm}.  The latter reference also details the necessary local behavior around singular fibers.  

  Note that $\pi_* \frak{J} = \frak{J'}$ and $(\op{id} \times \pi) : (\R \times Y, \frak{J}) \to  (\R \times Y', \frak{J}')$ is holomorphic.  Thus any such holomorphic cylinder $u_z'$ lifts to a holomorphic map $u_z: S \to \R \times Y$, where $S$ is a suitable cover of $\R \times S^1$ satisfying $(\op{id} \times \pi) \circ u_z = u_z'$.  In fact, $S$ must be an unbranched $pq$-fold cover of $\R \times S^1$, since by construction, $\pi \circ \zb = (\zb')^{pq}$,   $\pi \circ \zh = (\zh')^{pq}$, $\pi \circ \zp^p = (\zp')^{pq}$, $\pi \circ \zq^q = (\zq')^{pq}$.  Here $\zb'$, $\zh'$, $\zp'$, and $\zq'$ are the corresponding embedded fiber Reeb orbits in $Y'$, which respectively project to $x_b$, $x_h$, $x_p$, and $x_q$.   Thus $u_z$ is in $\M(\zb,\zh), \ \M(\zh, \zp^p),$ or $\M(\zh,\zq^q)$, depending on the original choice of $z$.

In light of this correspondence between $S^1$-invariant cylinders and orbifold flow lines, the desired counts equal the mod 2 counts of negative gradient flow lines of $\mathpzc{H}_{p,q}$ from (i) $x_h$ to $x_q$, (ii) $x_h$ to $x_p$, and (iii) $x_b$ to $x_h$.  
The isotropy of $x_h$ and $x_b$ are both trivial, thus the orbifold Morse differentials as defined in \cite[Lem.~3.1, (3.1)]{morse-orbifold}, \cite[\S 1.4]{leo} are given by an unweighted count of negative gradient flow lines with ends as indicated in the list above.  This means that the isotropy of any of the negative gradient index one paths $z$ is 1. Thus all we need to do is compute this orbifold Morse differential.

Since orbifold Morse homology computes homology and we are counting mod 2, the generators $x_p$ and $x_q$ must be homologous; this proves (i) and (ii). There is only one index 2 generator, thus it must be closed, proving (iii). A visual is provided in Figure \ref{fig:morsepq}.

\end{proof}
 
 Next we show that these cylinder counts agree using any sufficiently nearby  $\lambda_{p,q,\varepsilon}$-compatible almost complex structures. In particular, this includes the generic ones since this space is contractible.
 
\begin{lemma}\label{lem:pqM}
 If $\alpha$ is an admissible Reeb current for $(S^3,\lambda_{p,q,\varepsilon(L)})$ not admitting $\zh$, then for any $\lambda_{p,q,\varepsilon}$-compatible almost complex structure $J$ which is sufficiently close to the $S^1$-invariant  $\lambda_{p,q,\varepsilon}$-compatible almost complex structure $\frak{J} = \frak{p}^*j_{\CP^1_{p,q}}$,
\begin{enumerate}[{\em(i)}]
\itemsep-.25em
\item $\#_2\M(\zh\alpha,\zq^q\alpha,{J})=1$,
\item $\#_2\M(\zh\alpha,\zp^q\alpha,{J})=1$, 
\item $\#_2\M(\zb\alpha,\zh\alpha,{J})=0$,
\end{enumerate}
where $\#_2$ denotes the mod 2 count.
\end{lemma}

\begin{proof}
The proof of this lemma is carried out in two parts.  In Step 1, we prove existence of the relevant cylinders for nearby generic  $\lambda_{p,q,\varepsilon}$-compatible almost complex structures using parametric regularity and automatic transversality.  In Step 2, we prove that the cylinder counts agree, which relies on Lemma \ref{lem:d0g0}, index computations, and action and energy reasons which we elucidate below.  

\medskip

\textbf{Step 1.} The outline of the argument is as follows. First we construct a parametric moduli space corresponding to a path of varying $\lambda_{p,q,\varepsilon}$-compatible almost complex structures $\{J_s\}_{s\in[0,1]}$, with the parameter value $s=0$ corresponding to $\mathfrak{J}$, e.g. $J_0 = \frak{J}$.\footnote{Please excuse our slight abuse of notation.}  Next, we observe that this moduli space is smooth at $(0,C_0)$ and has dimension $\ind(C_0)+1$.   Here $C_0$ is any curve in $\M(\zh,\zq^q,\frak{J})$, $\M(\zh,\zp^q,\frak{J})$, or $\M(\zb,\zh,\frak{J})$.  As a result, $(0,C_0)$ is not isolated with respect to the parameter value, and thus sufficiently small perturbations of the almost complex structure admit regular cylinders close to $C_0$.

Let $\{J_s\}_{s\in[0,1]}$ be a smooth path of almost complex structures on $\R\times S^3$ such that $J_s$ is $\lambda_{p,q,\varepsilon}$-compatible for each $s$, with $s=0$ fixed to be $\mathfrak{J}$. 
Consider the parametric moduli spaces of cylinders with one positive and one negative asymptotic ends on the respective Reeb orbit pair  $(\gamma_+,\gamma_-)$ which is of one of the forms
\begin{equation}\label{eq:lempairs}
 (\zb,\zh),  (\zh,\zp^p), (\zh,\zq^q).
\end{equation}
We define 
\[
\mathcal{M}(\gamma_+,\gamma_-,\{J_s\}):= \left\{ (s,C_s):\  C_s\text{ is a $J_s$-cylinder with asymptotics as in \eqref{eq:lempairs}}\right\}.
\] 
For each $s\in [0,1]$ we consider also the ``sliced" moduli space
\[
\mathcal{M}_s:= \mathcal{M}(\gamma_+,\gamma_-,J_s).
\]

One defines the parametric moduli space $\mathcal{M}(\gamma_+,\gamma_-,\{J_s\})$ and the associated notion of parametric regularity, in terms of an explicit description of the associated linearized operator $D\bar\partial_{\{J_s\}_{s\in [0,1]}}$ as explained in \cite[Remark 8.7]{wendl-sft}.  In particular, the linearized operator  $D\bar\partial_{\{J_s\}}(j,C,s)$ is the sum of $D\bar\partial_{J_s}(j,C)$ with an additional term defined on $T_sP$, where $P=[0,1]$ is the parameter space. 
When these operators are surjective, the implicit function theorem guarantees that the parametric moduli space $\mathcal{M}(\gamma_+,\gamma_-,\{J_s\})$ and the slice $\mathcal{M}_s$ are smooth manifolds near $(s,C_s)$ and $C_s$ respectively, of dimensions given by the Fredholm indices.

That the parametric moduli space $\mathcal{M}(\gamma_+,\gamma_-,\{J_s\})$ is a smooth manifold in an open neighborhood of the point $(0,C_0)$ follows from an instance of the fact that ``honest regularity implies parametric regularity". More concretely, by automatic transversality  cf. \cite{wendl}, \cite[Lem.~4.1]{dc},  the linearized operator $D\bar\partial_{\frak{J}}(j,C_0)$ is surjective at $C_0$.  To see this, first observe that the asymptotics for $C_0$ ensure that $C_0$ is somewhere injective and that
\[
\ind(C_0)=I(C_0) =1,
\]
hence $C_0$ is embedded by the ECH index inequality, Theorem \ref{thm:indexineq}.  The Fredholm index computation follows from the Conley-Zehnder index computation in Lemma \ref{lem:orbtrivCZp} and the ECH index computation is carried out Lemma \ref{lem:ECHd}.  The automatic transversality inequality for immersed curves, c.f. \cite[Lem. 4.1]{dc},
\[
2g(C_0) - 2 + h_+(C_0) < \ind (C_0)
\]
holds because $g(C_0)=0$ and $h_+(C_0) =1$, the latter being the number of positive hyperbolic orbits of $C_0$.  

Next, recall that the parametric operator $D\bar\partial_{\{J_s\}}(j,C,s)$ is the direct sum of $D\bar\partial_{J_s}(j,C)$ with an additional term and that the co-domain is the same for the parametric and non-parametric operators.  Thus the parametric operator must be surjective at $(0,C_0)$ as well.  

We conclude that  $\mathcal{M}(\gamma_+,\gamma_-,\{J_s\})$ is a smooth manifold near $(0,C_0)$ of dimension $$
    \dim(\ker(D\bar\partial_{\{J_s\}}(j,C_0,0))) = \dim (\ker(D\bar\partial_{\frak{J}}(j,C_0)))+1 = \ind(C_0)+1.
    $$
    In particular, there exists $s$ arbitrarily close to 0 such that the  slice $\mathcal{M}_s$ is not empty and there exists $C_s\in \mathcal{M}_s$ that is $C^\infty$ close to $C_0$. Automatic transversality again applies in an identical argument as for $C_0$ to establish that $C_s\in \mathcal{M}$ is regular for these $s$ close enough to 0. (In particular all such $J_s$-cylinders $C_s$ will be regular, without the need for a genericity assumption).

\medskip

\textbf{Step 2.} We conclude by showing that the the curve counts of the slices $\M_s$ agree with the $s=0$ slice $\mathcal{M}(\gamma_+,\gamma_-,\frak{J})$, which is in some sense a sort of uniqueness result.  Here we utilize an Arzel\`a-Ascoli argument in conjunction with action and index calculations.  

Let $s_k \to 0$ and $C_{s_k}$ be a $J_{s_k}$-holomorphic sequence of cylinders with one positive asymptotic at $\gamma_+$ and one negative asymptotic at $\gamma_-$ of one of the forms in \eqref{eq:lempairs}.  By Stokes' Theorem we have a uniform bound on the energies $E_{s_k}(C_{s_k})$, since the asymptotics are on fixed Reeb orbits for all curves in this sequence.  

Next, we invoke a compactness result, so as to argue that $C_{s_k}$ has a subsequence convergent to a finite energy $\frak{J}$-holomorphic building $C_\infty$ consisting only of cylinders.  % This is slightly subtle, because we do not assume genericity, however it follows because of the precise control we have over the action and Conley-Zehnder indices of the Reeb orbits:

\medskip
First, we note that by Stokes' Theorem, for any contact form $\lambda$, the use of a $\lambda$-compatible $J$ implies that the $d\lambda$-energy of a $J$-holomorphic curve $C$ must be nonnegative and it is strictly positive unless $C$ is a trivial cylinder, cf. \cite[Ex.~1.23]{wendl-sft}.  

For our contact form $\lambda_{p,q,\varepsilon}$ we have that strict positivity of $\int_{\dot{\Sigma}}C^*d\lambda_{p,q,\varepsilon}$ implies that the Fredholm index of any somewhere injective cylinder (with one positive and one negative end) must be positive because we can enumerate the orbits, their actions, and their Conley-Zehnder indices.  (Here $\dot{\Sigma}$ is the domain of the curve $C$.) Automatic transversality applies for these cylinders (as well as their low index iterates), cf.~\cite[\S 4.2]{dc} and the argument in Step 1.  Any unbranched cover of a cylinder with positive Fredholm index must also have positive Fredholm index by \cite[Lem.~2.5(a)]{dc}.  So the only Fredholm index zero cylinders must be trivial cylinders and there are no negative Fredholm index cylinders.

\medskip
{Thus by the preceding paragraph,} it remains to check that there are not any genus zero curves with nonpositive Fredholm index that could appear in the $\frak{J}$-holomorphic building $C_\infty$, which are not trivial cylinders. %\tc{can we assume from before that they are not cylinders?} 
\medskip
Note that the Conley-Zehnder index computation in Lemma \ref{lem:orbtrivCZp} tells us that with respect to the ``global orbibundle trivialization," which has vanishing first Chern number for all curves, the Conley-Zehnder index is at least three for  for all Reeb orbits, e.g. the contact forms $\lambda_{p,q,\varepsilon}$ are all dynamically convex.  Since we are not assuming that our almost complex structures are generic, we do not a priori know that any somewhere injective genus zero curve that can appear in $C_\infty$, which is not a trivial cylinder, must have positive Fredholm index, so the index calculations of \cite[\S 2]{dc} cannot be invoked off the shelf.  

At our disposal are the action computation in Lemma \ref{lem:efromL}, recalled most recently in \eqref{e:lemM}, that the asymptotics of {each of the cylinders in the sequence $C_{s_k}$}  are of one of the special forms in \eqref{eq:lempairs}, and that $d\lambda_{p,q,\varepsilon}$-energy of any nontrivial level must be nonnegative.  This is sufficient to guarantee that there are not any genus zero curves with nonpositive Fredholm index that could appear in the $\frak{J}$-holomorphic building $C_\infty$, which are not trivial cylinders, as follows. 

In particular, if there are any non-cylindrical genus zero levels, at least one must have exactly one positive end and there will be insufficient action for there to be multiple negative ends if it is to appear in  $C_\infty$.  This follows from the fact that the largest action of a positive end  is that of  $\zb$ or $\zh$ and 
\[
\begin{split}
\mathcal{A}(\zb) = & (1+\varepsilon \fp^*\mathpzc{H}_{p,q}(x_{\text{max}}))\\
\mathcal{A}(\zh) = & (1+\varepsilon \fp^*\mathpzc{H}_{p,q}(x_{\text{saddle}}))\\
\mathcal{A}(\zp^p) = & (1+\varepsilon \fp^*\mathpzc{H}_{p,q}(x_{\text{min for } \zp})) \\
\mathcal{A}(\zq^q) = & (1+\varepsilon \fp^*\mathpzc{H}_{p,q}(x_{\text{min for } \zq})).
\end{split}
\]
Recall as in the construction of the Morse function $\mathpzc{H}_{p,q}$ in Proposition \ref{prop:morsep} that  $|\mathpzc{H}_{p,q}|$ is $C^2$ close to one.  {(To better illuminate this action argument, take the case where the asymptotics are $(\gamma_+,\gamma_-) = (\zh, \zp^p)$.  There is sufficient action for a curve with one positive end on $\zh$ and $p$ negative ends on $\zp$ to exist.  However, this curve could not appear as a level in $C_\infty$ because there must be a negative end on $\zp^p$.)}

\medskip

Because the Fredholm index of the building $C_\infty$ must be one, the building can have at most one nontrivial level, $C_\infty$, which is Fredholm regular by the automatic transversality argument explained in Step 1, and must be one of the respective $\frak{J}$-holomorphic cylinders arising as the lift of the respective index difference one orbifold Morse trajectory as constructed in Proposition \ref{prop:pqM}.  Thus $C_{s_k} \to C_\infty$, hence the curve counts must agree for $s_k$ sufficiently close to 0.

%\tv{ basically arzela-ascoli compactness plus the action bounds on cylinders plus knowing the CZ indices.  very similar to proof of theorem 10.32 in \cite{wendl-sft} }

\medskip

Finally, the rest of the result follows from Proposition \ref{prop:pqM}.  In particular, Lemma \ref{lem:d0g0} tells us that the curve counts in (i)-(iii) are given by the cylinders which are not trivial, namely those respectively in $\M(\zh,\zq^q,{J})$, $\M(\zh,\zp^p,{J}),$ and $\M(\zb, \zh,{J})$.  Step 1 and 2 allow us to know that the curve counts agree with those of $\M(\zh,\zq^q,\frak{J})$, $\M(\zh,\zp^p,\frak{J}),$ and $\M(\zb, \zh,\frak{J}),$ which were computed in Proposition \ref{prop:pqM}.\end{proof}

\begin{remark}\label{rmk:comp} Crucially, Proposition \ref{prop:pqM} and Lemma \ref{lem:pqM} do not rule out any other differentials. It is not immediately obvious from the index formula or the ordinary homology of the total space -- because it is just $S^3$ -- that these are indeed the only differentials or index one moduli spaces. To establish that these are the only differentials and index one moduli spaces requires the roundabout method of appealing to the known computation of the ECH of $(S^3,\xi_{std})$, which we complete in the proof of Proposition \ref{cor:nodiffp} in \S\ref{sss:finalcomp}.
\end{remark}

\subsection{Combinatorial ECH index calculations}\label{ss:combinatorics}

We now prove Theorem \ref{thm:pqI} and collect several combinatorial facts about the ECH index, which will be used to compute spectral invariants in \S\ref{s:spectral}.

\begin{proof}[Proof of Theorem \ref{thm:pqI}]
 Recall that the goal of Theorem \ref{thm:pqI} is to show that for any $L$ and Reeb current $\zb^B\zh^H\zp^P\zq^Q$, we have
\[
I(\zb^B\zh^H\zp^P\zq^Q)=-(P-Q)^2+2qP(H+B)+2pQ(H+B)+(pq-p-q)(H^2+B^2)+2HBpq+\mathbf{CZ},
\]
where $\mathbf{CZ}$ is the Conley-Zehnder term
\[
\mathbf{CZ}=CZ^I_{orb}(\zb^B)+2(p+q)H+CZ^I_{orb}(\zp^P)+CZ^I_{orb}(\zq^Q),
\]
and
\begin{align*}
CZ^I_{orb}(\zb^B)&=\sum_{i=1}^B\left(2(p+q)i+1\right)=(p+q)B^2+(p+q+1)B;
\\CZ^I_{orb}(\zp^P)&=\sum_{i=1}^P2\left\lfloor\left(\frac{p+q}{p}-\delta_{\zp,L}\right)i\right\rfloor+1;
\\CZ^I_{orb}(\zq^Q)&=\sum_{i=1}^Q2\left\lfloor\left(\frac{p+q}{q}-\delta_{\zq,L}\right)i\right\rfloor+1.
\end{align*}
Lemma \ref{lem:relfirstCherncalcp} computes the contributions from $c_\tau$ and the results in \S\ref{ss:Q} compute the contributions from the relative intersection pairing, see Lemmas \ref{lem:pzero}, \ref{lem:pQb}, \ref{lem:QpQq}, \ref{lem:pqcrossQ}, and \ref{lem:Qh}. The necessary monodromy angles from which the Conley-Zehnder contributions to the ECH index follow are computed in Lemma \ref{lem:orbtrivCZp}. 
\end{proof}

We show that the generators in Proposition \ref{prop:pqM} appear in the correct degrees and indices.

\begin{lemma}\label{lem:ECHd} Let $\alpha$ be any Reeb current with $H=0$.
\begin{enumerate}[{\em(i)}]
\itemsep-.25em
\item The degrees of $\zq^q\alpha, \zp^p\alpha, \zh\alpha$, and $\zb\alpha$ are all equal.
\item We have $I(\zh\alpha)=I(\zq^q\alpha)+1=I(\zp^p\alpha)+1.$
\item Furthermore, $
I(\zb\alpha)=I(\zh\alpha)+1.$
\end{enumerate}
\end{lemma}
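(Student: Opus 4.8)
The plan is to prove all three parts by direct substitution into the formulas already established, with the only genuine computation occurring in a single floor-sum identity needed for part (ii).

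Part (i) is immediate from the degree formula in Definition \ref{def:degreep} (equivalently Proposition \ref{prop-degree}). Writing $\alpha=\zb^B\zp^P\zq^Q$, I would simply record that each of $\zq^q\alpha,\zp^p\alpha,\zh\alpha,\zb\alpha$ has degree
\[
pqB+qP+pQ+pq,
\]
obtained from $d(\zb^{B}\zh^{H}\zp^{P}\zq^{Q})=pqB+pqH+qP+pQ$ by adding $q$ to $Q$, adding $p$ to $P$, setting $H=1$, or adding $1$ to $B$, respectively. There is no obstacle here.

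For part (ii) I would substitute the two generators into the ECH index formula of Theorem \ref{thm:pqI}. The $c_\tau$ and $Q_\tau$ contributions, together with the $\zb$- and $\zp$-Conley--Zehnder terms, are either common to both sides or cancel in closed form, so that $I(\zh\alpha)-I(\zq^q\alpha)$ reduces to a polynomial in $p,q,P,Q,B$ plus the single difference $CZ^I_{orb}(\zq^Q)-CZ^I_{orb}(\zq^{Q+q})=-\sum_{i=Q+1}^{Q+q}(2\lfloor\theta_q i\rfloor+1)$, where $\theta_q=\tfrac{p+q}{q}-\delta_{\zq,L}$ is the monodromy angle from Lemma \ref{lem:orbtrivCZp}. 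The crux is to evaluate $\sum_{i=Q+1}^{Q+q}\lfloor\theta_q i\rfloor$. I would use two facts: first, that $\lfloor\theta_q i\rfloor = i + \lfloor \tfrac{pi}{q}-\delta_{\zq,L}i\rfloor$, and that for $\delta_{\zq,L}$ sufficiently small (as in Remark \ref{rmk:kd}) subtracting $\delta_{\zq,L} i$ lowers $\lfloor\tfrac{pi}{q}\rfloor$ by exactly one precisely when $q\mid i$ and changes nothing otherwise; and second, that since $\gcd(p,q)=1$, over the length-$q$ window $\{Q+1,\dots,Q+q\}$ the residues $pi\bmod q$ are a permutation of $\{0,\dots,q-1\}$, whence $\sum_{i=Q+1}^{Q+q}\lfloor\tfrac{pi}{q}\rfloor = pQ+\tfrac{pq+p-q+1}{2}$, with exactly one index in the window divisible by $q$. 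Assembling these gives
\[
\sum_{i=Q+1}^{Q+q}\lfloor\theta_q i\rfloor = Qq+pQ+\tfrac{q^2+pq+p-1}{2},
\]
and substituting back collapses the difference to $1$. The $\zp^p\alpha$ case then follows verbatim after exchanging $(p,P,\delta_{\zp,L})\leftrightarrow(q,Q,\delta_{\zq,L})$, under which the index formula of Theorem \ref{thm:pqI} is symmetric.

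Part (iii) is pure algebra. Substituting $\zb\alpha$ (with $B\mapsto B+1$, $H=0$) and $\zh\alpha$ (with $H=1$) into Theorem \ref{thm:pqI}, the $\zp$- and $\zq$-Conley--Zehnder terms cancel outright and the terms $-(P-Q)^2$, $2qP(H+B)$, $2pQ(H+B)$ cancel since $B+1=1+B$. Using the closed form $CZ^I_{orb}(\zb^B)=(p+q)B^2+(p+q+1)B$, the binding Conley--Zehnder contributes $2(p+q)B+2(p+q)+1$ to the difference, while the only surviving geometric terms, coming from $(pq-p-q)(H^2+B^2)$ and $2HBpq$, contribute $2B(pq-p-q)-2Bpq=-2(p+q)B$. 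These sum to $1$, with no floor sums to evaluate. The main obstacle is thus the floor-sum evaluation in part (ii), and I would flag that this is exactly where the hypothesis that $\delta_{\zq,L}$ and $\delta_{\zp,L}$ be sufficiently small enters, verifying that the action level $L(\varepsilon)$ bounds the relevant iterates $i\le Q+q$ so that $\delta_{\zq,L}\,i$ never crosses a second integer.
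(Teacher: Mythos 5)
Parts (i) and (ii) of your proposal are correct and follow essentially the same route as the paper: part (i) is read off from the degree formula, and part (ii) comes down to evaluating a length-$q$ window of floors of multiples of $\theta_q=(p+q)/q-\delta_{\zq,L}$, using coprimality of $p$ and $q$ together with the smallness of $\delta_{\zq,L}$. Your evaluation $\sum_{i=Q+1}^{Q+q}\lfloor\theta_q i\rfloor=Qq+pQ+\tfrac{q^2+pq+p-1}{2}$ is correct and does collapse the index difference to $1$. The paper instead shifts the window to $\{1,\dots,q\}$ and reduces to the identity $(p-1)(q+1)=2\sum_{j=1}^{q}\lfloor(p/q-\delta_{\zq,L})j\rfloor$, which it proves by counting lattice points under a line of slope $p/q-\delta_{\zq,L}$ in a $p\times q$ rectangle; your residue-permutation step is exactly what justifies that window shift, so the two arguments are the same computation in different clothing. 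Your symmetry remark disposing of the $\zp^p\alpha$ case also matches the paper.

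Part (iii), however, contains a concrete bookkeeping error: you omitted the term $2(p+q)H$ in $\mathbf{CZ}$ of Theorem \ref{thm:pqI}, i.e., the Conley--Zehnder contribution of the hyperbolic orbit $\zh$ itself. This term equals $2(p+q)$ in $I(\zh\alpha)$ (where $H=1$) and $0$ in $I(\zb\alpha)$ (where $H=0$), so it contributes $-2(p+q)$ to the difference $I(\zb\alpha)-I(\zh\alpha)$. The two contributions you list, namely $2(p+q)B+2(p+q)+1$ from the binding Conley--Zehnder terms and $-2(p+q)B$ from the geometric terms, sum to $2(p+q)+1$, not $1$; only after adding the missing $-2(p+q)$ does the difference equal $1$. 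The conclusion is true and the repair is immediate, but as written your verification fails. For comparison, the paper avoids this pitfall by expressing both $I(\zb\alpha)$ and $I(\zh\alpha)$ relative to $I(\alpha)$ and reducing (iii) to the single identity $CZ_{orb}(\zb^{B+1})=2(p+q)(B+1)+1$, which is Lemma \ref{lem:orbtrivCZp}.
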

\begin{proof} The first conclusion is immediate from the definition of degree, Definition \ref{def:degreep}. 

For the second and third conclusions, let $\alpha=\zb^B\zp^P\zq^Q$. We have
\begin{align}
I(\zq^q\alpha)&=I(\alpha)+2(P-Q)q-q^2+2pqB+\sum_{i=Q+1}^{Q+q}CZ_{orb}(\zq^i);\nonumber
\\I(\zp^p\alpha)&=I(\alpha)-2(P-Q)p-p^2+2pqB+\sum_{i=P+1}^{P+p}CZ_{orb}(\zp^i);\nonumber
\\I(\zh\alpha)&=I(\alpha)+2qP+2pQ+(pq-p-q)+2Bpq+2(p+q).\label{eqn:halph}
\end{align}
We simplify to establish (ii):
\begin{align*}
&I(\zh\alpha)=I(\zq^q\alpha)+1
\\\iff&2qP+2pQ+pq+2Bpq+p+q=2qP-2qQ-q^2+2pqB+\sum_{i=Q+1}^{Q+q}CZ_{orb}(\zq^i)+1
\\\iff&(p+q)(2Q+q)+p-1=2\sum_{i=Q+1}^{Q+q}\left\lfloor\left(1+\frac{p}{q}-\delta_{\zq,L}\right)i\right\rfloor
\\\iff&(p-1)(q+1)=2\sum_{j=1}^{q}\left\lfloor\left(\frac{p}{q}-\delta_{\zq,L}\right)j\right\rfloor.
\end{align*}
The sum on the right hand side is the number of points in the first quadrant under the line of slope $p/q-\delta_{\zq,L}$, exclusive of those on the $x$-axis (there will be no points on the line itself as we assume $\delta_{\zq,L}$ is irrational). Each summand (e.g. for a fixed $j$) is represented by the points on a single vertical line. If we include the $q+1$ points on the $x$-axis, this number of points is exactly half of the number of points in the $p\times q$ rectangle. Since we're multiplying by two, it suffices to show that the number of points in this rectangle, minus $2(q+1)$, is equal to the left hand side. That is elementary:
\[
\mbox{\# of points in $p\times q$ rectangle}-2(q+1)=(p+1)(q+1)-2(q+1)=\mbox{ right hand side}.
\]
The proof that $I(\zh\alpha)=I(\zp^p\alpha)+1$ is identical, with the roles of $p$ and $q$ swapped and the roles of $P$ and $Q$ swapped.

Finally we prove (iii). We compute
\begin{equation}
I(\zb\alpha)=I(\alpha)+2qP+2pQ+(pq-p-q)(2B+1)+CZ_{orb}(b^{B+1}).\label{eqn:balph}
\end{equation}
Using (\ref{eqn:halph}) and (\ref{eqn:balph}), we may simplify
\[
I(\zb\alpha)=I(\zh\alpha)+1 \iff CZ_{orb}(b^{B+1})=2(p+q)(B+1)+1,
\]
which holds by Lemma \ref{lem:orbtrivCZp}.
\end{proof}

\subsection{Computation of the ECH chain complex}\label{sss:finalcomp}

In this section we complete the computation of the ECH chain complexes $ECC_*^{L(\varepsilon)}(S^3,\lambda_{p,q,\varepsilon},J)$ in terms of generators and differentials.  The coefficients
\begin{equation}\label{eqn:diffp}
\langle\partial(\zh\alpha),\zq^q\alpha\rangle=\langle\partial(\zh\alpha),\zp^p\alpha\rangle=1 \mbox{ and }\langle\partial(\zb\alpha),\zh\alpha\rangle=0
\end{equation}
were computed in Proposition \ref{prop:pqM}, which relied on the now established Lemma \ref{lem:ECHd}.  {These coefficient counts hold for any $J$ in a continuous one parameter family, which is sufficiently close to the $S^1$-invariant  $\lambda_{p,q,\varepsilon}$-compatible almost complex structure $\frak{J} = \frak{p}^*j_{\CP^1_{p,q}}$, as shown in Lemma \ref{lem:pqM}. Because the space of $\lambda_{p,q,\varepsilon}$-compatible almost complex structure is contractible, this includes some nearby $J$ which are themselves generic. It remains to show that all other differential coefficients are zero: }

\begin{proposition}\label{cor:nodiffp} Let $J$ be a generic $\lambda_{p,q,\varepsilon}$-compatible almost complex structure. So long as $*$ is small enough relative to $L(\varepsilon)$ as in Lemma \ref{lem:orbitseh}, all moduli spaces determining differentials in $ECC^{L(\varepsilon)}_*(S^3,\lambda_{p,q,\varepsilon},J)$ have a mod 2 count of zero besides those in Proposition \ref{prop:pqM} (i) and (ii).
\end{proposition}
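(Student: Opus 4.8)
The plan is to run the "roundabout'' argument anticipated in Remark~\ref{rmk:comp}: combine the three coefficients already computed in Proposition~\ref{prop:pqM} with the a priori identification \eqref{eqn:dirlimS3} of the homology with $ECH_*(S^3,\xi_{std})$, organizing the complex around the Koszul structure that these coefficients reveal. First I would record the algebraic shape of the complex. By the Index Parity property, Theorem~\ref{thm:Iproperties}(iv), $\partial$ carries the span $C^{\mathrm{ev}}$ of the generators with $H=0$ (even index) into the span $C^{\mathrm{odd}}$ of the generators with $H=1$ (odd index), and conversely. Proposition~\ref{prop:pqM} together with the index identities of Lemma~\ref{lem:ECHd} shows that $\partial$ contains the map $\partial_0\colon \zh\alpha\mapsto \zp^p\alpha+\zq^q\alpha$ with $\partial_0|_{C^{\mathrm{ev}}}=0$; under the identification of generators with $\Z/2[\zb,\zp,\zq]\otimes\Lambda[\zh]$, this $\partial_0$ is exactly the Koszul differential for $f=\zp^p+\zq^q$. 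Since $f$ is a nonzerodivisor in the integral domain $\Z/2[\zb,\zp,\zq]$ and, by Lemma~\ref{lem:ECHd}(ii), multiplication by $f$ lowers the ECH index by one uniformly (so the ideal $(f)$ is homogeneous for the index grading), the map $\partial_0|_{C^{\mathrm{odd}}}$ is injective on each graded summand, whence $H(\partial_0)$ is concentrated in even index with $H^{\mathrm{ev}}(\partial_0)\cong R:=\Z/2[\zb,\zp,\zq]/(f)$.

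The combinatorial heart of the argument is to show $\dim_{\Z/2}R_{2k}=1$ for every $k\ge 0$ in the ECH grading, equivalently that the index gives a bijection from the reduced monomials $\zb^B\zp^P\zq^Q$ with $0\le P<p$ onto $2\Z_{\ge 0}$; here one uses the explicit index formula of Theorem~\ref{thm:pqI}, matched against \eqref{eqn:dirlimS3}. This rank-one statement has a crucial corollary: because $f$ is homogeneous of degree $pq$ for the degree of Definition~\ref{def:degreep} while $(f)$ is homogeneous for the ECH index, index must determine degree among the $H=0$ generators. Indeed, two monomials of equal index represent the same class of $R$, hence differ by a multiple of $f$, and since no monomial is divisible by the binomial $f$ they must already coincide in degree.

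I would then use this rigidity to force all differentials to be degree preserving. For a generator $\zh\alpha$ of index $n$, the target $\zp^p\alpha$ lies in index $n-1$ and has the same degree as $\zh\alpha$ by Lemma~\ref{lem:ECHd}(i); by the previous paragraph every index-$(n-1)$ generator has that same degree, so by Proposition~\ref{prop-degree} every curve counted in $\langle\partial\zh\alpha,\beta\rangle$ has degree zero, and symmetrically for differentials out of $H=0$ generators. It then remains to classify index-one, degree-zero currents. Using the $S^1$-invariant $\frak p^{*}j_{\cpq}$, the projection $\frak p\circ\pi_{S^3}C$ is a degree-zero holomorphic map to the orbifold sphere $\cpq$, hence constant on components, so $C$ is a union of (covers of) fibers; invoking the cylinder analysis of Lemma~\ref{lem:d0g0} and Remark~\ref{rem:nongeneric}, an index-one such current is a union of trivial cylinders with a single nontrivial cylinder over one of the Morse flow lines $\zh\to\zp^p$, $\zh\to\zq^q$, $\zb\to\zh$. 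By Proposition~\ref{prop:pqM} the first two contribute $1$ and the last contributes $0$, so no coefficient beyond those of \eqref{eqn:diffp} can be nonzero. (As a check, $\partial_0|_{C^{\mathrm{odd}}}$ injective and $\partial^2=0$ independently force $\partial|_{C^{\mathrm{ev}}}=0$, reconfirming $\langle\partial\zb\alpha,\zh\alpha\rangle=0$.)

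The main obstacle is the rank-one computation of $R$ in the ECH grading and, inseparable from it, the rigidity tying index to degree; this is exactly where \eqref{eqn:dirlimS3} is indispensable, since, as Remark~\ref{rmk:comp} stresses, neither the index formula nor the ordinary homology of $S^3$ excludes extra differentials locally. A secondary technical point is that the degree-zero classification is carried out with the non-generic $\frak p^{*}j_{\cpq}$, where automatic transversality applies to cylinders, and then transported to generic $J$ via the $J$-invariance of action-filtered ECH; I would handle this precisely as in the proof of Proposition~\ref{prop:pqM}, restricting throughout to gradings small relative to $L(\varepsilon)$ in the sense of Lemma~\ref{lem:orbitseh}.
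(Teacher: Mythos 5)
Your Koszul repackaging of the combinatorics (identifying the $H=0$ part with $\Z/2[\zb,\zp,\zq]$, the differential's known part with the Koszul differential of $f=\zp^p+\zq^q$, and the rigidity corollary tying index to degree) is correct \emph{given} the rank-one statement, and it mirrors what the paper actually does. But your proposal has a genuine gap at its load-bearing step: the classification of ECH index one, degree zero currents between \emph{arbitrary} pairs. Lemma \ref{lem:d0g0} is proved only for the three special families of pairs in \eqref{eq:1pairs}; its proof hinges on the $J_0$ computations \eqref{eq:pairsnogamma1}--\eqref{eq:J3} and on a case analysis (Steps 3--4) that uses the special shape of those pairs (a thrice-punctured component would need both $P\geq1$ and $Q\geq1$ in the relative part, which those pairs never have). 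The coefficients you must exclude live on exactly the pairs where this breaks: e.g.\ for the pair with relative part $\zh\zq^{q}$ versus $\zp^{2p}$ (an ``extra'' target inside a fixed even index class), one checks $J_0=1$ and that a genus-zero curve with positive ends on $\zh$ and $\zq$ and negative ends on $\zp$ has Fredholm index $=$ ECH index $=1$, so no argument quotable from Lemma \ref{lem:d0g0} kills it; ruling it out requires fresh partition-condition computations ($P^+_{\zq}(q)$ and $P^-_{\zp}(2p)$ are not singletons) combined with $J_0$, i.e.\ new geometric work for every such pair, which you have not done. Your other justification --- degree zero forces the projection to $\cpq$ to be constant, hence $C$ is a union of covers of fibers --- is valid only for the degenerate form $\lambda_{p,q}$, where the projection of a $\frak{J}$-curve is holomorphic; for $\lambda_{p,q,\varepsilon}$, whose chain complex is the object of the Proposition, it is false, and it even contradicts your own conclusion, since a cylinder over a Morse flow line is not a union of fibers. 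This is precisely the difficulty Remark \ref{rmk:comp} flags: the index one moduli spaces cannot be classified ``locally,'' which is why the paper never analyzes any moduli space beyond those of Proposition \ref{prop:pqM}.

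The paper's route is different and avoids all of this: it proves, by an explicit lattice-point count (the triangles $T_k$ and lines $L_k$, fed by the index formula of Theorem \ref{thm:pqI}), that the ECH index gives a bijection from the $H=0$ generators modulo $\zp^p\alpha\sim\zq^q\alpha$ onto $2\Z_{\geq0}$, and then argues that any further differential would be incompatible with Corollary \ref{prop:bijectionp}. In your scheme that lattice count is exactly the rank-one statement $\dim_{\Z/2}R_{2k}=1$, which you assert (``Theorem \ref{thm:pqI} matched against \eqref{eqn:dirlimS3}'') but do not prove; it is the technical core of the paper's proof, not a formal consequence of the two inputs you cite. A smaller error: your claim that differentials out of $H=0$ generators are ``symmetrically'' degree preserving is wrong, since a current from an index $2k$ generator to an index $2k-1$ generator has degree $N_k(p,q)-N_{k-1}(p,q)$, which is positive whenever $N_k\neq N_{k-1}$; that case must instead be handled homologically (as in your closing parenthetical, or the paper's rank bookkeeping) once the odd-to-even part of $\partial$ has been pinned down.
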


To prove Proposition \ref{cor:nodiffp}, we utilize action filtered Morse-Bott direct limit arguments in conjunction with the (well-known) computation (and invariance) of the ECH of the standard tight 3-sphere.\footnote{Means of recovering $ECH_*(S^3, \xi_{std})$ include the irrational ellipsoid in \cite[\S3.7]{lecture}, the prequantization bundle over $S^2$ with Euler class -1 in \cite[\S7.2.2]{preech}, or the exciting index calculations for nonstandard families of contact forms in \cite[\S 5.3]{kech}.}  In particular,
\begin{equation}\label{eq:2}
ECH_*(S^3,\xi_{std}) =\begin{cases}\Z/2&\text{ if }* \in 2\Z_{\geq 0}
\\0&\text{else}.
\end{cases}
\end{equation}
We start by relating the direct limit of the homologies with respect to $\lambda_{p,q,\varepsilon}$ to the embedded contact homology of $(S^3,\xi_{std})$ in \eqref{eq:2} in Proposition \ref{prop:DL}.  The proof is a virtual repeat of our arguments in \cite[Prop.~3.2, Thm.~7.1]{preech}, and hence omitted.

\begin{proposition}\label{prop:DL} For $\varepsilon>\varepsilon'$, there is an exact symplectic cobordism from $(S^3,\lambda_{p,q,\varepsilon})$ to $(S^3,\lambda_{p,q,\varepsilon'})$, and the compositions of the inclusion induced maps $\iota^{L(\varepsilon),L(\varepsilon')}$ and the cobordism maps $\Phi^L$ defined in \cite[Thm.~2.17]{preech} are the canonical bijection on generators.  For generic $\lambda_{p,q,\varepsilon}$-compatible $J$,
\[
\lim_{\varepsilon\to0}ECH^{L(\varepsilon)}_*(S^3,\lambda_{p,q,\varepsilon},J)=ECH_*(S^3,\xi_{std}).
\]
\end{proposition}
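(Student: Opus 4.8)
The plan is to transport the argument of \cite[Prop.~3.2, Thm.~7.1]{preech} to the weighted projective base $\CP^1_{p,q}$; structurally nothing changes except the presence of the two orbifold points underlying $\zp$ and $\zq$, which were already accommodated by the orbibundle formalism of \cite{kech} and play no role in the cobordism machinery. I would proceed in three steps.

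First I would construct the exact symplectic cobordisms. For $\varepsilon>\varepsilon'>0$ both $\lambda_{p,q,\varepsilon}$ and $\lambda_{p,q,\varepsilon'}$ are contact forms for the common contact structure $\xi_{std}=\ker\lambda_{p,q}$, being positive pointwise multiples of $\lambda_{p,q}$. Since $\mathpzc{H}_{p,q}$ is $C^2$-close to $1$, and in particular positive, the hypothesis $\varepsilon>\varepsilon'$ yields $1+\varepsilon\,\fp^*\mathpzc{H}_{p,q}>1+\varepsilon'\,\fp^*\mathpzc{H}_{p,q}$, so that $\lambda_{p,q,\varepsilon}=g\,\lambda_{p,q,\varepsilon'}$ with $g>1$. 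This monotonicity is exactly what is needed to build an exact symplectic cobordism with positive end $(S^3,\lambda_{p,q,\varepsilon})$ and negative end $(S^3,\lambda_{p,q,\varepsilon'})$ by interpolating $e^s\lambda$ across a collar, and to invoke the induced filtered maps $\Phi^L$ of \cite[Thm.~2.17]{preech} (which repackage the Hutchings--Taubes cobordism maps and their compatibility with the inclusion-induced maps $\iota^{L(\varepsilon),L(\varepsilon')}$).

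Second I would verify that the composite of $\Phi^L$ with $\iota^{L(\varepsilon),L(\varepsilon')}$ is the canonical bijection on generators. By Lemma \ref{lem:orbitseh}, once $\varepsilon=\varepsilon(L)$ is chosen as in Lemma \ref{lem:efromL}, both complexes are generated, up to their action thresholds, by the identical combinatorial data, namely the fiber Reeb currents $\zb^B\zh^H\zp^P\zq^Q$ lying over the critical points of $\mathpzc{H}_{p,q}$. The cobordism is a trivial rescaling supported away from a tubular neighborhood of these fibers, so the Holomorphic Curves axiom of the cobordism maps forces the composite to act as the identity on these generators, exactly as in \cite[Prop.~3.2]{preech}. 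Here the action estimate of Lemma \ref{lem:efromL}(ii) is imported to guarantee that shrinking $\varepsilon$ strictly decreases the action of every fiber generator, so that $\iota^{L(\varepsilon),L(\varepsilon')}$ really is the honest inclusion of the corresponding subcomplex.

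Finally I would pass to the limit. The fact that these composites are canonical bijections makes $\{ECH^{L(\varepsilon)}_*(S^3,\lambda_{p,q,\varepsilon})\}$ a genuine direct system in the sense of \cite[\S3.4]{preech}, whose direct limit is identified by Proposition \ref{prop:directlimitcomputesfiberhomology} with the homology of the fiber chain complex. Because $\varepsilon(L)\sim 1/L$, letting $\varepsilon\to0$ coincides with letting $L\to\infty$, and the colimit of filtered ECH over an exhausting family of action windows recovers the full embedded contact homology $ECH_*(S^3,\xi_{std})$, via the continuity of filtered ECH and Taubes' isomorphism with monopole Floer cohomology as carried out in \cite[\S7]{preech} and refined in \cite[\S7]{kech}; the known value $\Z/2$ in each nonnegative even grading and $0$ otherwise then yields the stated limit. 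The main obstacle is Step 2: showing that the composite is literally the canonical bijection, not merely an isomorphism. This demands the precise action bookkeeping of Lemma \ref{lem:efromL} together with a verification that the exceptional fibers $\zp$ and $\zq$ contribute no anomalous holomorphic buildings to the cobordism maps; as in \cite{preech, kech} this is ensured by taking the rescaling cobordism to be an honest product away from those fibers, but it is the point requiring the most care in the orbifold setting.
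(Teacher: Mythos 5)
Your proposal is correct and takes essentially the same route as the paper, whose proof is omitted precisely because it is a repeat of \cite[Prop.~3.2, Thm.~7.1]{preech}: the graph cobordism between $1+\varepsilon'\fp^*\mathpzc{H}_{p,q}$ and $1+\varepsilon\fp^*\mathpzc{H}_{p,q}$ in the symplectization (valid since $\mathpzc{H}_{p,q}$ is taken $C^2$-close to the constant $1$, hence positive), the Holomorphic Curves axiom plus action estimates yielding the canonical bijection, and the Seiberg--Witten direct-limit step from \cite[\S 7]{preech}, \cite[\S 7]{kech}. The only imprecision is your description of the cobordism as a rescaling ``supported away from'' the fiber orbits---it is nowhere a literal product, only $C^0$-close to one, and the canonical-bijection step rests on energy/action estimates forcing the counted holomorphic currents to be trivial/connector cylinders, which is exactly how \cite[Prop.~3.2]{preech} argues.
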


We use Proposition \ref{prop:DL} to understand the ECH of $(S^3,\lambda_{p,q,\varepsilon})$ in more detail.\footnote{While these two results are similar to \cite[Prop.~5.7]{kech}  and its \cite[Cor~5.8]{kech}, the logic is reversed.} In particular, we obtain the following as an immediate consequence of the fact that the maps in the directed system in Proposition \ref{prop:DL} are the canonical bijections.  (That the action filtered embedded contact homology does not depend on the choice of $J$ is established in \cite{cc2}.)

\begin{corollary}\label{prop:bijectionp}
For any generic $\lambda_{p,q,\varepsilon}$-compatible  almost complex structure $J$, we have
\[
ECH_*^{L(\varepsilon)}(S^3,\lambda_{p,q,\varepsilon},J)=\begin{cases}
\Z/2&\text{ if }* \in 2\Z_{\geq 0}\\0&\text{ otherwise},
\end{cases}
\]
so long as $*$ is small enough relative to $L(\varepsilon)$ as in Lemma \ref{lem:orbitseh}.
\end{corollary}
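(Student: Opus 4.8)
The plan is to deduce Corollary \ref{prop:bijectionp} directly from Proposition \ref{prop:DL} by exploiting the fact that the maps in the directed system are canonical bijections on generators. Recall that Proposition \ref{prop:DL} establishes $\lim_{\varepsilon\to0}ECH^{L(\varepsilon)}_*(S^3,\lambda_{p,q,\varepsilon},J)=ECH_*(S^3,\xi_{std})$, and moreover the compositions of the inclusion-induced maps $\iota^{L(\varepsilon),L(\varepsilon')}$ with the cobordism maps $\Phi^L$ act as the canonical bijection on generators. The point is that if a map in a directed system of $\Z/2$-vector spaces is a bijection on the chosen generating sets in each grading, then it must already be an isomorphism \emph{before} passing to the limit, at least in gradings that stabilize early. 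So the strategy is to run the stabilization argument grading by grading.

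First I would fix a grading $2k$ (and separately an odd grading) and invoke Lemma \ref{lem:orbitseh} to ensure that, once $L(\varepsilon)$ is large enough relative to $*=2k$, the generators of $ECC^{L(\varepsilon)}_*(S^3,\lambda_{p,q,\varepsilon},J)$ in that grading are exactly the admissible Reeb currents $\zb^B\zh^H\zp^P\zq^Q$ built from fibers over critical points of $\mathpzc{H}_{p,q}$, and this generating set no longer changes as $L(\varepsilon)$ grows further. Next I would argue that, because the directed-system maps are the canonical bijection on generators (Proposition \ref{prop:DL}), for $\varepsilon>\varepsilon'$ sufficiently small the map
\[
ECH^{L(\varepsilon)}_{*}(S^3,\lambda_{p,q,\varepsilon},J)\longrightarrow ECH^{L(\varepsilon')}_{*}(S^3,\lambda_{p,q,\varepsilon'},J)
\]
is an isomorphism in the grading $*=2k$ under consideration: the induced map on generators is a bijection and it intertwines the differentials, so it descends to an isomorphism on homology in each stabilized grading. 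Consequently the directed system is eventually constant in that grading, and the direct limit agrees with each term of the system, namely $ECH^{L(\varepsilon)}_{*}(S^3,\lambda_{p,q,\varepsilon},J)$ itself.

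Combining this stabilization with the identification of the limit in Proposition \ref{prop:DL} and the known computation $ECH_*(S^3,\xi_{std})=\Z/2$ for $*\in2\Z_{\geq0}$ and $0$ otherwise, from \eqref{eq:2}, yields
\[
ECH_*^{L(\varepsilon)}(S^3,\lambda_{p,q,\varepsilon},J)=\begin{cases}\Z/2&* \in 2\Z_{\geq 0}\\0&\text{otherwise},\end{cases}
\]
for $*$ small enough relative to $L(\varepsilon)$ in the sense of Lemma \ref{lem:orbitseh}, which is the claim. The mild care needed is to phrase ``eventually constant'' correctly: the isomorphism type of $ECH^{L(\varepsilon)}_*$ in a fixed grading is independent of $\varepsilon$ once $\varepsilon<\varepsilon(L)$ with $L$ large enough for that grading, which is precisely the meaning of the phrase ``so long as $*$ is small enough relative to $L(\varepsilon)$.''

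The main obstacle I anticipate is not a deep one but a bookkeeping subtlety: one must be careful that the canonical bijection on generators supplied by Proposition \ref{prop:DL} is genuinely a chain map (commuting with $\partial$) in the stabilized range, so that it descends to the homology isomorphism. Since the inclusion maps $\iota^{L(\varepsilon),L(\varepsilon')}$ are induced by inclusions of subcomplexes and the cobordism maps $\Phi^L$ are chain homotopy equivalences by \cite[Thm.~2.17]{preech}, and their composite is the identity on generators, this commutativity is automatic; the only thing to verify is that the action threshold $L(\varepsilon)$ has been chosen (via Lemma \ref{lem:efromL} and Lemma \ref{lem:orbitseh}) so that no new generators enter the relevant grading as $\varepsilon$ decreases. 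Because Corollary \ref{prop:bijectionp} is explicitly flagged in Remark \ref{rmk:comp} as the mechanism by which the \emph{existence} of the differentials in Proposition \ref{prop:pqM} is forced, I would take care to state the conclusion at the level of the homology groups only, deferring any statement about which differential coefficients vanish to Proposition \ref{cor:nodiffp}.
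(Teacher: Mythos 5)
Your proposal is correct and follows essentially the same route as the paper: the paper deduces this corollary in one line as an ``immediate consequence'' of Proposition \ref{prop:DL}, precisely because the directed-system maps are the canonical bijections on generators, so the filtered groups stabilize grading by grading and agree with the direct limit $ECH_*(S^3,\xi_{std})$ from \eqref{eq:2}. Your extra care about Lemma \ref{lem:orbitseh}, chain-level commutativity, and deferring the differential computation to Proposition \ref{cor:nodiffp} simply makes explicit what the paper leaves implicit (including that passing from generic $J$ to arbitrary $J$ is covered by the $J$-independence of action-filtered ECH via \cite[Thm.~1.3]{cc2}).
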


We can now complete the proof of Proposition \ref{cor:nodiffp}. 

\begin{proof}[Proof of Proposition \ref{cor:nodiffp}]
The logic of the proof is as follows: we will identify each group $ECH^{L(\varepsilon)}_{2k}(S^3,\lambda_{p,q,\varepsilon},J)$ with $\Z/2$ generated by a single Reeb current. Our identification will provide a bijection between $2\Z_{\geq0}$ and the set of action filtered ECH generators with $H=0$ up to the equivalence $\zp^p\alpha\sim \zq^q\alpha$. However, this equivalence is precisely that provided by the differentials (\ref{eqn:diffp}) which we have previously computed in Proposition \ref{prop:pqM} and Lemma \ref{lem:pqM}. Therefore there can be no other differentials lest $ECH^{L(\varepsilon)}_{2k}(S^3,\lambda_{p,q,\varepsilon},J)$ not describe all of the ECH of $S^3$ as it ought to based on Proposition \ref{prop:DL}. 

The fact that the map $I$ from the set of ECH generators with $H=0$ to $2\Z_{\geq0}$ is well-defined up to $\zp^p\sim \zq^q$ follows from Lemma \ref{lem:ECHd}(ii). Further, it is surjective by Corollary \ref{prop:bijectionp}. It remains to show that it is injective, that is, if $\alpha\not\sim\beta$ by replacing $\zp^p$s with $\zq^q$s or vice versa, then $I(\alpha)\neq I(\beta)$.

We do this by defining an inverse $\mathbf{J}$ of $I$ and showing that $\mathbf{J}\circ I=id$. Let $T_k$ be the triangle in the first quadrant below the lowest line of slope $-p/q$ which contains at least $k+1$ points of the lattice $\Z^2$ (including those points on the axes and the line), and let $L_k$ be this line. Label the lattice points on $L_k$ as $(Q_i,P_i)$, starting with $(Q_0,P_0)$ being the closest to the $x$-axis and with $P_{i+1}/Q_{i+1}>P_i/Q_i$. Let $k'+1$ be the number of lattice points in $T_k$ excluding those on $L_k$. Define $\mathbf{J}$ by
\[
\mathbf{J}:2(k'+i+1)\mapsto\zb^i\zp^{P_0}\zq^{Q_0-iq}.
\]
It is defined for $i\geq0$ and at most the number of lattice points on $L_k$ minus one.

To show $\mathbf{J}\circ I=id$, let $\alpha=\zb^B\zp^P\zq^Q$; we may assume $P<p$ by replacing $\zp^p$s with $\zq^q$s. \\

\textbf{Claim:} the index $I(\alpha)=2k$, where $L_k$ passes through the point $(Q,pB+P)$ and there are precisely $B$ points on $L_k$ with smaller $y$-coordinate (i.e., $(Q,pB+P)=(Q_i,P_i)$ where $k=k'+i+1$). After showing the claim, we are done because $\mathbf{J}(2k)=\alpha$. \\

\textbf{Proof of claim:} By Lemma \ref{lem:ECHd}(ii, iii), it suffices to show that the index of $\zp^P\zq^Q$ with $P<p$ equals $2(k'+1)$. Divide $T_k\setminus L_k\cup\{(Q,P)\}$ into three regions: the triangle $T_Q$ strictly above the horizontal line $y=Q$, the triangle $T_P$ strictly to the right of the vertical line $x=P$, and the rectangle $R$ consisting of points with $0\leq x\leq P$ and $0\leq y\leq Q$. We start by counting the points in $T_Q, T_P$, and $R$.
\begin{itemize}
\item By splitting the count into a sum over each column $x=Q-1,\dots,x=0$, the triangle $T_Q$ contains $\mathcal{L}_Q:=\sum_{i=1}^Q\left\lfloor\left(\frac{p}{q}-\delta_{\zq,L}\right)i\right\rfloor$ lattice points.
\item By splitting the count into a sum over each row $y=P-1,\dots,y=0$, the triangle $T_P$ contains $\mathcal{L}_P:=\sum_{i=1}^P\left\lfloor\left(\frac{q}{p}-\delta_{\zp,L}\right)i\right\rfloor$ lattice points.
\item The rectangle $R$ contains $\mathcal{L}_R:=(P+1)(Q+1)$ lattice points.
\end{itemize}

We simplify the formula for $I(\zp^P\zq^Q)$ from Theorem \ref{thm:pqI}:
\begin{align*}
I(\zp^P\zq^Q)&=-(P-Q)^2+CZ^I_{orb}(\zp^P)+CZ^I_{orb}(\zq^Q)
\\&=2(P+1)(Q+1)-2+2\sum_{i=1}^P\left\lfloor\left(\frac{q}{p}-\delta_{\zp,L}\right)i\right\rfloor+2\sum_{i=1}^Q\left\lfloor\left(\frac{p}{q}-\delta_{\zq,L}\right)i\right\rfloor
\\&=2(\mathcal{L}_R+\mathcal{L}_P+\mathcal{L}_Q-1)
\\&=2(k'+1),
\end{align*}
because $\mathcal{L}_R+\mathcal{L}_P+\mathcal{L}_Q=k'+2$ as it counts the point $(Q,P)$, which is not in $T_k\setminus L_k$.

\end{proof}

\subsection{Comparison with a convex toric perturbation}\label{ss:toric}

Proposition \ref{prop:pqM} and the proof of Proposition \ref{cor:nodiffp} suggests that it is possible to compute spectral invariants for $\lambda_{p,q}$ using the following combinatorial model of the ECH chain complex, inspired by \cite{T3, intoconcave}, so long as the index is low enough with respect to the action as in Lemma \ref{lem:orbitseh}:

\begin{figure}[h]
\begin{center}
\begin{overpic}[width=\textwidth, unit=1.75mm]{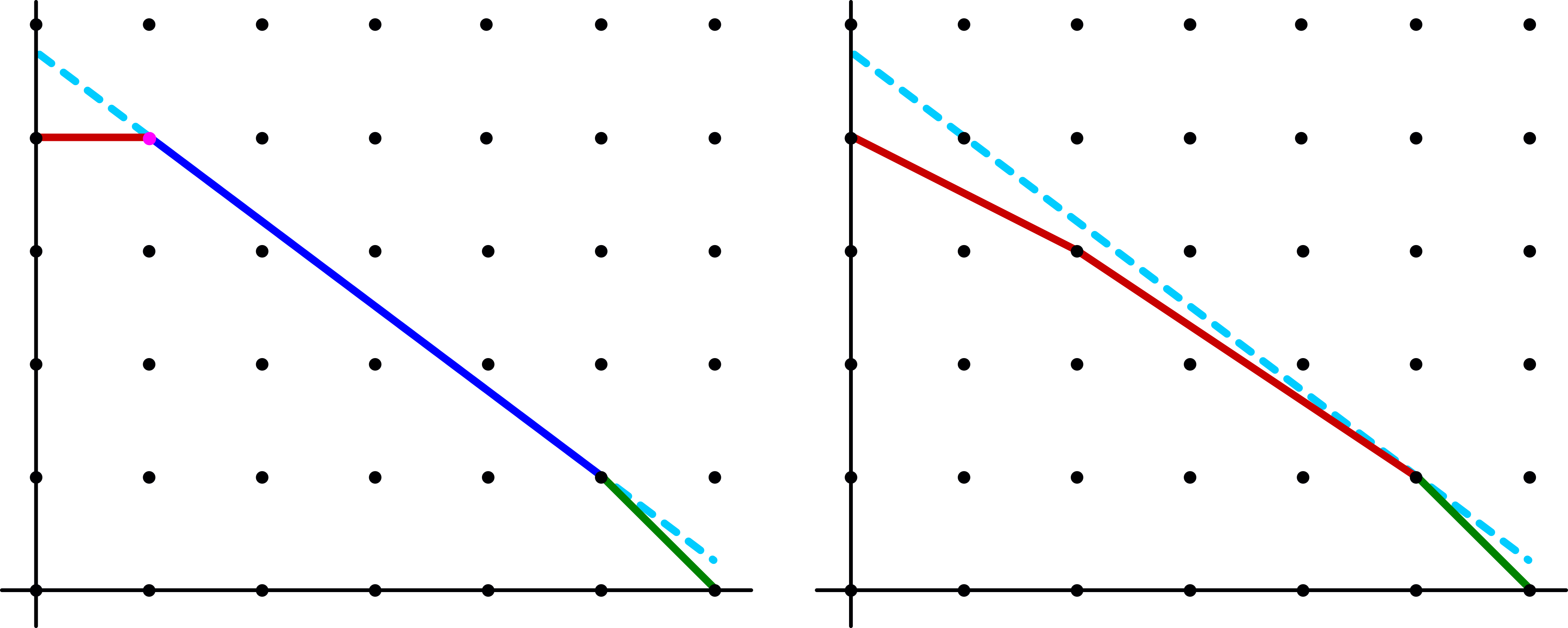}
\put(25,19){`$h$' label}
\end{overpic}
\end{center}
\caption{
{On the left in red, blue, and green we have drawn the curve $\Lambda$ representing the generator $\zh\zp\zq$ when $(p,q)=(3,4)$. On the right in red and green is the curve $\Lambda_\zq$ obtained by rounding the pink corner at $(1,4)$, which represents $\zp\zq^5$ and appears in $\partial\Lambda$. (The reader may compute their ECH indices by counting lattice points, and compare the result with Table \ref{table:genp} (a).) In both figures the dashed light blue line has slope $-3/4$.}
}
\label{fig:rounding}
\end{figure}

\begin{description}
\itemsep-.25em
\item[Generators:] Piecewise-linear curves $\Lambda:[0,a]\to\R^2_{\geq0}$ whose nonsmooth points are at $\Z^2$ lattice points and which can be divided into three pieces:

\begin{itemize}
\itemsep-.25em
\item the upper boundary of the convex hull of the points between the $y$-axis (inclusive), the vertical line $x=Q$ (inclusive), and the line through the point $(Q,P)$ of slope $-p/q$ (exclusive of all points except $(Q,P)$ itself),
\item the line segment of slope $-p/q$ from $(Q,P)$ to $(Q,P)+m(q,-p)$, labeled either `$e$' or `$h$', and
\item the top boundary of the convex hull of the points between the $x$-axis (inclusive), the vertical line $x=Q+mq$ (inclusive), and the line through the point $(Q,P)+m(q,-p)$ of slope $-p/q$ (exclusive of all points except $(Q+mq,P-mp)$ itself).
\end{itemize}
Such a curve $\Lambda$ corresponds to the generator
\[
\zb^m\zp^{P-mp}\zq^Q
\]
if the line segment of slope $-p/q$ is labeled `$e$,' otherwise it corresponds to the generator
\[
\zb^{m-1}\zh\zp^{P-mp}\zq^Q
\]
if the line segment is labeled `$h$.' Notice that every generator may be so described.
\emph{\textbf{Warning:} These `$e$' and `$h$' labels do not correspond to the $\lambda_{2,p,\varepsilon}$ generators from \cite{kech}.}

{Compare this definition of the differential to the partition conditions for negative ends described in Remark \ref{rmk:p-}. Up to the action of $AGL(2,\Z)$ (affine linear transformations of the $\Z^2$ lattice), the first and third pieces of a path $\Lambda$ correspond to a path $\Lambda^+$ determining the ``positive partitions" $P^+_\theta(m):=P^-_{-\theta}(m)$ with $\theta=\frac{p}{q}-\delta_{\zq,L}, \ \ \frac{q}{p}-\delta_{\zp,L}$ and $m=q, \ p$, respectively.}\footnote{{We make a point to note that this is confusing, because curves with positive ends on covers of $\zq$ or $\zp$ never appear in the differential. One would expect that the partitions determined by the curves counted by the ECH differential, namely, the negative partitions, would make an appearance.}}
\item[]
\item[ECH index:] The ECH index of $\Lambda$ equals $2(\mathcal{L}(\Lambda)-1)-h(\Lambda)$, where $\mathcal{L}(\Lambda)$ is the number of lattice points in the region bounded between $\Lambda$ and the axes, inclusive, and $h(\Lambda)=0$ if the line segment of slope $-p/q$ is labeled `$e$,' otherwise $h(\Lambda)=1$. {This can be proved using the argument in the proof of the Claim in the proof of Proposition \ref{cor:nodiffp} in \S\ref{sss:finalcomp}.}
\item[]
\item[Differential:] If there is no segment of slope $-p/q$ or it is not labeled `$h$,' then $\partial\Lambda=0$. Otherwise, the differential of $\Lambda$ is the sum $\Lambda_\zq+\Lambda_\zp$, where
\begin{itemize}
\itemsep-.25em
\item $\Lambda_\zq$ agrees with $\Lambda$ when $x\geq Q+q$, but when $0\leq x<Q+q$, the role of $(Q,P)$ is played by $(Q+q,P-p)$,
\item $\Lambda_\zp$ agrees with $\Lambda$ when $x\leq P-(m-1)p$, but when $x>P-(m-1)p$, the role of $(Q,P)+m(q,-p)$ is played by $(Q,P)+(m-1)(q,-p)$, and
\item if either $\Lambda_\zq$ or $\Lambda_\zp$ still contains a segment of slope $-p/q$ then it is labeled `$e$.'
\end{itemize}
That is, $\Lambda_\zq$ is the convex hull of the set of points between $\Lambda$ and the axes, inclusive, with $(Q,P)$ removed, and $\Lambda_\zp$ is similar but with $(Q+mq,P-mp)$ removed. {An example appears in Figure \ref{fig:rounding}, where $\Lambda_\zq$ has no segment of slope $-p/q$}.

Note that it is immediate from the definition of $\partial$ that $\partial^2=0$.
\item[]
\item[Action:] By Lemma \ref{lem:efromL}, the action of a generator $\zb^B\zh^H\zp^P\zq^Q$ is $B+H+P/p+Q/q$, which equals the value that the function $x/q+y/p$ takes at any point along the middle, slope $-p/q$, section of the corresponding $\Lambda$ (even if this section consists only of a single point). {The level sets of this function are the lines $L_k$ described in the proof of Proposition \ref{cor:nodiffp} in \S\ref{sss:finalcomp}. See Figure \ref{fig:ck}.}
\item[]
\item[Knot filtration:] Similarly, the knot filtration of a generator $\zb^B\zh^H\zp^P\zq^Q$ equals the value of the function $px+qy$ on the part of $\Lambda$ with slope $-p/q$.\footnote{This is actually only true in the limit $\varepsilon\to0$ as the perturbation recovers the true degenerate contact form. When $\varepsilon>0$, we need to add $\delta B$. Note that fact that the action and knot filtration values are equal up to constant multiplication is only an artifact of the fact that $(S^3,\lambda_{p,q})$ is strictly contactomorphic to an ellipsoid; when generalizing to other toric domains, this will not be the case, as the knot filtration will remain a function of the form $px+qy$ for some $(x,y)$ on $\Lambda$ while the action function becomes much more complex; see \cite[Def.1.13]{beyond}.}
\end{description}

\begin{figure}
     \centering
     \begin{subfigure}[b]{0.32\textwidth}
         \centering
         \includegraphics[width=\textwidth]{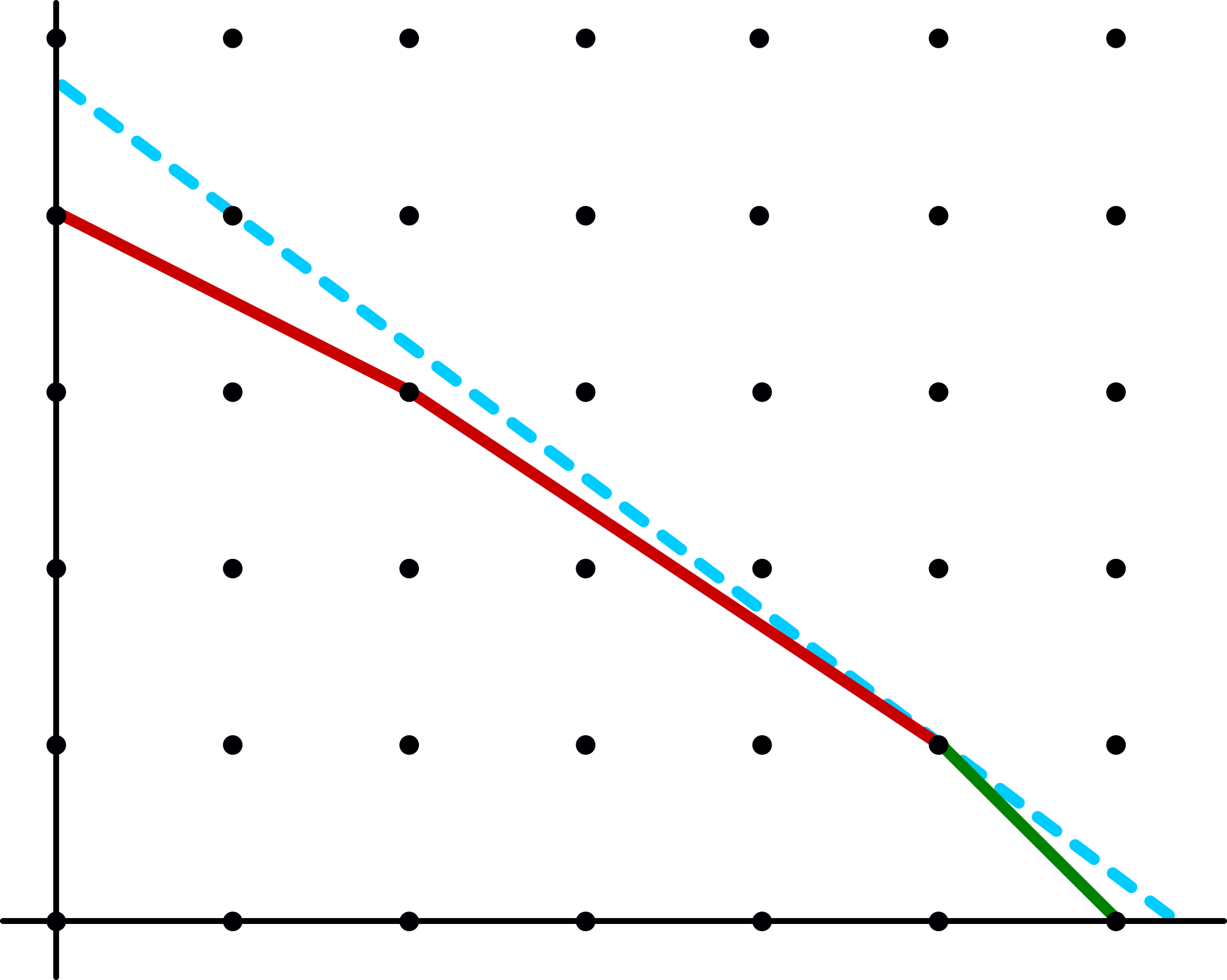}
         \caption{$\zp\zq^5$}
         \label{fig:q5}
     \end{subfigure}
     \hfill
     \begin{subfigure}[b]{0.32\textwidth}
         \centering
         \includegraphics[width=\textwidth]{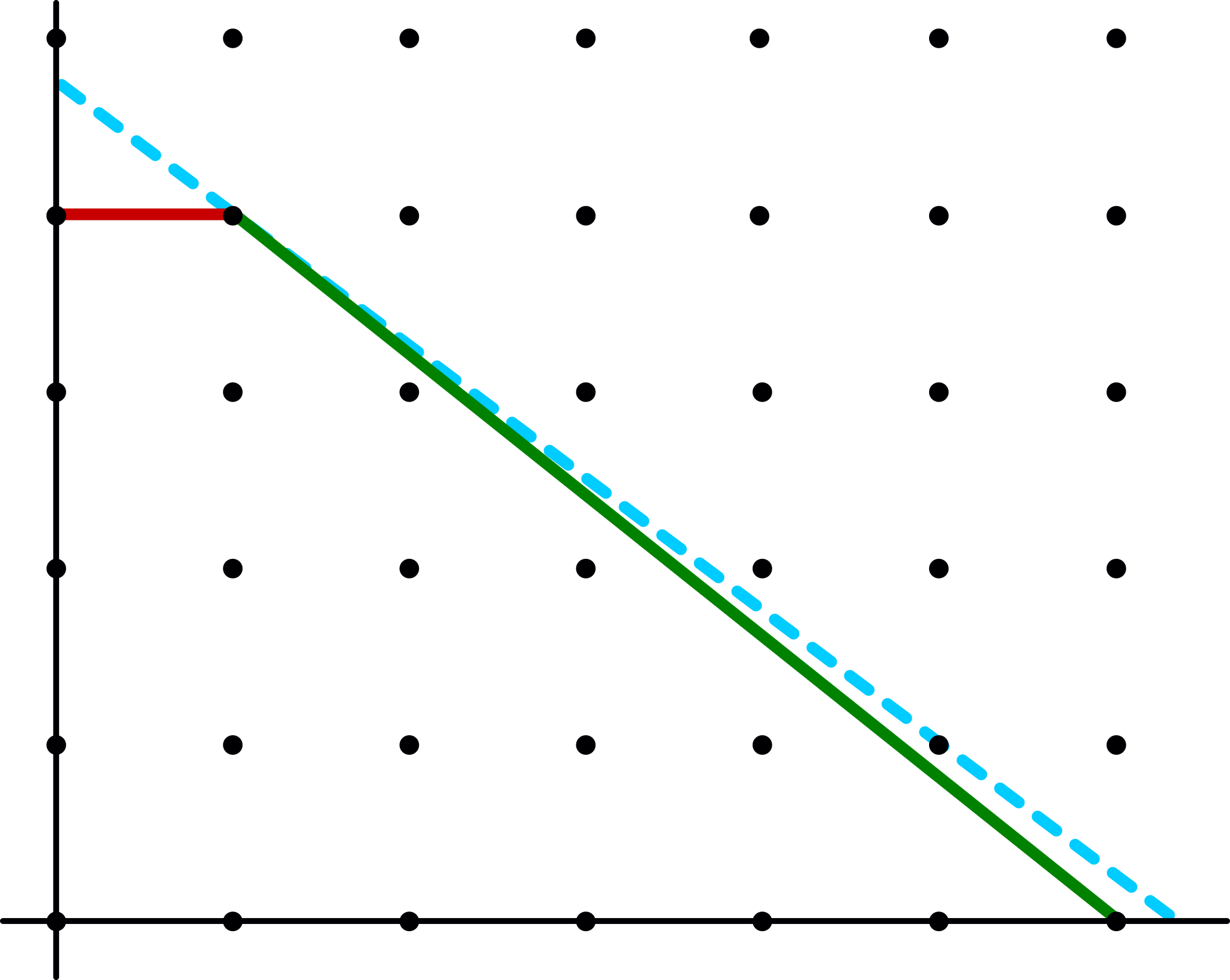}
         \caption{$\zp^4\zq$}
         \label{fig:p4}
     \end{subfigure}
     \hfill
     \begin{subfigure}[b]{0.32\textwidth}
         \centering
         \includegraphics[width=\textwidth]{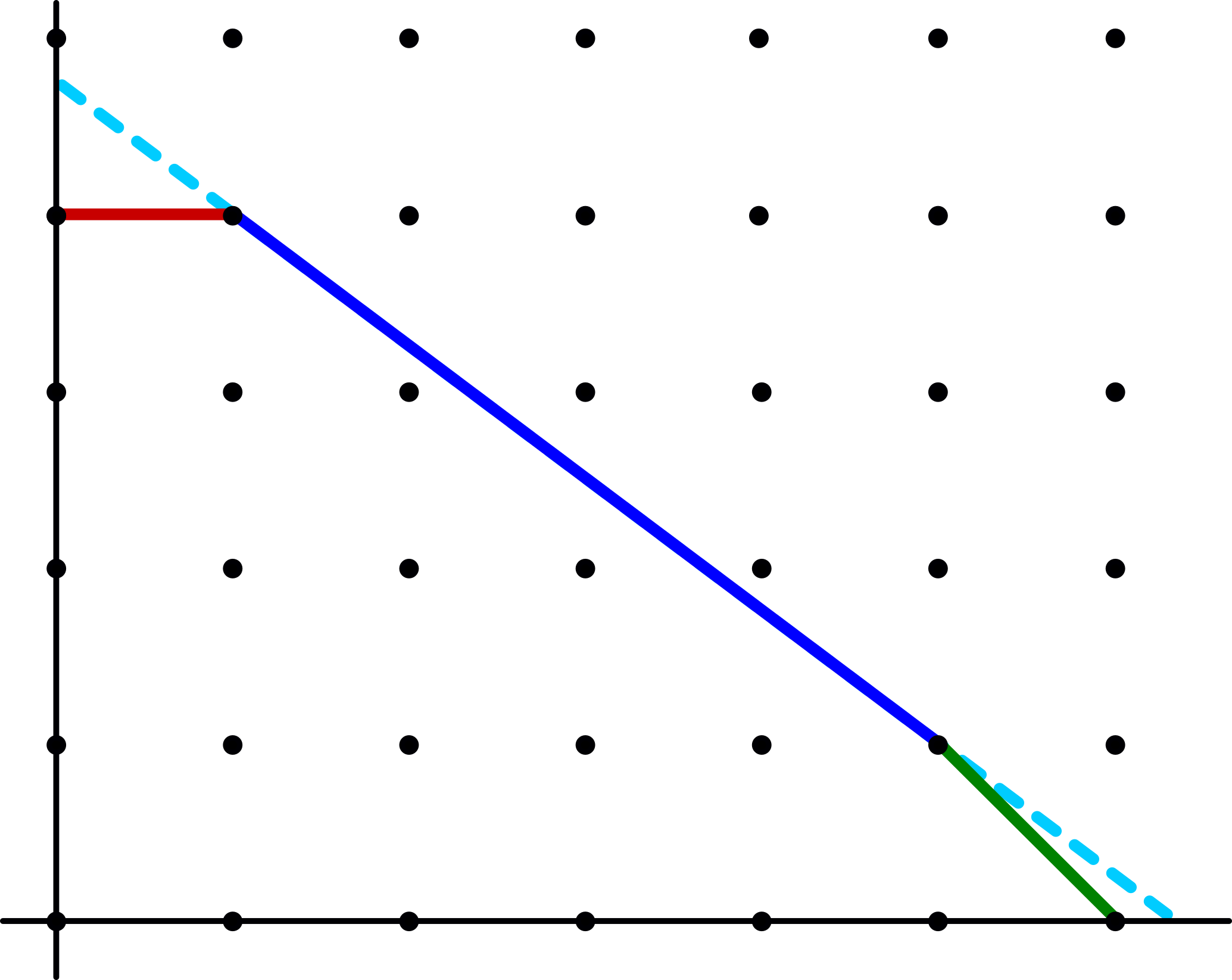}
         \caption{$\zh\zp\zq$ and $\zb\zp\zq$}
         \label{fig:bpq}
     \end{subfigure}
        \caption{{This figure shows the (dotted light blue) line determining the action and knot filtration value of the generators $\zp\zq^5, \zp^4\zq, \zh\zp\zq$, and $\zb\zp\zq$ corresponding to the curves illustrated in (a)-(c), respectively. (Note that the curve in (c) represents both $\zb\zp\zq$ and $\zh\zp\zq$ as its middle blue segment is unlabeled.)}}
        \label{fig:ck}
\end{figure}

The above description is enough to compute the knot filtration on ECH. Each generator without any `$h$' labels is closed. All such generators are homologous if they share the same index, because given two such generators $\Lambda_1$ and $\Lambda_2$ they will only differ by the number of consecutive points along the line of slope $-p/q$ they contain. There is a linear combination of generators with `$h$' labels whose differential equals $\Lambda_1-\Lambda_2$: start with $\Lambda_2$ and include the next point along the line of slope $-p/q$ in the direction of the points contained in $\Lambda_1$, then repeat. Moreover, homologous generators abut the same line of slope $-p/q$, so they have the same knot filtration value. As described in \cite{letternw}, it is also possible to compute the knot filtration using a larger combinatorial chain complex, which is a convex analogue to the complex described in \cite{intoconcave}, and that it is possible to make $\delta<0$ using the concave chain complex partially described (without its differential) in \cite{intoconcave}.

The relative simplicity of the above description makes it quite tempting. The main strength lies in the ease of computing the ECH index, since this would rely on well-studied results. However, the proof that the differential is as claimed has not been completed, and would require a detailed understanding and adaptation of Taubes' pseudoholomorphic (punctured) beasts in $\R \times (S^1 \times S^2)$  \cite{letter, beasts, beasts2, beasts3}. 

The problem is the following. In \cite{T3, choi, yao1, yao}, a ``rounding corners" differential similar to the one described above does agree with the ECH differential on manifolds obtained from $T^2\times[0,1]$ by collapsing circles in $T^2\times\{0\}$ and $T^2\times\{1\}$. (This procedure is called a \textit{contact cut}, see \cite{lerman}, and when those circles represent the standard basis of $H_1(T^2;\Z)$, we obtain $S^3$.) However, there is no proof as yet in the literature that the combinatorial rounding corners differential does agree with the ECH differential when the Reeb currents involved incorporate the elliptic orbits now appearing as the images of the two collapsed tori. (For us, these are iterates of $\zp$ and $\zq$; in general, if the manifold is $S^3$, it can be realized as embedded in $\C^2$, and these orbits are the Hopf link consisting of the intersections of $S^3$ with the complex axes.) 
Our chain complex described above \emph{only} has such ``virtual" combinatorial differentials. One can think of our present work as providing a proof of this agreement between differentials in the case of integral ellipsoids.

While the main advantage of our method when $p\neq2$ is in computing the differential, there are other advantages as well. Our prequantization orbibundle perspective will generalize to all contact forms adapted to periodic, positive fractional Dehn twist coefficient open books. If $\lambda$ is toric, then $Y$ must be diffeomorphic to $S^3, S^1\times S^2$, or a lens space (see \cite{choi}), which is a much more topologically restricted class of manifolds.

The case of $p=2$ could also have been handled using toric methods, but our chain complex in \cite{kech} is even simpler than the $p\neq2$ case as it has no differential, which greatly eases computations.

\section{ECH spectral invariants and Reeb dynamics}\label{s:spectral}

{This final section contains our computations of both the ECH action spectrum and knot filtration. We compute the action spectrum in \S\ref{ss:ECHspectrum} for the Morse-Bott contact form $(S^3,\lambda_{p,q})$, and we compute the knot filtration in \S\ref{ss:kECH} for the quadruple $(S^3,\xi_{std},T(p,q),pq+\delta)$, where $\delta$ is either zero or a positive irrational number that is sufficiently small when compared to the ECH index. Finally, in \S\ref{ss:dyn}, we use our computation to obtain the quantitative existence result for Reeb flows that we relied upon in \S\ref{s:calabi} to prove our results on surface dynamics. Our computations involve simplifying both the action and knot filtration to degree, which we relate to the ECH index in \S\ref{ss:index-degree}. The basic definitions and facts about both filtrations are reviewed in their respective sections.}

\subsection{Relating the ECH index to the degree of a Reeb current}\label{ss:index-degree} 
In this subsection we compute the function $I=2k\mapsto d$. Recall from Definition \ref{def:degreep} and Remark \ref{def:degree} that for a Reeb current $\alpha=\zb^B\zh^H\zp^P\zq^Q$,
\[
d(\alpha)=pq(B+H)+qP+pQ
\]
is the formula for its degree. The relationship between the ECH index and degree is given in terms of the sequence $N(a,b)$ for $a,b\in\R^+$, which is defined to be the sequence of all nonnegative integer linear combinations of $a$ and $b$, with multiplicities, in nondecreasing order. Let $N_k(a,b)$ denote the $k^\text{th}$ element of $N(a,b)$, including multiples, which, if $a,b\in\Z_{>0}$, will begin to occur at their least common multiple. We also always start with $N_0(a,b)=0$.

\begin{lemma}\label{lem:degNk}The degree of any Reeb current representing the generator of the group \\ $ECH_{2k}^{L(\varepsilon)}(S^3,\lambda_{p,q,\varepsilon})$ is $N_k(p,q)$.
\end{lemma}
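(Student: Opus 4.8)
The plan is to establish a bijection between the ECH generators with $H=0$ (up to the equivalence $\zp^P\alpha\sim\zq^Q\alpha$ generated by the differential), graded by ECH index, and the sequence $N(p,q)$, graded by position. By Corollary \ref{prop:bijectionp} we know that $ECH_{2k}^{L(\varepsilon)}(S^3,\lambda_{p,q,\varepsilon})=\Z/2$ is generated by a single Reeb current, and by Theorem \ref{thm:ECC} (together with the computation of the differential in Proposition \ref{prop:pqM}) this generator may be taken to have $H=0$, i.e. of the form $\zb^B\zp^P\zq^Q$. So the content of the lemma is purely combinatorial: I must match the ECH index grading on such generators with the counting sequence $N(p,q)$.

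First I would use the formula from the Claim in the proof of Proposition \ref{cor:nodiffp} in \S\ref{sss:finalcomp}, which shows that for a generator $\zb^B\zp^P\zq^Q$ with $P<p$, the index $2k$ is determined by the line $L_k$ of slope $-p/q$ through the point $(Q,pB+P)$, with $k$ counting (twice, minus one, via the lattice-point count $\mathcal{L}_R+\mathcal{L}_P+\mathcal{L}_Q$) the number of lattice points weakly below that line. The degree formula $d(\zb^B\zp^P\zq^Q)=pq B+qP+pQ$ should, under the same identification, be read off as $q\cdot(\text{$x$-intercept data})$; more precisely, the point $(Q,pB+P)$ lies on the line $qx+py=qQ+p(pB+P)$, and I would verify that $qQ+p(pB+P) = q Q + p P + p^2 B$ does not immediately give the degree, so the correct bookkeeping is through the function $px+qy$ evaluated on the slope $-p/q$ segment, giving precisely $d=pq B + q P + pQ$. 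The key observation is that the value $px+qy$ takes on a lattice point $(x,y)$ is exactly a nonnegative integer combination $qx+py$ of $p$ and $q$ (note the swap), and as $(x,y)$ ranges over lattice points ordered by the line $L_k$, these values run through $N(p,q)$ in nondecreasing order, with the $k^\text{th}$ lattice point giving $N_k(p,q)$.

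Concretely, the main step is to show that ordering the lattice points $(x,y)\in\Z_{\geq0}^2$ by the value of the linear functional $qx+py$ agrees, index-by-index, with the ECH index ordering: the $k^\text{th}$ point in the functional ordering is the generator in $ECH_{2k}^{L(\varepsilon)}$, and its functional value is $N_k(p,q)$ by definition of the sequence $N(p,q)$ as all nonnegative integer combinations of $p$ and $q$ with multiplicity. I would make this precise by invoking the explicit inverse map $J$ constructed in the proof of Proposition \ref{cor:nodiffp}, which sends $2(k'+i+1)\mapsto \zb^i\zp^{P_0}\zq^{Q_0-iq}$; computing $d$ on $J(2k)$ and checking it equals the functional value at the corresponding lattice point on $L_k$ closes the argument. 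The equivalence $\zp^P\alpha\sim\zq^Q\alpha$ and the replacement of $\zb$ via $(Q,P)\mapsto(Q,P)+(q,-p)$ all preserve both the line $L_k$ and the functional value $qx+py$, so $d$ is well-defined on homology classes, matching Lemma \ref{lem:ECHd}(i, ii, iii).

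The main obstacle I anticipate is the careful matching of multiplicities: the sequence $N(p,q)$ counts integer combinations \emph{with multiplicity} (repeats starting at $\mathrm{lcm}(p,q)=pq$), and I must confirm that the ECH index grading reproduces exactly these multiplicities rather than collapsing or double-counting them. This is precisely where the $\zb$ and $\zh$ orbits enter: a repeat in $N(p,q)$ at a value $\geq pq$ corresponds to the appearance of $\zb$ (a new lattice point on $L_k$ beyond the first), and the differential $\langle\partial\zh\alpha,\zp^p\alpha\rangle=\langle\partial\zh\alpha,\zq^q\alpha\rangle=1$ is exactly what identifies the $\zp$- and $\zq$-heavy representatives of a single homology class while leaving distinct $\zb$-powers as genuinely distinct classes. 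I would handle this by tracking, for each $k$, how many lattice points lie on the line $L_k$ (this count is $B+1$ for the representative $\zb^B\zp^{P}\zq^{Q}$), confirming via the count $k=k'+i+1$ from the Claim that consecutive indices $2k$ correctly enumerate both the ``new line'' transitions and the repeated values, so that $N_k(p,q)$ is reproduced with its correct multiplicity.
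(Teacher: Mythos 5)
Your proposal is correct and takes essentially the same route as the paper's proof: reduce to a single generator with $H=0$ and $P<p$ in each even grading via Propositions \ref{prop:pqM} and \ref{cor:nodiffp} and Corollary \ref{prop:bijectionp}, identify the degree with the linear functional $px+qy$ evaluated at the lattice point $(Q,pB+P)$ (equivalently, on the line $L_k$ of slope $-p/q$ from the Claim in the proof of Proposition \ref{cor:nodiffp}), and conclude by matching the ECH index ordering with the nondecreasing enumeration of values of this functional, which is $N(p,q)$ with its multiplicities. The only blemish is your occasional interchange of $px+qy$ and $qx+py$ --- harmless here since the value multiset is symmetric in $p$ and $q$ and you do compute the degree correctly as $pqB+qP+pQ$ --- but the functional that literally equals the degree under the identification $(x,y)=(Q,pB+P)$ is $px+qy$.
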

\begin{proof}

Each group $ECH_{2k}^{L(\varepsilon)}(S^3,\lambda_{p,q,\varepsilon})$ is generated by a single element, namely, the homology class of any ECH chain complex generator with ECH index $2k$. This follows from the computation of the differential in \S\ref{s:ECHI}:  all even index chain complex generators are closed by Proposition \ref{prop:pqM}, Lemma \ref{lem:pqM}, and Proposition \ref{cor:nodiffp}, and those with the same index are all homologous by Corollary \ref{prop:bijectionp}. Note that these ECH chain complex generators are precisely the Reeb currents with $H=0$. Thus we may assume that $ECH_{2k}^{L(\varepsilon)}(S^3,\lambda_{p,q,\varepsilon})$ is generated as a group by the homology class of a single cycle, which may be represented by a single admissible Reeb current $\alpha$ with $H=0$.

From the discussion above, we know that $\alpha$ is homologous to any Reeb current of the form $\zb^B\zp^{P'}\zq^{Q'}$ where $P'=P-xp, Q'=Q+xq$ for some integer $x$. Thus we may assume $0\leq P<p$ and set $(m,n)=(Q,pB+P)$. This map is surjective onto $\Z^2_{\geq0}$ because $\zb^{\lfloor n/p\rfloor}\zp^{n-p\lfloor n/p\rfloor}\zq^m\mapsto(m,n)$. It is injective when restricted to generators with $P<p$. Since assuming $P<p$ is the same as choosing one representative of each homology class by Proposition \ref{prop:pqM} and Corollary \ref{cor:nodiffp}, we now have a bijection from $ECH_{2k}^{L(\varepsilon)}(S^3,\lambda_{p,q,\varepsilon})$ and $\Z^2_{\geq0}$.

To show that composing with $(m,n)\mapsto pm+qn$ is monotonically increasing with respect to index we note that in the proof of Corollary \ref{cor:nodiffp} in \S\ref{sss:finalcomp}, we can express the degree as the value of the function $px+qy$ on the line through $(Q,pB+P)$ of slope $-p/q$. To increase the index $2k$, we must move this line up/to the right (as explained in the proof), which evidently also increases the value of $px+qy$.
\end{proof}

\begin{remark}\label{rmk:converse}
 The arguments in the first paragraph of the proof of Lemma \ref{lem:degNk} also serve to prove the first two bullets in Theorem \ref{thm:ECC}. The proof of Lemma \ref{lem:degNk} proves a converse to the third bullet (which is immediate from the formula for degree) because if $d(\alpha)=d(\beta)$ then their corresponding lattice points are on the same line in the proof of Proposition \ref{cor:nodiffp}, which exactly means that $\alpha$ and $\beta$ are related by repeatedly replacing ends of $\mathpzc{H}_{p,q}$ flow lines with each other. Heuristically, the third bullet of Theorem \ref{thm:ECC} tells us that cylinders above Morse flow lines are degree zero, while Lemma \ref{lem:degNk} tells us they are the only degree zero curves. We have also proved they are the only possible contributions to the differential: c.f. \cite[\S4.1]{preech}, where this relationship was proved more directly.
\end{remark}

\subsection{The action filtration and the ECH action spectrum}\label{ss:ECHspectrum} 
Our main goal in this section is to state the ECH Weyl Law, Theorem \ref{thm:weyl}, relating the ECH action spectrum to the contact volume. It is used in \S\ref{ss:dyn}. To state the Weyl Law, we first need to first define the ECH spectrum of a contact manifold, which itself requires understanding the action filtration on the ECH chain complex. With all that in place, we are also able to compute the ECH spectrum of $(S^3,\lambda_{p,q})$ in Proposition \ref{prop:ck}.

Although most of this paper is concerned with the case $Y=S^3$ in which $H_1(Y)=0$, our definitions in this section pertain to the general case. As a first instance of this generality, when $\Gamma\in H_1(Y)$ is not specified, we define
\begin{equation}\label{e:decomp}
ECH_*(Y,\lambda) := \bigoplus_{\Gamma \in H_1(Y)} ECH_*(Y,\lambda,\Gamma).
\end{equation}

First we explain the action filtration. By Stokes' Theorem and the fact that $J$-holomorphic curves have nonnegative area in $\R\times Y$ for the symplectization contact form, the ECH differential decreases symplectic action.  Thus for each $L\in \R$, the subgroup $ECC^L(Y,\lambda,J)$ generated by the Reeb currents $\alpha$ for which $\A(\alpha) < L$ is a subcomplex, and its homology is the \emph{action filtered embedded contact homology}\footnote{We include the word ``action" here, which is usually omitted in the literature, to distinguish action filtered ECH from knot filtered ECH.} 
$ECH_*^L(Y,\lambda)$. It is independent of $J$ by \cite[Thm.~1.3]{cc2}, and depends heavily on $\lambda$.

There is a map
\begin{equation}\label{eq:incl}
\iota: ECH^L(Y,\lambda) \longrightarrow  \ ECH(Y,\lambda)
\end{equation}
induced by the inclusion of chain complexes. It is independent of $J$ by \cite[Thm.~1.3]{cc2}.

\begin{remark}[$U$ map]\label{rmk:U}
Assume $Y$ is connected. There is a chain map
\[
ECC_*(Y,\lambda,\Gamma,J)\to ECC_{*-2}(Y,\lambda,\Gamma,J)
\]
counting ECH index two $J$-holomorphic currents in $\R\times Y$ passing through a generic point; see \cite[\S2.5]{shs}. This map induces a map called ``$U$" on homology, and under the isomorphisms \cite{taubesechswf}-\cite{taubesechswf5} of Taubes as $\Z/2$-modules with Kronheimer-Mrowka's Seiberg-Witten Floer cohomology, it agrees with an analogous $U$ map as defined in \cite{KMbook}.

When $Y=S^3$, the $U$ map is an isomorphism: see \cite[\S4.1]{lecture}.
\end{remark}

We are now ready to define the ECH action spectrum.

\begin{definition}\cite[\S 4]{qech}
Let $(Y, \lambda)$ be a closed connected\footnote{One can define the ECH spectrum for disconnected contact 3-manifolds, cf.~\cite[\S 1.5]{lecture}.} contact 3-manifold. Embedded contact homology contains a canonical class, called the \emph{contact invariant} $c(\xi) \in ECH(Y,\xi,0)$, which is the homology class of the cycle given by the empty set of Reeb orbits. Assume that $c(\xi) \neq 0$.  There is a sequence
\[
0=c_0(Y,\lambda) < c_1(Y,\lambda) \leq c_2(Y,\lambda) \leq ... \leq \infty
\]
of real numbers, called the \emph{ECH action spectrum}. For nondegenerate $\lambda$, the number $c_k(Y,\lambda)$ is the infimum of actions $L$ for which there is a class $\eta\in ECH^L(Y,\lambda,0)$ for which $U^k\eta=c(\xi)$. Note that $c_k(Y,\lambda)<\infty$ if and only if the contact class is in the image of $U^k$.

For degenerate $\lambda$, choose a sequence $f_n:Y\to\R_{>0}$ of functions for which $\lim_{n\to\infty}f_n=1$ in the $C^0$-topology and with $f_n\lambda$ nondegenerate. Then
\[
c_k(Y,\lambda):=\lim_{n\to\infty}c_k(Y,f_n\lambda).
\]
In both cases, the $c_k$ are valued in the action spectrum of $\lambda$.

\end{definition}

Note that $c_k(Y,\lambda)$ is the symplectic action of some ECH generator which has ECH index $2k$. Thus the $k^\text{th}$ ECH spectral number can be thought of as the symplectic action of ``homologically essential" ECH generators, specially selected by the $U$ map and ECH index. The quantities $c_k$ satisfy a number of nice properties, such as monotonicty under symplectic embedding and scaling with the contact form. In particular, when $Y$ is the boundary of a Liouville domain, these properties are enough to make $c_1$ a symplectic capacity. The ECH spectrum is especially useful for obstructing symplectic embeddings of such Liouville domains. In many interesting cases, the resulting symplectic obstructions are sharp.

A remarkable and very useful fact about the ECH spectrum is that it asymptotically recovers the contact volume, which we elucidated for the (irrational) ellipsoid in \cite[\S 6.2]{kech}.

\begin{theorem}[ECH Weyl Law {\cite[Thm.~1.2]{weyl}}]\label{thm:weyl}
Let $(Y,\lambda)$ be a closed contact 3-manifold with nonvanishing contact invariant. If $c_k(Y,\lambda) < \infty$ for all $k$, then
\[
\lim_{k \to \infty} \frac{c_k(Y,\lambda)^2}{2k} = \vol(Y, \lambda).
\]
\end{theorem}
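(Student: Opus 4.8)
The plan is to prove the Weyl law by reducing to a spectrally tractable situation and then extracting leading-order asymptotics. First I would reduce to the case of nondegenerate $\lambda$. Both sides of the claimed identity depend continuously on the contact form: the contact volume $\vol(Y,\lambda)=\int_Y\lambda\wedge d\lambda$ is manifestly continuous in the $C^0$-topology, and each $c_k(Y,\lambda)$ is continuous by the scaling and monotonicity properties of the ECH spectrum together with the defining limit $c_k(Y,\lambda)=\lim_{n}c_k(Y,f_n\lambda)$ used for degenerate forms. Hence it suffices to establish $\lim_{k\to\infty}c_k(Y,f\lambda)^2/(2k)=\vol(Y,f\lambda)$ for nondegenerate $f\lambda$ and pass to the limit $f_n\to 1$; the scaling $c_k(Y,s\lambda)=s\,c_k(Y,\lambda)$ and $\vol(Y,s\lambda)=s^2\vol(Y,\lambda)$ guarantees the limiting identity survives.

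The heart of the argument is to interpret $c_k$ spectrally. Via Taubes' isomorphism $ECH_*(Y,\lambda,\Gamma,J)\cong\widehat{HM}^{-*}(Y,\mathfrak{s}_\xi+\op{PD}(\Gamma))$, refined to the level of the action filtration, the number $c_k$ is identified (up to a controlled error) with a min-max critical value of the Chern--Simons--Dirac functional for a solution of the Seiberg--Witten equations perturbed by a large multiple $r\lambda$ of the contact form. In the large-$r$ limit, Taubes' a priori estimates show that the relevant critical values are governed to leading order by the eigenvalues of a Dirac-type operator twisted by the contact data, with the Seiberg--Witten connection term contributing only lower-order corrections. I would then invoke the classical Weyl law for a first-order elliptic operator on the closed $3$-manifold $Y$: the $k$-th such eigenvalue grows like a universal constant times $\sqrt{k}$, with the constant fixed by the symbol volume, which here evaluates precisely to $\int_Y\lambda\wedge d\lambda$. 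Matching leading coefficients, after tracking the grading shift that relates the Floer degree to the count $2k$, yields $c_k^2/(2k)\to\vol(Y,\lambda)$.

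As an independent normalization check, and to isolate the purely combinatorial content, I would verify the statement directly for the irrational ellipsoid, where $ECH(S^3,\xi_{std})$ is known and $c_k=N_k(a,b)$ is the nondecreasing sequence of nonnegative integer combinations of $a$ and $b$. An elementary lattice-point count gives $N_k(a,b)^2/(2k)\to ab=\vol$, which both confirms the leading constant and supplies the model computation underlying Proposition \ref{prop:ck} of the present paper.

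The main obstacle is the spectral step: one must control the min-max critical values of the Chern--Simons--Dirac functional \emph{uniformly in} $k$ and extract their precise leading-order asymptotics, rather than merely a $\sqrt{k}$ growth rate. Concretely, the difficulty is to show that replacing the honest min-max value over the infinite-dimensional configuration space by an approximating Dirac eigenvalue introduces an error that is $o(\sqrt{k})$, and that the nonlinear Seiberg--Witten terms do not perturb the leading coefficient. This is exactly where Taubes' large-$r$ analysis is essential; by comparison, the continuity reduction to the nondegenerate case and the ellipsoid sanity check are routine.
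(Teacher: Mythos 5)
You should know at the outset that the paper does not prove this statement: Theorem \ref{thm:weyl} is quoted from Cristofaro-Gardiner--Hutchings--Ramos \cite[Thm.~1.2]{weyl}, and the paper's only added commentary is that it is a corollary of \cite[Thm.~1.3]{weyl}, that Seiberg--Witten Floer theory enters the proof in general, and that for $Y=S^3$ (the only case used in this paper) its role can be largely circumvented via \cite[Prop.~8.6(b)]{qech}. So your proposal must be measured against the cited proof. Its overall architecture does match yours in outline --- reduction to nondegenerate forms, passage through Taubes' isomorphism, and a large-$r$ analysis of the Seiberg--Witten equations perturbed by $r\lambda$ --- and both your reduction to the nondegenerate case (via monotonicity and the scaling $c_k(Y,s\lambda)=s\,c_k(Y,\lambda)$, $\vol(Y,s\lambda)=s^2\vol(Y,\lambda)$) and your ellipsoid normalization check are sound.

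The genuine gap is in the step you yourself call the heart of the argument. The classical Weyl law for a first-order elliptic operator on a closed $3$-manifold gives $\lambda_k\sim Ck^{1/3}$ (the eigenvalue counting function grows like $\Lambda^3$), not $\lambda_k\sim C\sqrt{k}$, so the spectral fact you propose to invoke is false as stated, and no eigenvalue asymptotics for a \emph{fixed} operator can produce the exponent $1/2$; nor is the contact volume the symbol volume of any such operator. The actual source of the square root in \cite{weyl} is Taubes' \emph{asymptotic spectral flow} estimate: for the family of Dirac operators coupled to connections whose curvature grows like $r\,d\lambda$, the spectral flow grows quadratically in $r$, with leading coefficient proportional to $\int_Y\lambda\wedge d\lambda$. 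The homologically essential Seiberg--Witten generators are irreducible solutions localizing along Reeb orbits, whose energy recovers the symplectic action --- they are not approximate eigenvectors of a fixed twisted Dirac operator --- and the Weyl law follows by balancing a term linear in $r$ (weighted by energy) against a term quadratic in $r$ (weighted by contact volume) and optimizing over $r$, with Taubes' nonlinear a priori estimates controlling the min-max values uniformly in the grading. This is precisely the hard analytic core of \cite[Thm.~1.3]{weyl}, and it cannot be outsourced to a classical fixed-operator Weyl law. If you want to avoid this analysis for the case the paper actually needs, $(S^3,\xi_{std})$, the correct elementary path is the one the paper points to: realize the contact form via a star-shaped domain, use monotonicity of ECH capacities under symplectic embeddings, and approximate by toric domains whose capacities are combinatorial; your ellipsoid computation is the seed of that argument, but by itself it is only a consistency check, not a proof.
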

The ECH Weyl Law as we have stated it is a corollary of \cite[Thm.~1.3]{weyl}, which holds for all contact three-manifolds.  Seiberg-Witten Floer theory features in the proof in the general case; when $Y=S^3$, as in \cite[Prop.~8.6(b)]{qech}, its use can be largely circumvented, though it bears mention that the existence and properties of ECH cobordism maps require Seiberg-Witten Floer theory.

\begin{remark}
Frequently, the ECH Weyl law is stated in terms of the ECH capacities for Liouville domains $(X,\omega)$, \[\lim_{k \to \infty} \frac{c_k(X,\omega)^2}{4k} = \vol(X, \omega).\] Note that $
 \vol(X,\omega) = \frac{1}{2}\vol(Y,\lambda),$ because by Stokes' Theorem $\int_X \omega \wedge \omega = \int_Y \lambda \wedge d\lambda.$ Recall that symplectic volume is defined by $\vol(X,\omega):=\frac{1}{2}\int_X \omega \wedge \omega$ and contact volume is defined by $\vol(Y,\lambda):= \int_Y \lambda \wedge d\lambda$.
\end{remark}

%It is a natural question with geometric significance to understand the asymptotic convergence rate of the ECH spectrum to the volume.  In particular, additional information about symplectic embedding obstructions can be obtained from the deviation and, for strictly convex or strictly concave toric  domains, the error term converges to the ``Ruelle invariant" measuring the average rotation of the Reeb flow on the contact boundary \cite[Con.~1.5, Thm.~1.10]{ruelle}.   We define the \emph{error term} 
%\begin{equation}
%e_k(Y, \lambda) := c_k(Y,\lambda) - \sqrt{2k \vol(Y,\lambda)}.
%\end{equation}
%The conclusion of Theorem \ref{thm:weyl} implies that $e_k(Y,\lambda) = o(k^{1/2})$.  However, in all known examples, $e_k(Y)$ is $O(1)$, and significant progress has been made in sharpening the order of the subleading asymptotics of the ECH Weyl Law, cf. \cite{sun, subcap, ruelle, worm}.

We now compute the ECH spectrum of the degenerate contact form $\lambda_{p,q}$. (It may be worth noting that by Chern-Weil theory, $\vol(S^3, \lambda_{p,q}) = |e| = \frac{1}{pq}$.) 

\begin{proposition}\label{prop:ck} We have $c_k(S^3,\lambda_{p,q}) = N_k(1/q,1/p)$.
\end{proposition}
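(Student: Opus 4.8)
The plan is to compute $c_k(S^3,\lambda_{p,q})$ by relating the action spectrum of the degenerate contact form $\lambda_{p,q}$ to the action filtered ECH computed for the perturbations $\lambda_{p,q,\varepsilon}$. First I would recall the structure already established: by the definition of the ECH spectrum for degenerate forms, $c_k(S^3,\lambda_{p,q}) = \lim_{\varepsilon \to 0} c_k(S^3, \lambda_{p,q,\varepsilon})$, since $\lambda_{p,q,\varepsilon} = (1+\varepsilon\,\fp^*\mathpzc{H}_{p,q})\lambda_{p,q}$ gives a sequence of nondegenerate forms $C^0$-converging to $\lambda_{p,q}$ (after noting $\mathpzc{H}_{p,q}$ is $C^2$-close to $1$, so the multiplier tends to $1$ uniformly as $\varepsilon \to 0$). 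For fixed $k$, once $L(\varepsilon)$ is large enough relative to $k$ in the sense of Lemma \ref{lem:orbitseh}, the group $ECH_{2k}^{L(\varepsilon)}(S^3,\lambda_{p,q,\varepsilon})$ is a single $\Z/2$ by Corollary \ref{prop:bijectionp}, and $U$ is an isomorphism on $ECH(S^3,\xi_{std})$ (Remark \ref{rmk:U}), so the generator of $ECH_{2k}$ satisfies $U^k\eta = c(\xi)$. Thus $c_k(S^3,\lambda_{p,q,\varepsilon})$ is the action of the minimal-action Reeb current representing this degree-$2k$ class.

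Next I would use Lemma \ref{lem:degNk}, which asserts that any Reeb current representing the generator of $ECH_{2k}^{L(\varepsilon)}$ has degree $N_k(p,q)$. The key bridge is the remark accompanying Theorem \ref{thm:ECC} (and the computation in \S\ref{ss:index-degree}) that for the Morse–Bott limit, $d(\zb^B\zh^H\zp^P\zq^Q) = pq\,\A_{\lambda_{p,q}}(\zb^B\zh^H\zp^P\zq^Q)$; that is, degree equals $pq$ times the $\lambda_{p,q}$-action. Combining with Lemma \ref{lem:efromL}(ii), the action of a generator with $H=0$ in the $\varepsilon \to 0$ limit is $\A_{\lambda_{p,q}}(\zb^B\zp^P\zq^Q) = B + P/p + Q/q$, whose $pq$-multiple is $pqB + qP + pQ = d(\zb^B\zp^P\zq^Q)$. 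Since the action filtered generator of $ECH_{2k}$ has degree $N_k(p,q)$, its $\lambda_{p,q}$-action is $\tfrac{1}{pq}N_k(p,q)$. It then remains to identify this with $N_k(1/q,1/p)$: because scaling all the generators $(a,b)$ of nonnegative integer combinations by a common factor $1/(pq)$ sends the sorted sequence $N(p,q)$ to $N(q/(pq), p/(pq)) = N(1/p \cdot 1/q \cdot q, \ldots)$—more precisely, $\tfrac{1}{pq}N_k(p,q) = N_k(q/(pq), p/(pq)) = N_k(1/p, 1/q)$ by linearity of the defining combinations, and since $N$ is symmetric in its arguments this equals $N_k(1/q,1/p)$.

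I would then assemble these: $c_k(S^3,\lambda_{p,q}) = \lim_{\varepsilon \to 0} \A_{\lambda_{p,q,\varepsilon}}(\text{degree } N_k(p,q)\text{ generator}) = \tfrac{1}{pq}N_k(p,q) = N_k(1/q,1/p)$. I should be careful to confirm that the minimizing action representative indeed has $H=0$ (which it does, as chain complex cycles are exactly those with $H=0$ by Theorem \ref{thm:ECC}), and that taking the limit $\varepsilon \to 0$ does not shift the spectral value—this follows from the fact that for the fixed index $2k$ the relevant generators all lie below $L(\varepsilon)$ for small $\varepsilon$, and their actions converge to the $\lambda_{p,q}$-actions monotonically by Lemma \ref{lem:efromL}.

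The main obstacle I anticipate is the interchange of limits: one must ensure that for each fixed $k$, the threshold $L(\varepsilon)$ can be chosen growing as $\varepsilon \to 0$ so that $ECH_{2k}^{L(\varepsilon)}$ stably equals $\Z/2$ with a representative of degree exactly $N_k(p,q)$, and that the spectral number $c_k$ is realized by this representative rather than by some higher-action cycle in the same homology class. This requires invoking the stabilization in Proposition \ref{prop:DL} (that the direct system maps are canonical bijections on generators) together with the monotonicity of action under the cobordism maps, so that the infimum defining $c_k$ is attained by the minimal-degree generator. Once this compatibility of the action filtration with the degree filtration is pinned down, the identification with $N_k(1/q,1/p)$ is a purely combinatorial rescaling and is straightforward.
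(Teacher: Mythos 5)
Your proposal is correct and takes essentially the same route as the paper's proof: perturb to $\lambda_{p,q,\varepsilon}$ and use the degenerate definition of $c_k$, invoke Lemma \ref{lem:orbitseh} and Corollary \ref{prop:bijectionp} so that the degree-$2k$ group is generated by a single class, apply Lemma \ref{lem:degNk} to get degree $N_k(p,q)$, convert degree to action via Lemma \ref{lem:efromL}(ii) (so $\A = d/(pq)$), and rescale to obtain $N_k(1/q,1/p)$. The only difference is cosmetic: the paper dispenses with your concern about which representative realizes the infimum by noting that all homologous generators have the same degree and hence the same action, which is the same resolution you reach.
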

\begin{proof}
Since $\lambda_{p,q}$ is degenerate, we will use the perturbations $f_n\lambda_{p,q}=\lambda_{p,q,1/n}$, by setting
\[
f_n=1+\frac{1}{n}\fp^*\mathpzc{H}_{p,q}.
\]
For large $n$, we have $c_k(Y,\lambda_{p,q,1/n})<L(1,n)$, making the capacities eventually constant in $n$. Thus we must only compute $c_k(Y,\lambda_{p,q,1/n})$ when $k$ and $L(1/n)$ satisfy Lemma \ref{lem:orbitseh}, allowing us to consider only admissible Reeb currents $\zb^B\zh^H\zp^P\zq^Q$.

By Lemma \ref{lem:efromL}(ii) 
\[
\A(\zb^B)=B, \ \ \ \A(\zp^P)=\frac{P}{p}, \ \ \ \A(\zq^Q)=\frac{Q}{q}.
\]
Therefore
\begin{equation}\label{eqn:Ad}
\A(\zb^B\zp^P\zq^Q)=\frac{d(\zb^B\zp^P\zq^Q)}{pq}.
\end{equation}

The result follows from Lemma \ref{lem:degNk} and the fact that each homology group \\ $ECH^{L(1/n)}_{2k}(S^3,\lambda_{p,q,1/n})$ is generated by the homology class of a Reeb current of the form $\zb^B\zp^P\zq^Q$. Although this ECH generator is not unique, all homologous generators have the same degree and thus by (\ref{eqn:Ad}) the same action; this is explained at the beginning of the proof of Lemma \ref{lem:degNk}.
\end{proof}

\subsection{The knot filtration and the ECH linking spectrum}\label{ss:kECH} 
The aim of this section is to compute knot filtered ECH, a topological spectral invariant, for for the transverse positive torus knots $T(p,q)$ in $(S^3,\xi_{std})$ with rotation numbers $pq+\delta$ for $\delta$ positive, irrational, and small. We complete this computation in Theorem \ref{thm:kECH} using our model contact forms $\lambda_{p,q,\varepsilon}$ of \S\ref{s:topology}-\ref{s:ECHI}. We also introduce the linking spectrum $c_k^{\op{link}}$ and its key properties, which we require in \S\ref{ss:dyn}.

We must first define knot filtered ECH and explain its invariance properties. Unlike action filtered ECH (and hence the ECH spectrum), which depends heavily on $\lambda$, knot filtered ECH depends only on the smooth manifold $Y$,\footnote{The original definition of \cite[Thm.~5.3]{HuMAC} was for $\rot(b)$ irrational and $H_1(Y)=0$. In \cite[Thms.~5.2 \& 5.3]{weiler}, the assumption on $H_1$ was weakened to $H_1(Y)$ torsion.} the contact structure $\xi$, the filtration level $K$, a transverse knot $b$ which can be realized as an elliptic orbit for some contact form $\lambda$ of $\xi$, and its rotation number $\rot(b)$ as an elliptic orbit. This last component is the only vestige of the contact form, and is computed in the canonically defined trivialization in which a pushoff of $b$ has linking number zero with $b$ (see Remarks \ref{rem:linkingtriv} and \ref{rem:c1sl}). In light of this invariance, knot filtered ECH is denoted by $ECH_*^{\fb \leq K}(Y,\xi,b,\rot(b))$. It was originally defined in \cite{HuMAC}.

We also recount \cite[Thm.~1.5]{kech}, which provides the favorable Morse-Bott circumstances in which one can allow for rational rotation numbers; otherwise, $\rot(b)$ must be irrational. We now review how to define knot filtered ECH.

On a contact manifold $(Y,\lambda)$ with $H_1(Y)=0$, let $b$ be any (embedded) Reeb orbit and let $\rot(b)$ denote its rotation number in the canonical trivialization described above. Let $b^m \alpha$ be a Reeb current such that the orbit set $\alpha$ does not contain $b$ and where $m \in \Z_{\geq 0}$. The knot filtration is respected by any holomorphic curve, regardless of whether or not $\alpha$ is an ECH generator. The \emph{knot filtration} on Reeb currents of $(Y,\lambda)$ with respect to $(b, \rot(b))$ is
\begin{equation}\label{eq:filt}
\mathcal{F}_b(b^m\alpha) = m \rot(b) + \ell(\alpha,b).
\end{equation}
The linking number $\ell(\alpha, b)$ of $b$ with the Reeb current $\alpha$ is
\[
{\ell(\alpha,b)=\sum_im_i\ell(\alpha_i,b).}
\]
{If $b$ is a nondegenerate elliptic Reeb orbit then $\rot(b)\in\R\setminus\Q$ and the function $\mathcal{F}_b$ is not integer valued. Yet its values may still be in a discrete set of real numbers: when $\rot(b)>0$ and all other Reeb orbits link positively with $b$, which occurs in our setting, where $b$ is the binding of an open book, or when $\rot(b)<0$ and all other Reeb orbits link negatively with $b$.}

{As proved in \cite[Lem.~5.1]{HuMAC}, the function $\mathcal{F}_b$ is a filtration on the ECH chain complex, as the ECH differential (and more generally, any map counting appropriate $J$-holomorphic curves in $\R\times Y$) does not increase the knot filtration.}

If $K \in \R$, we use $ECH_*^{\mathcal{F}_b \leq K}(Y,\lambda,J)$ to denote the homology of the subcomplex generated by admissible Reeb currents $b^m\alpha$ with $\mathcal{F}_b (b^m\alpha)  \leq K$.  Hutchings proved that $ECH_*^{\mathcal{F}_b\leq K}(Y,\lambda,J)$ is a topological invariant in the following sense.  

\begin{theorem}{\em \cite[Thm.~5.3]{HuMAC}}\label{thm:kech}
Let $(Y, \xi)$ be a closed contact 3-manifold with $H_1(Y)=0$, $b\subset Y$ be a transverse knot and $K\in \R$.  Let $\lambda$ be a contact form with $\Ker(\lambda) = \xi$ such that $b$ is an elliptic Reeb orbit with rotation number $\rot(b)\in\R \setminus \Q$.  Let $J$ be any generic $\lambda$-compatible almost complex structure.  Then $ECH_*^{\mathcal{F}_b\leq K}(Y,\lambda,J)$ is well-defined, and this homology, together with the inclusion induced map
\begin{equation}\label{eq:kechi1}
ECH_*^{\mathcal{F}_b \leq K}(Y,\xi, b, \rot(b)) \to ECH(Y,\xi),
\end{equation}
depends only on $Y, \xi, b, \rot(b),$ and $K$.  
\end{theorem}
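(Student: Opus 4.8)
The plan is to prove Theorem~\ref{thm:kech} in three movements: establish that $\fb$ is genuinely a filtration (so that the subcomplex, its homology, and the inclusion-induced map \eqref{eq:kechi1} are all well-defined), then show that the resulting homology and map are independent of the auxiliary generic $J$, and finally show that they depend on the contact form $\lambda$ only through the rotation number $\rot(b)$. For the filtration property I would argue by intersection positivity with the trivial cylinder $\R\times b\subset\R\times Y$ over the elliptic orbit $b$. Given a $J$-holomorphic current $\mathcal{C}$ from $\alpha=b^m\alpha'$ to $\beta=b^n\beta'$ with neither $\alpha'$ nor $\beta'$ containing $b$, the asymptotic intersection number of $\mathcal{C}$ with $\R\times b$ is well-defined in Siefring's sense and is nonnegative; combining the asymptotic winding estimates at the ends (governed by $\rot(b)$, whose irrationality guarantees nondegeneracy of $b$ and discreteness of the values of $\fb$) with the homological linking contribution $\ell(\cdot,b)$ shows that this intersection number controls the difference $\fb(\alpha)-\fb(\beta)$ from below, forcing it to be nonnegative. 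Hence $\fb$ is non-increasing under the differential, which is exactly \cite[Lem.~5.1]{HuMAC}; this makes the span of currents with $\fb\leq K$ a subcomplex, defines $ECH_*^{\fb\leq K}$, and produces the inclusion-induced map to $ECH(Y,\xi)$.

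The two invariance statements I would treat together using filtered cobordism maps. For independence of $J$, a homotopy of $\lambda$-compatible almost complex structures induces a chain map between the two complexes via the associated product cobordism; the same intersection-positivity argument with $\R\times b$ sitting inside the cobordism shows this map does not increase $\fb$, so the argument establishing independence of $J$ for action-filtered ECH in \cite[Thm.~1.3]{cc2} can be run filtration level by filtration level, with the chain homotopies likewise non-increasing on $\fb$. For independence of $\lambda$ I would interpolate between two contact forms $\lambda_0,\lambda_1$ for $\xi$ that both realize $b$ as an elliptic orbit with the same rotation number by an exact symplectic cobordism containing the cylinder over $b$, and argue that the induced cobordism map preserves the $\fb$-filtration for the same reason. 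The $\fb$-values themselves are unchanged under this interpolation: the linking term $\ell(\cdot,b)$ is a homotopy invariant of the transverse knot type of $b$, and the $m\rot(b)$ term is fixed by hypothesis. Since the full invariance of unfiltered ECH under Taubes' isomorphism identifies the targets $ECH(Y,\xi)$ canonically, the filtered groups and the maps \eqref{eq:kechi1} then descend to invariants of the quadruple $(Y,\xi,b,\rot(b),K)$.

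The main obstacle is the last step, because ECH cobordism maps are constructed through Seiberg--Witten--Floer theory and are not literally counts of holomorphic currents, so that their compatibility with the knot filtration is not immediate. I would address this in one of two ways: either use Taubes' correspondence to reinterpret the cobordism map in terms of broken holomorphic currents whose intersection with the cylinder over $b$ is controlled exactly as in the filtration argument above, thereby transferring nonnegativity of intersection to the chain level; or, following Hutchings, realize $\fb$ as a rescaling of an action filtration for a contact form modified in a neighborhood of $b$ (so that orbits linking $b$ acquire proportionally large action), and then import the known filtration-preserving behavior of cobordism maps for action-filtered ECH. In either case the delicate point is to keep the rotation-number normalization of $b$ fixed throughout the interpolation, so that the modification near $b$ does not alter $\rot(b)$ and the two filtrations being compared genuinely agree.
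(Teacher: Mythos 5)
A point of order first: the paper does not prove this statement. It is quoted, with attribution, as Hutchings' theorem \cite[Thm.~5.3]{HuMAC}; the authors' own work in this direction is the extension to rational rotation numbers, Theorem \ref{thm:ok}, proved in \cite[\S 7]{kech}. Measured against the source proof, your proposal is essentially a reconstruction of it. Your first movement is exactly \cite[Lem.~5.1]{HuMAC}: after splitting off covers of $\R\times b$ from the current, positivity of intersections with $\R\times b$ together with the asymptotic winding bounds at ends on covers of $b$ (controlled by $\rot(b)$, whose irrationality gives nondegeneracy of $b$ and discreteness of the filtration values) forces $\fb(\alpha)\geq\fb(\beta)$. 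And the first of your two fixes for the ``main obstacle'' is precisely what Hutchings, and \cite[\S 7]{kech} after him, do: the exact-cobordism maps of \cite{cc2}, although constructed through Seiberg--Witten Floer cohomology, are induced by chain maps satisfying a Holomorphic Curves axiom --- a nonzero coefficient forces a broken $J$-holomorphic current in the completed cobordism --- and running the Lemma~5.1 argument on each level of that broken current (after arranging the cobordism and its almost complex structure so that the cylinder over $b$ is holomorphic, and using that $\rot(b)$ agrees at the two ends) shows these maps respect $\fb$; taking direct limits in the action then yields invariance of the filtered groups together with the map \eqref{eq:kechi1}.

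Two caveats. First, your opening claim for $J$-invariance --- that a homotopy of almost complex structures induces a chain map ``via the associated product cobordism'' to which intersection positivity applies, with chain homotopies to match --- is not available: no curve-counting chain maps or chain homotopies of this kind are known in ECH, because multiple covers obstruct transversality; this is exactly why even unfiltered ECH invariance passes through Seiberg--Witten theory. So the obstacle you attribute only to the ``last step'' applies verbatim to $J$-invariance, and all of your movements must be handled by the same Holomorphic Curves axiom package; since that is your own first fix, the proposal survives, but the three-part structure collapses into one argument. Second, your alternative fix --- realizing $\fb$ as a rescaled action filtration for a contact form modified in a neighborhood of $b$ --- cannot work as stated: a modification supported near $b$ changes neither the Reeb orbits lying outside that neighborhood nor their actions, so it cannot make action grow with linking number. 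A genuinely global modification (for a fibered knot, adding a large multiple of the open-book angular form) does add a multiple of $\ell(\cdot,b)$ to the action, but it also changes the Reeb dynamics and $\rot(b)$, reintroducing exactly the normalization problem you flag. Neither caveat sinks the proposal, since its first fix is the correct and complete route.
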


We generalized this result to allow for rational rotation numbers in \cite[\S7]{kech} in the following sense, via a double direct limit argument, which built on our Morse-Bott methods in \cite[\S3, 7]{preech}.

\begin{theorem}{\em \cite[Thm.~1.5]{kech}}\label{thm:ok}
Let $(Y, \xi)$ be a closed contact 3-manifold with $H_1(Y)=0$, $b\subset Y$ be a transverse knot and $K\in \R$.  If $\lambda$ is degenerate, 
we define
\[
ECH_*^{\mathcal{F}_b \leq K}(Y,\lambda, b, \rot(b)) :=
\lim_{\varepsilon \to 0}ECH_*^{\afe}(Y, \lambda_{\varepsilon}, b, \rot_{\varepsilon}(b),J_{\varepsilon}),  \]
where $\{(\lambda_\ve, J_\ve)\}$ is a  {knot admissible pair} for $(Y,\lambda, b, \rot(b))$.    Then \\ $ECH_*^{\mathcal{F}_b \leq K}(Y,\lambda, b, \rot(b))$ is well-defined, and this homology, together with the inclusion induced map
\begin{equation}\label{eq:kechi2}
ECH_*^{\mathcal{F}_b \leq K}(Y,\xi, b, \rot(b)) \to ECH(Y,\xi),
\end{equation}
depends only on $Y, \xi, b, \rot(b),$ and $K$.
\end{theorem}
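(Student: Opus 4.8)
The plan is to prove Theorem \ref{thm:ok} by mirroring the strategy used to establish invariance of action filtered ECH, but now carrying the knot filtration through a \emph{double} direct limit: one limit sends the Morse-Bott perturbation parameter $\varepsilon\to 0$ to recover the degenerate form $\lambda$, and a second (implicit in the definition of $\mathcal{F}_b$ and the inclusion maps) organizes the filtration levels $K$. First I would fix a \emph{knot admissible pair} $\{(\lambda_\varepsilon,J_\varepsilon)\}$ for $(Y,\lambda,b,\rot(b))$, meaning a family of nondegenerate perturbations $\lambda_\varepsilon$ for which $b$ remains an elliptic Reeb orbit with $\rot_\varepsilon(b)\to\rot(b)$ and such that every other Reeb orbit created by the perturbation, up to any fixed action level, links positively with $b$ (so that $\mathcal{F}_b$ takes values in a discrete set, as noted after \eqref{eq:filt}). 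For each such $\varepsilon$, Theorem \ref{thm:kech} already guarantees that $ECH_*^{\mathcal{F}_b\leq K}(Y,\lambda_\varepsilon,b,\rot_\varepsilon(b),J_\varepsilon)$ is well-defined and independent of $J_\varepsilon$, so the content is entirely in controlling the $\varepsilon\to 0$ limit and showing the result is independent of the choices.

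The key steps, in order, would be as follows. First, I would verify that the knot filtration interacts correctly with the exact symplectic cobordisms relating $\lambda_\varepsilon$ and $\lambda_{\varepsilon'}$ for $\varepsilon>\varepsilon'$: the cobordism maps $\Phi^L$ from \cite[Thm.~2.17]{preech} and the inclusion induced maps $\iota^{L(\varepsilon),L(\varepsilon')}$ must respect (not increase) $\mathcal{F}_b$, which follows because these maps count $J$-holomorphic curves in the completed cobordism and such curves do not increase the knot filtration by the analogue of \cite[Lem.~5.1]{HuMAC}. This makes $\{ECH_*^{\mathcal{F}_b\leq K}(Y,\lambda_\varepsilon,b,\rot_\varepsilon(b),J_\varepsilon)\}$ a genuine direct system, so the limit exists. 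Second, I would establish that the limit is independent of the choice of knot admissible pair by a standard interpolation argument: given two such families, one builds a third family interpolating between them and uses the commutativity of the cobordism and inclusion maps (the filtered refinement of the argument in \cite[\S7]{kech}) to produce an isomorphism of direct limits compatible with the maps to $ECH(Y,\xi)$. Third, I would check that the inclusion induced map \eqref{eq:kechi2} is well-defined on the limit and lands in the \emph{unfiltered} $ECH(Y,\xi)$, which is already known to be a topological invariant via Taubes' isomorphism; this step reduces to showing the filtered inclusions commute with the structure maps of the direct system, which is formal once the maps are defined.

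The main obstacle I expect is the interchange of the two limiting processes, namely ensuring that $\lim_{\varepsilon\to 0}$ of the filtered homologies genuinely computes a $K$-filtered invariant of the degenerate data $(Y,\xi,b,\rot(b))$ rather than an artifact of the perturbation. Concretely, as $\varepsilon\to 0$ the extraneous Reeb orbits have action proportional to $1/\varepsilon$ (cf. Lemma \ref{lem:efromL}) but their \emph{linking} and hence filtration values need not diverge, so one must argue that for each fixed $K$ the filtered subcomplex stabilizes: only finitely many orbit sets have $\mathcal{F}_b\leq K$ with bounded multiplicity on $b$, and the positivity-of-linking hypothesis forces $m\,\rot(b)\leq\mathcal{F}_b(b^m\alpha)\leq K$, bounding $m$ and thereby the relevant generators. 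The delicate point is that this stabilization must be \emph{uniform} enough that the maps in the direct system become isomorphisms on the $\mathcal{F}_b\leq K$ piece once $\varepsilon$ is small relative to $K$, so that the double direct limit collapses to a single well-defined group. Once this uniform stabilization is in hand, the invariance statement follows from the naturality established in the earlier steps, exactly as in the proof of Theorem \ref{thm:kech} but with the additional $\varepsilon$-limit absorbed by the stabilization estimate.
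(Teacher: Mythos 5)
Your proposal is correct and follows essentially the same route as the paper, which imports this theorem from \cite[Thm.~1.5]{kech}: a double direct limit argument (over the action threshold $L(\varepsilon)$ and the perturbation parameter $\varepsilon$ with $K$ held fixed, rather than over $K$ itself as your framing sentence suggests) built on the Morse-Bott methods of \cite{preech}, using cobordism and inclusion maps that do not increase $\mathcal{F}_b$ as in \cite[Lem.~5.1]{HuMAC}. Your stabilization step — bounding the multiplicity of $b$ by positivity of linking and using the bijective correspondence of low-action Reeb currents (cf.\ Definition \ref{def:knotadmissible}) so that the structure maps become isomorphisms on the $\mathcal{F}_b\leq K$ piece — is exactly the technical crux of that argument.
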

Here 
\[
\left( ECC_*^{\afe}(Y, \lambda_{\varepsilon}, b, \rot_{\varepsilon}(b),J_{\varepsilon}), \partial \right)
\]
is the action filtered subcomplex, which has been further restricted to the knot filtered subcomplex where $\fb \leq K$. It underlies the homology groups $ECH_*^{\afe}(Y, \lambda_{\varepsilon}, b, \rot_{\varepsilon}(b),J_{\varepsilon})$ appearing in the theorem statement.   Note that by the definition of $\ve(L)$,
\[
\lim_{L \to \infty}ECH_*^{\af}(Y, \lambda_{\varepsilon(L)}, b, \rot_{\varepsilon(L)}(b),J_{\varepsilon(L)}) = \lim_{\ve \to \infty}ECH_*^{\afe}(Y, \lambda_{\varepsilon}, b, \rot_{\varepsilon}(b),J_{\varepsilon}).
\]

When $Y$ is $S^3$ and $\xi$ is the standard tight contact structure, one can encode knot filtered embedded contact homology by a sequence of real numbers, similarly to the ECH spectrum defined in \S \ref{ss:ECHspectrum}, as observed by Hutchings in \cite{letternw}.
\begin{definition}\label{def:cklink}
Let $(Y,\xi) = (S^3, \xi_{std})$.  If $k$ is a nonnegative integer, define the \emph{ECH linking spectrum}
\[
c_k^{\op{link}}(b,\rot(b)) \in [-\infty, \infty)
\]
to be the infimum over $K$ such that the degree $2k$ generator of $ECH(S^3,\xi_{std})$ is in the image of the inclusion induced map \eqref{eq:kechi1} if $\rot(b)$ is irrational or in the image of the inclusion induced map   \eqref{eq:kechi2} if $\rot(b)$ is rational.

We have that 
\[
c_k^{\op{link}}(b,\rot(b)) \leq c_{k+1}^{\op{link}}(b,\rot(b)),
\]
because the $U$ map (which induces an isomorphism from $ECH_{2k+2}(S^3,\xi_{std})$ to $ECH_{2k}(S^3,\xi_{std})$) is given in terms of a $J$-holomorphic current, and any $J$-holomorphic current respects the linking filtration \cite[Lem.~5.1]{HuMAC}.  
Additionally, if $\theta < \theta'$ then 
\begin{equation}\label{eq:cklinkth}
c_k^{\op{link}}(b,\theta) \leq  c_k^{\op{link}}(b,\theta'),
\end{equation}
by the cobordism map argument in \cite[\S 7]{kech}, similar to the proof of \cite[Lem.~5.1]{HuMAC}.

\end{definition}

The definition of ``knot admissible" is as follows.    Precise definitions of each condition can be found in the statement of \cite[Lem.~7.11]{kech}, which were collected from \cite{cc2}.  

\begin{definition}\label{def:knotadmissible}
A pair of families $\{(\lambda_\ve, J_\ve)\}$ is said to be a \emph{knot admissible pair} for $(Y,\lambda, b, \rot(b))$, where $\lambda$ is a degenerate contact form admitting the transverse knot $b$ as an embedded Reeb orbit whenever
\begin{itemize}
\itemsep-.25em

\item  $f_{s}: [0,c_0]_s \times Y \to \R_{>0}$  are smooth functions such that $\frac{\partial f}{\partial s}>0$ and  $\lim_{s \to 0}f_{s}=1$ in the $C^0$-topology;
\item  There is a full measure set $\mathcal{S} \subset (0, c_0]$, such that for each $\varepsilon \in \mathcal{S},$ $f_{\ve}\lambda$  is $L(\ve)$-nondegenerate where $L(\ve)$ monotonically increases towards $+\infty$ as $\ve$ decreases towards 0;
\item $\lambda_{\varepsilon}:=f_{\varepsilon}\lambda$ each admit the transverse knot $b$ as an embedded elliptic Reeb orbit (when $\ve \neq 0$) and $\{ \rot_{\varepsilon}(b)\}$ is monotonically decreasing to $\rot(b)$ as $\varepsilon \in [0, c_0]$ decreases to 0;
\item $J_{\varepsilon}$ is an $ECH^{L(\varepsilon)}$ generic $\lambda_{\varepsilon}$-compatible almost complex structure (when $\varepsilon \neq 0$).
\end{itemize}
Sometimes we also suppress the almost complex structure and refer to the sequence of contact forms $\{\lambda_\ve\}$ as a \emph{knot admissible family}, provided it satisfies the above conditions.  By the discussion in \cite[Lem.~7.11, Rem.~7.12]{kech} summarizing results of \cite{cc2}, it follows that for any $\varepsilon' \in (0,\varepsilon)$, the admissible Reeb currents of action less than $L(\varepsilon)$ associated to $\lambda_{\varepsilon}$ and $\lambda_{\varepsilon'}$ are in bijective correspondence.  
\end{definition}

  In particular, our methods from \cite{kech} allows us to compute knot filtered embedded contact homology of $(S^3,\xi_{std},T(p,q),pq)$ via successive approximations using the sequence $\{ \lambda_{p,q,\ve} \}$, which is a knot admissible family by the computations preceding and summarized in Lemma \ref{lem:orbtrivCZp}.  The latter arguments also give a means of constructing a knot admissible family for any fiber of any Seifert fiber space of negative Euler class.
    
We now compute the positive $T(p,q)$ knot filtration.\footnote{Should it be of interest to the reader, we reviewed the computation of unknot filtered ECH in the irrational ellipsoid from \cite[\S 5]{HuMAC} in \cite[\S 6.3]{kech}.   }

\begin{proposition}\label{prop:Fb}
Given $(S^3,\lambda_{p,q,\varepsilon})$ and $\varepsilon(L)$ as in Proposition \ref{prop:morsep}, then for any Reeb current $\alpha$ not including the right handed $T(p,q)$ torus knot $\zb$, 
\[
\mathcal{F}_{\zb}(\mathpzc{b}^B\alpha) = d(\mathpzc{b}^B \alpha) + B\delta_L.
\]
\end{proposition}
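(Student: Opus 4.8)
The plan is to unwind the definition of the knot filtration and reduce the claimed identity to inputs already established: the rotation number of the binding and the linking numbers of the fiber orbits. Starting from the defining formula \eqref{eq:filt}, I would write
\[
\mathcal{F}_{\zb}(\zb^B\alpha) = B\,\rot(\zb) + \ell(\alpha,\zb),
\]
and then substitute the value of the rotation number of the binding in the page (equivalently, push-off linking zero) trivialization supplied by Lemma \ref{lem:pagetrivCZp}, namely $\rot(\zb) = pq + \delta_{\zb,L}$. Writing $\delta_L := \delta_{\zb,L}$, this immediately produces the contribution $B\,pq + B\delta_L$, so the entire content of the proposition reduces to identifying $\ell(\alpha,\zb)$ with $pqH + qP + pQ$.

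Next I would expand $\alpha$ in terms of the embedded fiber orbits. Since $\alpha$ omits $\zb$, Lemma \ref{lem:orbitseh} lets me write $\alpha = \zh^H\zp^P\zq^Q$, and the additivity of linking over an orbit set, $\ell(\alpha,\zb) = \sum_i m_i\,\ell(\alpha_i,\zb)$, gives
\[
\ell(\alpha,\zb) = H\,\ell(\zh,\zb) + P\,\ell(\zp,\zb) + Q\,\ell(\zq,\zb).
\]
Using the symmetry of the linking number together with the values $\ell(\zb,\zh)=pq$, $\ell(\zb,\zp)=q$, and $\ell(\zb,\zq)=p$ from Corollary \ref{cor:linkingnumber}, this becomes exactly $pqH + qP + pQ$.

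Finally I would compare with the degree formula of Definition \ref{def:degreep}, namely $d(\zb^B\alpha) = pqB + pqH + qP + pQ$. After subtracting the $pqB$ term already accounted for by the $B\,\rot(\zb)$ contribution, the remainder $pqH + qP + pQ$ matches $\ell(\alpha,\zb)$, completing the identity $\mathcal{F}_{\zb}(\zb^B\alpha) = d(\zb^B\alpha) + B\delta_L$. There is no serious obstacle here: the statement is a bookkeeping consequence of three inputs—the definition of $\mathcal{F}_{\zb}$, the page-trivialization rotation number, and the linking numbers—and the only points deserving care are that $\delta_L$ is to be read as the binding monodromy defect $\delta_{\zb,L}$ of Lemma \ref{lem:pagetrivCZp}, and that the (symmetric) orientation conventions for linking are applied consistently so that $\ell(\alpha_i,\zb)=\ell(\zb,\alpha_i)$.
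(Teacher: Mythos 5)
Your proposal is correct and follows essentially the same route as the paper's proof: expand the knot filtration by its definition, insert $\rot(\zb)=pq+\delta_{\zb,L}$ from Lemma \ref{lem:pagetrivCZp}, evaluate $\ell(\alpha,\zb)=pqH+qP+pQ$ via Corollary \ref{cor:linkingnumber}, and match against the degree formula of Definition \ref{def:degreep}. The only cosmetic difference is that you spell out the symmetry of linking numbers and the subtraction of the $pqB$ term, which the paper leaves implicit.
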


\begin{proof}
By Lemma \ref{lem:pagetrivCZp}, we know  $\rot(\zb)=pq+\delta_L$.  We have that $\alpha=\zh^H\zp^P\zq^Q$, thus again using Corollary \ref{cor:linkingnumber} and the definition of degree,
\begin{align*}
\fb(\zb^B\alpha)&=B\rot(\zb)+\ell(\alpha,\zb)
\\&=B(pq+\delta_L)+\ell(\zh^H,\zb)+\ell(\zp^P,\zb)+\ell(\zq^Q,\zb)
\\&=B(pq+\delta_L)+pqH+qP+pQ
\\&=d(\zb^B\zh^H\zp^P\zq^Q)+B\delta_L.
\end{align*}

\end{proof}

\begin{remark}\label{rmk:tknot}
Etnyre proved that two positive transverse torus knots have the same smooth knot type and self-linking number if and only if they are transversely isotopic in \cite{torus}. Therefore, there is a standard $T(p,q)$ in $(S^3,\xi_{std})$, precisely the one which as a transverse knot has maximum self-linking equal to $pq-p-q$. This is precisely the self-linking corresponding to rotation number $pq+\delta_L$ in the trivialization $\tau_\Sigma$, using $c_\Sigma([\Sigma])=p+q-pq$; see \S\ref{ss:chern} and \S\ref{ss:pagetau}. (The connection to $c_\tau$ is that $\op{sl}(\zb):=-c_1(\xi|_\Sigma, \tau)$.)
\end{remark}

{We now compute knot filtered ECH for the $T(p,q)$ (and make a few clarifying remarks before completing the proof).}

\begin{theorem}\label{thm:kECH} Let $\xi_{std}$ be the standard tight contact structure on $S^3$.  
Let ${b}$ be the standard {right-handed} (positive) transverse $T(p,q)$ torus knot for $(p,q)$ relatively prime.  If $\rot({b}) = pq,$ then
\[
ECH_{2k}^{\fb \leq K}(S^3,\xi_{std},T(p,q),pq)=\begin{cases}\Z/2&K\geq{ N_k(p,q)  },
\\0&\text{otherwise,}\end{cases}
\]
and in all other gradings $*$,
\[
ECH_*^{\fb \leq K}(S^3,\xi_{std},T(p,q),pq)=0.
\]
If $\rot({b}) = pq+\delta$, where $\delta$ is a sufficiently small positive irrational number less than $\min\{1/p,1/q\}$, then up to grading $*$ and filtration threshold $K$ inversely proportional to $\delta$,
\[
ECH_{2k}^{\fb \leq K}(S^3,\xi_{std},T(p,q),pq+\delta)=\begin{cases}\Z/2&K\geq{ N_k(p,q) + \delta {(\$N_k(p,q) -1)}},
\\0&\text{otherwise,}\end{cases}
\]
where { $\$N_k(p,q)$ is the number of repeats in $\{ N_j(p,q)\}_{j\leq k}$ with value $N_k(p,q)$,} and in all other gradings $*$, up to the threshold inversely proportional to $\delta$,
\[
ECH_*^{\fb \leq K}(S^3,\xi_{std},T(p,q),pq+\delta)=0.
\]
\end{theorem}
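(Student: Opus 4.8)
The plan is to compute knot filtered ECH by combining the computation of the chain complex $ECC^{L(\varepsilon)}_*(S^3,\lambda_{p,q,\varepsilon},J)$ from Theorem \ref{thm:ECC} with the knot filtration formula $\mathcal{F}_{\zb}(\zb^B\alpha) = d(\zb^B\alpha) + B\delta_L$ from Proposition \ref{prop:Fb} and the degree–index relation $d = N_k(p,q)$ from Lemma \ref{lem:degNk}. Since $\lambda_{p,q}$ is degenerate, I would work with the knot admissible family $\{\lambda_{p,q,\varepsilon}\}$ and invoke Theorem \ref{thm:ok} so that knot filtered ECH is computed as the double direct limit $\lim_{\varepsilon\to0}ECH_*^{\mathcal{F}_{\zb}\leq K}(S^3,\lambda_{p,q,\varepsilon},\zb,\rot_\varepsilon(\zb),J_\varepsilon)$. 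The first step is the degenerate case $\rot(\zb)=pq$: here $\delta_L=0$, so $\mathcal{F}_{\zb}(\zb^B\alpha)=d(\zb^B\alpha)$, and the knot filtration coincides exactly with the action filtration (up to the scaling factor $pq$, cf. equation (\ref{eqn:Ad})). Consequently the knot filtered subcomplex with $\mathcal{F}_{\zb}\leq K$ is generated by those $\zb^B\zp^P\zq^Q$ of degree at most $K$, the degree-$2k$ generator of $ECH(S^3,\xi_{std})$ is represented by a cycle of degree $N_k(p,q)$ by Lemma \ref{lem:degNk}, and the threshold at which it enters the image of the inclusion-induced map (\ref{eq:kechi2}) is precisely $K\geq N_k(p,q)$. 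The vanishing in odd gradings follows from the fact established in Proposition \ref{cor:nodiffp} that the essential classes all live in even index.

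The second and harder step is the irrational case $\rot(\zb)=pq+\delta$. Now $\mathcal{F}_{\zb}(\zb^B\alpha)=d(\zb^B\alpha)+B\delta_L$, so the filtration no longer agrees with action; the $B\delta_L$ correction distinguishes among the various homologous generators representing a fixed $ECH$ class. The key combinatorial point is that within a fixed homology class of index $2k$, all representatives share the same degree $N_k(p,q)$ but have different values of $B$. To get the degree-$2k$ generator into the filtered homology at the smallest possible $K$, I would identify, among all Reeb currents homologous to the index-$2k$ generator (i.e. related by the flow-line replacement moves of Theorem \ref{thm:ECC}), the one minimizing $B$, since for fixed degree the filtration value $d+B\delta_L$ is monotone increasing in $B$. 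The minimal such $B$ corresponds to the number of lattice points on the relevant line $L_k$ of slope $-p/q$ lying strictly below the point realizing $N_k(p,q)$, and this count is exactly $\$N_k(p,q)-1$, the number of repeats of the value $N_k(p,q)$ in the sequence $\{N_j(p,q)\}_{j\leq k}$ preceding the given one. Making this identification precise—translating the multiplicity structure of $N(p,q)$ into the minimal binding exponent via the lattice-point picture from the proof of Proposition \ref{cor:nodiffp}—is the main obstacle, and I would carry it out by revisiting the bijection there between generators with $H=0$ and lattice points $(Q,pB+P)$ with $0\leq P<p$.

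For the limiting and invariance argument I would appeal to Theorem \ref{thm:ok}: because $\{\lambda_{p,q,\varepsilon}\}$ is a knot admissible family (as noted following Definition \ref{def:knotadmissible}, with $\rot_\varepsilon(\zb)=pq+\delta_{\zb,L}$ decreasing to the target rotation number), the filtered homology and its map to $ECH(S^3,\xi_{std})$ depend only on $(S^3,\xi_{std},T(p,q),pq+\delta)$. The restriction that $\delta$ be sufficiently small, and in particular $\delta<\min\{1/p,1/q\}$, is needed exactly so that the correction terms $B\delta$ do not reorder the filtration: one requires $N_k(p,q)+\delta(\$N_k(p,q)-1)\leq N_\ell(p,q)$ where $N_\ell(p,q)$ is the next distinct value, as spelled out in Remark \ref{rmk:kd}, and the bound $\delta<1/p$ (resp. $1/q$) was already used in Lemma \ref{lem:d0g0} to control the partition conditions. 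Under these hypotheses the filtration levels of distinct homology classes remain in the same order as their degrees, so the threshold computation is the same as in the rational case but shifted by $\delta(\$N_k(p,q)-1)$. The odd-grading vanishing is inherited from the even-index structure of the essential generators as before, completing the proof.
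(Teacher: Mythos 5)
Your overall scaffolding matches the paper's proof: compute with the knot admissible family $\{\lambda_{p,q,\varepsilon}\}$, evaluate the filtration via Proposition \ref{prop:Fb} together with the degree--index relation of Lemma \ref{lem:degNk}, and use Theorem \ref{thm:ok} to pass to the rational rotation number by direct limits. However, your treatment of the irrational case rests on a false claim: you assert that within the fixed homology class of index $2k$ the representatives ``share the same degree $N_k(p,q)$ but have different values of $B$,'' and that the threshold is obtained by minimizing $d+B\delta$ over the class. The only homology relation among even-index generators is $\zp^p\alpha\sim\zq^q\alpha$ (coming from $\partial(\zh\alpha)=\zp^p\alpha+\zq^q\alpha$ in Theorem \ref{thm:ECC}), and this move does not change the exponent of $\zb$. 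Reeb currents of equal degree but different $B$ --- e.g.\ $\zb\zq^Q$ and $\zp^p\zq^Q$ --- are indeed related by the flow-line replacement moves of Theorem \ref{thm:ECC} (through $\zh$), but they are \emph{not} homologous: by Lemma \ref{lem:ECHd}(ii),(iii) their ECH indices differ by $2$, so they represent generators of different groups. You have conflated ``same degree'' (flow-line equivalence) with ``homologous''; these are different equivalence relations precisely because the $\zb\leftrightarrow\zh$ move shifts the grading by one.

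The error is not harmless. If your picture were correct, the lattice points on $L_k$ would supply representatives with $B=0,1,2,\dots$ inside a single class, the minimum would be $B=0$, and your minimization would output the threshold $N_k(p,q)$ with no $\delta$-correction --- contradicting the formula you are trying to prove. The correct statement, which is what the paper uses, is that \emph{all} generators homologous to the index-$2k$ generator have the \emph{same} $B$: the lattice-point bijection in the proofs of Proposition \ref{cor:nodiffp} and Lemma \ref{lem:degNk} identifies the index-$2k$ class with the $(\$N_k(p,q)-1)$-st lattice point up the line $L_k$, whose canonical representative is $\zb^{\$N_k(p,q)-1}\zp^{P_0}\zq^{Q_0-(\$N_k(p,q)-1)q}$, and the $\zp^p\leftrightarrow\zq^q$ moves preserve that exponent of $\zb$. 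Hence every cycle representing the class sits at filtration level exactly $N_k(p,q)+\delta(\$N_k(p,q)-1)$; there is nothing to minimize. Your count ``number of lattice points on $L_k$ strictly below the given point'' is the right quantity, but it computes this forced common value of $B$, not a minimum over the class. With that correction, the rest of your argument (ordering of filtration values for $\delta$ small, odd-grading vanishing, and the $\delta\to0$ limit giving the rational case) goes through essentially as in the paper.
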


\begin{remark}\label{rem:kechconj}
Theorem \ref{thm:kECH} and the known calculations for the unknot in \cite[Prop.~5.5]{HuMAC}, are consistent with the conjecture that for any transverse knot $b \subset S^3$, 
\[
\lim_{k \to \infty} \frac{c_k^{\op{link}}(b,\rot(b))^2}{2k} = \rot(b).
\]
\end{remark}

\begin{remark}\label{rem:delta}
The formula for the threshold of the grading $2k$ up to which we can compute $c^{\op{link}}_k$ and the size of $\delta$ relies on our ability to find a model contact form for which $\rot(T(p,q))=pq+\delta$. If $\delta$ is fixed, this takes the form of an upper bound on $\delta$ in terms of $1/k, p$, and $q$, meaning it will only be satisfied for $k$ up to a finite value (a precise upper bound would require adding further computations relating $\varepsilon, L, k$, and $\delta$ arising from a specific $\mathpzc{H}_{p,q}$ to Lemma \ref{lem:orbitseh}). Because our applications require us to estimate $c^{\op{link}}_k(T(p,q),pq+\delta)$ for any fixed $\delta$ and arbitrarily large $k$, knowing the formula for this finite maximum value of $k$ in terms of $\delta$ does not help us. However, we say a few words about what that formula might look like now.

\begin{itemize}
\item In order for the linking filtration values to be ordered according to the ECH index, we require $N_k(p,q)+\delta(\$N_k(p,q)-1)\leq N_\ell(p,q)$, where $\ell$ is the first index greater than $k$ for which $N_\ell(p,q)>N_k(p,q)$. As $k$ grows, the number of values in $N(p,q)$ equalling $N_k(p,q)$ grows, and moreover, because $p$ and $q$ are coprime, there will eventually be an index where $N_\ell(p,q)-N_k(p,q)=1$.  Thus at that point, using Lemma \ref{lem:Nk}(ii), because our estimate must hold for all $k$, we require
\[
\delta\left(\sqrt{\frac{2k}{pq}}-1\right)\leq1,
\]
which can be rearranged into an upper bound on $k$ in terms of $\delta$.

\item Hutchings suggests in \cite{letternw} that it may be possible to compute $c^{\op{link}}_k$ using a toric perturbation, assuming the isomorphism to the combinatorial chain complex akin to that described in \S\ref{ss:toric}) so long as
\[
\delta\left\lfloor\frac{N_k(p,q)}{pq}\right\rfloor<1,
\]
which, using our estimates in Lemma \ref{lem:Nk}(i), implies
\[
\sqrt{8kpq+(p+q+1)^2}-(p+q+1)<\frac{2pq}{\delta}-2,
\]
providing an upper bound on $k$ in terms of $\delta$.

\item Our arguments in the proof of Lemma \ref{lem:d0g0} require $\delta<\min\{1/p,1/q\}$, which is why that hypothesis appears in the statement of Theorem \ref{thm:kECH}. However, this produces no restrictions on $k$. Note that our results in \S\ref{ss:dyn} hold for potentially larger $\delta$.
\end{itemize}
\end{remark}

\begin{proof}

{We may use $\lambda_{p,q,\varepsilon}$ to compute ECC up to the index and action determined from $\varepsilon$ by Lemma \ref{lem:orbitseh}, because all contact forms are assumed to be contactomorphic and all share $T(p,q)$ as an elliptic orbit, with monotonically decreasing rotation number. The choice of $\varepsilon$ determines the value of $\delta$ and thus the rotation number, which is why we require $\delta$ to be small, as only small $\delta$ can be realized thusly.}

We may argue as in the proof of Lemma \ref{lem:degNk} that the lower bound on $K$ is precisely the value of the knot filtration on the generator $\zb^B\zp^P\zq^Q$ of index $2k$ with $P<p$, and that this is also the value which the knot filtration takes on all generators (and hence cycles) homologous to it. Again, this value may be computed using Proposition \ref{prop:Fb} and Lemma \ref{lem:degNk}.

By Theorem \ref{thm:ok} and the discussions in \cite[\S 7]{kech}, we can take direct limits to realize $\delta=0$.
\end{proof}

\subsection{Quantitative positive torus knotted Reeb dynamics}\label{ss:dyn}

We now establish the necessary input from contact geometry, from which we derived the mean action bounds for area preserving diffeomorphisms of higher genus surfaces with one boundary component in terms of the Calabi invariant in \S \ref{s:calabi}.   First, we use our computation of torus knot filtered ECH in Theorem \ref{thm:kECH} in conjunction with the ECH Weyl Law in Theorem \ref{thm:weyl} to provide an upper bound on the action of any orbit set and a lower bound on the linking filtration.

\begin{proposition}\label{prop:actionlinking}
Let $\lambda$ be any contact form for $(S^3, \xi_{std})$ which admits a positive $T(p,q)$ torus knot, denoted by ${b}$, as an elliptic Reeb orbit with rotation number $pq+\Delta$, where $\Delta$ is positive and irrational. Then for all $\epsilon > 0$, if $k$ is a sufficiently large positive integer, there exists a Reeb current $\gamma$, which is an ECH generator, not containing the $T(p,q)$ knotted orbit ${b}$, and a nonnegative integer $m$,  such that
\begin{equation}\label{eq:actionbound}
\frac{(\A(\gamma) + m\A({b}))^2}{2k} \leq \vol(\lambda) + \epsilon,
\end{equation}
\begin{equation}\label{eq:fbound}
 \ell(\gamma,{b}) +  m({pq}+\Delta) \geq {N_k(p,q)}.
\end{equation}
\end{proposition}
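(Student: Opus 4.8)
The plan is to combine the computation of knot filtered ECH in Theorem \ref{thm:kECH} with the ECH Weyl Law (Theorem \ref{thm:weyl}), reading off both desired bounds from a single cycle representing the degree $2k$ generator of $ECH(S^3,\xi_{std})$. We may assume $\lambda$ is nondegenerate: since $b$ is elliptic with the irrational rotation number $pq+\Delta$ it is already nondegenerate, so a $C^\infty$-small perturbation supported away from $b$ makes all low-action orbits nondegenerate without altering $\rot(b)$, changing $\vol(\lambda)$ and the spectral numbers $c_k$ by amounts we absorb into $\epsilon$; alternatively one passes to a knot admissible family as in \cite{kech} and takes limits. Fix a generic $\lambda$-compatible $J$ and let $\sigma_{2k}\in ECH_{2k}(S^3,\lambda)\cong\Z/2$ denote the degree $2k$ generator. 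By definition of the ECH action spectrum, for any $\epsilon'>0$ there is a cycle $z\in ECC^{c_k(\lambda)+\epsilon'}_{2k}(S^3,\lambda,J)$ with $[z]=\sigma_{2k}$; writing $z=\sum_i\alpha_i$ as a mod $2$ sum of ECH generators, each $\alpha_i$ satisfies $\A(\alpha_i)<c_k(\lambda)+\epsilon'$.

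The key step is to bound the knot filtration of $z$ from below, and here I would avoid computing $c_k^{\op{link}}(b,pq+\Delta)$ directly, which is not known for large $\Delta$. Since $z$ is a cycle representing $\sigma_{2k}$ with knot filtration value $\mathcal{F}_b(z)=\max_i\mathcal{F}_b(\alpha_i)$, the class $\sigma_{2k}$ lies in the image of the inclusion-induced map at filtration level $\mathcal{F}_b(z)$, so by the definition of the linking spectrum (Definition \ref{def:cklink}) we get $\mathcal{F}_b(z)\geq c_k^{\op{link}}(b,pq+\Delta)$; this uses the topological invariance of knot filtered ECH, which identifies the complex computed from $\lambda$ with the invariant $ECH^{\mathcal{F}_b\leq K}(S^3,\xi_{std},b,pq+\Delta)$. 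Then the monotonicity of $c_k^{\op{link}}$ in the rotation parameter, equation (\ref{eq:cklinkth}), together with $pq+\Delta>pq$, yields $c_k^{\op{link}}(b,pq+\Delta)\geq c_k^{\op{link}}(b,pq)$, and Theorem \ref{thm:kECH} in the case $\rot(b)=pq$ identifies $c_k^{\op{link}}(b,pq)=N_k(p,q)$, since $ECH^{\mathcal{F}_b\leq K}_{2k}(S^3,\xi_{std},T(p,q),pq)$ first becomes nonzero and surjects onto $ECH_{2k}$ precisely at $K=N_k(p,q)$. Hence $\mathcal{F}_b(z)\geq N_k(p,q)$.

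Finally I would extract the relevant orbit set. Choose the generator $\alpha=\alpha_{i_0}$ realizing $\mathcal{F}_b(\alpha)=\mathcal{F}_b(z)\geq N_k(p,q)$, and write $\alpha=b^m\gamma$ with $\gamma$ an ECH generator not containing $b$ and $m\in\Z_{\geq0}$. By the definition of the knot filtration, $\mathcal{F}_b(\alpha)=m\rot(b)+\ell(\gamma,b)=m(pq+\Delta)+\ell(\gamma,b)\geq N_k(p,q)$, which is exactly (\ref{eq:fbound}). For the action bound, $\A(\gamma)+m\A(b)=\A(\alpha)<c_k(\lambda)+\epsilon'$, and the Weyl Law gives $c_k(\lambda)^2/(2k)\to\vol(\lambda)$, so for $k$ large and $\epsilon'$ chosen small (depending on the now-fixed $k$) we obtain $(c_k(\lambda)+\epsilon')^2/(2k)\leq\vol(\lambda)+\epsilon$, yielding (\ref{eq:actionbound}).

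The main obstacle is conceptual rather than computational: the point is to recognize that only the lower bound $c_k^{\op{link}}(b,pq+\Delta)\geq N_k(p,q)$ is needed, and that it follows by monotonicity from the single computed case $\rot(b)=pq$, so that the argument applies for arbitrarily large $\Delta$ even though Theorem \ref{thm:kECH} computes the knot filtration only for small $\delta$. The remaining care lies in coordinating the two filtrations on one cycle—using the minimal-action cycle and observing that its knot filtration is automatically large—and in the reduction to the nondegenerate setting, where the quantities $\vol$, $c_k$, $c_k^{\op{link}}$, and the action and linking of orbit sets all vary continuously and the limit of the produced currents recovers orbit sets of the original $\lambda$.
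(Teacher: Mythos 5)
Your proposal is correct and follows the paper's own proof essentially step for step: the same use of the Weyl law to bound the action of a minimal-action cycle representing the degree-$2k$ generator, the same key inequality $c_k^{\op{link}}(b,pq+\Delta)\geq c_k^{\op{link}}(b,pq)=N_k(p,q)$ obtained from monotonicity \eqref{eq:cklinkth} together with Theorem \ref{thm:kECH}, and the same extraction of a single generator $b^m\gamma$ satisfying both bounds. The only substantive difference is that the paper's Step 2 carries out the degenerate-case reduction as a detailed compactness argument (uniform bounds forcing $m(n)$ constant along a subsequence, convergence of the $\gamma(n)$ as currents, and nondegeneracy of $b$ ruling out covers of $b$ in the limit), whereas you only sketch this limiting argument; the sketch does, however, identify the correct strategy and the relevant continuity issues.
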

A few remarks are in order before we proceed with the proof.

\begin{remark}\label{rem:subleadfun}
We will subsequently assume $\A(b)=1$ and rewrite \eqref{eq:actionbound} as
\begin{equation}\label{eq:infAhutchings}
\A(\gamma) + m \leq \sqrt{2k( \vol(\lambda) + \epsilon)}.
\end{equation}
\end{remark}

\begin{remark}\label{rem:delicate}
When the torus knot $b$ is degenerate, we expect that the result is still true, by employing methods akin to \cite{two}.  We do not carry out this out as it is not needed for the proofs of our main results.

In the setting where $\rot(b) = pq+ \Delta$, we can only recover so much of knot filtration on ECH, though we can recover it for arbitrarily large $k$ provided our perturbation $\lambda_{p,q,\varepsilon}$ is sufficiently small, which in turn governs the size of $\Delta$, as described in Remark \ref{rem:delta}.
\end{remark}

\begin{proof}
We prove the theorem by separately handling whether or not the contact form is degenerate.   
\medskip

\noindent \textbf{Step 1:} Assume that $\lambda$ is nondegenerate.  Let $J$ be a generic $\lambda$-compatible almost complex structure on $\R \times S^3$.  
By the ECH Weyl law, Theorem \ref{thm:weyl}, if $k$ is sufficiently large then
\begin{equation}\label{eq:action1}
\frac{c_k(S^3,\lambda)^2}{2k} \leq \vol(S^3, \lambda) + \epsilon.
\end{equation}
Assume that $k$ is sufficiently large in this sense.  From the definition of $c_k(S^3,\lambda)$, there exists a cycle
\[
\alpha = \sum_i \alpha_i \in ECC_{2k}(S^3,\lambda,J),
\]
representing the generator of $ECH_{2k}(S^3, \lambda)$ such that each orbit set $\alpha_i$ in the cycle has symplectic action $\A(\alpha_i) \leq c_k(S^3,\lambda)$.  By \eqref{eq:action1}, for each $i$,
 \begin{equation}\label{eq:action}
\frac{\A(\alpha_i)^2}{2k} \leq \vol(S^3, \lambda) + \epsilon.
\end{equation}
By Theorem \ref{thm:kECH}, the fact that the knot filtration is an invariant when the rotation number is irrational, and because it is increasing in the rotation number, see (\ref{eq:cklinkth}), we must have
\begin{equation}\label{eq:cklinkbound}
c_k^{\op{link}}(T(p,q), pq+ \Delta) \geq c_k^{\op{link}}(T(p,q), pq ) = N_k(p,q). 
\end{equation}
Thus for at least one $i$, we have that
\begin{equation}\label{eq:link}
\fb(\alpha_i) \geq N_k(p,q).
\end{equation}
Otherwise  $\alpha$ would be nullhomologous, a contradiction to the fact that $\alpha$ represents a generator of  $ECH_{2k}(S^3, \lambda)$.  

Pick such an $i$ such that \eqref{eq:action} and \eqref{eq:link} hold.  Write $\alpha_i = {b}^m\gamma$, where $m \in \Z_{\geq 0}$ and $\gamma$ is a Reeb current not including ${b}$.  Then the desired inequalities \eqref{eq:actionbound} and \eqref{eq:fbound} follow from \eqref{eq:action} and \eqref{eq:link}, respectively. 
 \bigskip

\noindent \textbf{Step 2:} 
If $\lambda$ is degenerate, let $\{f_n\}_{n=1,2,...}$ be a sequence of functions $S^3 \to \R$ with $0 < f_n \leq 1$ such that $f_n \to 1$ in $C^\infty$ and the contact form $\lambda_n = f_n\lambda$ is nondegenerate.  We can arrange that each $\lambda_n$ satisfies the hypotheses of the proposition for each $n$, with the same orbit ${b}$, such that $\A({b})$ and $\rot({b})$ do not depend on $n$, similar to arguments in \cite[App.~A]{abw}. 

The ECH Weyl law does not assume nondegeneracy of the contact form, thus we may assume that $k$ is sufficiently large so that 
\begin{equation}\label{eq:actiondeg}
\frac{c_k(S^3,\lambda)^2}{2k} \leq \vol(S^3, \lambda) + \epsilon.
\end{equation}
Assume that $k$ is sufficiently large in this sense. By the monotonicity property of the ECH spectrum, Proposition \ref{prop:ck}(ii), we have for each $n$ that
\begin{equation}\label{eq:fnweyl} 
\frac{c_k(S^3,f_n\lambda)^2}{2k} \leq \vol(S^3,\lambda)+\epsilon.
\end{equation}
The argument in Step 1 applies: for each $n$, we can choose a nonnegative integer $m(n)$, and an orbit set $\gamma(n)$ not including ${b}$, such that
\begin{align}
\frac{(\A_n(\gamma(n)) + m(n)\A({b}))^2}{2k} &\leq \vol(S^3, \lambda) +  \epsilon, \label{eq:action-n} \\
\ell(\gamma(n),{b}) + m(n) (pq + \delta) & \geq N_k(p,q). \label{eq:link-n} 
\end{align}
Here $\A_n(\gamma) : = \int_\gamma \lambda_n$ is the action determined by $\lambda_n$.

The existence of an $n$-independent upper bound on the nonnegative integer $m(n)$ follows from the inequality \eqref{eq:action-n}.  Thus we may pass to a subsequence so that $m(n)$ is constant; denote this constant by $m$.  Likewise,  there is an $n$-independent upper bound on $\A_n(\gamma(n))$ as a result of the inequality \eqref{eq:action-n}.  Since the Reeb vector fields of $\lambda_n$ and $\lambda$ are nonsingular, there are also $n$-independent lower bounds on the action of any Reeb orbit computed with respect to $\lambda_n$.  By appealing to \eqref{eq:action-n} again, we have that the total multiplicity of all the Reeb orbits in $\gamma(n)$ admit an $n$-independent upper bound.

We may then pass to a further subsequence so that $\gamma(n) =\{ \gamma_i(n), m_i\}$, where $m_i$ is an $n$-independent positive integer, and $\gamma_i(n)$ is a embedded Reeb orbit associated to $\lambda_n$ with $\lim_{n \to \infty} \gamma_i(n) = \gamma_i$ a Reeb orbit associated to $\lambda$.  The Reeb orbits $\gamma_i$ need not be embedded or distinct, because the contact form $\lambda$ is degenerate.

Since the Reeb orbit ${b}$ has irrational rotation number and is thus a nondegenerate Reeb orbit associated to $\lambda$, we can conclude that none of the orbits $\gamma_i$ are a cover of ${b}$.  Thus $\gamma = \prod_i \gamma_i^{m_i}$ is a well-defined Reeb current of $\lambda$ that does not include ${b}$, and the sequence of orbit sets $\gamma(n)$ converges ``as a current" to $\gamma$.   In particular, $\A_n(\gamma(n))$ converges to $\A(\gamma)$, and $\ell(\gamma(n),{b})$ equals $\ell(\gamma,{b})$ for $n$ sufficiently large.  Thus the inequalities \eqref {eq:action-n} and \eqref{eq:link-n} imply that the pair $(\gamma, m)$ satisfy the desired conclusions.

\end{proof}

Before we can use Proposition \ref{prop:actionlinking} to establish the mean action bounds in Theorem \ref{thm:toruslinking}, we first need some estimates on the quantities $N_k(p,q)$ and $\$N_k(p,q)$. Part (i) is an improvement of \cite[Lem.~3.2]{HuMAC}, which is only possible because $p,q\in\Z$ rather than in $\R$. 

\begin{lemma}\label{lem:Nk} 
Given positive integers $p$ and $q$, there are infinitely many $k$ for which the following hold simultaneously:
\begin{enumerate}[{\em (i)}]
\item $N_k(p,q)=\dfrac{\sqrt{8pqk+(p+q+1)^2}-(p+q+1)}{2}$,
\item when $(p,q)\neq(1,1)$, $\$N_k(p,q)\geq\sqrt{\dfrac{2k}{pq}}$.
\end{enumerate}
\end{lemma}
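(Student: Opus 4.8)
The plan is to reduce both statements to an \emph{exact} lattice-point count on a right triangle, which is available precisely at the ``corner'' values $v=pq\,c$. First I would introduce the counting function $T(v):=\#\{(a,b)\in\Z_{\geq0}^2 : ap+bq\leq v\}$. Since the entries of $N(p,q)$ are in bijection with the lattice points $(a,b)$ (each contributing the value $ap+bq$), the entries of value at most $v$ occupy exactly the indices $0,\dots,T(v)-1$; in particular, whenever $v$ is representable as $ap+bq$ one has $N_{T(v)-1}(p,q)=v$. Thus it suffices to produce an infinite family of representable values $v$ together with an exact formula for $T(v)$, and then to verify the two asserted identities at the indices $k=T(v)-1$.

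Second, I would specialize to $v=pq\,c$ for $c\in\Z_{\geq1}$. This value is representable (e.g.\ $v=(qc)p$) and, crucially, the three vertices $(0,0),(qc,0),(0,pc)$ of the triangle $\{x,y\geq0,\ px+qy\leq v\}$ are all lattice points, so Pick's theorem applies directly. The area is $pqc^2/2$, and a short boundary count---$qc+1$ points on the horizontal edge, $pc+1$ on the vertical edge, and $\gcd(pc,qc)+1=c+1$ on the hypotenuse (using $\gcd(p,q)=1$), minus the three shared vertices---gives $B=(p+q+1)c$ boundary points. Pick's theorem then yields $T(pqc)=\frac{pqc^2}{2}+\frac{(p+q+1)c}{2}+1$, so that $k_c:=T(pqc)-1=\frac{pqc^2+(p+q+1)c}{2}$. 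These $k_c$ are integers (being lattice-point counts) that strictly increase with $c$, furnishing infinitely many indices.

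Third, for part (i) I would verify the formula at $k=k_c$ by exhibiting the discriminant as a perfect square, $8pq\,k_c+(p+q+1)^2=\bigl(2pqc+(p+q+1)\bigr)^2$, so that the right-hand side of (i) equals $pqc=N_{k_c}(p,q)$, as required. For part (ii) I would note that, since $k_c$ is the \emph{last} index carrying the value $pqc$, the quantity $\$N_{k_c}(p,q)$ equals the number of representations $r(pqc)=\#\{(a,b)\geq0: ap+bq=pqc\}$; forcing $a=qs$ with $0\leq s\leq c$ shows $r(pqc)=c+1$. The desired bound $\$N_{k_c}\geq\sqrt{2k_c/pq}$ then reduces, after squaring, to $2c+1\geq\frac{p+q+1}{pq}\,c$, which follows from the elementary inequality $p+q+1\leq 2pq$, valid for every coprime $(p,q)\neq(1,1)$ (immediate for $p,q\geq2$, and checked directly in the remaining cases $p=1,q\geq2$).

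I expect the only genuine subtlety to be the bookkeeping: pinning down the indexing convention so that $k=T(v)-1$ (rather than $T(v)$ or $T(v-1)$) is the correct index for the maximal occurrence of the value $v$, and handling the three shared vertices correctly in the boundary count for Pick's theorem. Once these are fixed, recognizing the perfect square in (i) is purely mechanical, and the inequality in (ii) is a one-line consequence of $p+q+1\leq2pq$.
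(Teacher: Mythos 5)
Your proof is correct and follows essentially the same route as the paper's: both select exactly the indices $k$ at which a value $pqc$ (the paper's $npq$) occurs for the \emph{last} time in $N(p,q)$, count the lattice points of the triangle with vertices $(0,0)$, $(qc,0)$, $(0,pc)$ exactly, identify the number of repeats as $c+1$ (the lattice points on the hypotenuse), and reduce part (ii) to the inequality $p+q+1\leq 2pq$. The only differences are cosmetic: you count via Pick's theorem and verify (i) by exhibiting the perfect-square discriminant $\bigl(2pqc+(p+q+1)\bigr)^2$, whereas the paper counts by halving the $(np+1)\times(nq+1)$ rectangle of lattice points and solves the resulting quadratic for $n$ in terms of $k$.
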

\begin{proof} Recall that $N_k(p,q)=L$ where $L$ is the lowest value that the function $px+qy$ takes for $(x,y)\in\Z^2_{\geq0}$ subject to the constraint that the triangle $T_k$ between the axes and $px+qy=L$ contains at least $k+1$ lattice points, including those on the axes and line.

The $k$ in question will be the largest $k$ with $N_k(p,q)=npq$ for some $n\in\Z_{\geq0}$. In particular, their triangles $T_k$ will have vertices $(0,0), (nq,0)$, and $(0,np)$.

To prove (i) we must solve for $n$ in terms of $k$. Because $k$ is the largest index so that $N_k(p,q)=npq$, its triangle $T_k$ contains exactly $k+1$ lattice points. We may solve for the number of lattice points in another way: it is precisely the number of lattice points in an $np\times nq$ triangle, minus the $n+1$ lattice points along the diagonal, divided by two, and finally with the lattice points along the diagonal added back in. That is,
\[
k+1=\frac{(np+1)(nq+1)-(n+1)}{2}+n+1 \iff n=\frac{\sqrt{8kpq+(p+q+1)^2}-(p+q+1)}{2pq},
\]
where we have taken only the larger solution to the quadratic equation because the smaller one is negative. The fact that $N_k(p,q)=npq$ completes the proof of conclusion (i).

We now turn to the proof of (ii). There are precisely $n+1$ lattice points along the diagonal of the triangle $T_k$, meaning there are exactly $n+1$ terms in the sequence $N(p,q)$ equal to $N_k(p,q)$ and with index less than or equal to $k$. We compute
\begin{align*}
\$N_k(p,q) = n+1 &= \frac{\sqrt{8kpq+(p+q+1)^2}-(p+q+1)}{2pq} + 1 \geq \sqrt{\frac{2k}{pq}}
\\&\Longleftarrow \frac{p+q+1}{2pq} \leq 1,
\end{align*}
which holds because at least one of $p$ or $q$ is at least two.
\end{proof}

We now restate and prove Theorem \ref{thm:toruslinking}.
\begin{theorem}\label{thm:toruslinking6} 
Let $\lambda$ be a contact form on $(S^3,\xi_{std})$ with $\vol(\lambda)=V$.  Suppose that the Reeb vector field admits the right handed $T(p,q)$ torus knot, denoted by ${b}$, as an elliptic Reeb orbit with symplectic action 1 and rotation number $pq + \Delta$, where $\Delta$ is a positive irrational number.  If
\[
V < \frac{pq}{(pq+\Delta)^2},
\]
then
\[
\inf \left\{ \frac{\A(\gamma)}{\ell(\gamma,{b})} \ \bigg \vert \ \gamma \in \mathcal{P}(\lambda) \setminus \{ {b}\} \right\} \leq \sqrt{\frac{V}{pq}}.
\] 
\end{theorem}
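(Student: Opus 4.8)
The plan is to argue by contradiction, feeding the action-linking bound of Proposition \ref{prop:actionlinking} into the sharp asymptotics for $N_k(p,q)$ from Lemma \ref{lem:Nk}(i). First I would set $\rho := \inf\{\A(\gamma)/\ell(\gamma,{b}) : \gamma \in \mathcal{P}(\lambda) \setminus \{{b}\}\}$ and assume, for contradiction, that $\rho > \sqrt{V/pq}$. Since every Reeb orbit other than ${b}$ links positively with ${b}$ and $V>0$, this $\rho$ is positive, and for any Reeb current $\gamma=\prod_i\gamma_i^{m_i}$ not containing ${b}$ I would record the weighted-mean inequality $\A(\gamma)=\sum_i m_i\A(\gamma_i)\geq\rho\sum_i m_i\ell(\gamma_i,{b})=\rho\,\ell(\gamma,{b})$, using that each constituent has action-to-linking ratio at least $\rho$ (covers have the same ratio as their underlying simple orbit). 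This lets me trade linking for action on the orbit sets produced below.

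Next, fixing $\epsilon>0$ and letting $k$ range over the infinite index set of Lemma \ref{lem:Nk}(i) (discarding the finitely many $k$ too small for Proposition \ref{prop:actionlinking}), I would apply Proposition \ref{prop:actionlinking} with $\A({b})=1$. Via Remark \ref{rem:subleadfun} this supplies an ECH generator $\gamma$ without ${b}$ and an integer $m\geq0$ satisfying $\A(\gamma)+m\leq\sqrt{2k(V+\epsilon)}$ together with $\ell(\gamma,{b})+m(pq+\Delta)\geq N_k(p,q)$. Substituting $\ell(\gamma,{b})\leq\A(\gamma)/\rho$ and then maximizing the linear expression $\A(\gamma)/\rho+m(pq+\Delta)$ over the simplex $\{\A(\gamma),m\geq0,\ \A(\gamma)+m\leq\sqrt{2k(V+\epsilon)}\}$ — whose maximum is attained at a vertex — yields
\[
N_k(p,q)\leq\sqrt{2k(V+\epsilon)}\,\max\!\left(\tfrac{1}{\rho},\,pq+\Delta\right).
\]

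I would then divide by $\sqrt{2k}$ and pass to the limit along the subsequence, using $N_k(p,q)=\sqrt{2pqk}+O(1)$ so that $N_k(p,q)/\sqrt{2k}\to\sqrt{pq}$, and finally send $\epsilon\to0$, obtaining $\sqrt{pq}\leq\sqrt{V}\,\max(1/\rho,\,pq+\Delta)$. The contradiction then closes the argument: the assumption $\rho>\sqrt{V/pq}$ forces $\sqrt{V}/\rho<\sqrt{pq}$, while the hypothesis $V<pq/(pq+\Delta)^2$ forces $\sqrt{V}\,(pq+\Delta)<\sqrt{pq}$, so both entries of the max lie strictly below $\sqrt{pq}$, contradicting the displayed inequality. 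Hence $\rho\leq\sqrt{V/pq}$, as desired.

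The step I expect to require the most care is not any calculation but the appearance of the factor $pq+\Delta=\rot({b})$ as the second entry of the max: each copy of ${b}$ contributes $\rot({b})$ to the knot filtration while costing only $\A({b})=1$ in action, so this term cannot be discarded. This is precisely what forces the stronger hypothesis $V<pq/(pq+\Delta)^2$ rather than $V<1/(pq+\Delta)$, and it reflects the fact that \eqref{eq:cklinkbound} only lets us invoke the knot filtration computation at rotation number $pq$ rather than $pq+\Delta$; compare Remarks \ref{rem:stronger} and \ref{rmk:Vpqd}. Once this bookkeeping is pinned down, the remaining steps are routine given Proposition \ref{prop:actionlinking} and Lemma \ref{lem:Nk}.
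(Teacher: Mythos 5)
Your proposal is correct and is essentially the paper's proof recast in contrapositive form: both arguments feed Proposition \ref{prop:actionlinking} into the asymptotics $N_k(p,q)=\sqrt{2pqk}+O(1)$ from Lemma \ref{lem:Nk}(i), and your vertex bound $N_k(p,q)\leq\sqrt{2k(V+\epsilon)}\,\max\!\left(1/\rho,\,pq+\Delta\right)$ encodes exactly the paper's key step \eqref{eq:lhsrhs}, with the hypothesis $V<pq/(pq+\Delta)^2$ entering in the same way to control the $pq+\Delta$ entry of the max. The only difference is organizational: the paper argues directly that the ratio $\A(\gamma)/\ell(\gamma,b)$ supplied by Proposition \ref{prop:actionlinking} is worst when $m=0$ (the mediant inequality in its footnote), whereas you assume $\rho>\sqrt{V/pq}$ and derive a contradiction.
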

Here $\ell(\gamma,b)$ denotes the linking number of the embedded Reeb orbit $\gamma$ with the torus knot $b$ and $\mathcal{P}(\lambda)$ denotes the set of embedded Reeb orbits of $\lambda$.  Note that our computation of knot filtered ECH in Theorem \ref{thm:kECH} demonstrates that $\mathcal{P}(\lambda) \setminus \{ {b}\} \neq \emptyset$.

\begin{proof}
\textbf{Step 1:} Let $\mathcal{O}_0(\lambda)$ denote the set of Reeb currents which do not include the torus knot $b$.  It suffices to establish
\begin{equation}\label{eq:desire}
\inf \left\{ \frac{\A(\alpha)}{\ell(\alpha,{b})} \ \bigg \vert \ \alpha \in \mathcal{O}_0(\lambda) \right\} \leq \sqrt{\frac{V}{pq}}.
\end{equation}
This is because if $\alpha = \{ (\alpha_i,m_i)\}$ is an orbit set not including $b$, then by the definition of action and linking for Reeb currents 
\[
\A(\alpha) =  \sum_i m_i \A(\alpha_i), \ \ \ \ \ \ell(\alpha, b) =  \sum_i m_i \ell(\alpha_i,b),
\]
it follows that at least one of the orbits $\alpha_i$ must have 
\[
\frac{\A(\alpha_i)}{\ell(\alpha_i,{b})} \leq \frac{\A(\alpha)}{\ell(\alpha,{b})}.
\]
Proposition \ref{prop:actionlinking} provides us with an admissible Reeb current $\alpha$ for which
\begin{align}
 \A(\alpha) \leq & \ \sqrt{2k ( \vol(\lambda) + \epsilon)} - m, \label{eq:A1} \\
 \ell(\alpha,b) \geq & \ {N_k(p,q)} - m(pq+\Delta). \label{eq:L1}
\end{align}

\noindent \textbf{Step 2:}  Since we are interested in an upper bound on the infimum, we may choose $k$ to be one of the infinitely many for which Lemma \ref{lem:Nk} holds to carry out our computations. In conjunction with \eqref{eq:cklinkbound} we may update the right hand sides of \eqref{eq:A1} and \eqref{eq:L1} to 
\begin{align}
\A(\alpha) \leq & \ \sqrt{2k ( V+ \epsilon)} - m, \label{eqn:A} \\
\ell(\alpha,{b}) \geq & \ {\sqrt{2pqk+\left(\frac{p+q+1}{2}\right)^2} \ -\frac{(p+q+1)}{2}} - m(pq+\Delta). \label{eqn:ell}
\end{align}

\noindent \textbf{Step 3:} They key step in the proof is to establish that 
\begin{equation}\label{eq:ratio}
\frac{\A(\alpha)}{\ell(\alpha,b)} \leq  \frac{\sqrt{2k ( V + \epsilon)}}{{\sqrt{2pqk+\left(\frac{p+q+1}{2}\right)^2} \ -\frac{(p+q+1)}{2}}} .
\end{equation}
This follows if the ratio of the right hand sides of \eqref{eqn:A} over \eqref{eqn:ell} is maximized by setting $m=0$.\footnote{Observe that $\frac{a-mb}{c-md} \leq \frac{a}{c}$ if $m \geq 0$, $ad\leq bc$, and $c-md > 0$.} 

First, we note that  there exists a $k_0$ such that for $k>k_0$, the right hand side of \eqref{eqn:ell} is positive.  By \eqref{eqn:A} and our assumption that
\begin{equation}\label{eq:Vbound}
V < \frac{pq}{(pq + \Delta)^2} <  \frac{1}{pq + \Delta},
\end{equation}
it follows that there is a constant $h<1$ independent of $k$ such that $m < h\sqrt{\frac{2k}{pq+\Delta}}$.

It remains to verify that for sufficiently large values of $k$,
\begin{equation}\label{eq:lhsrhs}
\underbrace{\left(\sqrt{2k (V + \epsilon)}\right)(pq+\Delta)}_\textrm{(LHS)} \leq \underbrace{{\sqrt{2pqk+\left(\tfrac{p+q+1}{2}\right)^2} \ -\tfrac{(p+q+1)}{2}} }_\text{(RHS)}.
\end{equation}
To do so, we divide both sides by $\sqrt{2k}$, and continue to denote them by (LHS) and (RHS). Using our assumption on the volume $V$ as in \eqref{eq:Vbound} and requiring that 
\begin{equation}\label{eq:ep}
\epsilon \leq \frac{pq}{(pq+\Delta)^2} - V
\end{equation}
yields 
\[
\begin{split}
\mbox{(LHS)} \ & \leq  \ (pq + \Delta) \sqrt{V + \epsilon}, \\ 
& \leq \sqrt{pq}.% \\
\end{split}
\]
We have 
\[
\mbox{(RHS)}   \geq  \sqrt{pq + \tfrac{(p+q+1)^2}{8k}} - \tfrac{(p+q+1)}{2\sqrt{2}k},\]
 thus for sufficiently large values of $k$, \eqref{eq:lhsrhs} holds.

 \textbf{Step 4:} Since $k$ can be arbitrarily large, taking a sequence of $k \to \infty$ such that Lemma \ref{lem:Nk} holds, and using our bound \eqref{eq:Vbound} on $V$,  yields
  \[
\inf \left\{ \frac{\A(\alpha)}{\ell(\alpha,{b})} \ \bigg \vert \ \alpha \in \mathcal{O}_0(\lambda) \right\} \leq \sqrt{\frac{V + \epsilon }{pq}}.
 \]
 from \eqref{eq:ratio}. Since $\epsilon>0$ can be arbitrarily small, this proves \eqref{eq:desire}.

\end{proof}

\begin{remark}\label{rmk:Vpqd} The hypothesis $V<pq/(pq+\Delta)^2$ is stronger than the analogue of the hypothesis $V<\rot(B)^{-1}$ of \cite[Prop.~2.2]{HuMAC}, and it is the reason we require the hypothesis $\V(\psi)<pq\cdot\theta_0^2$ in Theorem \ref{thm:Calabi}. It is required to prove the key step (\ref{eq:lhsrhs}) in the proof above. The issue is that the lower bound (\ref{eqn:ell}) underlying (RHS) in (\ref{eq:lhsrhs}) comes from (\ref{eq:cklinkbound}) in which we ignore the contribution to the knot filtration from $\rot(T(p,q))$, which we may do by (\ref{eq:cklinkth}). This is necessary because we cannot compute the knot filtration for arbitrarily large $k$ given a fixed $\Delta$, as discussed in Remark \ref{rem:delta} if we set $\delta=\Delta$. We strengthen our relatively weaker lower bound on $\ell(\alpha,b)$ in (\ref{eqn:ell}) to what we need to prove (\ref{eq:lhsrhs}) using the stronger hypothesis $V<pq/(pq+\Delta)^2$.
\end{remark}

\begin{remark}\label{rmk:deltadenom} If we were lucky enough to know that we could obtain the estimate (\ref{eq:A1}) on the ECH capacity $c_k$ in terms of the contact volume using $k$ satisfying an upper bound, and furthermore if that bound were stronger than those discussed in Remark \ref{rem:delta} with $\Delta=\delta$ (or potential others obtained by a clever choice of $\lambda$ used to compute $c^{\op{link}}_k(T(p,q),pq+\Delta)$), then using Lemma \ref{lem:Nk}(ii) we could improve (\ref{eq:ratio}) to
\[
\frac{\A(\alpha)}{\ell(\alpha,b)}\leq\frac{\sqrt{2k(V+\epsilon)}}{\sqrt{2pqk+\left(\frac{p+q+1}{2}\right)^2}-\frac{p+q+1}{2}+\Delta\left(\sqrt{\frac{2k}{pq}}-1\right)},
\]
which, for $k$ large enough relative to $p$ and $q$ (a lower bound on $k$, but in the hypothetical scenario of this remark, one we are assuming is satisfied even in light of our competing upper bounds), implies
\[
\frac{\A(\alpha)}{\ell(\alpha,b)}\leq\sqrt{\frac{V}{pq+\Delta}}.
\]
In this scenario, we also don't require the stronger hypothesis $V<pq/(pq+\Delta)^2$, but instead can use the weaker hypothesis $V<1/(pq+\Delta)$; the proof of Theorem \ref{thm:Calabi} in \S\ref{ss:calabi} could be used with $\varepsilon(1)\to\V(\psi)-\theta_0$ and we could prove that the infimum of the mean action of $\psi$ is at most $\V(\psi)$. Unfortunately, our $\lambda$ satisfies very few restrictions, thus we know nothing about how quickly in $k$ we can assume a bound of $\epsilon$ on the difference $c_k(S^3,\lambda)-\sqrt{2\vol(\lambda)}$. Results on the subleading asymptotics also do not help, as again, they do not bound $k$.
\end{remark}

\addcontentsline{toc}{section}{References}

\noindent \textsc{Jo Nelson \\   Rice University \\}
{\em email: }\texttt{jo.nelson@rice.edu}\\

\noindent \textsc{Morgan Weiler \\ University of California, Riverside}\\
{\em email: }\texttt{morgan.c.weiler@ucr.edu}\\

\end{document}